 \newtheorem{thm}{Theorem}[section]
 \newtheorem{cor}[thm]{Corollary}
 \newtheorem{lem}[thm]{Lemma}
 \newtheorem{prop}[thm]{Proposition}
 \theoremstyle{definition}
 \newtheorem{defn}[thm]{Definition}
 \theoremstyle{remark}
 \newtheorem{rem}[thm]{Remark}
 \numberwithin{equation}{section}
\newcommand{\Hs}{\mathcal{H}}
\newcommand{\cI}{\mathcal{I}}
\newcommand{\cO}{\mathcal{O}}
\newcommand{\R}{\mathbb{R}}
\newcommand{\eR}{\overline{\mathbb{R}}}
\newcommand{\C}{\mathbb{C}}
\newcommand{\eC}{\overline{\mathbb{C}}}
\newcommand{\disc}{\mathcal{D}}
\newcommand{\domain}{\mathcal{C}}
\newcommand{\strip}{\mathcal{S}}
\newcommand{\alp}{\hat{\alpha}}
\newcommand{\bet}{\hat{\beta}}
\newcommand{\dom}{\mathop{\rm dom}}
\begin{document}

\title[Numerical Range of Nonselfadjoint Rational Operator Functions]{Enclosure of the Numerical Range of a Class of Non-Selfadjoint Rational Operator Functions}

\author{Christian Engstr\"om}
\address{Department of Mathematics and Mathematical Statistics, Ume\aa \ University, SE-901 87 Ume\aa, Sweden}
\email{christian.engstrom@math.umu.se}
\author{Axel Torshage} 
\address{Department of Mathematics and Mathematical Statistics, Ume\aa \ University, SE-901 87 Ume\aa, Sweden}
\email{axel.torshage@math.umu.se}

\subjclass{47J10, 47A56, 47A12}

\keywords{Non-linear spectral problem, numerical range, psedospectra, resolvent estimate}

\date{}
\dedicatory{}

\begin{abstract}
In this paper we introduce an enclosure of the numerical range of a class of rational operator functions. In contrast to the numerical range the presented enclosure can be computed exactly in the infinite dimensional case as well as in the finite dimensional case. Moreover, the new enclosure is minimal given only the numerical ranges of the operator coefficients and many characteristics of the numerical range can be obtained by investigating the enclosure. We introduce a pseudonumerical range and study an enclosure of this set. This enclosure provides a computable upper bound of the norm of the resolvent. 
\end{abstract}

\maketitle
\section{Introduction}

The spectral properties of operator functions play an important role in mathematical analysis and in many  applications \cite{book,MR990647,MR1911850}. A classic enclosure of the spectrum is the closure of the numerical range \cite{MR971506}. Furthermore, the norm of the resolvent in a point $\omega$ is under some conditions bounded by a quantity that depend on the distance from $\omega$ to the numerical range \cite{MR1839848}.
Knowledge of the numerical range is also important in perturbation theory and in several other branches of operator theory \cite{MR1335452}. However, in most cases it is not possible to analytically determine the numerical range, not even in the finite dimensional case.

The geometric properties of the numerical range of matrix polynomials and rational matrix functions have been studied extensively \cite{MR1293915,MR1890990} and it is possible to numerically approximate the shape of the numerical range of matrix polynomials  \cite{MR1899890}.  
However, as matrix functions generated by a discretization of a differential equation are very large, the available algorithms are very time consuming. Furthermore, the methods developed for matrix problems are not applicable in the infinite dimensional case.

In this paper we introduce an enclosure of the numerical range of a class of rational operator functions whose values are linear operators in a Hilbert space $\Hs$. Importantly, this new enclosure is applicable in the infinite dimensional case as well as in the finite dimensional case. Let $A$ and $B$ be selfadjoint operators in $\Hs$, where $B$ is non-zero and bounded. We consider rational operator functions of the form
\begin{equation}\label{5orgeq}
	T (\omega):=A-\omega^2-\frac{\omega^2}{c-id\omega-\omega^2}B,\quad \dom T(\omega)=\dom A,\quad \omega\in \C\setminus\{\delta_+,\delta_-\},
\end{equation}
where $c$ and $d$ are non-negative real numbers, and $\delta_\pm$ are the poles of the coefficient of $B$. If $d=0$, then the operator function $\omega^2\mapsto T(\omega^2)$ is selfadjoint. This function has been studied extensively \cite{MR1132142,MR1354980,MR1818652,MR2068432}. In the case $B\geq 0$ the rational function is the first Schur complement of a selfadjoint block operator matrix \cite{MR1354980,book}. The non-selfadjoint case, $d>0$ (as well as the case $d=0$), has applications in electromagnetic field theory and cover important applications in optics \cite{Ziolkowski:03,MR2876569,MR3543766}. The presented enclosure of the numerical range is minimal, given only the numerical ranges of $A$ and of $B$, and we will show that this enclosure can be computed exactly. 

Resolvent estimates and pseudospectra can be used to investigate quantitative properties of non-normal operators and operator functions \cite{MR2359869,MR2155029}. In particular, estimates of the resolvent of bounded analytic operator functions were considered in \cite{MR1839848}. To derive a computable estimate for \eqref{5orgeq}, we introduce a pseudonumerical range and study an enclosure of this set. The derived enclosure of the pseudonumerical range provides a computable upper bound of the norm of the resolvent in the complement of the new enclosure of the numerical range. This enclosure of the pseudospectra can be used to understand how the resolvent behaves outside the enclosure of the numerical range. Moreover, the enclosure of the pseudospectra shows where the resolvent potentially is large and can in the finite dimensional case be combined with a numerical estimate of the pseudospectra \cite{MR2155029}. 

The the paper is organized as follows: In Section 2, we present the enclosure of the numerical range, the theoretical framework used in the paper, and conditions for determining if $\omega\in\C$ belong to the enclosure. Our main results are Theorem 2.9 and the algorithm in Proposition 2.19, which can be used to determine the enclosure of the numerical range.

In Section 3, properties of the boundary of the enclosure are analyzed in detail. Our main results are conditions for the existence of a strip in the complement of the numerical range given in Proposition 3.15 and in Proposition 3.29. Moreover, Proposition 3.16 and Proposition 3.31 provide important properties of the strip.

In Section 4, the $\epsilon$-pseudonumerical range is introduced and we determine an enclosure of this set. Our main results are Theorem 4.3, which shows how the boundary of the enclosure of the pseudospectra can be determined and Corollary \ref{4Tbound} gives an estimate of the resolvent of \eqref{5orgeq}.

Throughout this paper, we use the following notation. Let $\omega_{\Re}$ and $\omega_{\Im}$ denote the real and imaginary parts of $\omega$, respectively.  If $\mathcal{M}$ is a subset of an Euclidean space, then $\partial \mathcal{M}$ denotes the boundary of $\mathcal{M}$. Further, we denote by $\sqrt{\cdot}$  the principal square root.

\section{Enclosure of the numerical range}

In this section we derive an enclosure of the numerical range of the operator function \eqref{5orgeq}.
Define for a non-negative real number $c$ and a positive $d$ the constants
\begin{equation}\label{5delthe}
	\theta:=\sqrt{c-\frac{d^2}{4}},\qquad \delta_\pm:= \pm\theta-i\frac{d}{2}.
\end{equation}
Note that the operator-valued function \eqref{5orgeq} is defined for $\omega\in\domain :=\C\setminus\{\delta_+,\delta_-\}$, where
$\delta_+$ and $\delta_-$ are the poles of $T$ and the domain is independent of $\omega\in\domain$. For $u\in\dom T\setminus\{0\}$ we define the functionals $\alpha_u=(Au,u)/(u,u)$, $\beta_u=(Bu,u)/(u,u)$, and 
\begin{equation}\label{TCurver}
t_{(\alpha_u,\beta_u)}(\omega):=\frac{(T(\omega)u,u)}{(u,u)}=\alpha_u-\omega^2-\frac{\omega^2}{c-id \omega - \omega^2}\beta_u, \quad \omega\in\domain.
\end{equation}
The numerical range of $T$ is by definition
\[
	W(T):=\bigcup_{u\in \dom A\setminus\{0\}}\{\omega\in\domain:t_{(\alpha_u,\beta_u)}(\omega)=0\}.
\]
For convenience we will in some cases not explicitly write the dependence of $u$ in the functionals $\alpha_u$ and $\beta_u$. To simplify the investigation of $W(T),$ we define the polynomial
\begin{equation}\label{TCurve}
p_{(\alpha,\beta)}(\omega):=t_{(\alpha,\beta)}(\omega)(c-id \omega - \omega^2)=(\alpha-\omega^2)(c-id \omega - \omega^2)-\beta\omega^2.
\end{equation}
For fixed values on the constants $c$ and $d$ we order the roots 
\begin{equation}\label{2roots}
r_n:\R\times\R\rightarrow\C,\ n=1,\dots,4,
\end{equation}
of $p_{\alpha,\beta}$ such that they are continuous functions of $(\alpha,\beta)\in\R^2$.
The numerical range of $T$ can then be written as
\begin{equation}\label{eq:W2}
	W(T)=\bigcup_{n=1}^4\bigcup_{u\in \dom A\setminus\{0\}} r_n(\alpha_u,\beta_u).
\end{equation}
From \eqref{eq:W2} it is apparent that $W(T)$ consists of at most four components, i.e., $W(T)$ is a union of at most four maximal connected subsets of $W(T)$.
Let $\eR:=\R\cup\{\pm\infty\}$ denote the extended line of real numbers and denote by $\eC:=\C\cup\{\infty\}$ the Riemann sphere. We extend the functions $r_n$, $n=1,\dots,4$ 
to $r_n:\eR\times\R\rightarrow\eC$ such that the extension coincides with the limit values. For a given set $X\subset \eR\times \R$ let $W_X(T)\subset\eC$ denote the set
\begin{equation}\label{4nrenc2}
  W_X(T):=\bigcup_{n=1}^4r_n(X),\quad r_n(X):=\bigcup_{(\alpha,\beta)\in X} r_n(\alpha,\beta).
\end{equation}
The roots $r_n$  of \eqref{TCurve} are given by particular pairs $(\alpha_u,\beta_u)$ in
\begin{equation}
	\Omega:=\overline{W(A)}\times \overline{W(B)}\subset\eR\times\R.
\end{equation}
Hence, by taking $X=\Omega$ in \eqref{4nrenc2}, we get the enclosure
\begin{equation}\label{4nrenc} 
	W_\Omega(T)=\bigcup_{n=1}^4r_n(\Omega)\supset W(T).
\end{equation}
Moreover, from this definition it follows that $W_\Omega(T)$ is the minimal set that encloses $W(T)$ given only $\overline{W(A)}$ and $\overline{W(B)}$.
\begin{lem}\label{2leminf}
The polynomial $p_{(\alpha,\beta)}$ defined in \eqref{TCurve} has in the limits $\alpha\rightarrow\pm\infty$ the roots $\delta_+, \delta_-$, and $\infty$, where $\infty$ is a double root.
\end{lem}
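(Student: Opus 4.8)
The plan is to exploit the factorisation $p_{(\alpha,\beta)}(\omega)=(\alpha-\omega^2)q(\omega)-\beta\omega^2$ with $q(\omega):=c-id\omega-\omega^2$; by \eqref{5delthe}, $\delta_+$ and $\delta_-$ are precisely the two zeros of $q$. The key is a degree drop after rescaling: writing $p_{(\alpha,\beta)}(\omega)=\omega^4+id\omega^3-(\alpha+c+\beta)\omega^2-i\alpha d\omega+\alpha c$, the polynomial $\alpha^{-1}p_{(\alpha,\beta)}$ has its top two coefficients tending to $0$ and its remaining coefficients tending to those of $q$ as $\alpha\to\pm\infty$; equivalently $\alpha^{-1}p_{(\alpha,\beta)}\to q$ uniformly on compact subsets of $\C$. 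A degree-four family whose rescaled limit has degree two must have two roots escaping to $\infty$ and the other two converging to the zeros $\delta_+,\delta_-$ of the limit, and this is exactly the assertion of the lemma.

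I would make this rigorous with two Rouch\'e/clustering steps. First, the bounded roots: around each zero of $q$ put a small circle (one circle about the double zero $-id/2$ if $\theta=0$); on such a circle $|q|\ge m>0$ while $|\alpha^{-1}p_{(\alpha,\beta)}-q|=O(1/|\alpha|)$, so for $|\alpha|$ large Rouch\'e's theorem gives that $p_{(\alpha,\beta)}$ has inside each circle, counted with multiplicity, as many zeros as $q$; summing, $p_{(\alpha,\beta)}$ has exactly two zeros near $\{\delta_+,\delta_-\}$ and they converge to $\delta_+$ and $\delta_-$. Second, the escaping roots: if $p_{(\alpha,\beta)}(\omega)=0$ with $|\omega|\le R$, then $\alpha q(\omega)=\omega^2\bigl(q(\omega)+\beta\bigr)$ forces $|q(\omega)|\le R^2\bigl(\max_{|\zeta|\le R}|q(\zeta)|+|\beta|\bigr)/|\alpha|\to0$, so every zero of $p_{(\alpha,\beta)}$ in $\{|\omega|\le R\}$ clusters at the zero set of $q$; combined with the count from the first step, for $|\alpha|$ large exactly two zeros stay in $\{|\omega|\le R\}$, and hence the remaining two satisfy $|\omega|>R$. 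Since $R$ is arbitrary, these two tend to $\infty$ in $\eC$.

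It remains to phrase the conclusion in terms of the continuous roots $r_n$ of \eqref{2roots}, extended to coincide with the limit values. Fixing $\beta$ and restricting to a ray $\alpha\ge A$ with $A$ large, the two small disks about $\delta_\pm$ and the exterior $\{|\omega|>R\}$ are pairwise disjoint, so each continuous branch $\alpha\mapsto r_n(\alpha,\beta)$, being confined to their union, must lie in a single one of these open sets; shrinking the disks (and growing $R$) then shows each branch has a limit, and the counts above give that two of the four limits equal $\infty$ while the other two equal $\delta_+$ and $\delta_-$. The case $\alpha\to-\infty$ is identical. I expect the analytic core — the two Rouch\'e/clustering estimates — to be routine; the only points needing a little care are the book-keeping of the continuous branches and the degenerate configurations $c=d^2/4$ (where $\delta_+=\delta_-$) and $c=0$ (where $\omega=0$ is already a zero of $p_{(\alpha,\beta)}$ for all $\alpha$).
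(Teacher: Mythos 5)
Your proposal is correct and follows essentially the same route as the paper: rescale $p_{(\alpha,\beta)}$ by $\alpha^{-1}$, observe that the limit is the quadratic with roots $\delta_\pm$, and conclude by continuity of roots that two roots converge to $\delta_\pm$ while the other two escape to $\infty$. Your Rouch\'e and clustering estimates simply make explicit the ``roots depend continuously on coefficients'' step that the paper invokes without detail.
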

\begin{proof}
Define $p_2(\omega):=(\omega-\delta_+)(\omega-\delta_-)$, then the roots of $p_{\alpha,\beta}$ coincide with those of
\begin{equation}\label{eq:scaled}
\frac{p_{(\alpha,\beta)}(\omega)}{\alpha}= p_2(\omega)+\omega^2\frac{\beta- p_2(\omega)}{\alpha}.
\end{equation}
The poles  $\delta_+$ and $\delta_-$ are roots of $p_2$ and  \eqref{eq:scaled} is 
for large $|\alpha|$ a small perturbation of $p_2$. Then, since the roots of a polynomial depend continuously on its coefficients, $\delta_+$ and $\delta_-$ are roots in the limits $\alpha\rightarrow\pm\infty$. There can be no other finite roots in the limit since the perturbation of $p_2$ is arbitrary small.
\end{proof}

\begin{prop}\label{5boundspol}
The enclosure $W_\Omega(T)$ as defined in \eqref{4nrenc} has the following properties:
\begin{itemize}
\item[\rm i)]  $W_\Omega(T)$ is symmetric with respect to the imaginary axis.
\item[\rm ii)] $0\in W_\Omega(T)$ if and only if $0\in \overline{W(A)}$ or $c=0$.
\item[\rm iii)] $\delta_+\in W_\Omega(T)$ if and only if $W(A)$ is unbounded or $0\in \overline{W(B)}$ or $c=0$.
\item[\rm iv)] $\delta_-\in W_\Omega(T)$ if and only if $W(A)$ is unbounded or $0\in \overline{W(B)}$. 
\item[\rm v)] $\infty \in W_\Omega(T)$ if and only if $W(A)$ is unbounded. 
\end{itemize}
\end{prop}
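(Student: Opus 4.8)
The plan is to reduce all five claims to statements about the roots of the degree-four \emph{monic} polynomial $p_{(\alpha,\beta)}$ of \eqref{TCurve}, using Lemma~\ref{2leminf} to control the limit $\alpha\to\pm\infty$. For finite $(\alpha,\beta)$ the values $r_1(\alpha,\beta),\dots,r_4(\alpha,\beta)$ are precisely the four (finite) roots of $p_{(\alpha,\beta)}$; since $B$ is bounded and self-adjoint, $\overline{W(B)}$ is a compact interval, so the only unbounded directions in $\Omega=\overline{W(A)}\times\overline{W(B)}$ come from $\pm\infty\in\overline{W(A)}$, i.e.\ from $W(A)$ being unbounded, and in that case Lemma~\ref{2leminf} identifies the limit roots as $\delta_+,\delta_-,\infty,\infty$. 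Thus
\[
 W_\Omega(T)=\Bigl(\bigcup_{(\alpha,\beta)\in\Omega\cap\R^2}\{\omega\in\C:p_{(\alpha,\beta)}(\omega)=0\}\Bigr)\cup L,
\]
where $L=\{\delta_+,\delta_-,\infty\}$ if $W(A)$ is unbounded and $L=\varnothing$ otherwise. Everything else is bookkeeping plus a few evaluations of $p_{(\alpha,\beta)}$.

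For i) I would show $p_{(\alpha,\beta)}(-\overline\omega)=\overline{p_{(\alpha,\beta)}(\omega)}$: since $\alpha,\beta,c,d\in\R$ we have $(-\overline\omega)^2=\overline{\omega^2}$ and $c-id(-\overline\omega)-(-\overline\omega)^2=\overline{c-id\omega-\omega^2}$, so the whole polynomial conjugates. Hence the root set of each $p_{(\alpha,\beta)}$ is invariant under $\omega\mapsto-\overline\omega$, which is reflection in the imaginary axis and fixes $\infty$; a direct check gives $\{-\overline{\delta_+},-\overline{\delta_-}\}=\{\delta_+,\delta_-\}$, so $L$ is reflection-symmetric too, and therefore so is $W_\Omega(T)$.

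For ii)--iv) I would evaluate $p_{(\alpha,\beta)}(0)=\alpha c$ and, using $c-id\delta_\pm-\delta_\pm^2=0$, $p_{(\alpha,\beta)}(\delta_\pm)=-\beta\delta_\pm^2$; moreover, with the principal square root in \eqref{5delthe} and $d>0$, one checks $\delta_+=0\iff c=0$ while $\delta_-\neq0$ always. Then ii) follows since $0\in W_\Omega(T)$ iff some $(\alpha,\beta)\in\Omega\cap\R^2$ has $\alpha c=0$ (note $0\in L$ only if $c=0$, so $L$ adds nothing), i.e.\ iff $0\in\overline{W(A)}$ or $c=0$. For iii)/iv): if $0\in\overline{W(B)}$, then $\beta=0$ is admissible and $p_{(\alpha,0)}(\delta_\pm)=0$; if $c=0$, then $\delta_+=0$ and $p_{(\alpha,\beta)}(\delta_+)=\alpha c=0$ for every $(\alpha,\beta)$; if $W(A)$ is unbounded, then $\delta_\pm\in L\subset W_\Omega(T)$; each of these puts $\delta_\pm$ in $W_\Omega(T)$. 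Conversely, if $\delta_\pm\in W_\Omega(T)$ with $W(A)$ bounded, then $\delta_\pm$ is a root of some $p_{(\alpha,\beta)}$ with $(\alpha,\beta)\in\Omega\cap\R^2$, so $\beta\delta_\pm^2=0$, forcing $\beta=0$ (so $0\in\overline{W(B)}$) or $\delta_\pm=0$ (so $c=0$, possible only for $\delta_+$). This establishes ii)--iv), the asymmetry between iii) and iv) coming solely from $\delta_+=0\iff c=0$.

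For v): if $W(A)$ is bounded, then $\Omega$ is compact, the coefficients of the monic $p_{(\alpha,\beta)}$ are bounded on $\Omega$, hence so are its roots (Cauchy's bound), so $W_\Omega(T)\subset\C$ is bounded and $\infty\notin W_\Omega(T)$; if $W(A)$ is unbounded, then $\infty\in L\subset W_\Omega(T)$ by Lemma~\ref{2leminf}. The step I expect to require the most care is the clean handling of $L$: justifying that $\infty$ and $\delta_\pm$ enter $W_\Omega(T)$ only through $\alpha\to\pm\infty$ (the variable $\beta$ being confined to the compact set $\overline{W(B)}$), matching the limit roots from Lemma~\ref{2leminf} to the continuous branches $r_n$, and tracking the degenerate case $\delta_+=0$ when $c=0$.
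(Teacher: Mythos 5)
Your proposal is correct and follows essentially the same route as the paper: evaluate $p_{(\alpha,\beta)}$ at $0$ and $\delta_\pm$, use Lemma~\ref{2leminf} for the unbounded-$W(A)$ contributions, and get i) from the conjugation symmetry $p_{(\alpha,\beta)}(-\overline\omega)=\overline{p_{(\alpha,\beta)}(\omega)}$ (the paper phrases this as $p_{(\alpha,\beta)}(i\omega)$ having real coefficients). Your sign $p_{(\alpha,\beta)}(\delta_\pm)=-\beta\delta_\pm^2$ is in fact the correct one (the paper has a harmless sign typo), and your explicit bookkeeping of the limit set $L$ just makes precise what the paper leaves implicit.
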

\begin{proof}
\rm i) 
The polynomial $p_{(\alpha,\beta)}(i\omega)$ has real coefficients. Hence, the symmetry follows from the complex conjugate root theorem.
 \rm ii) Follows directly from \eqref{TCurve} and \eqref{4nrenc}. \rm iii) $c=0$ implies $\delta_+=0$ and $\delta_+\in W_\Omega(T)$  then follows from \rm ii). The number $p_{(\alpha,\beta)}(\delta_+)=\beta\delta_+^2$ is zero for $\beta=0$, which implies $\delta_+\in W_\Omega(T)$ if $0\in W(B)$. If $W(A)$ is unbounded the statement follows directly from Lemma \ref{2leminf}.
Suppose none of the above holds, then $p_{(\alpha,\beta)}(\delta_+)=\beta\delta_+^2\neq0$, and since $\overline{W(A)}$ is bounded, $p_{(\alpha,\beta)}(\omega)\neq0$ in a neighborhood of $\delta_+$. The proof of \rm iv) is similar to \rm iii) with the difference $\delta_-\neq 0$ for $c=0$.  \rm v) is immediate from Lemma \ref{2leminf}.
\end{proof}

\begin{cor}\label{5polcor}
Let  $W_\Omega(T)$ denote the enclosure \eqref{4nrenc} and take $\omega\in\{0,\delta_+,\delta_-,\infty\}$.
Then $\omega\in W_\Omega(T)$ if and only if $r_n(\alpha,\beta)=\omega$ for some $n\in\{1,2,3,4\}$ and $(\alpha,\beta)\in \partial\Omega$.
\end{cor}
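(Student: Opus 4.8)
The reverse implication is immediate: since $\partial\Omega\subseteq\Omega$, an equality $r_n(\alpha,\beta)=\omega$ with $(\alpha,\beta)\in\partial\Omega$ already yields $\omega\in r_n(\Omega)\subseteq W_\Omega(T)$. The plan for the forward implication is, for each $\omega\in\{0,\delta_+,\delta_-,\infty\}$ lying in $W_\Omega(T)$, to exhibit an explicit pair $(\alpha,\beta)\in\partial\Omega$ at which the polynomial $p_{(\alpha,\beta)}$ of \eqref{TCurve} vanishes at $\omega$. Since for finite $\alpha$ the four roots of the degree-four polynomial $p_{(\alpha,\beta)}$ are precisely $r_1(\alpha,\beta),\dots,r_4(\alpha,\beta)$, and for $\alpha=\pm\infty$ the root set is the one described in Lemma~\ref{2leminf}, one of the $r_n(\alpha,\beta)$ is then equal to $\omega$.

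I would first record the ingredients. From \eqref{TCurve} together with $c-id\delta_\pm-\delta_\pm^2=0$ one gets the evaluations $p_{(\alpha,\beta)}(0)=c\alpha$ and $p_{(\alpha,\beta)}(\delta_\pm)=-\delta_\pm^{2}\beta$ (these already appear in the proof of Proposition~\ref{5boundspol}), together with the elementary facts, valid because $d>0$, that $\delta_-\neq0$ always and $\delta_+=0$ precisely when $c=0$. Geometrically, $\overline{W(A)}$ is a closed interval in $\eR$ and $\overline{W(B)}$ a nonempty bounded closed interval in $\R$, so $\Omega$ is a closed rectangle and $(\alpha,\beta)\in\partial\Omega$ exactly when $\alpha\in\partial\overline{W(A)}$ or $\beta\in\partial\overline{W(B)}$; in particular $\partial\overline{W(B)}$ is always nonempty, and $\partial\overline{W(A)}$ is nonempty whenever $W(A)$ is bounded.

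Then I would carry out the case analysis, using Proposition~\ref{5boundspol} to see which hypothesis is in force. If $\omega=\infty$, or more generally whenever $W(A)$ is unbounded, I would fix an infinite endpoint $\alpha_\infty\in\{+\infty,-\infty\}\cap\overline{W(A)}$ and any endpoint $\beta_0$ of $\overline{W(B)}$; then $(\alpha_\infty,\beta_0)\in\partial\Omega$, and by Lemma~\ref{2leminf} the values $r_n(\alpha_\infty,\beta_0)$ include $\delta_+$, $\delta_-$ and $\infty$. This disposes of $\omega=\infty$ and of the subcases of $\omega\in\{\delta_+,\delta_-\}$ with $W(A)$ unbounded. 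For $\omega=0$, Proposition~\ref{5boundspol}(ii) leaves two alternatives: if $c=0$ then $p_{(\alpha,\beta)}(0)\equiv0$, so any point of the nonempty set $\partial\Omega$ works; if instead $0\in\overline{W(A)}$, then $(0,\beta_0)\in\partial\Omega$ for any endpoint $\beta_0$ of $\overline{W(B)}$, and $p_{(0,\beta_0)}(0)=0$. For $\omega=\delta_+$ with $W(A)$ bounded, Proposition~\ref{5boundspol}(iii) forces $0\in\overline{W(B)}$ or $c=0$; taking an endpoint $\alpha_0$ of the finite interval $\overline{W(A)}$, in the first case $(\alpha_0,0)\in\partial\Omega$ with $p_{(\alpha_0,0)}(\delta_+)=0$, and in the second $\delta_+=0$, so any $(\alpha_0,\beta_0)$ with $\beta_0\in\partial\overline{W(B)}$ satisfies $p_{(\alpha_0,\beta_0)}(\delta_+)=p_{(\alpha_0,\beta_0)}(0)=c\alpha_0=0$. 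The case $\omega=\delta_-$ with $W(A)$ bounded is handled identically via Proposition~\ref{5boundspol}(iv): there $0\in\overline{W(B)}$ is forced and $(\alpha_0,0)$ works.

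The one point requiring care is the topology of $\Omega$ in the compactified factor $\eR$: when $\overline{W(A)}$ is unbounded, its points at infinity are interior to $\overline{W(A)}$, so a pair $(\pm\infty,\beta)$ lies on $\partial\Omega$ only if $\beta\in\partial\overline{W(B)}$. This is precisely why, in every unbounded case, the $\beta$-coordinate must be pinned to an endpoint of $\overline{W(B)}$ — which is always available since $\partial\overline{W(B)}\neq\emptyset$. The remainder is routine bookkeeping over the (mutually overlapping) alternatives in Proposition~\ref{5boundspol}.
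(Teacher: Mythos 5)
Your proof is correct and takes the route the paper intends: the paper's own proof is just the line ``Similar to Proposition \ref{5boundspol}'', and you have carried out exactly that programme by re-running the case analysis of Proposition \ref{5boundspol} while checking in each case that a witnessing pair can be pinned to $\partial\Omega$ (including the genuinely needed observation that an infinite endpoint of $\overline{W(A)}$ is interior in $\eR$, so the $\beta$-coordinate must then be taken at an endpoint of $\overline{W(B)}$, which always exists since $B$ is bounded). I see no gaps.
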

\begin{proof}
Similar to Proposition \ref{5boundspol}.
\end{proof}
The following propositions provide simple tests for $\omega\in W_\Omega(T)$.
\begin{prop}\label{5boundsi}
Let  $W_\Omega(T)$ denote the enclosure \eqref{4nrenc} and assume that $\omega$ is a point on the imaginary axis with $\omega =i \omega_\Im \in  i\R\setminus\{0,\delta_+,\delta_-\}$. Then $\omega\in W_\Omega(T)$ if and only if at least one of following conditions hold: 
\begin{equation}
\begin{array}{c}
-\omega_{\Im}^2-\dfrac{ \omega_{\Im}^2}{c+d\omega_{\Im}+\omega_{\Im}^2} \inf  W(B)\in \overline{W(A)},\\
-\omega_{\Im}^2-\dfrac{ \omega_{\Im}^2}{c+d\omega_{\Im}+\omega_{\Im}^2} \sup W(B)\in \overline{W(A)},\\
-\dfrac{c+d\omega_{\Im}+\omega_{\Im}^2}{\omega_\Im^2}(\omega_{\Im}^2+\inf W(A))\in \overline{W(B)}.
\end{array}
\end{equation}
\end{prop}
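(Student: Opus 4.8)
The plan is to turn the membership question $i\omega_\Im\in W_\Omega(T)$ into a one‑dimensional intersection problem in the parameter plane. First I would substitute $\omega=i\omega_\Im$ into \eqref{TCurver}, using $\omega^2=-\omega_\Im^2$ and $c-id\omega-\omega^2=c+d\omega_\Im+\omega_\Im^2$. Since $c-id\omega-\omega^2=-(\omega-\delta_+)(\omega-\delta_-)$ and $\omega\neq\delta_\pm$, this last factor is a nonzero real number, so $t_{(\alpha,\beta)}(i\omega_\Im)=0$ is equivalent to $p_{(\alpha,\beta)}(i\omega_\Im)=0$, and by \eqref{TCurve} the latter reads
\[
(\alpha+\omega_\Im^2)(c+d\omega_\Im+\omega_\Im^2)+\beta\,\omega_\Im^2=0 .
\]
Because the polynomial $p_{(\alpha,\beta)}$ has degree four with leading coefficient $1$, the functions $r_1,\dots,r_4$ enumerate all of its roots; since $i\omega_\Im$ is a finite point, \eqref{4nrenc} then gives that $i\omega_\Im\in W_\Omega(T)$ if and only if the above single real affine equation has a solution $(\alpha,\beta)$ in the rectangle $\Omega=\overline{W(A)}\times\overline{W(B)}$, i.e.\ the line $L$ it defines meets $\Omega$.

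The second step uses that $\omega_\Im\neq0$ and $c+d\omega_\Im+\omega_\Im^2\neq0$, so $L$ has finite nonzero slope and can be written simultaneously as a graph $\alpha=\psi(\beta):=-\omega_\Im^2-\tfrac{\omega_\Im^2}{c+d\omega_\Im+\omega_\Im^2}\beta$ and as a graph $\beta=\phi(\alpha):=-\tfrac{c+d\omega_\Im+\omega_\Im^2}{\omega_\Im^2}(\omega_\Im^2+\alpha)$, where $\phi=\psi^{-1}$ is a strictly monotone affine bijection of $\eR$. Thus $L\cap\Omega\neq\emptyset$ if and only if $J\cap\overline{W(B)}\neq\emptyset$, where $J:=\phi\bigl(\overline{W(A)}\bigr)$ is a (possibly unbounded) interval in $\eR$ with endpoints $\phi(\inf W(A))$ and $\phi(\sup W(A))$, taken as limits in the unbounded cases. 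As $\phi$ is a bijection, for finite $b$ one has $\psi(b)\in\overline{W(A)}\iff b\in J$; applying this to $b=\inf W(B)$ and $b=\sup W(B)$ identifies the left‑hand sides of the first two listed conditions with ``$\inf W(B)\in J$'' and ``$\sup W(B)\in J$'', while $\phi(\inf W(A))$ is exactly the left‑hand side of the third.

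It remains to prove the elementary planar fact: for an interval $J\subset\eR$ with endpoints $\phi(\inf W(A)),\phi(\sup W(A))$ and the bounded closed interval $\overline{W(B)}$, one has $J\cap\overline{W(B)}\neq\emptyset$ if and only if $\inf W(B)\in J$, or $\sup W(B)\in J$, or $\phi(\inf W(A))\in\overline{W(B)}$. The implication ``$\Leftarrow$'' is immediate (in the third case $\inf W(A)$ is necessarily finite, so $\phi(\inf W(A))$ is an endpoint of $J$ and lies in $J\cap\overline{W(B)}$). For ``$\Rightarrow$'', if $J$ meets $\overline{W(B)}$ but contains neither endpoint of $\overline{W(B)}$, then connectedness of $J$ forces $J\subseteq\overline{W(B)}$; in particular $J$ is bounded, so $\inf W(A)$ is finite and $\phi(\inf W(A))\in J\subseteq\overline{W(B)}$, which is the third condition. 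The one point that needs care, and which I expect to be the main obstacle, is precisely this treatment of a possibly unbounded $\overline{W(A)}$: one has to check that it suffices to test the single endpoint $\inf W(A)$ in the third condition---equivalently that the ``fourth condition'' one would naively form from $\sup W(A)$ is redundant---and the sub‑interval argument just given is what supplies this.
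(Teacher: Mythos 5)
Your proof is correct and takes essentially the same route as the paper's: reduce membership of $i\omega_\Im$ to the question of whether the affine line $\alpha=-\omega_\Im^2-\frac{\omega_\Im^2}{c+d\omega_\Im+\omega_\Im^2}\beta$ meets the rectangle $\Omega$, and then show that testing three of the four sides suffices. The paper dispatches this last step with a one-line remark that the line meets $\partial\Omega$ in two points on distinct segments; your connectedness argument for the interval $J=\phi(\overline{W(A)})$, together with the explicit treatment of unbounded $\overline{W(A)}$, simply supplies the details the paper leaves implicit.
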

\begin{proof}
By definition  $i\omega_{\Im}\in W_\Omega(T)\cap i\R\setminus\{0,\delta_+,\delta_-\}$ if and only if there exists a $(\alpha,\beta)\in \Omega$ such that
\begin{equation}\label{eq:linearb}
	\alpha=-\omega_{\Im}^2-\frac{ \omega_{\Im}^2}{c+d\omega_{\Im}+\omega_{\Im}^2}\beta.
\end{equation}
Thus $\alpha$ is a non-constant real linear function in $\beta$. Since $(\alpha,\beta)\in\Omega$ and $\beta$ belongs to a bounded set, $r_n(\alpha',\beta')=i\omega_{\Im}$ for some pair $(\alpha',\beta')\in\partial\Omega$. Equation \eqref{eq:linearb} has two solutions unless the pair is a corner of $\Omega$. Hence it is enough to investigate three of the line segments on $\partial \Omega$ to determine if $i\omega_{\Im}\in W_\Omega(T)$. The converse holds trivially.
\end{proof}

Let $\disc$ denote the open disk
\begin{equation}\label{pmcircle}
\disc:=\left\{\omega:\left|\omega+i\frac{c}{d}\right|<\frac{c}{d}\right\} \subset \C.
\end{equation}
\begin{lem}\label{2lemdisc}
Let  $W_\Omega(T)$ denote the enclosure \eqref{4nrenc} and denote by $\disc$ the disk \eqref{pmcircle}, then it holds that
$\partial\disc\cap W_\Omega(T)\subset \{0,\delta_+,\delta_-,-2ic/d\}$.
\end{lem}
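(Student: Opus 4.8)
The plan is to assume $\omega\in\partial\disc\cap W_\Omega(T)$ and to read off, from the equation $p_{(\alpha,\beta)}(\omega)=0$ with $(\alpha,\beta)$ \emph{real} together with the defining equation of the circle $\partial\disc$, that $\omega$ must be one of the four listed points. (If $c=0$ there is nothing to prove, since then $\disc=\varnothing$, so I take $c>0$.)

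First I would dispose of the point at infinity and of unbounded $\alpha$: since $\partial\disc$ is a bounded circle, $\infty\notin\partial\disc$, and by Lemma~\ref{2leminf} every value $r_n(\pm\infty,\beta)$ lies in $\{\delta_+,\delta_-,\infty\}$. Hence, if $\omega\notin\{\delta_+,\delta_-\}$, then necessarily $\omega=r_n(\alpha,\beta)$ for some $n$ and some \emph{finite} pair $(\alpha,\beta)\in\overline{W(A)}\times\overline{W(B)}\subset\R^2$; in particular $\alpha,\beta\in\R$ and $p_{(\alpha,\beta)}(\omega)=0$. If moreover $\omega=0$ we are done, so from now on I assume $\omega\notin\{0,\delta_+,\delta_-\}$, so that $\omega^2\ne 0$.

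Dividing $p_{(\alpha,\beta)}(\omega)=0$ by $\omega^2$ then gives
\[
(\alpha-\omega^2)\,\frac{c-id\omega-\omega^2}{\omega^2}=\beta .
\]
Taking imaginary parts and using $\alpha,\beta\in\R$ yields $\alpha\,\Im\!\big((c-id\omega-\omega^2)/\omega^2\big)=\Im(c-id\omega-\omega^2)=-\omega_\Re(d+2\omega_\Im)$. The key computation is that the coefficient on the left vanishes on $\partial\disc$: clearing the denominator $\omega^2$ against its conjugate one finds $\Im\!\big((c-id\omega-\omega^2)/\omega^2\big)=-\omega_\Re\,(2c\omega_\Im+d|\omega|^2)/|\omega|^4$, while the equation of the circle $\partial\disc$ is precisely $|\omega|^2+\tfrac{2c}{d}\omega_\Im=0$, i.e. $2c\omega_\Im+d|\omega|^2=0$. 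Hence the left-hand side is $0$, and we are left with $\omega_\Re(d+2\omega_\Im)=0$.

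Finally I would split into the two resulting cases. If $\omega_\Re=0$, then $\omega\in i\R\cap\partial\disc$; writing $\omega=i\omega_\Im$ in $|\omega|^2+\tfrac{2c}{d}\omega_\Im=0$ gives $\omega_\Im(\omega_\Im+2c/d)=0$, so $\omega\in\{0,-2ic/d\}$. If instead $\omega_\Im=-d/2$, the circle equation forces $|\omega|^2=c$, hence $\omega_\Re^2=c-d^2/4=\theta^2$ and $\omega=\pm\theta-\tfrac{id}{2}=\delta_\pm$. In every case $\omega\in\{0,\delta_+,\delta_-,-2ic/d\}$, which is the assertion. The only nonroutine ingredient is the vanishing of $\Im\!\big((c-id\omega-\omega^2)/\omega^2\big)$ on $\partial\disc$ — which is what singles out this particular circle — and I expect that identity to be the crux; everything else is elementary bookkeeping.
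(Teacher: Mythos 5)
Your proof is correct and takes essentially the same route as the paper: the crux in both is that the coefficient $\omega^2/(c-id\omega-\omega^2)$ is real precisely on $\partial\disc$, i.e.\ where $d|\omega|^2+2c\omega_\Im=0$. The paper applies this to $\Im t_{(\alpha,\beta)}(\omega)=-2\omega_\Re\omega_\Im$ directly, whereas you take imaginary parts of $p_{(\alpha,\beta)}(\omega)/\omega^2=0$ and arrive at $\omega_\Re(d+2\omega_\Im)=0$; this only changes which of the four listed points show up as formal solutions (you recover $\delta_\pm$ in the case $\omega_\Im=-d/2$ rather than excluding them at the outset), and both versions yield the stated inclusion.
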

\begin{proof}
Assume that $\omega\in \partial\disc\setminus\{\delta_+,\delta_-\}\cap W_\Omega(T)$, then the imaginary part of $t_{(\alpha,\beta)}(\omega)$ in \eqref{TCurver} is $-2\omega_\Re\omega_\Im$, which is zero only for $\omega\in\{0,-2ic/d\}$.
\end{proof}

\begin{prop}\label{5bounds}
Let  $W_\Omega(T)$ and $\disc$ be the enclosure \eqref{4nrenc} and the disk \eqref{pmcircle}, respectively. Take $\omega\in\eC\setminus (i\R \cup \{\delta_+,\delta_-,\infty\})$. Then $\omega\in W_\Omega(T)$ if and only if $\omega\notin\partial\disc$ and
    \begin{equation}\label{4partbl}
    \bet(\omega):=\frac{-2\omega_{\Im}\left((-\omega_{\Re}^2+\omega_{\Im}^2+d\omega_{\Im}+c)^2+\omega_{\Re}^2(2\omega_{\Im}+d)^2\right)}{d|\omega|^2+2c\omega_{\Im}}\in \overline{W(B)},
    \end{equation}
and    
    \begin{equation}\label{4partal}
   	\alp(\omega):=\frac{(2\omega_\Im+d)|\omega|^4}{d|\omega|^2+2c\omega_{\Im}}\in \overline{W(A)}.
    \end{equation}
\end{prop}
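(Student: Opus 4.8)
The plan is to characterise membership of $\omega$ in $W_\Omega(T)$ through the scalar equation $t_{(\alpha,\beta)}(\omega)=0$ and then to solve that equation explicitly for the pair $(\alpha,\beta)$. First I would observe that, since $\omega\notin\{\delta_+,\delta_-\}$, the factor $c-id\omega-\omega^2$ is non-zero, so by \eqref{TCurve} the conditions $p_{(\alpha,\beta)}(\omega)=0$ and $t_{(\alpha,\beta)}(\omega)=0$ are equivalent; moreover a pair with $\alpha=\pm\infty$ can by Lemma \ref{2leminf} only produce the excluded values $\delta_+,\delta_-,\infty$, and $\overline{W(B)}$ is bounded, so the only infinite boundary data needing separate attention is $\alpha=\pm\infty$. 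Hence $\omega\in W_\Omega(T)$ if and only if there is a finite pair $(\alpha,\beta)\in\Omega=\overline{W(A)}\times\overline{W(B)}$ with
\[
\alpha-\omega^2-\frac{\omega^2}{c-id\omega-\omega^2}\,\beta=0 .
\]

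Next I would split this complex equation into real and imaginary parts, regarding it as a real linear system for the two real unknowns $\alpha,\beta$. Since $\alpha,\beta\in\R$, the imaginary part does not involve $\alpha$ and reads $2\omega_\Re\omega_\Im=-\beta\,\operatorname{Im}\!\bigl(\omega^2/(c-id\omega-\omega^2)\bigr)$. A short computation gives
\[
\operatorname{Im}\!\left(\frac{\omega^2}{c-id\omega-\omega^2}\right)=\frac{\omega_\Re\bigl(d|\omega|^2+2c\omega_\Im\bigr)}{\bigl|c-id\omega-\omega^2\bigr|^2},
\]
and, since $\omega\notin i\R$ forces $\omega_\Re\neq0$, this coefficient vanishes exactly when $d|\omega|^2+2c\omega_\Im=0$, i.e.\ when $\omega\in\partial\disc$ (rewrite $|\omega+ic/d|^2=(c/d)^2$). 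If $\omega\in\partial\disc$, then Lemma \ref{2lemdisc} gives $\partial\disc\cap W_\Omega(T)\subset\{0,\delta_+,\delta_-,-2ic/d\}\subset i\R\cup\{\delta_+,\delta_-\}$, so $\omega\notin W_\Omega(T)$; this is consistent with the claim, whose right-hand side already demands $\omega\notin\partial\disc$. If $\omega\notin\partial\disc$, the imaginary-part equation determines $\beta$ uniquely, and one checks that the resulting value is precisely $\bet(\omega)$ of \eqref{4partbl}, using $|c-id\omega-\omega^2|^2=(-\omega_\Re^2+\omega_\Im^2+d\omega_\Im+c)^2+\omega_\Re^2(2\omega_\Im+d)^2$. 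Substituting this $\beta$ into the real-part equation $\alpha=\operatorname{Re}(\omega^2)+\beta\,\operatorname{Re}\!\bigl(\omega^2/(c-id\omega-\omega^2)\bigr)$ then determines $\alpha$ uniquely, and the expression simplifies to $\alp(\omega)$ of \eqref{4partal}. Thus $t_{(\alpha,\beta)}(\omega)=0$ has the unique solution $(\alpha,\beta)=(\alp(\omega),\bet(\omega))$, so $\omega\in W_\Omega(T)$ precisely when this pair lies in $\Omega$, i.e.\ precisely when \eqref{4partbl} and \eqref{4partal} both hold.

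The only conceptual ingredient is recognising the singular locus of the linear system as $\partial\disc$ and removing that case via Lemma \ref{2lemdisc}; the remainder is bookkeeping — computing $\operatorname{Im}(\omega^2/(c-id\omega-\omega^2))$, identifying $|c-id\omega-\omega^2|^2$ with the bracket in \eqref{4partbl}, and verifying that the back-substitution collapses to $(2\omega_\Im+d)|\omega|^4/(d|\omega|^2+2c\omega_\Im)$. I expect the back-substitution for $\alp(\omega)$ to be the most error-prone step, so it is worth computing $\operatorname{Re}(\omega^2/(c-id\omega-\omega^2))$ symbolically and combining it with $\operatorname{Re}(\omega^2)=\omega_\Re^2-\omega_\Im^2$ over the common denominator $|c-id\omega-\omega^2|^2$ before cancelling against the $|c-id\omega-\omega^2|^2$ coming from $\bet(\omega)$.
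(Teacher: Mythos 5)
Your proposal is correct and follows essentially the same route as the paper: split $t_{(\alpha,\beta)}(\omega)=0$ into real and imaginary parts to get a linear system for $(\alpha,\beta)$, identify the singular locus $d|\omega|^2+2c\omega_\Im=0$ with $\partial\disc$, dispose of that case via Lemma \ref{2lemdisc}, and solve uniquely for $(\alp(\omega),\bet(\omega))$. Your explicit treatment of the boundary data $\alpha=\pm\infty$ via Lemma \ref{2leminf} and the verification of the converse direction are slightly more detailed than the paper's proof, but the argument is the same.
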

\begin{proof}
Assume that $\omega\in W_\Omega(T)$ for some  $\omega\notin i\R \cup \{\delta_+,\delta_-\}$, then  the real and imaginary parts of the equality $t_{(\alpha,\beta)}(\omega)=0$ give the following linear system of equations:
    \begin{equation}\label{4newb}
   	-2\omega_{\Im}\left((-\omega_{\Re}^2+\omega_{\Im}^2+d\omega_{\Im}+c)^2+\omega_{\Re}^2(2\omega_{\Im}+d)^2\right)= (d|\omega|^2+2c\omega_{\Im})\beta,
    \end{equation}
    \begin{equation}\label{4newa}
   	(2\omega_\Im+d)|\omega|^4 = (d|\omega|^2+2c\omega_{\Im})\alpha.
	    \end{equation}
The expression $d|\omega|^2+2c\omega_{\Im}$  is only zero for $\omega\in \partial\disc $. Hence  \eqref{4partbl} and  \eqref{4partal} follows from \eqref{4newb}, \eqref{4newa}, and Lemma \ref{2lemdisc}.
\end{proof}
Define the sets 
\begin{equation}\label{2pPi}
\begin{array}{l}
\Pi_{\beta}:=\{\omega\in\C\setminus \overline{\disc} : \omega_\Im\leq 0\}, \\
\Pi_{\alpha}:=\{ \omega\in\C\setminus \overline{\disc} : \omega_\Im\geq -d/2\}\cup\{\omega\in\disc : \omega_\Im\leq -d/2\} .
\end{array}
\end{equation}
Corollary \ref{4circor} presents several general properties of the enclosure $W_\Omega(T)$. In particular  \rm iii) -- \rm  iv) show that $\Pi_{\beta}$ and  $\Pi_{\alpha}$ determine the the sign of $\bet(\omega)$ and of $\alp(\omega)$, where $\bet$ and $\alp$ are defined in Proposition \ref{5bounds}.

\begin{cor}\label{4circor}
Let  $W_\Omega(T)$ denote the enclosure \eqref{4nrenc} and denote by $\disc$ the disk \eqref{pmcircle}. Let $\alp$ and $\bet$ be the functions
defined in \eqref{4partbl} and in \eqref{4partal}, respectively. Let $\Pi_{\beta}$ and $\Pi_{\alpha}$ denote the sets \eqref{2pPi}. Then the following properties hold:
\begin{itemize}
\item[\rm i)]   If $\omega\notin i\R $, then $\omega\notin W_\Omega(T)$ provided that $|\omega_\Im|$ is large enough.
\item[\rm ii)]   For sequences $\{\omega^n\}\in W_\Omega(T)$, with $|\omega^n_\Re|\rightarrow\infty$, $n\rightarrow \infty$, it holds that\\
$\bet(\omega^n)\sim -2\omega^n_\Im\left (\omega^n_\Re\right )^2/d$ and $ \omega^n_\Im=O(({\omega^n_\Re})^{-2})$.
\item[\rm iii)] If $\omega\notin i\R\cup \partial\disc$ then $\bet(\omega)\geq0$ if and only if $\omega\in\Pi_{\beta}$.
\item[\rm iv)]  If $\omega\notin i\R\cup \partial\disc$ then $\alp(\omega)\geq0$ if and only if $\omega\in\Pi_{\alpha}$.
\end{itemize}
\end{cor}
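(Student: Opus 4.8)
The plan is to work entirely from the two explicit rational functions in Proposition~\ref{5bounds} and to exploit that their common denominator $d|\omega|^2+2c\omega_\Im$ vanishes exactly on $\partial\disc$: rearranging $|\omega+ic/d|^2=(c/d)^2$ in \eqref{pmcircle} gives $d|\omega|^2+2c\omega_\Im=0$. Hence for $\omega\notin i\R\cup\partial\disc$ one has $d|\omega|^2+2c\omega_\Im>0$ when $\omega\notin\overline{\disc}$ and $d|\omega|^2+2c\omega_\Im<0$ when $\omega\in\disc$; in the latter case $2c\omega_\Im<-d|\omega|^2\le 0$, so $\disc\subset\{\omega_\Im<0\}$. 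One also checks that $\delta_+,\delta_-$ lie on $i\R$ when $c\le d^2/4$ and on $\partial\disc$ when $c>d^2/4$ (since then $|\delta_\pm|^2=c$ and $d|\delta_\pm|^2+2c(\delta_\pm)_\Im=0$), so they are excluded by the hypothesis $\omega\notin i\R\cup\partial\disc$, and Proposition~\ref{5bounds} applies to every $\omega$ under consideration.

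For (iii) and (iv) I would first note that the numerator of $\bet$ in \eqref{4partbl} equals $-2\omega_\Im\,Q(\omega)$, where $Q(\omega):=(-\omega_\Re^2+\omega_\Im^2+d\omega_\Im+c)^2+\omega_\Re^2(2\omega_\Im+d)^2=|c-id\omega-\omega^2|^2=|\omega-\delta_+|^2|\omega-\delta_-|^2$, so $Q(\omega)>0$ for every $\omega\notin i\R\cup\partial\disc$; and the numerator of $\alp$ in \eqref{4partal} equals $(2\omega_\Im+d)|\omega|^4$ with $|\omega|^4>0$ for $\omega\notin i\R$. Therefore the sign of $\bet(\omega)$ is that of $-\omega_\Im/(d|\omega|^2+2c\omega_\Im)$, with value $0$ iff $\omega_\Im=0$, and the sign of $\alp(\omega)$ is that of $(2\omega_\Im+d)/(d|\omega|^2+2c\omega_\Im)$, with value $0$ iff $\omega_\Im=-d/2$. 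Splitting into the cases $\omega\notin\overline{\disc}$ (positive denominator) and $\omega\in\disc$ (negative denominator) then yields, for (iii): $\bet(\omega)\ge 0$ iff either $\omega\notin\overline{\disc}$ and $\omega_\Im\le 0$, or $\omega\in\disc$ and $\omega_\Im\ge 0$; the second alternative is empty because $\disc\subset\{\omega_\Im<0\}$, so this is exactly $\omega\in\Pi_{\beta}$ as in \eqref{2pPi}. For (iv): $\alp(\omega)\ge 0$ iff either $\omega\notin\overline{\disc}$ and $\omega_\Im\ge -d/2$, or $\omega\in\disc$ and $\omega_\Im\le -d/2$, which are precisely the two pieces of $\Pi_{\alpha}$ in \eqref{2pPi}.

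For (i) and (ii) the idea is that $\overline{W(B)}$ is bounded, since $B$ is bounded, whereas $\bet$ grows without bound off the real axis. Writing $x=\omega_\Re$, $y=\omega_\Im$ with $x\neq0$: once $|y|$ exceeds a constant depending only on $c,d$ one has $\omega\notin\overline{\disc}$, hence $d|\omega|^2+2c\omega_\Im>0$ and $|\bet(\omega)|=2|y|\,Q(\omega)/(d|\omega|^2+2c\omega_\Im)$, where $Q(\omega)\ge\max\{x^2(2y+d)^2,\ (-x^2+y^2+dy+c)^2\}$ and $d|\omega|^2+2c\omega_\Im=O(x^2+y^2)$. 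Estimating this quotient in the three regimes $x^2\le\tfrac12 y^2$, $\tfrac12 y^2<x^2\le y^2$, and $x^2>y^2$ shows $|\bet(\omega)|\gtrsim|y|^3$ for $|y|$ large, with implied constants depending only on $c,d$; so there is $M$ with $\bet(\omega)\notin\overline{W(B)}$ whenever $\omega\notin i\R$ and $|\omega_\Im|>M$, and Proposition~\ref{5bounds} gives $\omega\notin W_\Omega(T)$, which is (i). For (ii), along a sequence $\{\omega^n\}\subset W_\Omega(T)$ with $|\omega^n_\Re|\to\infty$, part (i) forces $\{\omega^n_\Im\}$ to be bounded; setting $x_n=\omega^n_\Re$ and keeping $y_n=\omega^n_\Im$ bounded, one expands $Q(\omega^n)=x_n^4(1+O(x_n^{-2}))$ and $d|\omega^n|^2+2c\omega^n_\Im=dx_n^2(1+O(x_n^{-2}))$, whence $\bet(\omega^n)=-\tfrac{2y_n}{d}x_n^2(1+O(x_n^{-2}))\sim-2\omega^n_\Im(\omega^n_\Re)^2/d$; since $\bet(\omega^n)$ stays in the bounded set $\overline{W(B)}$, the product $|y_n|x_n^2$ is bounded, i.e.\ $\omega^n_\Im=O((\omega^n_\Re)^{-2})$.

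I expect the main obstacle to be making the blow-up in (i) uniform: a single limit $|\omega_\Im|\to\infty$ does not suffice because $|\omega_\Re|$ may grow much faster than $|\omega_\Im|$, so the regime split above is genuinely needed, together with the fact that the denominator stays away from $0$ once $|\omega_\Im|$ is large, which holds because $\partial\disc$ has bounded imaginary part. The rest of the argument for (ii)--(iv) is routine manipulation of the explicit expressions in Proposition~\ref{5bounds}.
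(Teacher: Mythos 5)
Your proof is correct and follows essentially the same route as the paper: membership is tested through the explicit formulas for $\alp$ and $\bet$ from Proposition \ref{5bounds} together with the boundedness of $\overline{W(B)}$, and parts (iii)--(iv) are exactly the sign computations the paper dismisses as ``straightforward calculations.'' Your treatment of (i) is in fact more careful than the paper's one-line argument, since you make the divergence of $|\bet(\omega)|$ uniform in $\omega_\Re$ via the regime split, which is genuinely needed when $|\omega_\Re|$ grows faster than $|\omega_\Im|$.
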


\begin{proof}
\rm i)  The value $|\bet(\omega)|$ gets arbitrary large as $\omega_\Im\rightarrow \pm\infty$ but $W(B)$ is bounded. \rm ii) Assume $\{\omega^n\}\in W_\Omega(T)$, $|\omega^n_\Re|\rightarrow\infty$, then \rm i) implies that $\omega^n_\Im$ is bounded and $\bet(\omega^n)\sim -2\omega^n_\Im\left (\omega^n_\Re\right )^2/d$ from \eqref{4newb}. Hence, the boundedness of $\bet(\omega^n)$ yields that $ \omega^n_\Im=O(({\omega^n_\Re})^{-2})$.
  \rm  iii) and \rm iv) follow by straightforward calculations.
\end{proof}

\begin{lem}\label{4rninj}
The functions $r_n$ in \eqref{2roots} have the following properties:
\begin{itemize}
\item[\rm i)] For given $\omega\in\eC\setminus (i\R \cup \{\delta_+,\delta_-,\infty\})$ there is a unique pair $(\alpha,\beta)\in \R^2$ 
such that $r_n(\alpha,\beta)=\omega$ for some $n\in\{1,2,3,4\}$. Further, if $r_m(\alpha,\beta)=\omega$, $m\neq n$, then $\alpha=c$, $\beta= d^2/4$, and $\omega=\pm \sqrt{c-d^2/16}-id/4$.
\item[\rm ii)] For given $\omega\in i\R\setminus\{\delta_+,\delta_-\}$ and $\beta\in\R$,  the unique $\alpha\in\R$ such that $r_n(\alpha,\beta)=\omega$ for some $n\in\{1,2,3,4\}$ is
\[
	\alpha=-\omega_\Im^2-\frac{\omega_\Im^2}{c+d\omega_\Im+\omega_\Im^2}\beta.
\]
\item[\rm iii)] For given $\omega\in i\R\setminus\{0\}$ and $\alpha\in\R$, the unique $\beta\in\R$ such that $r_n(\alpha,\beta)=\omega$ for some $n\in\{1,2,3,4\}$ is
\[
	\beta=-(c+d\omega_\Im+\omega_\Im^2)\left(1+\frac{1}{\omega_\Im^2}\alpha\right).
\]
\end{itemize}
\end{lem}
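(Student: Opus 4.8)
The plan is to work with the polynomial $p_{(\alpha,\beta)}$ from \eqref{TCurve} rather than with $t_{(\alpha,\beta)}$, since $p_{(\alpha,\beta)}$ is a genuine polynomial in $\omega$ and its roots are (for $\omega\notin\{\delta_+,\delta_-\}$) exactly the $r_n(\alpha,\beta)$. The key observation is that $p_{(\alpha,\beta)}(\omega)=0$ is \emph{affine linear} in the parameters $(\alpha,\beta)$: writing it out, $p_{(\alpha,\beta)}(\omega)=\alpha\,(c-id\omega-\omega^2)-\omega^2\,(c-id\omega-\omega^2)-\beta\omega^2$, so for a \emph{fixed} $\omega$ the equation $p_{(\alpha,\beta)}(\omega)=0$ reads $a(\omega)\alpha+b(\omega)\beta=e(\omega)$ with $a(\omega)=c-id\omega-\omega^2$, $b(\omega)=-\omega^2$, and $e(\omega)=\omega^2(c-id\omega-\omega^2)$. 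For part i), $\omega$ is not real after multiplication by $i$ (i.e. $\omega\notin i\R$) and $\omega\notin\{\delta_+,\delta_-,\infty\}$; I would split the complex equation $a(\omega)\alpha+b(\omega)\beta=e(\omega)$ into its real and imaginary parts, obtaining two real linear equations in the two real unknowns $\alpha,\beta$. Uniqueness of $(\alpha,\beta)$ is then equivalent to nonvanishing of the $2\times2$ determinant, which is (up to a nonzero factor) $\Im\big(\overline{a(\omega)}b(\omega)\big)$; a short computation gives this determinant as a multiple of $\omega_\Re\omega_\Im\cdot(\text{something})$ — in fact, using $b(\omega)=-\omega^2$ one finds $\Im(\overline{a(\omega)}b(\omega))$ reduces to a nonzero expression precisely off $i\R\cup\R$, and since $\omega\notin i\R$ and the configuration excludes the real axis as well (the roots come in the pattern forced by Proposition \ref{5boundspol}), this determinant is nonzero. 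That settles existence and uniqueness of the pair $(\alpha,\beta)$.

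For the second sentence of part i) — when can two distinct roots $r_n,r_m$ agree at this $\omega$? — the point is that $\omega$ is then a \emph{double} root of $p_{(\alpha,\beta)}$, so $p_{(\alpha,\beta)}(\omega)=0$ and $p_{(\alpha,\beta)}'(\omega)=0$ simultaneously. I would treat these two equations again as linear in $(\alpha,\beta)$: from \eqref{TCurve}, $p'_{(\alpha,\beta)}(\omega)=-2\omega(c-id\omega-\omega^2)+(\alpha-\omega^2)(-id-2\omega)-2\beta\omega$, which is again affine in $(\alpha,\beta)$. Combining with $p_{(\alpha,\beta)}(\omega)=0$ gives a linear system whose solution (generically) pins down $(\alpha,\beta)$ explicitly; substituting back and demanding consistency (the determinant of the relevant system, or elimination of $\alpha,\beta$) yields a polynomial condition on $\omega$ alone. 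Carrying out the elimination, I expect it to collapse to $\alpha=c$, $\beta=d^2/4$, and then the double-root condition becomes $p_{(c,d^2/4)}(\omega)=(c-\omega^2)(c-id\omega-\omega^2)-\tfrac{d^2}{4}\omega^2=0$ with a repeated root; factoring $c-id\omega-\omega^2=-(\omega-\delta_+)(\omega-\delta_-)$ and $c-\omega^2$, one checks the repeated root is $\omega=\pm\sqrt{c-d^2/16}-id/4$ as claimed. This is the computational heart of the lemma and the step most likely to be tedious, though it is purely mechanical.

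Parts ii) and iii) are the degenerate case $\omega\in i\R$, where one of the two real equations above becomes trivial. Writing $\omega=i\omega_\Im$, the quantity $c-id\omega-\omega^2 = c+d\omega_\Im+\omega_\Im^2$ is real, hence $t_{(\alpha,\beta)}(i\omega_\Im)=\alpha-\omega_\Im^2\cdot\big(1+\tfrac{\beta}{c+d\omega_\Im+\omega_\Im^2}\big)\cdot(-1)$ is \emph{real-valued}, wait — more precisely $t_{(\alpha,\beta)}(i\omega_\Im)=\alpha+\omega_\Im^2 - \tfrac{-\omega_\Im^2}{c+d\omega_\Im+\omega_\Im^2}\beta$ is real, so the single scalar equation $t_{(\alpha,\beta)}(i\omega_\Im)=0$ already determines a one-parameter family. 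For ii) I would simply solve this linear equation for $\alpha$ in terms of $\beta$ (valid since $\omega\notin\{\delta_+,\delta_-\}$ makes the denominator nonzero), giving the stated formula; uniqueness of $\alpha$ given $\beta$ is immediate since the equation is linear in $\alpha$ with nonzero coefficient. For iii), solve the same equation for $\beta$ in terms of $\alpha$; here one additionally needs $\omega_\Im\neq0$ so that the coefficient of $\beta$, namely $\tfrac{\omega_\Im^2}{c+d\omega_\Im+\omega_\Im^2}$, is nonzero, which is why iii) excludes $\omega=0$ whereas ii) need not. The only mild subtlety is verifying that these $(\alpha,\beta)$ do not accidentally hit a pole of $t$, i.e. that $c+d\omega_\Im+\omega_\Im^2\neq0$ on $i\R\setminus\{\delta_+,\delta_-\}$ — but that is exactly the statement that $\delta_\pm$ are the only poles on the imaginary axis, already built into the setup.
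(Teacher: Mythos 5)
Your strategy for part i) is essentially the paper's: reduce $p_{(\alpha,\beta)}(\omega)=0$ to a real $2\times2$ linear system in $(\alpha,\beta)$ and check its determinant (this is precisely the content of Proposition \ref{5bounds}). However, your identification of the singular locus is wrong, and the error conceals a case your argument cannot handle. With $a(\omega)=c-id\omega-\omega^2$ and $b(\omega)=-\omega^2$ one computes
\[
{\rm Im}\left(\overline{a(\omega)}\,b(\omega)\right)=-\omega_\Re\left(d|\omega|^2+2c\omega_\Im\right),
\]
so the determinant vanishes exactly on $i\R\cup\partial\disc$, where $\disc$ is the disk \eqref{pmcircle} --- not on $i\R\cup\R$ as you assert (for $\omega\in\R\setminus\{0\}$ the second factor is $d\omega_\Re^2\neq 0$). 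The circle $\partial\disc$ is \emph{not} excluded by the hypotheses of part i), so for $\omega\in\partial\disc\setminus i\R$ your $2\times2$ system is singular and the nonvanishing-determinant argument breaks down; a singular but consistent system would yield a whole line of pairs $(\alpha,\beta)$ and destroy uniqueness. What saves the statement is the paper's Lemma \ref{2lemdisc}: on $\partial\disc$ the coefficient $\kappa={\omega^2}/({c-id\omega-\omega^2})$ of $\beta$ becomes real, so ${\rm Im}\,t_{(\alpha,\beta)}(\omega)=-2\omega_\Re\omega_\Im$ independently of $(\alpha,\beta)$, which is nonzero off $i\R\cup\{0,-2ic/d\}$; hence the singular system is inconsistent and no pair exists there at all. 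Your proof needs this additional step, and as written it would wrongly conclude the determinant is nonzero at such points.

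Two smaller remarks. For the double-root clause of i) you only sketch an elimination and write that you ``expect it to collapse'' to the stated values; this is the computational core and must actually be carried out. The paper's route is shorter and worth adopting: since $p_{(\alpha,\beta)}(i\,\cdot)$ has real coefficients, the roots are symmetric about $i\R$, so a double root $\omega\notin i\R$ forces $-\overline{\omega}$ to be a second double root, whence $p_{(\alpha,\beta)}(z)=(z^2-2i\omega_\Im z-|\omega|^2)^2$ and comparing coefficients gives $\omega_\Im=-d/4$, $\alpha=c$, $\beta=d^2/4$, $\omega=\pm\sqrt{c-d^2/16}-id/4$ directly. Parts ii) and iii) are correct and match the paper's (one-line) argument, except that at $\omega=\delta_\pm\in i\R$ in part iii) you should argue with the polynomial $p_{(\alpha,\beta)}$ rather than with $t_{(\alpha,\beta)}$, which is undefined there.
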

\begin{proof}
\rm i) Proposition \ref{5bounds} yields that $\omega\in W_\Omega(T)$ if and only if $(\alp(\omega),\bet(\omega))\in\Omega$. Thus $r_m(\alpha,\beta)=\omega$ is only possible for $(\alpha,\beta)=(\alp(\omega),\bet(\omega))$. Assume $\omega=r_n(\alpha,\beta)=r_m(\alpha,\beta)$, $n\neq m$. Then, $-\overline{\omega}$ is also a double root since $\omega\notin i\R$ and roots of $p_{(\alpha,\beta)}$ are symmetric with respect to the imaginary axis. The result is then obtained using an ansatz with these two double roots. 
\rm ii)--\rm iii) Follows trivially from the definition of $p_{\alpha,\beta}$. 
\end{proof}
In Theorem \ref{4maint}, we show that the enclosure of the numerical range $W_{\Omega}(T)$ is closely related to the set
\begin{equation}\label{4gamma} 
W_{\partial\Omega}(T)=\bigcup_{n=1}^4 r_n(\partial\Omega).
\end{equation}

\begin{thm}\label{4maint}
Let $W_{\Omega}(T)$ denote the enclosure \eqref{4nrenc} and let $W_{\partial\Omega}(T)$ denote the set \eqref{4gamma}. Then the following equalities hold:
\begin{itemize}
\item[\rm i)] $W_\Omega(T)\cap i\R=W_{\partial\Omega}(T)\cap i\R$.
\item[\rm ii)] $\partial W_\Omega(T)\setminus i\R=W_{\partial\Omega}(T)\setminus i\R$.
\end{itemize}
\end{thm}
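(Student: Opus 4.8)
The plan is to transfer the question to the parameter plane via the explicit map $R:=(\alp,\bet)$ of Proposition~\ref{5bounds} and then argue by elementary topology of the compact rectangle $\Omega$. Two preliminary facts will be used throughout. First, $\overline{W(B)}$ is a compact interval (as $B$ is bounded) and $\overline{W(A)}$ is a compact subinterval of $\eR$, so $\Omega$ is compact; since the $r_n$ are continuous on $\eR\times\R$, both $W_\Omega(T)=\bigcup_n r_n(\Omega)$ and $W_{\partial\Omega}(T)=\bigcup_n r_n(\partial\Omega)$ are compact, hence closed in $\eC$, and $W_{\partial\Omega}(T)\subseteq W_\Omega(T)$. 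Second, on the open set $U:=\eC\setminus(i\R\cup\partial\disc\cup\{\infty\})$ the map $R$ is well defined and continuous, because $d|\omega|^2+2c\omega_\Im$ vanishes exactly on $\partial\disc$; moreover $\delta_\pm\in\partial\disc$, and combining $p_{(\alpha,\beta)}(\omega)=t_{(\alpha,\beta)}(\omega)(c-id\omega-\omega^2)$ with Proposition~\ref{5bounds} yields, for $\omega\in U$,
\[
 r_n(\alpha,\beta)=\omega\ \text{for some }n\in\{1,\dots,4\}\iff(\alpha,\beta)=R(\omega).
\]
Hence $W_\Omega(T)\cap U=R^{-1}(\Omega)$ and $W_{\partial\Omega}(T)\cap U=R^{-1}(\partial\Omega)$. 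Finally, the points $0,\delta_+,\delta_-,\infty$ are special: by Corollary~\ref{5polcor} each lies in $W_\Omega(T)$ exactly when it lies in $W_{\partial\Omega}(T)$, and none lies in $\operatorname{int}W_\Omega(T)$ — for $0$ and $\delta_\pm$ because they belong to $\partial\disc$, which by Lemma~\ref{2lemdisc} meets $W_\Omega(T)$ only in the discrete set $\{0,\delta_\pm,-2ic/d\}$, and for $\infty$ because by Corollary~\ref{4circor}(i) points off $i\R$ with large imaginary part avoid $W_\Omega(T)$.

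For (i), ``$\supseteq$'' is immediate from $W_{\partial\Omega}(T)\subseteq W_\Omega(T)$. For ``$\subseteq$'', let $\omega=i\omega_\Im\in W_\Omega(T)\cap i\R$. If $\omega\in\{0,\delta_\pm\}$, use Corollary~\ref{5polcor}. Otherwise $(\alpha,\beta)\mapsto p_{(\alpha,\beta)}(i\omega_\Im)=(\alpha+\omega_\Im^2)(c+d\omega_\Im+\omega_\Im^2)+\beta\omega_\Im^2$ is a nonconstant affine function (the coefficient $\omega_\Im^2$ of $\beta$ is nonzero and $c+d\omega_\Im+\omega_\Im^2\neq0$ since $\omega\neq\delta_\pm$), so its zero set $S$ is a line along which $\alpha$ is affine in $\beta$. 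Since $\omega\in W_\Omega(T)$, $S\cap\Omega\neq\emptyset$; it is closed and convex, and bounded because $\overline{W(B)}$ is bounded, hence a (possibly degenerate) segment whose endpoints lie in $\partial\Omega$. Any such endpoint $(\alpha',\beta')$ satisfies $p_{(\alpha',\beta')}(i\omega_\Im)=0$, i.e. $r_m(\alpha',\beta')=\omega$ for some $m$, so $\omega\in W_{\partial\Omega}(T)$.

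For (ii), take first $\omega_0\in\partial W_\Omega(T)\setminus i\R$; then $\omega_0\in W_\Omega(T)$ since $W_\Omega(T)$ is closed. If $\omega_0\in\{\delta_\pm,\infty\}$ apply Corollary~\ref{5polcor}; if $\omega_0\in\partial\disc$ then $\omega_0\in\{0,\delta_\pm,-2ic/d\}$ by Lemma~\ref{2lemdisc}, which with $\omega_0\notin i\R$ forces $\omega_0\in\{\delta_\pm\}$, already handled. Otherwise $\omega_0\in U$ and $R(\omega_0)\in\Omega$; were $R(\omega_0)\in\operatorname{int}\Omega$, continuity of $R$ would put a whole neighbourhood of $\omega_0$ inside $R^{-1}(\Omega)=W_\Omega(T)\cap U\subseteq W_\Omega(T)$, contradicting $\omega_0\in\partial W_\Omega(T)$; hence $R(\omega_0)\in\partial\Omega$, and the index $n_0$ with $r_{n_0}(R(\omega_0))=\omega_0$ gives $\omega_0\in r_{n_0}(\partial\Omega)\subseteq W_{\partial\Omega}(T)$. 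Conversely let $\omega_0\in W_{\partial\Omega}(T)\setminus i\R\subseteq W_\Omega(T)$. If $\omega_0\in\{\delta_\pm,\infty\}$ it lies in $\partial W_\Omega(T)$ by the interior remark above. If $\omega_0\in U$ then $R(\omega_0)\in\partial\Omega$, so (as $\alp(\omega_0)$ is finite) either $\bet(\omega_0)$ is an endpoint of $\overline{W(B)}$ or $\alp(\omega_0)$ is a finite endpoint of $\overline{W(A)}$; in either case a small outward perturbation of the corresponding coordinate produces $(\alpha_k,\beta_k)\to R(\omega_0)$ with $(\alpha_k,\beta_k)\notin\Omega$. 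With $n_0$ as above, $r_{n_0}(\alpha_k,\beta_k)\to\omega_0$ by continuity, and for large $k$ these points lie in $U$ with $R\big(r_{n_0}(\alpha_k,\beta_k)\big)=(\alpha_k,\beta_k)\notin\Omega$, hence outside $W_\Omega(T)$. Thus $\omega_0$ is a limit of points not in $W_\Omega(T)$, so $\omega_0\in\partial W_\Omega(T)$.

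The routine ingredients are the identity $W_\Omega(T)\cap U=R^{-1}(\Omega)$ (immediate from Proposition~\ref{5bounds} and Lemma~\ref{4rninj}) and the convexity/compactness bookkeeping. I expect the main difficulty to be the topological part of (ii): one has to move between the boundary computed in $\eC$ and the preimage description valid only on $U$, and to dispose separately of the circle $\partial\disc$ — where $R$ is undefined but where, by Lemma~\ref{2lemdisc}, $W_\Omega(T)$ is tightly constrained — and of the exceptional points $0,\delta_\pm,\infty$.
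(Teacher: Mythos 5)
Your proof is correct. Part i) follows essentially the same route as the paper, which simply invokes Corollary \ref{5polcor} together with Proposition \ref{5boundsi}; your affine--segment argument (the zero set of $(\alpha,\beta)\mapsto p_{(\alpha,\beta)}(i\omega_\Im)$ meets $\Omega$ in a compact segment whose endpoints lie on $\partial\Omega$) is precisely the proof of that proposition, inlined. For part ii) your route is genuinely different: the paper deduces $\partial r_n(\Omega)\setminus i\R=r_n(\partial\Omega)\setminus i\R$ from the injectivity of $r_n$ (Lemma \ref{4rninj} i)) together with Brouwer's invariance of domain and then assembles the union over $n$; you instead exploit the explicit continuous inverse $R=(\alp,\bet)$ of Proposition \ref{5bounds}, so that on $U=\eC\setminus(i\R\cup\partial\disc\cup\{\infty\})$ one has $W_\Omega(T)\cap U=R^{-1}(\Omega)$ and $W_{\partial\Omega}(T)\cap U=R^{-1}(\partial\Omega)$, after which elementary preimage topology gives $\partial W_\Omega(T)\cap U\subset R^{-1}(\partial\Omega)$ and an outward perturbation of a boundary pair $(\alpha,\beta)\in\partial\Omega$ gives the reverse inclusion. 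This buys two things: it avoids invariance of domain altogether (only continuity of the explicitly computed $\alp$ and $\bet$ is needed), and it treats transparently both the union over the four branches and the inclusion $W_{\partial\Omega}(T)\setminus i\R\subset\partial W_\Omega(T)$, which the paper's proof handles rather tersely. The paper's argument is shorter once invariance of domain is granted. Both proofs rest on the same uniqueness statement (Lemma \ref{4rninj} i)) and dispose of the exceptional points $\delta_\pm$ and $\infty$ in the same way, via Lemma \ref{2lemdisc}, Corollary \ref{5polcor}, and Corollary \ref{4circor} i).
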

\begin{proof}
\rm i) The inclusion $W_\Omega(T)\cap i\R\supset W_{\partial\Omega}(T)\cap i\R$ is clear from \eqref{4nrenc} and \eqref{4gamma}. Let $\omega\in W_\Omega(T)\cap i\R$, then the result follows from Corollary \ref{5polcor} and Proposition \ref{5boundsi}. \rm ii) Assume $\delta_+\in W_\Omega(T)\setminus i\R$, then $\delta_+\in\partial W_\Omega(T)$ follows from Lemma \ref{2lemdisc} and Corollary \ref{5polcor} implies $\delta_+\in W_{\partial\Omega}(T)$. The proof for $\delta_-$ is similar to the proof for $\delta_+$ and for $\infty$ the result follows directly. Apart from $i\R\cup\{\delta_+,\delta_-,\infty\}$, Lemma \ref{4rninj} \rm i)
yields that $r_n:\R\times\R\rightarrow\C\simeq \R^2$ is injective. Then $\partial r_n(\Omega)\setminus i\R=r_n(\partial\Omega)\setminus i\R$ is a consequence of the invariance of domain theorem \cite{MR1511685}.  Hence, $\partial W_\Omega(T)\setminus  i\R\subset  W_{\partial\Omega}(T)\setminus i\R$ and \rm ii) follows from Lemma \ref{4rninj} {\rm i)}.
\end{proof}
\begin{cor}\label{boucccor}
The boundary of $\overline{W_\Omega(T)\setminus i\R}$ is $\overline{W_{\partial\Omega}(T)\setminus i\R}$.
\end{cor}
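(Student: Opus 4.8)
The plan is to deduce the corollary from Theorem \ref{4maint} ii) together with two topological observations: that passing to the closure of a set disjoint from $i\R$ can only add boundary points lying on $i\R$, and that $W_{\partial\Omega}(T)$ is itself closed away from $i\R$ in the relevant sense. Write $M := W_\Omega(T)\setminus i\R$ and $N := W_{\partial\Omega}(T)\setminus i\R$. By Theorem \ref{4maint} ii) we have $\partial M = N$ (boundary taken in $\C$, or equivalently in $\eC$ after noting $\infty$ is handled by Proposition \ref{5boundspol} v) and Corollary \ref{5polcor}). The goal is to show $\partial \overline{M} = \overline{N}$.

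First I would establish the inclusion $\partial\overline{M}\subseteq \overline{N}$. For any set $M$ one has $\partial\overline{M}\subseteq\partial M$, since the closure can only remove boundary points, never create new ones: if $\omega\in\partial\overline{M}$ then $\omega\in\overline{M}$ and every neighborhood of $\omega$ meets the complement of $\overline{M}$, hence meets the complement of $M$; and $\omega\in\overline{M}$ means every neighborhood meets $M$, so $\omega\in\partial M = N \subseteq \overline{N}$. This direction is essentially formal.

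For the reverse inclusion $\overline{N}\subseteq\partial\overline{M}$, I would argue as follows. Since $N = \partial M$ and $M$ is (relatively) open in $\C\setminus i\R$ — here I would invoke that, away from $i\R\cup\{\delta_+,\delta_-,\infty\}$, each $r_n$ is an open map by invariance of domain as used in the proof of Theorem \ref{4maint}, while the exceptional points $\delta_\pm$ were shown there to lie in $\partial W_\Omega(T)$ hence cause no trouble — we have $N\subseteq\overline{M}$. Next, $N\cap M=\emptyset$: a point of $M$ together with a neighborhood disjoint from $i\R$ lies in the interior of $M$ relative to $\C\setminus i\R$, hence is not in $\partial M = N$. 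Therefore $N\subseteq\overline{M}\setminus M\subseteq\overline{M}\setminus \mathrm{int}\,\overline{M}\cup(i\R)$; more carefully, each point of $N$ is a limit of points of $M\subseteq\overline{M}$ and also, being in $\partial M$, a limit of points not in $M$. I must still check those "outside" points are not in $\overline{M}$. Here the key structural fact is that near a point of $N\setminus i\R$ the set $M$ looks locally like the image of $\partial\Omega$ plus one side of it: by Lemma \ref{4rninj} i), off $i\R$ the map from $\omega$ to $(\alp(\omega),\bet(\omega))$ is a local homeomorphism, so $\omega\in\overline{M}$ iff $(\alp(\omega),\bet(\omega))\in\overline\Omega=\Omega$, and $\omega\in\partial M$ iff that pair lies on $\partial\Omega$. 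Since $\Omega$ is a closed rectangle, a point $p\in\partial\Omega$ has, in every neighborhood, points of $\R^2\setminus\Omega$; pulling back through the local homeomorphism gives points near $\omega$ with parameter pair outside $\Omega$, hence not in $\overline{M}$. This shows $N\subseteq\partial\overline{M}$. Finally, $\partial\overline{M}$ is closed, so $\overline{N}\subseteq\partial\overline{M}$, completing the equality.

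The main obstacle I anticipate is the careful bookkeeping at the points where the clean "local homeomorphism" picture degrades: the exceptional fiber points of Lemma \ref{4rninj} i) where two $r_n$ coincide (the pair $(\alpha,\beta)=(c,d^2/4)$ giving $\omega=\pm\sqrt{c-d^2/16}-id/4$), the poles $\delta_\pm$, and the point $\infty$, as well as the accumulation behavior as $\omega$ approaches $i\R$. For $\delta_\pm,\infty$ I would cite Lemma \ref{2lemdisc} and Corollary \ref{5polcor} exactly as in the proof of Theorem \ref{4maint} to place them in $W_{\partial\Omega}(T)$ and on $\partial W_\Omega(T)$; for the branch-collision points, note they form a finite set and the closure operation is insensitive to removing finitely many points from $N$, so they do not affect $\overline N$. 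The $i\R$ boundary contributes nothing new because we have deliberately excised $i\R$ before taking closures on both sides, and any new boundary point created by the closure would have to lie on $i\R$, hence outside the sets in question. I would present these as short remarks rather than separate lemmas.
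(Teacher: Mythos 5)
Your reverse inclusion $\overline{W_{\partial\Omega}(T)\setminus i\R}\subseteq\partial\bigl(\overline{W_\Omega(T)\setminus i\R}\bigr)$ is essentially sound: with $M:=W_\Omega(T)\setminus i\R$ and $N:=W_{\partial\Omega}(T)\setminus i\R$, you correctly get $N\subseteq\overline{M}$, and the local-homeomorphism argument via $(\alp,\bet)$ (plus compactness of $\Omega$, so that $W_\Omega(T)$ is closed and $\overline{M}\subseteq W_\Omega(T)$) shows no point of $N$ is interior to $\overline{M}$; the finitely many exceptional points are handled as you indicate. The gap is in the forward inclusion. You assert $\partial M=N$ "by Theorem \ref{4maint} ii)", but that theorem states $(\partial W_\Omega(T))\setminus i\R=N$, not $\partial\bigl(W_\Omega(T)\setminus i\R\bigr)=N$. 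These differ exactly on the imaginary axis: $\partial M$ contains every point of $\overline{M}\cap i\R$ (since $i\R\subseteq M^{c}$), whereas $N$ contains no point of $i\R$ by construction. Such points are not a pathology — Section 3 of the paper is largely devoted to them (e.g.\ $0$ and the intersections of $\overline{W_{\eR\times\{\beta\}}(T)\setminus i\R}$ with $i\R$ in Proposition \ref{4ax_crprop}), and they do lie in $\partial\overline{M}$ whenever $\overline{M}$ is not a neighborhood of them. Your closing remark that any boundary point on $i\R$ is "outside the sets in question" is therefore not correct: both $\partial\overline{M}$ and $\overline{N}$ are closed sets that in general contain points of $i\R$, and the content of the corollary at those points is precisely the inclusion $\partial\overline{M}\cap i\R\subseteq\overline{N}$, which your chain $\partial\overline{M}\subseteq\partial M=N$ does not deliver.

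The missing ingredient is the symmetry of $W_\Omega(T)$ with respect to the imaginary axis (Proposition \ref{5boundspol} i)); without it the statement would actually be false for a general set (e.g.\ a region whose free boundary is a segment of $i\R$ has boundary points on $i\R$ that are not limits of off-axis boundary points). With symmetry the repair is short: suppose $p\in\partial\overline{M}\cap i\R$ and some ball $B(p,\epsilon)$ misses $N\supseteq\partial\overline{M}\setminus i\R$. Then each of the two connected half-disks of $B(p,\epsilon)\setminus i\R$ is disjoint from $\partial\overline{M}$, hence lies entirely in ${\rm int}\,\overline{M}$ or entirely in the complement of $\overline{M}$. Since $p\in\overline{M}$, one half-disk meets $M$ and so lies in ${\rm int}\,\overline{M}$; by symmetry of $\overline{M}$ the other does too, whence $B(p,\epsilon)\subseteq\overline{M}$ and $p\in{\rm int}\,\overline{M}$, contradicting $p\in\partial\overline{M}$. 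Adding this step (and replacing the incorrect identity $\partial M=N$ by $\partial M=N\cup(\overline{M}\cap i\R)$) closes the argument.
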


\begin{defn}\label{2endroots}
Let $\mathcal{N}:=\emptyset$ for $d<2\sqrt{c}$ and $\mathcal{N}:=[\delta_-,\delta_+]$ for $d\geq 2\sqrt{c}$. Define the sets 
\[
\begin{array}{l}
\tau_1:=\{\inf W(A),\inf W(B)\}\cup \{\sup W(A),\sup W(B)\}, \\
\tau_2:=\{\inf W(A),\sup W(B)\}\cup \{\sup W(A),\inf W(B)\}, \\
\mathcal{R}_1:=\left(\bigcup_{n=1}^4 r_n(\tau_1)\right)\cap i\R\setminus \mathcal{N}, \quad \mathcal{R}_2:=\left(\bigcup_{n=1}^4 r_n(\tau_2)\right)\cap \mathcal{N}.
\end{array}
\]
Let $m:\mathcal{R}_1\dot{\cup} \mathcal{R}_2\rightarrow \mathbb{N}$ denote a counting function, where for $i\mu\in \mathcal{R}_j$ we set
\[
	m(i\mu):= \sum_{n=1}^4\#\{\tau\in \tau_j\, : \, r_n(\tau)=i\mu\}.
\]
Due to continuity $\cup_nr_n(-\infty,\beta)=\{\delta_\pm,\pm i\infty \}$ and $\cup_nr_n(\infty,\beta)=\{\delta_\pm,\pm \infty \}$.
\end{defn}
\begin{prop}\label{2iaxis}
Let $W_{\partial\Omega}(T)$ denote \eqref{4gamma}, and let $\mathcal{R}_1,\mathcal{R}_2,\tau_1,\tau_2$, and $m$ be defined as in Definition \ref{2endroots}. Assume  that $c>0$, then $i\mu\in W_{\partial\Omega}(T)$ is an endpoint of a line segment of $ W_{\partial\Omega}(T)\cap i\R$ if and only if $i\mu\in \mathcal{R}_1\dot{\cup} \mathcal{R}_2$ and $m(i\mu)$ is odd. Further, if $i\mu$ is an isolated point of $ W_{\partial\Omega}(T)\cap i\R$,  then $i\mu\in \mathcal{R}_1\dot{\cup}\mathcal{R}_2$ and $m(i\mu)$ is even.
\end{prop}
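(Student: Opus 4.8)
The plan is to analyze how $W_{\partial\Omega}(T) \cap i\R$ is built up from the images $r_n(\partial\Omega)$ of the four sides of the rectangle $\partial\Omega$, and to reduce the question of endpoints and isolated points to a parity count of preimages. First I would recall that $\partial\Omega$ consists of four line segments: two of the form $\{\alpha\} \times \overline{W(B)}$ with $\alpha \in \{\inf W(A), \sup W(A)\}$, and two of the form $\overline{W(A)} \times \{\beta\}$ with $\beta \in \{\inf W(B), \sup W(B)\}$. Along each such segment, as the free parameter ($\beta$ resp.\ $\alpha$) runs through an interval, each root $r_n(\alpha,\beta)$ traces a continuous curve in $\eC$. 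The key structural fact I would invoke is Lemma \ref{4rninj}: off the imaginary axis (and away from $\delta_\pm,\infty$) the map $(\alpha,\beta)\mapsto r_n(\alpha,\beta)$ is injective, so a curve $r_n$ restricted to one side of $\partial\Omega$ can only \emph{enter or leave} $i\R$, it cannot run along it; consequently the portion of $r_n(\text{side})$ lying on $i\R$ is a union of closed subintervals whose endpoints are exactly the points where the curve meets $i\R$ transversally, i.e.\ at the two endpoints of the side (corners of $\Omega$) or at crossing points.

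Next I would pin down the crossing points. By Lemma \ref{4rninj}(ii)--(iii), for $i\mu \in i\R \setminus \{0,\delta_\pm\}$ the relation $r_n(\alpha,\beta)=i\mu$ forces a specific \emph{linear} relation between $\alpha$ and $\beta$ (namely $\alpha = -\mu^2 - \frac{\mu^2}{c + d\mu + \mu^2}\beta$). Along a side of $\partial\Omega$ where $\alpha$ is fixed this determines $\beta$ uniquely, and along a side where $\beta$ is fixed it determines $\alpha$ uniquely; hence a point $i\mu$ on an open arc of $W_{\partial\Omega}(T)\cap i\R$ that is \emph{interior} to that arc is hit, locally, by curves from sides of $\partial\Omega$ that pass through $i\mu$ transversally and continue to both sides of it. The decisive observation is a dichotomy: whether $i\mu$ lies at the boundary of the segment $r_n(\text{side})\cap i\R$ or in its interior is governed by whether the side in question \emph{terminates} at the preimage of $i\mu$ (a corner of $\Omega$, contributing one "half-branch") or \emph{passes through} it (contributing a full branch, i.e.\ two half-branches). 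Here is where the case split $\mathcal N = \emptyset$ vs.\ $\mathcal N = [\delta_-,\delta_+]$ and the two preimage sets $\tau_1,\tau_2$ come in: for $\mu$ with $i\mu \notin \mathcal N$, feasibility of the linear relation with $(\alpha,\beta)$ a \emph{corner} of $\Omega$ requires the corner to lie in $\tau_1$; for $i\mu \in \mathcal N$, in $\tau_2$. (This is the content of the signs in Corollary \ref{4circor}(iii)--(iv) applied on the imaginary axis, combined with the geometry of $\disc$ and the definition of $\delta_\pm$ via $\theta = \sqrt{c - d^2/4}$.) I would make this precise by computing, for $i\mu \in \mathcal R_j$, that $m(i\mu)$ counts exactly the number of half-branches of $\bigcup_n r_n(\partial\Omega)$ emanating from $i\mu$ \emph{into the complement of} $i\R$, together with the branches lying along $i\R$, and then observing: $i\mu$ is an endpoint of a line segment of $W_{\partial\Omega}(T)\cap i\R$ iff an odd number of these half-branches is "unbalanced," i.e.\ $m(i\mu)$ is odd; it is an isolated point iff no branch continues along $i\R$ on either side while branches do leave into $\eC\setminus i\R$, which is the even case with all half-branches leaving transversally.

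For the converse directions I would argue contrapositively: if $i\mu \notin \mathcal R_1 \dot\cup \mathcal R_2$, then no corner of $\Omega$ maps to $i\mu$ under any $r_n$, so every curve $r_n(\text{side})$ through $i\mu$ passes through transversally (full branch) by injectivity of $r_n$ off $i\R$ and the uniqueness in Lemma \ref{4rninj}; hence $i\mu$ is interior to $W_{\partial\Omega}(T)\cap i\R$ (or not in it at all), never an endpoint or isolated point. Likewise, if $i\mu \in \mathcal R_j$ but $m(i\mu)$ is even, the half-branch contributions pair up so that $i\mu$ sits in the interior of a segment; and if $m(i\mu)$ is odd it cannot be isolated because an odd count forces at least one branch to continue along $i\R$. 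The main obstacle I anticipate is the careful bookkeeping at the special points $0,\delta_+,\delta_-,\infty$ — where $r_n$ may fail to be injective, several roots can coincide (Lemma \ref{2leminf}, Lemma \ref{4rninj}(i)), and the linear relations of Lemma \ref{4rninj}(ii)--(iii) degenerate — so I would handle these separately using Corollary \ref{5polcor} (which says membership of such $\omega$ in $W_\Omega(T)$ is already witnessed on $\partial\Omega$) and the boundary-continuity statement at the end of Definition \ref{2endroots} ($\cup_n r_n(\mp\infty,\beta)$). The assumption $c>0$ is exactly what keeps $0 \notin \{\delta_+,\delta_-\}$ and keeps the denominator $c + d\mu + \mu^2$ from vanishing identically, so the linear-relation analysis stays valid on all of $i\R \setminus \{0,\delta_\pm\}$; the finitely many exceptional points are then absorbed into the counting function $m$ by the same transversality/parity argument.
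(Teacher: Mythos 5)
Your overall strategy (reduce everything to the linear relation of Lemma \ref{4rninj} ii)--iii), locate the relevant preimages at corners of $\Omega$, finish with a parity count, and treat $0,\delta_\pm,\infty$ separately) is the same as the paper's, but two load-bearing steps are wrong or missing. First, the reason the preimage must be a corner in $\tau_1$ (resp.\ $\tau_2$) is not that ``feasibility of the linear relation with a corner requires'' it --- any corner of $\Omega$ can satisfy $\alpha=-\mu^2-\frac{\mu^2}{c+d\mu+\mu^2}\beta$ for a suitable $\mu$. The actual argument is geometric: the solution set of this relation is a line in the $(\alpha,\beta)$-plane whose slope has a definite sign, determined by the sign of $\mu^2/(c+d\mu+\mu^2)$, i.e.\ by whether $i\mu\in\mathcal{N}$; if that line meets $\Omega$ at any boundary point other than one of the two slope-compatible corners, it necessarily passes through the interior of $\Omega$, and then Lemma \ref{4rninj} ii) together with continuity shows that every nearby $i\mu'$ is also attained, so $i\mu$ is an interior point of a segment. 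It is this dichotomy, not ``feasibility,'' that forces the preimage of an endpoint or isolated point into $\tau_1$ (or $\tau_2$).

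Second, your converse direction is incomplete: for $i\mu\in\mathcal{R}_j$ with $m(i\mu)$ odd you only argue that $i\mu$ is not isolated (``at least one branch continues along $i\R$''), but you must also exclude that $i\mu$ is an \emph{interior} point of a segment. This is the most delicate part of the paper's proof: assuming interiority, one finds boundary pairs realizing nearby points $i\mu_1,i\mu_2$ with $\mu_1<\mu<\mu_2$ and with the same $\alpha$, and the intermediate value theorem then produces a second pair $(\alpha,\beta_3)$ whose polynomial has the root $i\mu$, contradicting the uniqueness in Lemma \ref{4rninj} iii). Your ``half-branches pair up'' heuristic does not supply this; indeed your claim that even $m(i\mu)$ forces $i\mu$ into the interior of a segment contradicts the very statement being proved, since isolated points have even $m(i\mu)$. (A smaller point: the assertion that injectivity off $i\R$ prevents a root curve from running along $i\R$ is false --- the segments of $W_{\partial\Omega}(T)\cap i\R$ are produced precisely by roots travelling along the axis, which the conjugate symmetry of $p_{(\alpha,\beta)}(i\,\cdot)$ permits --- though this claim does not appear to be used later in your argument.)
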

\begin{proof}
The result is first shown for $i\mu\notin \{0,\delta_+,\delta_-,\pm i\infty\}$.
Assume $i\mu\notin \mathcal{N}\cup\{0,\pm i\infty\}$ is an endpoint of a line segment or an isolated point of $ W_{\partial\Omega}(T)\cap i\R$. Then
\[
\alpha+\mu^2+\frac{\mu^2}{c+d\mu+\mu^2}\beta=0,
\]
for some $(\alpha,\beta)\in \partial\Omega$. Assume that $(\alpha,\beta)\notin \tau_1$, then since $\frac{\mu^2}{c+d\mu+\mu^2}>0$ it follows by similar arguments as given in Proposition \ref{5boundsi} that there exist a pair $(\alpha',\beta')\in\Omega\setminus\partial\Omega$ such that $\alpha'+\mu^2+\frac{\mu^2}{c+d\mu+\mu^2}\beta'=0$. Then, Lemma \ref{4rninj} \rm ii)--iii) gives a contradiction. Hence $(\alpha,\beta)\in\tau_1$ and $i\mu\in \mathcal{R}_1$.
Assume that $i\mu$ is an isolated point, then from the symmetry of the roots with respect to the imaginary axis it follows that $m(i\mu)$ is even. Assume that $i\mu$ is an endpoint of a line segment. From the injectivity proven in  Lemma \ref{4rninj} \rm ii)--iii) follows then that exactly one root must be on the line segment. Thus from the roots symmetry with respect to the imaginary axis it follows that $m(i\mu)$ is odd.

If $i\mu\in \mathcal{N}\setminus\{0,\delta_+,\delta_-\}$ a similar argument proves the claim for $\mathcal{R}_2$. For the converse, assume $i\mu\in \mathcal{R}_1$ and $m(i\mu)$ odd. Then since $\frac{\mu^2}{c+d\mu+\mu^2}>0$ it follows that $i\mu$ is the root for an unique pair $(\alpha,\beta)\in\tau_1$.
Assume that $i\mu$ is not the endpoint of a line segment, then since it is not an isolated point it is an inner point of a line segment in  $ W_{\partial\Omega}(T)\cap i\R$.
From injectivity proven in Lemma \ref{4rninj} \rm ii)--iii), symmetry with respect to the imaginary axis, and that $m(i\mu)$ is odd, it follows that for $(\alpha',\beta')\in\partial\Omega$ sufficiently close to $(\alpha,\beta)$ there is exactly one simple root on the imaginary axis that is in the vicinity of $i\mu$. Take points $i\mu_1,\ i\mu_2$ in the vicinity of $i\mu$ such that, $\mu_1<\mu<\mu_2$ and $\frac{\mu_i^2}{c+d\mu_i+\mu_i^2}>0$. Then there is some $(\alpha_1,\beta_1),\ (\alpha_2,\beta_2)\in \partial \Omega$  such that
 \[
\alpha_1+\mu_1^2+\frac{\mu_1^2}{c+d\mu_1+\mu_1^2}\beta_1=0,\quad
\alpha_2+\mu_2^2+\frac{\mu_2^2}{c+d\mu_2+\mu_2^2}\beta_2=0.
\]
Since $\frac{\mu_i^2}{c+d\mu_i+\mu_i^2}>0$  there is a line of solutions $(\alpha,\beta)$ intersecting $\partial \Omega$ twice. Hence we can assume that $\alpha_1=\alpha_2=\alpha$. By continuity there must exist a $\beta_3$ between $\beta_1$ and $\beta_2$ such that $(\alpha,\beta_3)$ has the root $i\mu$. But $\beta_3\neq\beta$ which contradicts Lemma \ref{4rninj} \rm iii). 
The proof for $i\mu\in \mathcal{R}_2$ and $m(i\mu)$ odd is similar. 
Assume $i\mu\in \{0,\delta_+,\delta_-\}$, then the result is shown by investigating each case for $i\mu\in \mathcal{R}_1\dot{\cup}\mathcal{R}_2$ and when $i\mu$ is an endpoint of a line segment of  $ W_{\partial\Omega}(T)\cap i\R$.
\end{proof}
\begin{rem}
If $c=0$ the point $\mu=0$ is always a solution to \eqref{TCurve} and similar results as in Proposition \ref{2iaxis} can for this case be obtained from the reduced cubic polynomial. \end{rem}
\begin{prop}\label{2coloralgi}
Let $W_{\partial\Omega}(T)$ denote \eqref{4gamma}, and let $\mathcal{R}_1,\mathcal{R}_2$, and $m$ be defined as in Definition \ref{2endroots}. Then
$ W_{\partial\Omega}(T)\cap i\R$ is obtained from $\mathcal{R}_1\dot{\cup}\mathcal{R}_2$ by the following algorithm:
\begin{enumerate}
	\item Set $\cI:=\{i\mu\in \mathcal{R}_1\dot{\cup}\mathcal{R}_2:m(i\mu)\text{ is odd}\}$ and enumerate $\mu\in \cI$ increasingly $\mu_1< \mu_2<\dots$.
	\item Add an interval between $i\mu_j,i\mu_{j+1}$ in $ \cI$ if $j$  is odd.
	\item Set $ W_{\partial\Omega}(T)\cap i\R=\cI\cup(\mathcal{R}_1\dot{\cup}\mathcal{R}_2)$.	
\end{enumerate}
\end{prop}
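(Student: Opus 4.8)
The plan is to show that the algorithm reconstructs $W_{\partial\Omega}(T)\cap i\mathbb{R}$ by combining the endpoint/isolated-point characterization of Proposition \ref{2iaxis} with a parity argument along the imaginary axis. First I would recall that $W_{\partial\Omega}(T)\cap i\mathbb{R}$ is a closed subset of the imaginary axis (it is the union of four continuous images $r_n(\partial\Omega)$, each of which is compact in $\overline{\mathbb{C}}$, intersected with the closed set $i\mathbb{R}$), so it decomposes uniquely into a locally finite family of closed line segments (possibly degenerate, i.e.\ isolated points) together with any remaining isolated points. By Proposition \ref{2iaxis}, every endpoint of a nondegenerate segment lies in $\mathcal{R}_1\dot\cup\mathcal{R}_2$ with $m$ odd, every isolated point lies in $\mathcal{R}_1\dot\cup\mathcal{R}_2$ with $m$ even, and conversely a point of $\mathcal{R}_1\dot\cup\mathcal{R}_2$ with odd multiplicity is an endpoint of a segment. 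This already identifies the set $\mathcal{I}$ from step (1) as exactly the set of all segment endpoints.

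Next I would argue that the segment structure is completely determined by $\mathcal{I}$ via the pairing in step (2). The key point is that the interior of $W_{\partial\Omega}(T)\cap i\mathbb{R}$ (relative to $i\mathbb{R}$) cannot contain a point of $\mathcal{R}_1\dot\cup\mathcal{R}_2$ with odd $m$: by Lemma \ref{4rninj} \rm ii)--iii) the correspondence $(\alpha,\beta)\leftrightarrow i\mu$ is injective on $i\mathbb{R}\setminus\{0,\delta_\pm,\pm i\infty\}$ along each coordinate direction, and the convexity argument already used in the proof of Proposition \ref{2iaxis} (a line of solutions through an interior point of $\Omega$ meets $\partial\Omega$ in exactly two points) shows that near an interior point of a segment the preimage on $\partial\Omega$ is a single simple root sweeping monotonically. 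Hence between two consecutive elements $i\mu_j,i\mu_{j+1}$ of $\mathcal{I}$ the set $W_{\partial\Omega}(T)\cap i\mathbb{R}$ is either the whole interval $[i\mu_j,i\mu_{j+1}]$ or meets the open interval $(i\mu_j,i\mu_{j+1})$ only in isolated points (elements of $\mathcal{R}_1\dot\cup\mathcal{R}_2$ with even $m$). A parity count then pins down which alternative occurs: starting from $-i\infty$ (or from the largest $\mu$), each odd-multiplicity point toggles between ``inside a segment'' and ``outside,'' so the gaps that are filled are exactly those $(i\mu_j,i\mu_{j+1})$ with $j$ odd under the increasing enumeration, which is precisely step (2). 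Finally, adding back all of $\mathcal{R}_1\dot\cup\mathcal{R}_2$ in step (3) restores the even-multiplicity isolated points and the endpoints themselves, giving equality.

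The main obstacle will be the boundary cases $i\mu\in\{0,\delta_+,\delta_-,\pm i\infty\}$, where Lemma \ref{4rninj} \rm ii)--iii) does not directly apply and where several roots can collide or escape to infinity. For $\pm i\infty$ one uses the remark in Definition \ref{2endroots} that $\cup_n r_n(\mp\infty,\beta)=\{\delta_\pm,\pm i\infty\}$, so the behavior at the ``ends'' of the imaginary axis is governed by the corners of $\Omega$ coming from $\sup W(A)$ or $\inf W(A)$ being $\pm\infty$, and the parity bookkeeping of Proposition \ref{2iaxis} still applies after this substitution. For $\mu\in\{0,\delta_\pm\}$ one invokes the case-by-case analysis already carried out at the end of the proof of Proposition \ref{2iaxis} together with Corollary \ref{5polcor}, which tells us precisely when these four distinguished points lie in $W_\Omega(T)$, and hence in $W_{\partial\Omega}(T)$ by Theorem \ref{4maint}; one simply checks that in each configuration the point is correctly classified as an endpoint, an interior point, or an isolated point by the parity of $m$, so that steps (1)--(3) handle it consistently. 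With these special points dispatched, the generic parity argument of the previous paragraph closes the proof.

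\begin{proof}
By Theorem \ref{4maint} \rm i) and the preceding discussion, $W_{\partial\Omega}(T)\cap i\mathbb{R}$ is a closed subset of $i\mathbb{R}$, hence a locally finite union of closed line segments (allowing degenerate ones) together with possibly finitely many isolated points. By Proposition \ref{2iaxis} (and its handling of $\{0,\delta_+,\delta_-,\pm i\infty\}$, supplemented by Corollary \ref{5polcor}), the endpoints of the nondegenerate segments are exactly the points of $\mathcal{R}_1\dot\cup\mathcal{R}_2$ with $m$ odd, i.e.\ the set $\cI$ of step (1), while every isolated point lies in $\mathcal{R}_1\dot\cup\mathcal{R}_2$ with $m$ even.

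It remains to determine which of the gaps between consecutive elements of $\cI$ are filled. Fix consecutive $i\mu_j<i\mu_{j+1}$ in $\cI$ and let $i\mu\in(i\mu_j,i\mu_{j+1})$ with $i\mu\notin\{0,\delta_\pm\}$. If $i\mu\in W_{\partial\Omega}(T)$, then by Lemma \ref{4rninj} \rm ii)--iii) it is attained by a unique root for a unique $(\alpha,\beta)\in\partial\Omega$; running the convexity argument from the proof of Proposition \ref{2iaxis} (a line of solutions through an interior point of $\Omega$ meets $\partial\Omega$ twice, forcing $\beta_3\neq\beta$ and contradicting Lemma \ref{4rninj} \rm iii)) shows that if $i\mu$ is not an endpoint then it lies in the relative interior of a segment and the nearby preimage on $\partial\Omega$ is a single simple root moving monotonically. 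Consequently, on $(i\mu_j,i\mu_{j+1})$ the set $W_{\partial\Omega}(T)\cap i\mathbb{R}$ is either all of $[i\mu_j,i\mu_{j+1}]$ or meets the open interval only in even-multiplicity isolated points; and whether $i\mu$ (close to $i\mu_j$) lies in a segment is determined by parity, since each point of $\cI$ toggles the ``inside a segment'' status as $\mu$ increases. Enumerating $\cI$ increasingly as $\mu_1<\mu_2<\dots$, the filled gaps are therefore precisely $(i\mu_j,i\mu_{j+1})$ with $j$ odd, as in step (2). Adding back all of $\mathcal{R}_1\dot\cup\mathcal{R}_2$ in step (3) restores the even-multiplicity isolated points and the segment endpoints, yielding $W_{\partial\Omega}(T)\cap i\mathbb{R}=\cI\cup(\mathcal{R}_1\dot\cup\mathcal{R}_2)$ after forming the intervals. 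The distinguished points $\{0,\delta_+,\delta_-,\pm i\infty\}$ are handled exactly as in the end of the proof of Proposition \ref{2iaxis} together with the limit identities $\cup_n r_n(\mp\infty,\beta)=\{\delta_\pm,\pm i\infty\}$, and one checks case by case that each is classified consistently by the parity of $m$, so the algorithm produces the correct set in these cases as well.
\end{proof}
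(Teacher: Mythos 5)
Your proposal is correct and follows essentially the same route as the paper: both arguments use Proposition \ref{2iaxis} to identify $\cI$ as precisely the set of endpoints of the (disjoint) segments of $W_{\partial\Omega}(T)\cap i\R$, pair consecutive endpoints in the increasing enumeration to recover the segments (your ``parity toggle'' is the paper's iterative pairing), and recover the even-multiplicity isolated points in step (3). The extra material you include on interior points and on the distinguished points $\{0,\delta_\pm,\pm i\infty\}$ is already subsumed in the ``if and only if'' of Proposition \ref{2iaxis} and does not change the argument.
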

\begin{proof}
From Proposition \ref{2iaxis} it follows that step $1$ defines $\cI$ as the set of endpoints of line segments of $ \mathcal{R}_1\dot{\cup}\mathcal{R}_2$, where $i\mu_1$ is the minimal imaginary part of a line segment. Then $i\mu_2$ must be the endpoint of that segment, which is the point with maximum imaginary part. Doing this iteratively gives that for all odd $j$, $i\mu_j$ is the minimal imaginary part of a line segment and for even $j$, $i\mu_j$ is the maximal imaginary part of a line segment. Hence, step $2$ sets $\cI$ to $ W_{\partial\Omega}(T)\cap i\R$ apart from isolated points. These points are added in step $3$.
\end{proof}

The following lemma and Lemma \ref{4rninj} implies that $\overline{ W_{\partial\Omega}(T)\setminus i\R}$ has a finite number of points where more than one curve component intersect.

\begin{lem}\label{5nueq0}
The roots of the polynomial $p_{\alpha,\beta}$ defined by \eqref{TCurve} have the following properties:
\begin{itemize}
\item[\rm i)]Fix $\alpha\in \R\setminus\{0\}$, then $p_{(\alpha,\cdot)}$ has a multiple root for at most $4$ values $\beta\in\R$.
\item[\rm ii)]Fix $\beta\in \R\setminus\{0\}$, then $p_{(\cdot,\beta)}$ has a multiple root for at most $5$ values $\alpha\in\R$.
\item[ \rm iii)]    $p_{(0,\beta)}$ has a double root at $0$ and the roots $\pm\sqrt{\beta+c-{d^2}/4}-id/2$.
\item[ \rm iv)]  $p_{(\alpha,0)}$ has the roots $\pm\sqrt{\alpha}$ and $\pm\sqrt{c-d^2/4}-id/2$.
\end{itemize}
\end{lem}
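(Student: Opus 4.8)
## Proof Plan for Lemma \ref{5nueq0}

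The plan is to treat each item by elementary polynomial algebra, exploiting the special structure of $p_{(\alpha,\beta)}(\omega)=(\alpha-\omega^2)(c-id\omega-\omega^2)-\beta\omega^2$ as a quartic in $\omega$ whose coefficients are affine in $\alpha$ and in $\beta$ separately. For \textbf{(i)}, fix $\alpha\neq 0$. A quartic has a multiple root exactly when its discriminant vanishes. The discriminant of $p_{(\alpha,\cdot)}$ is a polynomial in $\beta$ whose coefficients depend on $\alpha$; since $\beta$ enters $p_{(\alpha,\beta)}$ only through the $\omega^2$-coefficient, I would track the $\beta$-degree carefully. Writing $p_{(\alpha,\beta)}(\omega)=\omega^4+id\,\omega^3-(\alpha+c+\beta)\omega^2-i\alpha d\,\omega+\alpha c$, the discriminant (or equivalently the resultant $\mathrm{Res}(p_{(\alpha,\beta)},p_{(\alpha,\beta)}')$) is a polynomial in the single variable $\beta$ of degree at most $4$ for $\alpha\neq 0$. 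The task is just to verify this bound is attained correctly and that the leading $\beta^4$-coefficient is a nonzero multiple of a power of $\alpha$ (so nonvanishing for $\alpha\neq 0$), giving at most $4$ roots $\beta$. I would compute the resultant via the Sylvester matrix, reading off the $\beta$-degree by inspection of which entries carry $\beta$.

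For \textbf{(ii)}, fix $\beta\neq 0$ and repeat the discriminant computation, now as a polynomial in $\alpha$. Here $\alpha$ appears in three coefficients ($\omega^2$, $\omega^1$, $\omega^0$ terms), so a priori the $\alpha$-degree of the discriminant could be higher; the claim is that it is at most $5$. I would again use the Sylvester resultant $\mathrm{Res}_\omega(p,p')$ and bound the $\alpha$-degree from the structure of the $4\times 7$ (or $7\times 7$) Sylvester matrix: each row contributes at most a bounded $\alpha$-degree, and summing over the determinant expansion gives the bound $5$. The slightly delicate point is confirming that the coefficient of $\alpha^5$ does not identically vanish (otherwise the bound would be vacuous but one should still check it is not always zero, or note that even if it vanishes the degree is $\le 5$, which still suffices). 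Items \textbf{(iii)} and \textbf{(iv)} are direct substitutions: setting $\alpha=0$ in \eqref{TCurve} gives $p_{(0,\beta)}(\omega)=-\omega^2(c-id\omega-\omega^2)-\beta\omega^2=\omega^2(\omega^2+id\omega-c-\beta)$, so $0$ is a double root and the remaining quadratic $\omega^2+id\omega-(c+\beta)$ has roots $-id/2\pm\sqrt{-d^2/4+c+\beta}=\pm\sqrt{\beta+c-d^2/4}-id/2$; setting $\beta=0$ gives $p_{(\alpha,0)}(\omega)=(\alpha-\omega^2)(c-id\omega-\omega^2)$, which factors into the two quadratics $\alpha-\omega^2$ and $c-id\omega-\omega^2$, with roots $\pm\sqrt{\alpha}$ and (using \eqref{5delthe}) $\pm\theta-id/2=\delta_\pm=\pm\sqrt{c-d^2/4}-id/2$.

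The main obstacle is purely computational: producing the discriminant of the quartic as an explicit polynomial in $\beta$ (resp. $\alpha$) and certifying its degree and the nonvanishing of its leading coefficient. For (i) this is routine because $\beta$ sits in only one coefficient, making the $\beta$-degree transparent. For (ii) the bookkeeping is heavier since $\alpha$ appears in three coefficients, so I expect to spend most of the effort arranging the Sylvester matrix so that the $\alpha$-degree bound $5$ is manifest (for instance, by column operations that isolate the $\alpha$-dependence). Once the degree bounds are in hand, the counting conclusions are immediate, and (iii)--(iv) require no more than the factorizations displayed above.
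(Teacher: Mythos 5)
Your proposal follows essentially the same route as the paper: the paper's proof also reduces (i) and (ii) to observing that the discriminant $\Delta_{p_{(\alpha,\beta)}}$ is a polynomial of degree at most $4$ in $\beta$ and at most $5$ in $\alpha$ (vanishing identically only in the excluded cases $\alpha=0$, respectively $\beta=0$ with $d=2\sqrt{c}$), and treats (iii)--(iv) as the direct factorizations you display. Your version merely makes the degree bookkeeping explicit via the Sylvester resultant, which the paper leaves implicit.
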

\begin{proof}
If $\alpha=0$ or both $\beta=0$ and $d=2\sqrt{c}$, then the discriminant $\Delta_{p_{(\alpha,\beta)}}$ is zero and $p_{(\alpha,\beta)}$ has a double root. For all other cases, we conclude from definition that $\Delta_{p_{(\alpha,\beta)}}$  is a fifth-degree polynomial in $\alpha$ and a fourth-degree polynomial in $\beta$.
\end{proof}

\begin{defn}
Two disjoint sets $\Gamma_1,\Gamma_2\in\eC$ are neighbors if $\partial\Gamma_1\cap\partial\Gamma_2$ contains at least one curve segment.
\end{defn}
The algorithm presented in Proposition \ref{2coloralg} is described in Figure \ref{4fig:graph}.
\begin{prop}\label{2coloralg}
Let $W_{\Omega}(T)$ denote the enclosure \eqref{4nrenc} and let $W_{\partial\Omega}(T)$ denote \eqref{4gamma}. Then
$\overline{W_\Omega(T)\setminus i\R}=\overline{ W_{\partial\Omega}(T)\setminus i\R}$ if $W(A)$ or $W(B)$ is constant. Otherwise $\overline{W_\Omega(T)\setminus i\R}$ is obtained from $\overline{ W_{\partial\Omega}(T)\setminus i\R}$ by the following algorithm: 
\begin{enumerate}
	\item Let $\cO$ be the component of $\eC\setminus(\overline{ W_{\partial\Omega}(T)\setminus i\R})$ containing \\values of $\omega$ with arbitrarily large imaginary parts.
	\item Let $\cI\subset \eC\setminus(\overline{ W_{\partial\Omega}(T)\setminus i\R})$ be the union of all  components \\that are neighbors of $\cO$.
	\item Let $\cO\subset \eC\setminus(\overline{ W_{\partial\Omega}(T)\setminus i\R})$ be the union of all components \\that are neighbors of $\cI$.
	\item If $\cI\cup \cO\neq \eC\setminus(\overline{ W_{\partial\Omega}(T)\setminus i\R})$, go to step $2$.
	\item  Set $\overline{W_\Omega(T)\setminus i\R}=\cI\cup\overline{ W_{\partial\Omega}(T)\setminus i\R}$.
\end{enumerate}
\end{prop}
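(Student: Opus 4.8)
The plan is to dispose of the degenerate case by hand and then run a soft topological argument that combines Theorem \ref{4maint}, Corollary \ref{boucccor} and Corollary \ref{4circor}.

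\textbf{Degenerate case.} If $W(A)$ or $W(B)$ is constant, then $\Omega$ is a segment in $\eR\times\R$; viewed in $\R^2$ it has empty interior, so $\Omega$ coincides with $\partial\Omega$ up to at most two endpoints, whose images already lie on the curves $r_n(\partial\Omega)$. Hence $W_\Omega(T)=W_{\partial\Omega}(T)$ and the claimed equality is immediate. From now on assume neither numerical range is constant.

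\textbf{Setup.} Put $B:=\overline{W_{\partial\Omega}(T)\setminus i\R}$ and $F:=\overline{W_\Omega(T)\setminus i\R}$. Since $\Omega=\overline{W(A)}\times\overline{W(B)}$ is compact in $\eR\times\R$ and the $r_n$ are continuous there (with the limit values of Lemma \ref{2leminf} at infinity), $W_\Omega(T)$ is compact, so $B\subset W_\Omega(T)$; moreover, by the observation preceding Lemma \ref{5nueq0} together with Lemmas \ref{4rninj} and \ref{5nueq0}, $B$ is a finite union of real-analytic arcs meeting at finitely many points, so $\eC\setminus B$ has finitely many components. By Corollary \ref{boucccor}, $B=\partial F$. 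Therefore every component $G$ of $\eC\setminus B$, being connected and disjoint from $\partial F$, is either \emph{inside} ($G\subset\operatorname{int}F$) or \emph{outside} ($G\cap F=\emptyset$); consequently $\operatorname{int}F$ is exactly the union of the inside components and $F=B\cup\bigcup\{G:\ G\ \text{inside}\}$.

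\textbf{Key step: neighboring components carry opposite labels.} Let $G_1,G_2$ share a curve segment $\gamma\subset B$ in their boundaries and pick a point $p$ in the relative interior of $\gamma$ that avoids $i\R$, the finitely many points where arcs of $B$ meet, and the images of the finitely many parameters at which some $r_n$ fails to be locally injective (Lemma \ref{4rninj} i), Lemma \ref{5nueq0}). Near $p$ the set $B$ is a simple arc splitting a small disk $D$ into $D_1\subset G_1$ and $D_2\subset G_2$. Writing $p=r_{n_0}(q_0)$ with $q_0$ on an edge of $\partial\Omega$ but not a corner, $r_{n_0}$ is, by injectivity and invariance of domain \cite{MR1511685}, a homeomorphism of a neighbourhood of $q_0$ onto a neighbourhood of $p$; it carries the interior side of that edge onto one of $D_1,D_2$, say $D_1$, whence $D_1\subset r_{n_0}(\Omega)\subset W_\Omega(T)$, so $D_1\subset\operatorname{int}W_\Omega(T)$ and $D_1\setminus i\R\subset\operatorname{int}F$; thus $G_1$ is inside. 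On the other hand $p\in\partial W_\Omega(T)$ by Theorem \ref{4maint} ii), so every neighbourhood of $p$ meets $\eC\setminus W_\Omega(T)$; since $D_1\cup\gamma\subset W_\Omega(T)$, such points lie in $D_2$, so $G_2$ is outside. (The finitely many points of $B$ lying on $i\R$ or equal to $0,\delta_\pm,\infty$, and the image of the edge of $\Omega$ at infinity, require separate bookkeeping but do not affect the arc and face structure used here.)

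\textbf{Conclusion.} The face-adjacency graph of $\eC\setminus B$ is connected: any two of its points can be joined by a path transverse to $B$, which then passes through a chain of pairwise neighboring components. The component $\cO$ selected in step 1 is outside, since by Corollary \ref{4circor} i) every $\omega\notin i\R$ with $|\omega_\Im|$ large lies outside $W_\Omega(T)\supset F$ and in $\eC\setminus B$. By the key step the inside/outside labels alternate across every edge of this connected finite graph, so the alternating breadth-first sweep of steps 2--4 places exactly the outside components in $\cO$ and the inside components in $\cI$, visits every component, and terminates. Hence $\cI$ is the union of all inside components, and by the Setup $F=B\cup\cI=\overline{W_{\partial\Omega}(T)\setminus i\R}\cup\cI$, which is the output of step 5.

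\textbf{Main obstacle.} The delicate point is the key step: one must certify that across a boundary arc \emph{one} side is genuinely interior to $W_\Omega(T)$. This rests on the local-homeomorphism property of the root functions at generic boundary parameters and on verifying that the excluded ``bad'' parameters (corners of $\Omega$, branch points from Lemma \ref{5nueq0}, the exceptional locus of Lemma \ref{4rninj} i), and preimages of $i\R$) form only a finite subset of each arc, so that a suitable $p$ exists.
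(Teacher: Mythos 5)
Your proof is correct and follows essentially the same route as the paper's: the degenerate case by definition, finiteness of the set of components via Lemmas \ref{4rninj} and \ref{5nueq0}, the fact that the initial component lies outside $\overline{W_\Omega(T)\setminus i\R}$ via Corollary \ref{4circor} i), and the alternation of inside/outside labels across a shared boundary arc, which the paper deduces from Corollary \ref{boucccor}. Your write-up is more explicit on the two points the paper asserts in one line each --- the alternation step (which you justify by local injectivity plus invariance of domain) and the connectivity of the neighbor graph needed for termination --- but the underlying argument is the same.
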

\begin{proof} 
If $W(A)$ or $W(B)$ are constant the result follows by definition. Corollary \ref{4circor} shows that only one component of $\eC\setminus(\overline{ W_{\partial\Omega}(T)\setminus i\R})$ contains values of $\omega$ with arbitrarily large imaginary parts. Hence the initial set $\cO\subset\eC\setminus\overline{W_\Omega(T)\setminus i\R}$ in Step 1 is well-defined. From Lemma \ref{4rninj} and  Lemma \ref{5nueq0} it follows that $\overline{ W_{\partial\Omega}(T)\setminus i\R}$ has a finite number of points with more than one curve component intersecting it and by definition $ W_{\partial\Omega}(T)$ has at most $4$ components. Hence, $\eC\setminus(\overline{ W_{\partial\Omega}(T)\setminus i\R})$ consists of a finite number of components, which implies that the algorithm will terminate after a finite number of steps. Corollary \ref{boucccor} yields that the set $\overline{ W_{\partial\Omega}(T)\setminus i\R}$ is the boundary of a closed set and if two components of $\eC\setminus(\overline{W_{\partial\Omega}(T)\setminus i\R})$ are neighbors, one is a subset of $\overline{W_\Omega(T)\setminus i\R}$, and one is a subset of $\eC\setminus\overline{W_\Omega(T)\setminus i\R}$. Thus the algorithm gives the sets $\cI\subset\overline{W_\Omega(T)\setminus i\R}$ and $\cO\subset\eC\setminus\overline{W_\Omega(T)\setminus i\R}$, and the Proposition follows therefore from the termination criteria.
\end{proof}
\begin{rem}
In graph theory the algorithm in Proposition \ref{2coloralg} is related to the $2$-colorability of the dual graph of $\overline{ W_{\partial\Omega}(T)\setminus i\R}$, \cite[Theorem 2-3]{MR863420}.
\end{rem}

\begin{center}
\begin{figure}
\includegraphics[width=12.5cm]{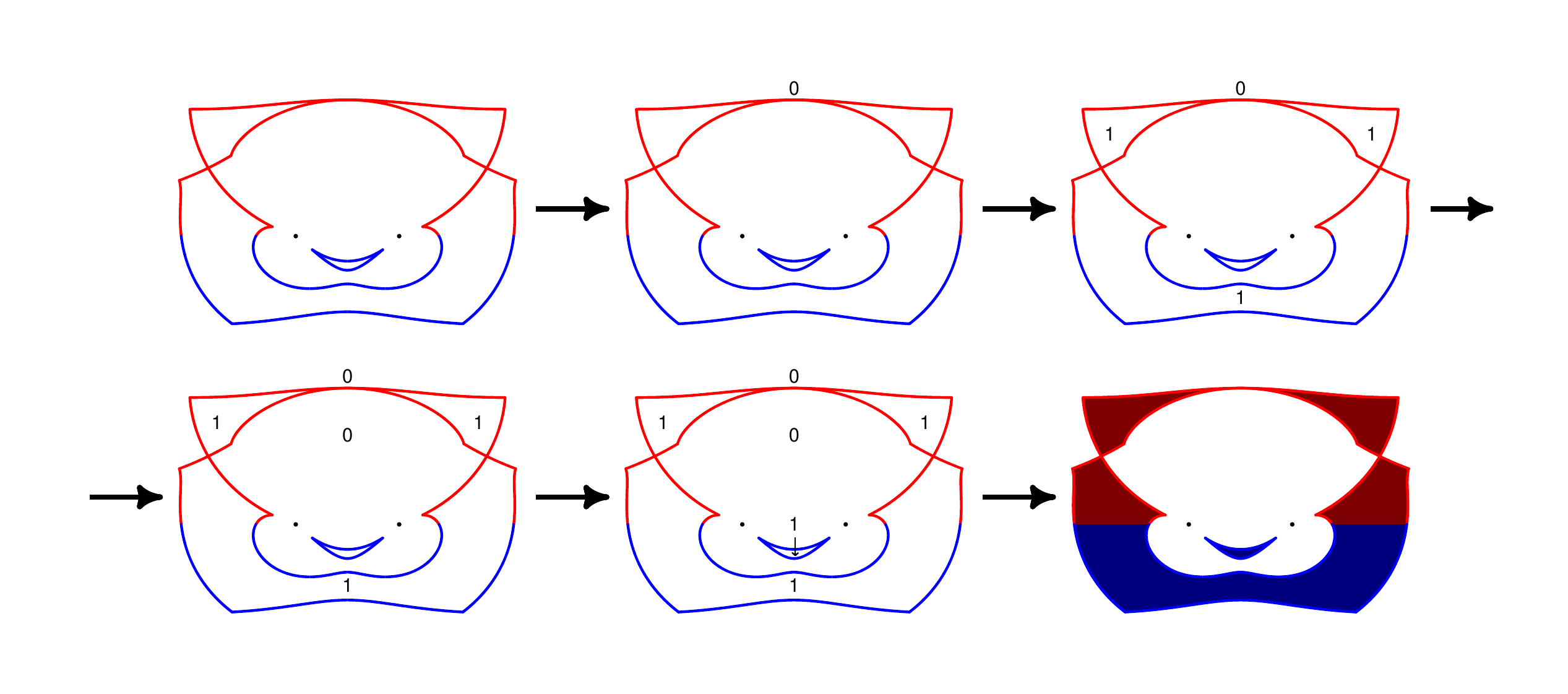}
\caption{Visualization of how $\overline{W_\Omega(T)\setminus i\R}$ is obtained from $ W_{\partial\Omega}(T)$ using the algorithm in Proposition \ref{2coloralg}. Red and blue denotes points given by $\alpha\in[0,\infty]$ and $\alpha\in[-\infty,0)$, respectively.}\label{4fig:graph}
\end{figure}
\end{center}

\section{Analysis of the enclosure of the numerical range}

In this section, the boundary of the enclosure is analyzed in detail. We derive conditions for the existence of a strip in the complement of the numerical range and prove properties of that strip.

The function $\tilde T(\lambda):=-T(\sqrt{\lambda})$ is analytic in the upper half-plane $\C^+$ and
\[
	{\rm Im} (\tilde T (\lambda)u,u)\geq 0,\quad\text{for}\,
	\, \lambda\in\C^{+},
\]
if and only if $\inf W(B)\geq0$. Hence, $\tilde T$ is a Nevanlinna function if and only if  $\inf W(B)\geq0$. Since operator functions with applications in physics often are Nevanlinna functions \cite{MR1354980,MR3543766}, we analyze in this section the enclosure $\overline{W_\Omega(T)\setminus i\R}$ under the assumption $\inf W(B)\geq0$.  However, the analysis when $\inf W(B)$ is allowed to be negative is similar.

Let $\omega_1,\omega_2,\omega_3,\omega_4$ be the roots of  $p_{\alpha,\beta}$ as defined in \eqref{TCurve} and define
\begin{equation}\label{5gentabdef}
\begin{array}{c c c c}
it_1:=\omega_1+\omega_2,&  it_2:=\omega_3+\omega_4,& v_1:=-\omega_1\omega_2,&v_2:=-\omega_3\omega_4.
\end{array}
\end{equation}
From the relation between the coefficients and roots of a polynomial it follows that
\begin{equation}\label{5gentab}
\begin{array}{r c l }
t_1+t_2&=&\hspace{-8pt}-d,\\
t_1t_2+v_1+v_2&=&\alpha+\beta+c,\\
t_1v_2+t_2v_1&=&\hspace{-8pt}- \alpha d,\\
v_1v_2&=&\alpha c.
\end{array}
\end{equation}
It is of interest to see when $p_{\alpha,\beta}$ has purely imaginary solutions and the following result shows that it depends on the sign of $\alpha$.

\begin{lem}\label{5pmfac}
Let $p_{\alpha,\beta}$ be defined as in \eqref{TCurve}. Then, the following statements hold for the roots on the imaginary axis:
\begin{itemize}
\item[\rm i)] If $\alpha<0$, then $p_{\alpha,\beta}$ has at least two roots of the form $i\mu$, $\mu\in\R$, where $\mu>0$ for exactly one root and $\mu\leq 0$ ($\mu<0$ if $c>0$) for at least one root.

\item[\rm ii)] If $\alpha>0$, then all roots of $p_{\alpha,\beta}$  of the form $i\mu$, $\mu\in\R$, satisfies $\mu\leq0$ ($\mu<0$  if $c>0$). If $d<2\sqrt{c}$ there are no purely imaginary root and if $d\geq2\sqrt{\beta+c}$ there are at least two purely imaginary roots.
\end{itemize}
\end{lem}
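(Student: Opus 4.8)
The plan is to reduce the lemma to an elementary real one‑variable problem by restricting $p_{(\alpha,\beta)}$ to the imaginary axis. Putting $\omega=i\mu$ with $\mu\in\R$ and using $(i\mu)^2=-\mu^2$ and $-id(i\mu)=d\mu$, one obtains the real quartic
\[
q(\mu):=p_{(\alpha,\beta)}(i\mu)=(\mu^2+\alpha)(\mu^2+d\mu+c)+\beta\mu^2=(\mu^2+\alpha)(\mu^2+d\mu+c+\beta)-\alpha\beta ,
\]
whose real zeros are precisely the $\mu$ for which $i\mu$ is a root of $p_{(\alpha,\beta)}$, and which satisfies $q(0)=\alpha c$ and $q(\mu)\to+\infty$ as $\mu\to\pm\infty$. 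I would use the first factorization to analyse the positive half‑axis and the second for the last assertion of ii). Throughout I use the standing hypothesis of this section, $\inf W(B)\ge0$, hence $\beta\ge0$, as well as $d>0$.

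First I would study $\mu>0$. There $\mu^2+d\mu+c>0$ (all summands $\ge0$ and $\mu^2>0$), so the first form factors as $q(\mu)=(\mu^2+d\mu+c)\bigl(\alpha-g(\mu)\bigr)$ with $g(\mu):=-\mu^2-\beta\mu^2/(\mu^2+d\mu+c)$. A short differentiation gives $\frac{d}{d\mu}\bigl(\mu^2+\beta\mu^2/(\mu^2+d\mu+c)\bigr)=2\mu+\beta\mu(d\mu+2c)/(\mu^2+d\mu+c)^2\ge2\mu>0$ (this is where $\beta\ge0$ enters), so $g$ decreases strictly on $(0,\infty)$ from $g(0^+)=0$ (using $d>0$) to $g(+\infty)=-\infty$. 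Consequently $q$ has exactly one positive zero when $\alpha<0$ and none when $\alpha\ge0$; that zero is even simple, since there $q'=(\mu^2+d\mu+c)(-g')>0$. This already delivers the ``exactly one $\mu>0$'' part of i) and, together with $q(0)=\alpha c$, the assertion in ii) that every imaginary root of $p_{(\alpha,\beta)}$ lies in $\{\mu\le0\}$ (with $\mu<0$ if $c>0$). Moreover $g(\mu)=-\mu^2\bigl(1+\beta/(\mu^2+d\mu+c)\bigr)\le0$ whenever $\mu^2+d\mu+c>0$; so in the case $d<2\sqrt c$, where $c>0$ and the quadratic stays positive on all of $\R$, the equation $q(\mu)=0$ (i.e.\ $g(\mu)=\alpha$) has no real solution when $\alpha>0$, which is the ``no purely imaginary root'' statement.

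It then remains to produce a non‑positive imaginary root in i) and to treat the regime $d\ge2\sqrt{\beta+c}$ of ii). For i) this is immediate: if $c>0$ then $q(0)=\alpha c<0$, so since $q\to+\infty$ the intermediate value theorem yields a zero in $(-\infty,0)$; if $c=0$ then $q(0)=0$, so $\mu=0$ is a zero. For the remaining part of ii) I would invoke the second factorization: if $d^2\ge4(\beta+c)$ the quadratic $\mu^2+d\mu+c+\beta$ has real zeros $\nu_\pm=\tfrac12\bigl(-d\pm\sqrt{d^2-4(\beta+c)}\bigr)$, and there $q(\nu_\pm)=-\alpha\beta\le0$; combined with $q\to+\infty$ at $\pm\infty$ this forces $q$ to have at least two real zeros counted with multiplicity (when $\beta>0$ one gets distinct zeros on either side of $[\nu_-,\nu_+]$, and when $\beta=0$ the zeros are $\nu_\pm$ themselves, coinciding exactly when $d=2\sqrt c$). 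I expect the only genuine idea to be spotting the second factorization of $q$, which is what makes the threshold $d\ge2\sqrt{\beta+c}$ appear; everything else is sign‑chasing, with care needed only in the degenerate cases $c=0$ and $d=2\sqrt c$.
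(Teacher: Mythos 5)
Your proof is correct and follows essentially the same route as the paper: restrict $p_{(\alpha,\beta)}$ to the imaginary axis to obtain the real quartic $\hat p(\mu)=(\mu^2+\alpha)(\mu^2+d\mu+c)+\beta\mu^2$ and argue by sign analysis and the intermediate value theorem. The differences are only in the micro-steps: where you get uniqueness of the positive root from the strictly monotone auxiliary function $g$ and produce the two roots in ii) by evaluating at the zeros of $\mu^2+d\mu+c+\beta$, the paper instead uses $\hat p'(0)=\alpha d<0$ together with convexity of $\hat p'$ on $[0,\infty)$, and evaluates at $\mu=-d/2$, where $\hat p(-d/2)\le-\alpha\beta\le0$.
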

\begin{proof}
{\rm i)}
If $\beta=0$, the result follows from Lemma \ref{5nueq0}. Assume $\beta>0$ and that $\mu$ is a root of the real function $\hat{p}_{\alpha,\beta}$ defined by $\hat{p}_{\alpha,\beta}(\mu):=p_{(\alpha,\beta)}(i\mu)$. Then,
\begin{equation}\label{5imrot}
	\hat{p}_{\alpha,\beta}(\mu)=(\mu^2+\alpha)(\mu^2+d\mu+c)+\beta \mu^2=0,
\end{equation}
where $\hat{p}_{\alpha,\beta}$ is positive and of even order. For $\alpha<0$ it follows that $\hat{p}_{\alpha,\beta}(0)\leq0$ (with equality if and only if $c=0$) and thus there is a positive and a non-positive root (negative if $c>0$). There can be no other roots $\mu>0$ since $\hat{p}'_{\alpha,\beta}(0)=\alpha d<0$ and $\hat{p}_{\alpha,\beta}'$ is convex on $[0,\infty)$. Hence, $\hat{p}_{\alpha,\beta}(\mu)=0$ for exactly one value $\mu>0$.

\rm ii) If $\alpha>0$ then $\hat{p}_{\alpha,\beta}(\mu)>0$ whenever $\mu>0$ or $d<2\sqrt{c}$. Assume $\alpha>0$, $d\geq2\sqrt{\beta+c}$, then $\hat{p}_{\alpha,\beta}(-d/2)\leq 0$, which implies that  $\hat{p}_{\alpha,\beta}$ has at least two real roots.
\end{proof}

The set $ W_{\partial\Omega}(T)$  is given by the values  
on the rectangle $\partial\Omega$. Hence, there are two types of curves that are interesting to analyze. In subsection \ref{Sec:beta}, $\beta\in \overline{W(B)}$ is fixed and in subsection \ref{Sec:alpha}, $\alpha\in \overline{ W(A)}$ is fixed. 
\subsection{Variation of the numerical range $W(A)$}\label{Sec:beta}

The set $W_{\partial\Omega}(T)$ defined in \eqref{4gamma} was in Proposition \ref{2coloralg} used to determine the enclosure $W_{\Omega}(T)$. In this section, we will describe the subset of $W_{\partial\Omega}(T)$ obtained by fixing $\beta$ and varying $\alpha\in \overline{W(A)}$ in greater detail. To this end we consider the set
\begin{equation}\label{4curveb}
 	W_{\eR\times\{\beta\}}(T)=\bigcup_{n=1}^4 r_n(\overline{\R}\times\{\beta\}),
\end{equation}
defined according to \eqref{4nrenc2}.
Note that for $\omega\in\C\setminus(i\R\cup\{\delta_+,\delta_-\})$, the point $\omega$ is in $W_{\eR\times\{\beta\}}(T)$ if and only if $\beta=\bet(\omega)$, where $\bet(\omega)$ is defined in \eqref{4partbl}. 
The set $W_{\eR\times\{\beta\}}(T)$ can in the variable $\alpha\in \overline{W(A)}$ be parametrized  into a union of four curves. For $\beta=0$, the set $W_{\eR\times\{\beta\}}(T)$ is completely characterized by Lemma \ref{5nueq0} and we will therefore assume $\beta>0$ in the rest of Section \ref{Sec:beta}. 
 Figure \ref{4fig:first} illustrates possible behaviors of $W_{\eR\times\{\beta\}}(T)$.
\begin{center}
\begin{figure}
\includegraphics[width=12.5cm]{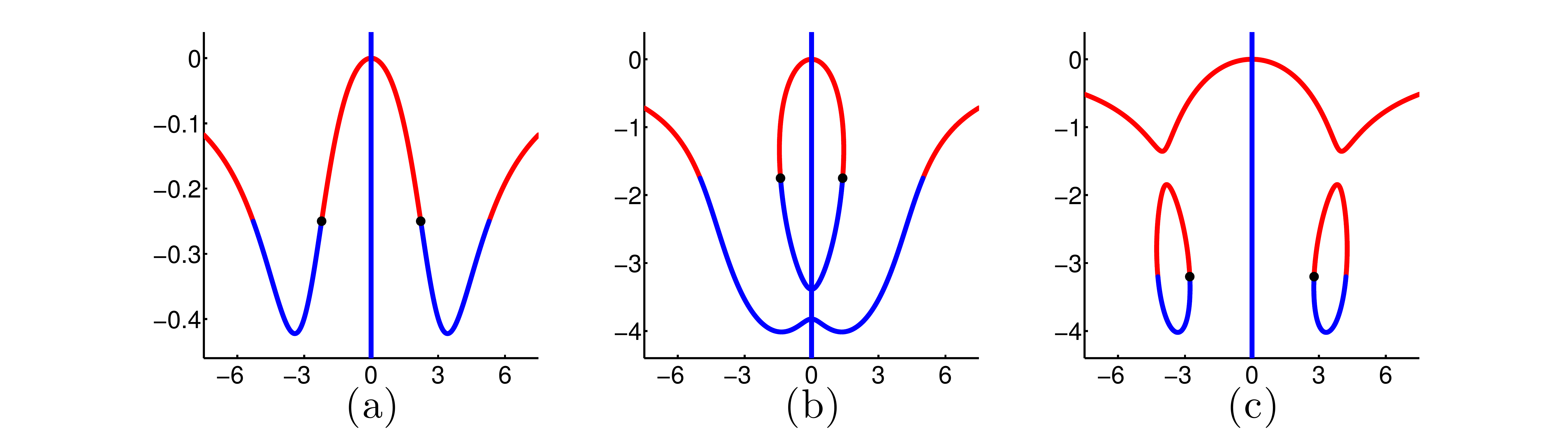}
\caption{Examples of the set $ W_{\eR\times\{\beta\}}(T)\subset\eC$, where red and blue denotes $W_{[0,\infty]\times\{\beta\}}(T)$ and $W_{[-\infty,0)\times\{\beta\}}(T)$, respectively.}\label{4fig:first}
\end{figure}
\end{center}
\begin{prop}\label{prop5imroots}
Let $W_{\eR\times\{\beta\}}(T)$ denote the set $\eqref{4curveb}$ and take $\beta,c>0$. Then $i\mu\in\overline{ W_{\eR\times\{\beta\}}(T)\setminus i\R}$ for $\mu\in \R$ if and only if $\mu=0$ or $\mu$ is a real solution to
\begin{equation}\label{5imroots}
	q_\beta(\mu):=\mu^4+2d\mu^3+(2c+d^2)\mu^2+d\left(\frac{\beta}{2}+2c\right)\mu+c\left(\beta+c\right)=0.
\end{equation}
The statement above holds also if $c=0$, with the exception that zero is not in the set $\overline{W_{\eR\times\{\beta\}}(T)\setminus i\R}$. 
\end{prop}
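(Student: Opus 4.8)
The plan is to reduce the statement, for a generic point $i\mu$ with $\mu\in\R\setminus\{0\}$ and $i\mu\notin\{\delta_+,\delta_-\}$, to the assertion that $i\mu\in\overline{W_{\eR\times\{\beta\}}(T)\setminus i\R}$ exactly when $i\mu$ is a multiple root of $p_{(\alpha_0,\beta)}$, where $\alpha_0:=-\mu^2-\frac{\mu^2}{c+d\mu+\mu^2}\beta$ is the unique parameter for which $i\mu$ is a root of $p_{(\alpha,\beta)}$ (Lemma~\ref{4rninj}(ii)). A short elimination then converts this multiple-root condition into $q_\beta(\mu)=0$. The points $\mu=0$ and $i\mu\in\{\delta_+,\delta_-\}$ — where $i\mu$ is either never a root of $p_{(\alpha,\beta)}$ for finite $\alpha$, or a root only for one distinguished $\alpha$ — will be handled separately, and this is where the $c=0$ exception enters.

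To prove the reduction I would argue as follows. For the forward implication, take $\omega^k\to i\mu$ with $\omega^k\in W_{\eR\times\{\beta\}}(T)\setminus i\R$; passing to a subsequence, $\omega^k=r_n(\alpha^k,\beta)$ for a fixed $n$, and by compactness of $\eR$ we may assume $\alpha^k\to\alpha_\infty\in\eR$. If $\alpha_\infty=\pm\infty$, Lemma~\ref{2leminf} forces $i\mu\in\{\delta_+,\delta_-,\infty\}$, which is excluded; hence $\alpha_\infty$ is finite, $r_n(\alpha_\infty,\beta)=i\mu$ by continuity, and $\alpha_\infty=\alpha_0$ by Lemma~\ref{4rninj}(ii). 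Since $\omega^k\notin i\R$, its mirror image $-\overline{\omega^k}$ is a distinct root of $p_{(\alpha^k,\beta)}$, say $-\overline{\omega^k}=r_m(\alpha^k,\beta)$ with $m\neq n$, and it also tends to $i\mu$; thus $r_n(\alpha_0,\beta)=r_m(\alpha_0,\beta)=i\mu$, so $i\mu$ is a multiple root of $p_{(\alpha_0,\beta)}$. Conversely, writing $\hat{p}_{\alpha,\beta}(\mu)=p_{(\alpha,\beta)}(i\mu)$ as in \eqref{5imrot}, a multiple root at $i\mu$ means $\hat{p}_{\alpha_0,\beta}(\mu)=\hat{p}_{\alpha_0,\beta}'(\mu)=0$; since $\partial_\alpha\hat{p}_{\alpha,\beta}(\mu)=c+d\mu+\mu^2\neq0$ (because $i\mu\notin\{\delta_+,\delta_-\}$), a standard Puiseux/bifurcation analysis of the real-analytic equation $\hat{p}_{\alpha,\beta}(\mu)=0$ at $(\alpha_0,\mu)$ shows that for $\alpha$ near $\alpha_0$ on at least one side it has solutions with nonzero imaginary part, i.e.\ roots of $p_{(\alpha,\beta)}$ off the imaginary axis tending to $i\mu$; hence $i\mu\in\overline{W_{\eR\times\{\beta\}}(T)\setminus i\R}$.

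For the elimination I would substitute $\mu^2+\alpha_0=-\beta\mu^2/(c+d\mu+\mu^2)$ into $\hat{p}_{\alpha_0,\beta}'(\mu)=0$, clear the denominator $c+d\mu+\mu^2$, and cancel the factor $\mu$ (legitimate since $\mu\neq0$), which leaves
\[
	2\bigl(\mu^2+d\mu+c\bigr)^2+\beta\bigl(d\mu+2c\bigr)=0,
\]
whose left-hand side expands to exactly $2q_\beta(\mu)$. For the special points: if $\mu=0$ and $c>0$, then $p_{(\alpha,\beta)}(0)=\alpha c$ vanishes only at $\alpha=0$, the origin is an exactly double root of $p_{(0,\beta)}$ by Lemma~\ref{5nueq0}(iii) (exactly double because $\beta+c\neq0$), and from $\partial_\alpha p_{(\alpha,\beta)}(0)=c\neq0$ — concretely, $\hat{p}_{\alpha,\beta}(\mu)\approx(\beta+c)\mu^2+c\alpha$ near $(0,0)$ — this root splits off the axis for small $\alpha>0$, so $0\in\overline{W_{\eR\times\{\beta\}}(T)\setminus i\R}$, while $q_\beta(0)=c(\beta+c)\neq0$ justifies the separate ``$\mu=0$'' clause. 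If $\mu=0$ and $c=0$, then $0=\delta_+$ and $p_{(\alpha,\beta)}(\omega)=\omega\,h_\alpha(\omega)$ with $h_\alpha$ a cubic, so $0$ is always a root, and the root of $h_\alpha$ nearest $0$ for $\alpha$ near $0$ is purely imaginary to leading order and, by the symmetry of the roots with respect to the imaginary axis, stays on $i\R$; thus $0$ is a limit only of roots on the axis and $0\notin\overline{W_{\eR\times\{\beta\}}(T)\setminus i\R}$ — the stated exception. Finally, if $i\mu\in\{\delta_+,\delta_-\}$ (which meets $i\R$ only for $d\geq2\sqrt c$) with $\delta_\pm\neq0$, then $p_{(\alpha,\beta)}(\delta_\pm)\neq0$ (cf.\ the proof of Proposition~\ref{5boundspol}) forces any off-axis approximating sequence to have $\alpha\to\pm\infty$, and Lemma~\ref{2leminf} lets one count the roots tending to $\delta_\pm$: for $d>2\sqrt c$ there is exactly one, which equals its own mirror image and hence lies on $i\R$, so $\delta_\pm\notin\overline{W_{\eR\times\{\beta\}}(T)\setminus i\R}$ in agreement with $q_\beta(\mu_\pm)\neq0$; for $d=2\sqrt c$ the two roots tending to $\delta_+=\delta_-=-id/2$ form a real pair for $\alpha>0$, so $-id/2\in\overline{W_{\eR\times\{\beta\}}(T)\setminus i\R}$, matching $q_\beta(-d/2)=0$.

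I expect the main obstacle to be the forward direction of the reduction together with the bookkeeping at the degenerate points: one must marshal compactness of $\eR$, continuity of the branches $r_n$, the behaviour at $\alpha=\pm\infty$ from Lemma~\ref{2leminf}, and the mirror symmetry of the roots of $p_{(\alpha,\beta)}$ simultaneously, and then verify by hand that at $\mu=0$ and at $i\mu=\delta_\pm$ — precisely where these tools degenerate — the outcome still matches ``$\mu=0$ or $q_\beta(\mu)=0$'', including the sign dichotomy between $c>0$ and $c=0$ at the origin.
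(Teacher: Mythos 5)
Your proposal is correct and follows essentially the same strategy as the paper: reduce membership of $i\mu$ in $\overline{W_{\eR\times\{\beta\}}(T)\setminus i\R}$ to $i\mu$ being a multiple root of $p_{(\alpha,\beta)}$ (via the mirror symmetry $\omega\mapsto-\overline{\omega}$ for the forward direction and a root-splitting argument for the converse), eliminate to obtain $q_\beta$, and treat $\mu=0$ and $i\mu\in\{\delta_+,\delta_-\}$ separately. The only difference is cosmetic — you derive $2(\mu^2+d\mu+c)^2+\beta(d\mu+2c)=2q_\beta(\mu)$ from $\hat p_{\alpha_0,\beta}'(\mu)=0$ instead of solving the Vieta system \eqref{eq:prop5}, and your bookkeeping at the degenerate points $0$ and $\delta_\pm$ is, if anything, more explicit than the paper's.
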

\begin{proof} 
Assume that $i\mu$ for some $\mu\in\R$ is a root of $p_{(\alpha,\beta)}$ of order greater than one. Hence, $i\mu$ has to be at least a double root and we set $t_1=2\mu,\ v_1=\mu^2$  in \eqref{5gentabdef}. The system \eqref{5gentab} can then be written as 
\begin{equation}\label{eq:prop5}
\begin{array}{c}
2\mu+t_2=-d,\\
2\mu t_2+\mu^2+v_2=\alpha+\beta+c,\\
2\mu v_2+\mu ^2t_2=- \alpha d, \\
\mu^2v_2=\alpha c.
\end{array}
\end{equation}
Solving \eqref{eq:prop5}  shows that $\mu$ is a solution if and only if $\mu=0$ or $\mu$ is a root of \eqref{5imroots}. Assume $i\mu\in\overline{ W_{\eR\times\{\beta\}}(T)\setminus i\R}$ for some $\mu\in \R$. Then, by the symmetry with respect to the imaginary axis, $i\mu$ is a double root of $p_{(\alpha,\beta)}$ for some $\alpha$. Hence $\mu=0$ or $\mu$ is a real solution to \eqref{5imroots}.

For the converse,
assume that one of the poles is purely imaginary and that $i\mu\in\{\delta_+,\delta_-\}$ is a root of $q_\beta$. Then it follows from \eqref{5imroots} that $c\in\{0,d^2/4\}$. Furthermore, for $c=d^2/4$, $\mu=-d/2$ is a solution to \eqref{5imroots}. For $c=d^2/4$, $\delta_+=\delta_-=-id/2$ is in the limit $\alpha\rightarrow\infty$ a root of $p_{(\alpha,\beta)}$, but $p_{(\alpha,\beta)}$ do not have a purely imaginary root for any $\alpha\in\R^+$, which implies $-i\mu\in \overline{ W_{\eR\times\{\beta\}}(T)\setminus i\R}$. 
For $c=0$ is $\mu=0$ a solution to \eqref{5imroots}, and thus a double root of $p_{(\alpha,\beta)}$ for some $\alpha\in\eR$. Moreover, zero is a root of $p_{(\alpha,\beta)}$ for all $\alpha\in\eR$. Hence, the symmetry with respect to the imaginary axis implies that zero only can belong to the set $\overline{ W_{\eR\times\{\beta\}}(T)\setminus i\R}$ if for some $\alpha$, zero is a triple root of $p_{(\alpha,\beta)}$. An ansatz with a triple root implies $\beta=0$, which yields a contradiction. Now assume that $\mu=0$, or $\mu$ is a real solution to \eqref{5imroots}, and $i\mu\notin\{\delta_+,\delta_-\}$. Then $i\mu$ is a double root of $p_{(\alpha,\beta)}$ for some $\alpha\in\R$ and Lemma \ref{4rninj} \rm ii) yields $i\mu\in\overline{ W_{\eR\times\{\beta\}}(T)\setminus i\R}$.
\end{proof} 

Let $\Delta_{q_\beta}$ denote the discriminant of $q_\beta$.

\begin{cor}\label{3multcor}
Let $p_{\alpha,\beta}$ and $q_\beta$ denote the polynomials \eqref{TCurve} and \eqref{5imroots}, respectively. Then $p_{\alpha,\beta}$ has a root $i\mu$, $\mu\in\R\setminus\{0\}$ of multiplicity $n>1$ for some $\alpha\in\eR$ if and only if $\mu$ is a root of $q_\beta$ of multiplicity $n-1$.

\end{cor}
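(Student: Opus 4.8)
The statement relates the multiplicity of a purely imaginary non-zero root $i\mu$ of $p_{\alpha,\beta}$ (as $\alpha$ varies) to the multiplicity of $\mu$ as a root of the quartic $q_\beta$ from \eqref{5imroots}. My plan is to reuse the algebraic bookkeeping already set up in \eqref{5gentabdef}--\eqref{5gentab} and in the proof of Proposition \ref{prop5imroots}, but to carry it one derivative further. The key idea is that ``$i\mu$ is a root of $p_{\alpha,\beta}$ of multiplicity $\geq n$'' can be expressed as a chain of $n$ polynomial identities $p_{\alpha,\beta}(i\mu)=p_{\alpha,\beta}'(i\mu)=\dots=p_{\alpha,\beta}^{(n-1)}(i\mu)=0$, and I want to show this chain is equivalent, after eliminating $\alpha$ (and the auxiliary unknowns $t_2,v_2$), to ``$q_\beta(\mu)=q_\beta'(\mu)=\dots=q_\beta^{(n-2)}(\mu)=0$''.

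\textbf{Step 1.} Fix $\mu\in\R\setminus\{0\}$. From the proof of Proposition \ref{prop5imroots}, if $i\mu$ is a root of $p_{\alpha,\beta}$ of multiplicity $\geq 2$ we may set $t_1=2\mu$, $v_1=\mu^2$ in \eqref{5gentabdef}, and the system \eqref{eq:prop5} determines $t_2,v_2$ and forces, after elimination, $q_\beta(\mu)=0$; moreover by Lemma \ref{4rninj}\,ii) the value $\alpha$ is uniquely pinned down (it is $-\mu^2-\tfrac{\mu^2}{c+d\mu+\mu^2}\beta$), so the correspondence $\mu\mapsto\alpha$ along purely imaginary double roots is a fixed rational function, call it $\alpha(\mu)$. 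Substituting $\alpha(\mu)$ back into $p_{\alpha,\beta}$ I get a one-parameter family $\omega\mapsto p_{\alpha(\mu),\beta}(\omega)$; the factorization of this polynomial has $(\omega-i\mu)^2$ as a factor, and I claim the remaining quadratic factor is (up to the known constant) governed by the pair $(t_2,v_2)$ solved from \eqref{eq:prop5}.

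\textbf{Step 2 (the crux).} Compute $p_{\alpha(\mu),\beta}(i\nu)$ as a polynomial in $\nu$ and $\mu$ and show that, after dividing out $(\nu-\mu)^2$, the ``extra'' purely imaginary roots $\nu$ satisfy an equation whose coincidence with $\nu=\mu$ is detected exactly by $q_\beta'(\mu)$. Concretely, I expect the identity
\[
\hat p_{\alpha(\mu),\beta}(\nu)=\bigl(\nu-\mu\bigr)^2\,g(\nu,\mu),\qquad g(\mu,\mu)=\kappa\,q_\beta'(\mu)
\]
for some nonvanishing rational factor $\kappa=\kappa(\mu)$ (nonvanishing away from the poles and away from $\mu=0$). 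Granting this, $i\mu$ is a root of $p_{\alpha,\beta}$ of multiplicity $\geq 3$ iff $g(\mu,\mu)=0$ iff $q_\beta'(\mu)=0$, i.e.\ iff $\mu$ is a root of $q_\beta$ of multiplicity $\geq 2$. Iterating (differentiating $g(\nu,\mu)$ in $\nu$ and evaluating at $\nu=\mu$, matching against $q_\beta''(\mu)$, etc.) gives the full statement: multiplicity $n$ for $p_{\alpha,\beta}$ at $i\mu$ $\iff$ multiplicity $n-1$ for $q_\beta$ at $\mu$. The ``drop by one'' is structurally forced: passing from $p_{\alpha,\beta}$ to the imaginary-axis restriction $\hat p_{\alpha,\beta}$ and then imposing the relation $\alpha=\alpha(\mu)$ consumes precisely one order of vanishing.

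\textbf{Main obstacle.} The real work is the explicit algebraic identity in Step 2: verifying that the quotient polynomial, after substituting the forced value $\alpha(\mu)$, has its coincidence locus with $\nu=\mu$ cut out exactly by the derivatives of $q_\beta$, with a provably nonzero proportionality factor on $\R\setminus\{0\}$ minus the poles. An alternative, cleaner route that avoids grinding this out: note $q_\beta(\mu)$ is (a constant multiple of) the discriminant-type resultant eliminating the quadratic factor from \eqref{eq:prop5}, and use the general fact that if $R(\mu)$ is obtained from a polynomial family $P_\alpha$ by substituting the unique $\alpha=\alpha(\mu)$ making $i\mu$ a double root, then $\mathrm{ord}_{\mu=\mu_0}\bigl(\text{multiplicity excess of }i\mu\text{ in }p_{\alpha(\mu),\beta}\bigr) = \mathrm{ord}_{\mu_0} q_\beta - 1$ whenever $\alpha'(\mu_0)\neq 0$ and $i\mu_0\notin\{\delta_\pm\}$ — which holds here because $\alpha(\mu)$ is non-constant and the excluded cases $i\mu\in\{\delta_+,\delta_-\}$, $\mu=0$ were already handled in Proposition \ref{prop5imroots}. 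Either way, Lemma \ref{5nueq0} and Lemma \ref{4rninj} guarantee the bookkeeping is consistent (finitely many multiple-root parameters, uniqueness of $\alpha$), so no degenerate family can spoil the count; the only genuinely delicate point is confirming the proportionality constant never vanishes on the relevant range.
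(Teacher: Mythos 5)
Your overall strategy --- restrict $p_{\alpha,\beta}$ to the imaginary axis, substitute the forced value $\alpha(\mu)=-\mu^2-\mu^2\beta/(\mu^2+d\mu+c)$, and read off higher multiplicities of $i\mu$ from successive derivatives of $q_\beta$ --- is viable, and the identity you ``expect'' in Step 2 is in fact true: writing $P(\mu):=\mu^2+d\mu+c$ one finds $\hat p_{\alpha(\mu),\beta}(\nu)=(\nu-\mu)G(\nu,\mu)$ with $G(\mu,\mu)=2\mu\, q_\beta(\mu)/P(\mu)$, and on the locus $q_\beta(\mu)=0$ a further computation gives $\partial_\nu G(\nu,\mu)|_{\nu=\mu}=\mu\, q_\beta'(\mu)/P(\mu)$, with the factor $\mu/P(\mu)$ nonvanishing for $\mu\neq0$ and $i\mu\notin\{\delta_+,\delta_-\}$. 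But as written the proposal is a plan, not a proof: the crux (this identity, the nonvanishing of the proportionality factor, and the next iteration matching $q_\beta''$ for the quadruple-root case) is exactly what you defer, and you are candid that it is ``the real work''. Note also that your displayed factorization $\hat p_{\alpha(\mu),\beta}(\nu)=(\nu-\mu)^2g(\nu,\mu)$ is not an identity in $\mu$ --- the second factor of $(\nu-\mu)$ appears only on the zero locus of $q_\beta$ --- so the claim $g(\mu,\mu)=\kappa\, q_\beta'(\mu)$ must be formulated and verified modulo $q_\beta(\mu)=0$, which is where all the actual algebra lives.

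The proposed ``cleaner alternative'' is broken. Its hypothesis $\alpha'(\mu_0)\neq0$ fails at every point where the corollary says anything: differentiating gives
\[
\alpha'(\mu)=-\frac{2\mu\,q_\beta(\mu)}{\left(\mu^2+d\mu+c\right)^2},
\]
so the real roots of $q_\beta$ are precisely the critical points of $\alpha(\cdot)$ --- which is just the statement that $i\mu$ becomes a double root exactly where the envelope condition holds. ``Non-constant'' does not imply ``no critical points'', so that route cannot be salvaged as stated. For comparison, the paper argues by cases on $n\in\{2,3,4\}$: the case $n=2$ is Proposition \ref{prop5imroots}; the case $n=4$ is a direct computation showing that a quadruple root of $p_{(\cdot,\beta)}$ at $i\mu$ corresponds to a triple root of $q_\beta$; and the case $n=3$ is characterized through the vanishing of the discriminant $\Delta_{q_\beta}$ via \eqref{5gentabdef}--\eqref{5gentab}, together with the observation that $q_\beta$ cannot have a quadruple root. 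Either organization requires the same kind of explicit elimination; yours packages it as a division-with-remainder computation that still has to be carried out before the argument is complete.
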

\begin{proof}
The assumptions on the coefficients imply that  $q_\beta$ can not have an quadruple root.
A straightforward calculation shows that $p_{(\cdot,\beta)}$ has a quadruple root $i\mu$ for some $\alpha$ if and only if $\mu$ is a triple root of $q_\beta$. 
Assume that $p_{(\cdot,\beta)}$ has a triple root $i\mu$ for some $\alpha$. From \eqref{5gentabdef} and \eqref{5gentab} follows that this assumption is equivalent to $\Delta_{q_\beta}=0$. Hence, $q_\beta$ has a multiple root and the multiplicity must be two. 
For $n=2$, we showed that the multiplicity of a root of $q_\beta$ can not be larger than one. Then the result follows from Proposition \ref{prop5imroots}. 
\end{proof}
The multiplicity of a real root $\mu$ of $q_\beta$ determine the number of segments of $\overline{ W_{\eR\times\{\beta\}}(T)\setminus i\R}$ intersecting $i\mu$, (if $c=0$ there is no intersection in zero).

For convenience, we set in Lemma \ref{5lemd} some constants to $\infty$. These constants are used in Proposition \ref{4ax_crprop}. 
\begin{lem}\label{5lemd} 
Let $q_\beta$ be the polynomial \eqref{5imroots} and $\Delta_{q_\beta}$ its discriminant. Then the following properties hold:
\begin{itemize} 
\item[\rm i)] For $\beta<4c$, $\Delta_{q_\beta}=0$ has an unique non-negative solution $d_1\in(0,2\sqrt{c})$. Set $d_2=d_3=\infty$.
\item[\rm  ii)] For $4c\leq\beta<8c$, $\Delta_{q_\beta}=0$ has the three non-negative solutions $d_1\in(0,2\sqrt{c})$, $d_2\in(2\sqrt{c},2\sqrt{\beta}]$, and $d_3\in[2\sqrt{\beta},\infty)$.
\item[\rm iii)] For $\beta\geq8c>0$, $\Delta_{q_\beta}=0$ has two non-negative solutions $d_1\in(0,2\sqrt{c})$, $d_2\in(2\sqrt{c},2\sqrt{\beta}]$. Set $d_3=\infty$. 
\item[\rm iv)] For $c=0$, $\Delta_{q_\beta}=0$ has two non-negative solutions $d_1=0$, $d_2=27\beta/32$. Set $d_3=\infty$.  
\item[\rm v)] The polynomial $q_\beta$ has zero real roots if $d<d_1$ and four real roots if $d_2\leq d\leq d_3$. In all other cases $q_\beta$ has two real roots.
\end{itemize}
\end{lem}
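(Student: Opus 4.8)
The plan is to compute $\Delta_{q_\beta}$ in closed form, exhibit a factorisation that isolates a cubic in one auxiliary variable, and then read off all five claims from a sign analysis of that cubic. First I would depress the quartic: the substitution $\mu=\nu-d/2$ leaves the discriminant unchanged, and writing $s:=d^2/4-c$ a direct expansion turns $q_\beta(\mu)$ into $\nu^4-2s\nu^2+\tfrac{\beta d}{2}\nu+s(s-\beta)$. Here $s$ runs increasingly over $[-c,\infty)$ as $d$ runs over $[0,\infty)$, and the quadratic and constant coefficients $p=-2s$, $r=s(s-\beta)$ satisfy the handy identity $p^2-4r=4s\beta$. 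For $\beta=0$ the depressed polynomial is the perfect square $(\nu^2-s)^2$, so $\beta^2\mid\Delta_{q_\beta}$. Substituting $p=-2s$, $q=\beta d/2$, $r=s(s-\beta)$ into the standard quartic‑discriminant formula and using $d^2=4(s+c)$ to remove $d$, the top‑degree terms in $s$ cancel and one is left with
\[
	\Delta_{q_\beta}=\beta^2\,D(s),\qquad D(s)=32(\beta-8c)s^3+9\beta(32c-3\beta)s^2-54c\beta^2 s-27c^2\beta^2,
\]
which degenerates to a quadratic exactly when $\beta=8c$. Two evaluations do most of the work: $D(-c)=256c^3(c+\beta)$ and $D(0)=-27c^2\beta^2$.

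For parts i)--iv) I would note that $s\mapsto 2\sqrt{c+s}$ is an increasing bijection of $(-c,\infty)$ onto $(0,\infty)$ carrying $(-c,0)$, $(0,\beta-c]$, $[\beta-c,\infty)$ onto $(0,2\sqrt c)$, $(2\sqrt c,2\sqrt\beta]$, $[2\sqrt\beta,\infty)$ respectively, so it suffices to locate the real zeros of $D$ in $[-c,\infty)$. Assume $c>0$. From $D(-c)>0>D(0)$ there is a zero $s_1\in(-c,0)$; a short computation shows $D'<0$ on $(-c,0)$ (it is a quadratic with $D'(-c),D'(0)<0$ whose zeros, when real, lie outside $(-c,0)$), so $D$ is strictly decreasing there and $s_1$ is unique, giving $d_1\in(0,2\sqrt c)$. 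For the remaining zeros I would split according to the sign of the leading coefficient $32(\beta-8c)$ and use the values of $D$ at $s=0,\ \beta-c,\ \beta$ together with $\lim_{s\to+\infty}D$: the goal is to show that the local maximum of $D$ on $(0,\infty)$ is negative for $\beta<4c$ (no further zero, so $d_2=d_3=\infty$), becomes a double zero at $s=\beta-c$ when $\beta=4c$, splits into zeros $s_2\in(0,\beta-c]$ and $s_3\in[\beta-c,\infty)$ for $4c<\beta<8c$, while for $\beta\ge 8c$ the third real zero of $D$ has abscissa $<-c$, leaving only $s_1$ and $s_2\in(0,\beta-c]$ (so $d_3=\infty$). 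The case $c=0$ is immediate: then $D(s)=\beta s^2(32s-27\beta)$, whose zeros give $d_1=0$ and the stated $d_2$.

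Part v) I would get by overlaying the sign chart of $D$ with the classical real‑root count for a monic real quartic $\nu^4+p\nu^2+q\nu+r$: two real roots when the discriminant is negative; when it is positive, four real roots if $p<0$ and $p^2-4r>0$, and no real roots otherwise. Since $\Delta_{q_\beta}$ has the sign of $D(s)$ while $p=-2s$ and $p^2-4r=4s\beta$, this reads: four real roots iff $D(s)>0$ and $s>0$; no real roots iff $D(s)>0$ and $s\le 0$; two real roots iff $D(s)<0$. From parts i)--iv), $D>0$ with $s<0$ holds exactly for $d<d_1$, $D\ge 0$ with $s>0$ holds exactly for $d_2\le d\le d_3$, and $D\le 0$ holds otherwise, which is the claim (the endpoints $d\in\{d_1,d_2,d_3\}$ by continuity).

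I expect the main obstacle to be the bookkeeping in parts i)--iv): first verifying the cancellations that produce $\Delta_{q_\beta}=\beta^2 D(s)$, and --- more seriously --- pinning down how the two ``large'' zeros of $D$ are created at $\beta=4c$ and how one of them crosses $s=\beta-c$ and then escapes to abscissa $<-c$ as $\beta$ passes through $8c$. This needs simultaneous control of $D$ and its critical points as functions of the two parameters $s$ and $\beta$, e.g. via the discriminant of $D$ (in $s$) as a function of $\beta$ together with the sign of $D$ at $s=\beta-c$.
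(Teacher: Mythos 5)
Your setup is essentially the paper's: you compute $\Delta_{q_\beta}$ as a cubic in $d^2/4$ (the paper works with $\hat d:=d^2/4$, you with the shift $s=\hat d-c$, which is immaterial), and your expression $\Delta_{q_\beta}=\beta^2D(s)$ agrees term by term with the paper's formula for $f(\hat d)/(32\beta^2)$ after substituting $\hat d=s+c$. Your evaluations $D(-c)=256c^3(c+\beta)$ and $D(0)=-27c^2\beta^2$ are the paper's $f(0)>0$, $f(c)<0$, and your treatment of part v) via the classical real-root count for a depressed quartic (using $p=-2s$, $p^2-4r=4s\beta$) is correct and in fact cleaner than the paper's argument, which handles $d<d_1$ by continuity from $d=0$ and $d_2\le d\le d_3$ by evaluating $q_\beta$ at $-d/2-\sqrt{d^2/4-c}$.

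The genuine gap is in parts i)--iii): everything you actually prove there is the existence and uniqueness of the zero $s_1\in(-c,0)$, while the content of the lemma --- that $D$ has \emph{no further} zeros in $[0,\infty)$ when $\beta<4c$, acquires exactly two of them (one in $(0,\beta-c]$, one in $[\beta-c,\infty)$) when $4c\le\beta<8c$, and loses the larger one to $s<-c$ when $\beta\ge 8c$ --- is announced as a ``goal'' and deferred to a two-parameter analysis you describe as the main obstacle. That step is where the proof lives, and it cannot be read off from $D(-c)$, $D(0)$ and the sign of the leading coefficient alone (for $\beta<4c$ the cubic has $D(0)<0$ and negative leading coefficient, yet could still bump above zero on $(0,\infty)$). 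The paper closes this in one stroke: the discriminant of the cubic is $\Delta_f=2\cdot 6^9\beta^{12}c^3(\beta-4c)^3$, so for $\beta<4c$ the cubic has a single real root (which must be $s_1$), and for $\beta\ge 4c$ it has three, which are then located by the endpoint evaluations $f(\beta)\ge 0$ (i.e. $D(\beta-c)\ge 0$) and the behavior as $\hat d\to\infty$, with the $\beta=8c$ degeneration handled separately. You name exactly this tool (``the discriminant of $D$ in $s$ as a function of $\beta$'') but do not compute it; once you do, the feared simultaneous control of $D$ and its critical points collapses to reading off the sign of $(\beta-4c)^3$, and your outline completes. A smaller loose end: your parenthetical justification that $D'<0$ on $(-c,0)$ (``zeros, when real, lie outside $(-c,0)$'') also needs an actual argument, e.g. via the sign of the product and sum of the roots of $D'$, though this one does check out.
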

\begin{proof}
Let $\hat{d}:=d^2/4$ and consider $f(\hat{d}):=\Delta_{q_\beta}$ as a polynomial in $\hat{d}$, where each positive root will correspond to exactly one positive solution $d$. By the definition of the discriminant, we obtain
\begin{equation}\label{4discr2}
	\frac{f(\hat{d})}{32\beta^2}=(\beta - 8 c) \hat{d}^3-\left(\frac{27}{32} \beta^2 - 6\beta c - 24 c^2\right) \hat{d}^2 -3c^2(5\beta + 8c)\hat{d} +8c^3(\beta+c).
\end{equation}
\rm iv) For $c=0$ the roots are $0$, $27\beta/32$ and the result follows. If $c>0$, the existence of a root $\hat{d}_1\in(0,c)$ follows from $f(0)>0$,  $f(c)<0$. The discriminant of $f$ is $\Delta_f=2\cdot6^9\beta^{12}c^3(\beta-4c)^3$. 

\rm i) Assume $\beta<4c$,  then $\Delta_f<0$ and thus $f$ has only one real root. 

\rm ii) Assume $4c \leq \beta < 8c$, then  $\Delta_f\geq0$ and $f$ is a cubic polynomial. It can be seen that $f(\beta)\geq0$ and $f(\hat{d})\rightarrow -\infty$, $\hat{d}\rightarrow \infty$. Hence there is one root $\hat{d}_2$ in $(c,\beta]$ and one root $\hat{d}_3$ in $[\beta,\infty)$.

\rm iii) Assume $\beta\geq8c$. Then $f(\beta)>0$, thus there is a root $\hat{d}_2$ in $(c,\beta]$. In the special case $\beta=8c$,  $f$ is a quadratic polynomial and thus there are no more roots. Otherwise $\beta>8c$ and then the last root will be negative.

\rm v) The sign of $f$ will be negative if and only if $d_1<d<d_2$ or $d>d_3$ and thus in these cases $q_\beta$ has two roots. In all other cases it will either have zero or four roots. When $d=0$, $q_\beta$ has no roots and by continuity $q_\beta$ has no roots for $d<d_1$. For $d_2\leq d\leq d_3$ it holds that $d>2\sqrt{c}$ and $q_\beta(-d/2-\sqrt{d^2/4-c})<0$. Then, since the highest order term of $q_\beta$ is positive it must have at least one root and thus four roots.
\end{proof}

Figures \ref{4fig:imag_c}.(a) and \ref{4fig:imag_c}.(c) depict $ W_{\eR\times\{\beta\}}(T)$ for $d=d_1$ and $d=d_2=d_3$,  respectively.
In Figure \ref{4fig:imag_c}.(c) the set $W_{\eR\times\{\beta\}}(T)$ intersects the imaginary axis three times at $-1$. This can only happen when $d=2\sqrt{\beta}=4\sqrt{c}$ and $\mu=-d/4$, which implies that all sets $ W_{\eR\times\{\beta\}}(T)$ with this property are linear scalings of the case presented in Figure \ref{4fig:imag_c}.(c).
\begin{center}
\begin{figure}
\includegraphics[width=12.5cm]{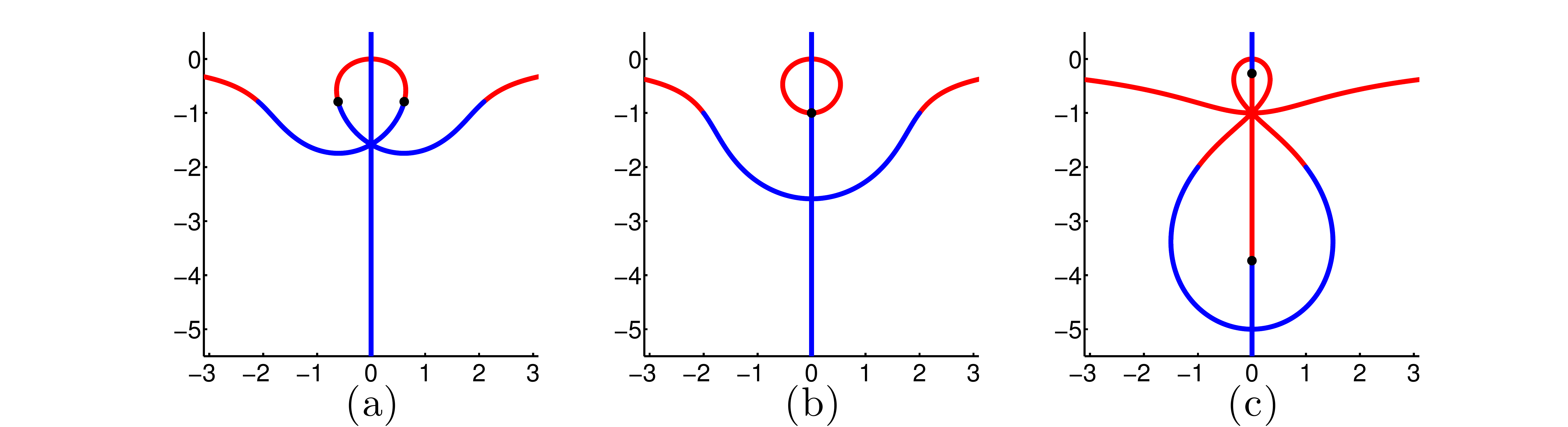}
\caption{Examples of the set $ W_{\eR\times\{\beta\}}(T)\subset\eC$, where red and blue denotes $W_{[0,\infty]\times\{\beta\}}(T)$ and $W_{[-\infty,0)\times\{\beta\}}(T)$, respectively.
In (a) there is a double root of the function $q_\beta$ in \eqref{5imroots}. In (b) there are two distinct roots. In (c) there is one distinct root and one triple root.}\label{4fig:imag_c}
\end{figure}
\end{center}
\begin{prop}\label{4ax_crprop}
Let $d_k\in\eR$, $k=1,2,3$ denote the constants defined in Lemma {\rm \ref{5lemd}} and set
\begin{align}
	I_1 &:=(-d/2-\sqrt{2d^2-8c})/2,-d/2),\quad  I_2:=(-\infty,-d/2-\sqrt{2d^2-8c}/2),\nonumber \\ 
	 l_3 & :=(-d/2-\sqrt{d^2-4c}/2,-d/2),\quad\quad I_4:=(-\infty,-d/2-\sqrt{d^2-4c}/2).\nonumber
\end{align}
For $c>0$, the set $\overline{ W_{\eR\times\{\beta\}}(T)\setminus i\R}$ defined as in \eqref{4curveb} intersects the imaginary axis at zero and in the following points, counting multiplicity: 
\begin{itemize}
\item[\rm i)]   For $d<2\sqrt{c}$, there are no intersections if $d<d_1$. If $d\geq d_1$ there are two intersections in the interval $(-\infty,-d/2)$. 
\item[\rm ii)]   For $d=2\sqrt{c}$, the two intersections are $-d/2$ and $-(d+\sqrt[3]{4\beta d})/2$.
\item[\rm iii)]   For $2\sqrt{c}<d<2\sqrt{\beta+c}$ there is one intersection in $I_2$. Additionally, if $d_2\leq d\leq d_3$ there are three intersections in $(-d/2,0)$, and if $d\geq d_3$ one intersection in $(-d/2,0)$. 
\item[\rm iv)]   For $d=2\sqrt{\beta+c}$, $-d/2$ is an intersection and there is one intersection in the interval $I_2$. If $d\geq d_3$ there are two further intersections in the interval $(-d/2,0)$, and if $d<d_3$ there are no further intersections.
\item[ \rm v)]   For $d>2\sqrt{\beta+c}$, there is one intersection in $I_4$, and one in $(-\infty,d/2) \setminus I_4$.  If $d\geq d_3$ there are two further intersections in the interval $(-d/2,0)$, and if $d<d_3$ there are no further intersections.
\end{itemize}
 \end{prop}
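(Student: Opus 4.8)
The plan is to reduce the claim about intersections of $\overline{W_{\eR\times\{\beta\}}(T)\setminus i\R}$ with $i\R$ entirely to the location and multiplicity of the real roots of the quartic $q_\beta$ from \eqref{5imroots}, using Proposition~\ref{prop5imroots} together with Corollary~\ref{3multcor}. By Proposition~\ref{prop5imroots} the intersection points are exactly $0$ and the points $i\mu$ with $q_\beta(\mu)=0$, $\mu\in\R$, and Corollary~\ref{3multcor} tells us the multiplicity counted as an intersection equals the root multiplicity in $q_\beta$ (for $\mu\neq0$). So the whole proposition is a statement about how many real roots $q_\beta$ has and in which subintervals of $\R$ they lie, as the parameter $d$ sweeps through the regimes $d<2\sqrt c$, $d=2\sqrt c$, $2\sqrt c<d<2\sqrt{\beta+c}$, $d=2\sqrt{\beta+c}$, $d>2\sqrt{\beta+c}$, further refined by comparison with $d_1,d_2,d_3$.

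Here is the order I would carry it out. First, recall from Lemma~\ref{5lemd}\,(v) that $q_\beta$ has zero real roots for $d<d_1$, four for $d_2\le d\le d_3$, and two otherwise; this immediately settles the \emph{count} of intersections in each case, so what remains is to pin down the \emph{intervals}. For the location, the basic tool is sign evaluation of $q_\beta$ at the candidate breakpoints: $q_\beta(0)=c(\beta+c)>0$, the leading coefficient is positive, and one computes $q_\beta(-d/2)$ and $q_\beta$ at the endpoints $-d/2\pm\sqrt{d^2/4-c}$ of $\mathcal N$ (when $d\ge2\sqrt c$), as well as at $-d/2-\sqrt{2d^2-8c}/2$ and $-d/2-\sqrt{d^2-4c}/2$ — these are exactly the endpoints of $I_1,I_2,l_3,I_4$. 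The sign pattern, combined with the known number of real roots, localizes each root. For instance, in case (i), $d\ge d_1$ (and $d<2\sqrt c$ so there is no $\mathcal N$) forces two real roots; since $\Delta_{q_\beta}$ vanishes precisely at $d_1$, at $d=d_1$ this is a double root, and $q_\beta(0)>0$ together with $q_\beta'(0)=d(\beta/2+2c)>0$ and the leading sign push both roots into $(-\infty,-d/2)\subset(-\infty,0)$; a monotonicity/convexity argument on $[-d/2,0)$ analogous to the one in the proof of Lemma~\ref{5pmfac} shows $q_\beta>0$ there.

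The special boundary cases (ii) and (iv) should be checked directly: for $d=2\sqrt c$ one substitutes and finds that $\mu=-d/2$ is a root, then factors $q_\beta$ to exhibit the remaining real root as $-(d+\sqrt[3]{4\beta d})/2$; similarly for $d=2\sqrt{\beta+c}$ one verifies $\mu=-d/2$ is a root (this is where $\hat p_{\alpha,\beta}(-d/2)=0$ from Lemma~\ref{5pmfac}\,(ii) enters) and analyzes the cofactor cubic for its real roots, comparing with $d_3$. For cases (iii) and (v) the refinement by $d_2\le d\le d_3$ vs.\ $d\ge d_3$ uses Lemma~\ref{5lemd}\,(v): when $d_2\le d\le d_3$ there are four real roots, two of which were already located in $I_2$ (resp.\ $I_4$ and its complement within $(-\infty,-d/2)$), so the other two must lie in $(-d/2,0)$; one then checks they are genuinely in the open interval by evaluating $q_\beta$ at $-d/2$ and at $0$ and using that no root can equal an endpoint of $\mathcal N$ except in the degenerate $c\in\{0,d^2/4\}$ situations already excluded or handled.

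The main obstacle I expect is the bookkeeping in cases (iii)--(v): one has to simultaneously track (a) whether $-d/2\in\mathcal N$ and hence whether roots are allowed to sit on $\partial\disc$, (b) the sign of the discriminant $\Delta_{q_\beta}$ as encoded by $d_1,d_2,d_3$, and (c) the sign of $q_\beta$ at several $d$-dependent algebraic points whose own sign changes as $d$ crosses $2\sqrt c$ and $2\sqrt{\beta+c}$. Making the interval endpoints $-d/2\pm\sqrt{2d^2-8c}/2$ and $-d/2\pm\sqrt{d^2-4c}/2$ come out as clean closed-form factors of the relevant cofactor polynomials is the computational heart of the argument; I would organize it by first writing $q_\beta$ in the shifted variable $\nu=\mu+d/2$ to exploit the symmetry of $\mathcal N$ about $-d/2$, which turns $q_\beta$ into a polynomial in $\nu$ with only even-plus-linear terms and makes the factorizations transparent, and only afterward translate the root locations back into the stated intervals.
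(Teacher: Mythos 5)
Your proposal follows essentially the same route as the paper: identify the intersections with the real roots of $q_\beta$ via Proposition \ref{prop5imroots} (with multiplicities via Corollary \ref{3multcor}), obtain the root counts from Lemma \ref{5lemd} v), and localize the roots by evaluating the sign of $q_\beta$ at $0$, $-d/2$ (note $q_\beta(-d/2)=(d^2/4-c)(d^2/4-c-\beta)$, which produces the case split at $d=2\sqrt{c}$ and $d=2\sqrt{\beta+c}$) and at the endpoints of $I_2$, $l_3$, $I_4$, together with convexity of $q_\beta$ on the unbounded intervals. The shift $\nu=\mu+d/2$ is only an organizational variant; the substance of the argument is identical to the paper's.
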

 
 \begin{proof}
Assume $c>0$, then the intersections will coincide with roots of $q_\beta$.
 
\rm i) From Lemma \ref{5lemd} \rm{iv}) follows that there are no intersections when $d\in (0,d_1)$ and two intersections for $d\in [d_1,2\sqrt{c})$. Since $q_\beta(\mu)>0$ for $\mu\geq -d/2$, $d\in [d_1,2\sqrt{c})$ the two intersections are in $(-\infty,-d/2)$. 

\rm ii)  Follows from straight forward computations. 

\rm iii) The value $q_\beta(\mu)$ is negative on $(-\infty,d/2) \setminus I_2$, thus there are no intersections in $(-\infty,d/2) \setminus I_2$. The function $q_\beta$ is convex on $I_2$ and $q_\beta(-\infty)>0$. Hence, there is one intersection in $I_2$. All other intersections are in $(-d/2,0)$ and the number of intersections is given by  Lemma \ref{5lemd} \rm{iv}).

\rm iv)  A  straight forward computation show that $q(-d/2)=0$ and the remaining statements follows as in \rm iii). 

\rm v)  We have $q(-d/2)>0$, $q'(\mu)>0$ for $\mu\in I_3$, and $q(-(d+\sqrt{d^2-4c})/2)<0$, thus one root of $q$ is in $I_3$. The function $q_\beta$ is convex on $I_4$ and $q_\beta(-\infty)>0$. Hence, there is one intersection in $I_4$. All other intersections are in $(-d/2,0)$ and the number of intersections is given by  Lemma \ref{5lemd} \rm{v}).
\end{proof}

\begin{rem}
The statements of Proposition \ref{4ax_crprop} hold also in the case $c=0$ except that there is no intersection in $0$.
\end{rem}
 
\begin{prop}\label{4cdv}
Let $W_{\eR\times\{\beta\}}(T)\setminus i\R$ be defined as in \eqref{4curveb}. Then the point $\omega$ is in
$\overline{ W_{\eR\times\{\beta\}}(T)\setminus i\R}$ if and only if $\omega\in\{0,\infty\}$ ($\omega=\infty$ if $c=0$) or $\omega_\Im\neq0$ and one of the following four equations hold:
\begin{equation}\label{5ctdt}
\omega_\Re=\pm\sqrt{P_{\omega_\Im}\pm\sqrt{P_{\omega_\Im}^2-Q_{\omega_\Im}}-\omega_\Im^2},
\end{equation}
where 
\begin{equation}\label{4defcd}
\begin{array}{l}
P_{\omega_\Im}:=c-\dfrac{d^2}{2}-d\omega_\Im-\dfrac{\beta d}{4\omega_\Im},\\
Q_{\omega_\Im}:=\beta c+c^2+2cd\omega_\Im+4c\omega_\Im^2.
\end{array}
\end{equation}
\end{prop}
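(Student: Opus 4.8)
\emph{Overview of the plan.} The plan is to reduce the statement to the single polynomial equation \eqref{4newb} (with the fixed $\beta$), to solve that equation for $\omega_\Re$ in terms of $\omega_\Im$, and then to pass to the closure. Throughout, $\beta>0$ by the standing assumption of Section~\ref{Sec:beta}.

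\emph{Step 1 (reduction to \eqref{4newb}).} First I would prove that for every $\omega\in\C\setminus i\R$ one has $\omega\in W_{\eR\times\{\beta\}}(T)$ if and only if $\omega$ satisfies \eqref{4newb} with the given value of $\beta$; write $S$ for the set of such $\omega$. For $\omega\notin i\R\cup\partial\disc\cup\{\delta_+,\delta_-\}$ this is immediate from Proposition~\ref{5bounds} together with Lemma~\ref{4rninj}\,i): the only pair $(\alpha,\beta')\in\R^2$ with $r_n(\alpha,\beta')=\omega$ for some $n$ is $(\alp(\omega),\bet(\omega))$, so $\omega\in W_{\eR\times\{\beta\}}(T)$ iff $\bet(\omega)=\beta$, and clearing the non-zero denominator $d|\omega|^2+2c\omega_\Im$ turns this into \eqref{4newb}. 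On $\partial\disc\setminus i\R$ the right-hand side of \eqref{4newb} vanishes while its left-hand side equals $-2\omega_\Im|c-id\omega-\omega^2|^2$, which (since $\omega_\Im\neq0$ there) is zero only at $\delta_\pm$; and by Lemma~\ref{2lemdisc} together with Lemma~\ref{2leminf} it is exactly $\delta_+$ and $\delta_-$ that belong to $W_{\eR\times\{\beta\}}(T)$ among the points of $\partial\disc\setminus i\R$. Hence the equivalence holds on all of $\C\setminus i\R$.

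\emph{Step 2 (solving \eqref{4newb}).} Next I would set $z:=\omega_\Re^2$ and use that the bracket in \eqref{4newb} equals $|c-id\omega-\omega^2|^2$, which as a function of $z$ is $z^2+(2\omega_\Im^2+2d\omega_\Im+d^2-2c)z+(\omega_\Im^2+d\omega_\Im+c)^2$. For $\omega_\Im\neq0$, dividing \eqref{4newb} by $-2\omega_\Im$ turns it into the quadratic
\[
z^2+2\bigl(\omega_\Im^2-P_{\omega_\Im}\bigr)z+\bigl(\omega_\Im^4-2\omega_\Im^2P_{\omega_\Im}+Q_{\omega_\Im}\bigr)=0 .
\]
The key computational identity is that the constant term is $q_\beta(\omega_\Im)$, i.e.\ $q_\beta(\mu)=\mu^4-2\mu^2P_\mu+Q_\mu$ for the polynomial $q_\beta$ of \eqref{5imroots}. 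Completing the square gives $(z+\omega_\Im^2-P_{\omega_\Im})^2=P_{\omega_\Im}^2-Q_{\omega_\Im}$, hence $z=P_{\omega_\Im}-\omega_\Im^2\pm\sqrt{P_{\omega_\Im}^2-Q_{\omega_\Im}}$, and taking $\omega_\Re=\pm\sqrt z$ produces exactly the four equations \eqref{5ctdt}. Thus, for $\omega_\Im\neq0$, $\omega$ satisfies one of \eqref{5ctdt} if and only if it satisfies \eqref{4newb}; separating the cases $\omega_\Re\neq0$ and $\omega_\Re=0$, the former solutions are $S$ and the latter are the $i\mu$ with $\mu\neq0$ and $q_\beta(\mu)=0$ (on the imaginary axis \eqref{4newb} reduces to $q_\beta(\omega_\Im)=0$).

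\emph{Step 3 (closure) and the main obstacle.} Since $S$ is cut out of $\C\setminus i\R$ by a polynomial, $\overline S\setminus S\subset i\R\cup\{\infty\}$; on the real axis \eqref{4newb} forces $\beta d\,\omega_\Re^2=0$, so $0$ is the only possible finite real limit point, and $\infty\in\overline S$ because $S$ is unbounded. By Proposition~\ref{prop5imroots} the points of $\overline{W_{\eR\times\{\beta\}}(T)\setminus i\R}=\overline S$ lying on $i\R$ are precisely $0$ (which occurs iff $c>0$) together with the $i\mu$ satisfying $q_\beta(\mu)=0$, and these latter points are already among the solutions of \eqref{5ctdt} found in Step~2. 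Assembling Steps~1--3 identifies $\overline S$ with $\{0,\infty\}\cup\{\omega:\omega_\Im\neq0\text{ and one of }\eqref{5ctdt}\text{ holds}\}$ when $c>0$, and with the same set without $0$ when $c=0$, which is the assertion. I expect the main obstacle to be the bookkeeping in Step~1 — verifying that neither $\partial\disc$ nor the poles $\delta_\pm$ contribute anything beyond what \eqref{5ctdt} already records — together with the lengthy but routine check of the identity $q_\beta(\mu)=\mu^4-2\mu^2P_\mu+Q_\mu$ and of the factorization of $|c-id\omega-\omega^2|^2$ used in Step~2.
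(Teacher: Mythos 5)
Your proposal is correct, and it reaches the result by a route that differs in mechanism from the paper's. The paper's proof is a two-line sketch: it pairs the roots of the quartic $p_{(\alpha,\beta)}$ as $\{\omega,-\overline{\omega}\}$, sets $t_1=2\omega_\Im$, $v_1=|\omega|^2$ in \eqref{5gentabdef}, and eliminates $t_2,v_2,\alpha$ from the Vieta system \eqref{5gentab} to obtain a quadratic in $v_1$. You instead start from the $\alpha$-free equation \eqref{4newb} of Proposition \ref{5bounds} (i.e.\ $\bet(\omega)=\beta$ with the denominator cleared) and solve a quadratic in $z=\omega_\Re^2$; since $v_1=z+\omega_\Im^2$, the two eliminations produce the same quadratic $(z+\omega_\Im^2-P_{\omega_\Im})^2=P_{\omega_\Im}^2-Q_{\omega_\Im}$, and your identities (the bracket in \eqref{4newb} equals $|c-id\omega-\omega^2|^2$, and the constant term equals $q_\beta(\omega_\Im)$) check out. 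What your version buys is that the ``if and only if'' is actually established: you verify explicitly that $\partial\disc\setminus i\R$ contributes only $\delta_\pm$ (via Lemmata \ref{2lemdisc} and \ref{2leminf}), that real limit points other than $0$ are excluded, and that the imaginary-axis points of the closure are exactly the roots of $q_\beta$ already encoded by \eqref{5ctdt} with $\omega_\Re=0$ (via Proposition \ref{prop5imroots}) --- bookkeeping the paper's proof leaves entirely implicit. The only mild inefficiency is that leaning on Proposition \ref{5bounds} and Lemma \ref{4rninj} makes Step 1 longer than the paper's direct appeal to the root symmetry, but nothing in your argument is circular, since both cited results precede Proposition \ref{4cdv}.
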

\begin{proof}
Due to the symmetry with respect to the imaginary axis we can choose $t_1=2\omega_\Im$ in \eqref{5gentabdef}.
The result then follows from straight forward computations, where \eqref{5gentab} is used.
\end{proof}
The system \eqref{5gentab} can be solved for a given $\alpha$ by computing the roots of a fourth order polynomial. 
However, Proposition \ref{4cdv} shows that if $\omega_\Im$ is known, $\omega_\Re$ can be computed independently of $\alpha$. Hence, for $\omega\in\overline{ W_{\eR\times\{\beta\}}(T)\setminus i\R}$ this allows us to regard $\omega_\Re$ as a multivalued function in $\omega_{\Im}$.

By a \textit{horizontal strip} $\mathcal{S}\subset \eC$ 
we mean an open set of the form
\begin{equation}\label{rev:strip}
	\mathcal{S}=\{\omega\in\eC:s_0<\omega_\Im < s_1\},
\end{equation}
where $s_0,s_1\in\R$ and $s_0<s_1$.

\begin{defn}\label{4strip}
For a closed set $\Gamma\subset \eC$, a horizontal strip $\mathcal{S}\subset \eC\setminus \Gamma$ as defined in \eqref{rev:strip} is said to be \textit{maximal} with respect to $\Gamma$ if 
\begin{equation}\label{4stripper}
\Gamma\cap(\R+is_0)\neq\O,\quad\Gamma\cap(\R+is_1)\neq\O.
\end{equation}
The set $\Gamma\cap(\R+is_0)$ is called the \textit{local minimum} points and $\Gamma\cap(\R+is_1)$ is called the \textit{local maximum} points.
\end{defn}

\begin{defn}\label{3extdef}
For a closed set $\Gamma\subset \{\omega\in\eC: |\omega_\Im | < s\in\R\}$ assume that there are $n$ maximal horizontal strips, $\strip_1,\hdots,\strip_n$, with respect to $\Gamma$.  Further let $p_{\min}$ and $p_{\max}$ denote the points in $\Gamma$ with the smallest respectively largest imaginary values in $\Gamma$. Define  the set
\begin{equation}\label{3extdefe}
	M:=\left (\bigcup_{i=1}^n \Gamma \cap\overline{\strip_i}\right )\cup(p_{\min}\cup p_{\max}),
\end{equation}
where the points in $M$ will be called the \textit{extreme points} of $\Gamma$.
\end{defn}

In the following a strip is always assumed to be horizontal and maximal with respect to a given set.

Figure \ref{4fig:first}.(c) depicts $\overline{ W_{\eR\times\{\beta\}}(T)\setminus i\R}$ for a case with a strip $\strip$ as defined in \eqref{4stripper}.
Note that the point in $\overline{ W_{\eR\times\{\beta\}}(T)\setminus i\R}$  with largest imaginary part is always zero.

\begin{cor}\label{3minlim}
The smallest imaginary part for a point in $\overline{W_{\eR\times\{\beta\}}(T)\setminus i\R}$ as defined in \eqref{4curveb} is less than ${\rm Im}(\delta_-)$.
\begin{proof} 
The claim follows immediately since there exists an $\omega$ with $\omega_\Im={\rm Im}(\delta_-)$, $\omega_\Re\neq0$ satisfying \eqref{5ctdt}.
\end{proof}
\end{cor}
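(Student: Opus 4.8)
The plan is to exhibit, in each parameter regime, an explicit point of $\overline{W_{\eR\times\{\beta\}}(T)\setminus i\R}$ whose imaginary part is \emph{strictly} below ${\rm Im}(\delta_-)$. I would split according to whether $\delta_-$ lies off the imaginary axis (the case $c\ge d^2/4$, where ${\rm Im}(\delta_-)=-d/2$) or on it (the case $c<d^2/4$, where ${\rm Im}(\delta_-)=-d/2-\tfrac12\sqrt{d^2-4c}$).

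In the first case I would substitute $\omega_\Im=-d/2$ into the quantities $P_{\omega_\Im},Q_{\omega_\Im}$ from Proposition~\ref{4cdv}. A short computation gives $P_{-d/2}=c+\beta/2$ and $Q_{-d/2}=c(\beta+c)$, hence $P_{-d/2}^2-Q_{-d/2}=\beta^2/4>0$, and the branch $\omega_\Re^2=P_{\omega_\Im}+\sqrt{P_{\omega_\Im}^2-Q_{\omega_\Im}}-\omega_\Im^2$ of \eqref{5ctdt} gives $\omega_\Re^2=\beta+c-d^2/4\ge\beta>0$. By Proposition~\ref{4cdv} the point $\omega_0:=\sqrt{\beta+c-d^2/4}-id/2$ then lies in $\overline{W_{\eR\times\{\beta\}}(T)\setminus i\R}$ with nonzero real part, which already yields the non-strict bound. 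For the strict inequality I would argue by continuity: the map $\omega_\Im\mapsto P_{\omega_\Im}+\sqrt{P_{\omega_\Im}^2-Q_{\omega_\Im}}-\omega_\Im^2$ is continuous near $\omega_\Im=-d/2$ (it is continuous away from $\omega_\Im=0$, and $P_{\omega_\Im}^2-Q_{\omega_\Im}$ is strictly positive at $-d/2$) and equals $\beta+c-d^2/4>0$ there, so it stays positive for $\omega_\Im$ slightly below $-d/2$; feeding such values back into \eqref{5ctdt} and Proposition~\ref{4cdv} produces points of the set with imaginary part $<-d/2={\rm Im}(\delta_-)$.

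In the second case, $c<d^2/4$ forces $d>2\sqrt c$, and since $\beta>0$ we have $2\sqrt c<2\sqrt{\beta+c}$, so exactly one of the last three cases of Proposition~\ref{4ax_crprop} applies. In each of them $\overline{W_{\eR\times\{\beta\}}(T)\setminus i\R}$ is asserted to contain a point $i\mu$ with $\mu\in I_2$ or $\mu\in I_4$. I would finish by noting that both intervals lie inside $(-\infty,{\rm Im}(\delta_-))$: indeed $I_4=(-\infty,-d/2-\tfrac12\sqrt{d^2-4c})=(-\infty,{\rm Im}(\delta_-))$, and $-d/2-\tfrac{\sqrt2}{2}\sqrt{d^2-4c}<-d/2-\tfrac12\sqrt{d^2-4c}={\rm Im}(\delta_-)$ gives $I_2\subset(-\infty,{\rm Im}(\delta_-))$. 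Hence $\mu<{\rm Im}(\delta_-)$, as required.

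The evaluations of $P_{-d/2}$, $Q_{-d/2}$ and the interval comparisons are routine; the one step needing care is the strictness in the first case, where one must check that $\omega_\Im=-d/2$ is an interior, non-singular value of the parametrisation of \eqref{4curveb} — not a branch point ($P_{\omega_\Im}^2-Q_{\omega_\Im}=\beta^2/4\ne0$), not an axis crossing ($\beta+c-d^2/4\ne0$), and not the excluded value $0$ (since $d>0$) — so that the curve genuinely continues to imaginary parts below $-d/2$.
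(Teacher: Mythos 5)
Your proposal is correct, and it is a fleshed-out (and in one place strengthened) version of the paper's one-line argument. The paper simply asserts the existence of an $\omega$ with $\omega_\Im={\rm Im}(\delta_-)$ and $\omega_\Re\neq0$ satisfying \eqref{5ctdt}; taken literally this only yields the non-strict bound, and the verification that such an $\omega$ exists is left to the reader in both regimes. For $c\geq d^2/4$ you carry out exactly that verification ($P_{-d/2}^2-Q_{-d/2}=\beta^2/4>0$ and $\omega_\Re^2=\beta+c-d^2/4\geq\beta>0$) and then supply the continuity argument needed to convert ``$\leq$'' into ``$<$''; this is precisely the step the paper glosses over, and your check that $-d/2$ is neither a branch point of the parametrisation nor an axis crossing nor the excluded value $0$ is what makes that step legitimate via the ``if and only if'' of Proposition~\ref{4cdv}. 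For $c<d^2/4$ you depart from the paper: rather than evaluating \eqref{5ctdt} at the level ${\rm Im}(\delta_-)=-d/2-\tfrac12\sqrt{d^2-4c}$ (which does work but requires the messier verification that $P_\mu+\sqrt{P_\mu^2-Q_\mu}-\mu^2>0$ there), you read off from cases iii)--v) of Proposition~\ref{4ax_crprop} that the curve meets the imaginary axis in $I_2$ or $I_4$, and both intervals lie strictly below ${\rm Im}(\delta_-)$, so no separate strictness argument is needed. Since Propositions~\ref{4ax_crprop} and~\ref{4cdv} both precede the corollary and their proofs do not use it, there is no circularity; the only cost of your route is the case split, and the benefit is that every step is explicit.
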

\begin{lem}\label{4cor_nax1}
Let $W_{\eR\times\{\beta\}}(T)\setminus i\R$ denote the set in \eqref{4curveb} and let $P_{\omega_\Im}$ and $Q_{\omega_\Im}$ denote the expressions in \eqref{4defcd}. 
A point $\omega\in W_{\eR\times\{\beta\}}(T)\setminus i\R$ is an extreme point in the sense of Definition \ref{3extdef} if and only if $\omega_\Im$ is a distinct root of $f(\omega_\Im):=\omega_\Im^2(P_{\omega_\Im}^2-Q_{\omega_\Im})$. The roots of  $f$ are
\begin{equation}\label{4axn_gap}
\text{\rm i)}\quad \omega_\Im=\frac{-d\pm\sqrt{d^2 - 4\beta}}{4},\quad 
\text{\rm ii)}\quad \omega_\Im=\frac{-d \pm d\sqrt{1+\frac{4\beta}{4c-d^2}}}{4}.
\end{equation}
A double root of $f$ is only possible if $d=2\sqrt{\beta}<4\sqrt{c}$, where $\omega=\pm \sqrt{c-d^2/16}-id/4$. Assume that $\omega_\Im$ is a double root of $f$. 
Then, $\omega$ is a point where more than one curve component intersects $ W_{\eR\times\{\beta\}}(T)\setminus i\R$. 
\end{lem}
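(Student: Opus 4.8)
The statement has three parts: (1) a characterisation of the extreme points of $W_{\eR\times\{\beta\}}(T)\setminus i\R$ via the distinct roots of $f(\omega_\Im)=\omega_\Im^2(P_{\omega_\Im}^2-Q_{\omega_\Im})$; (2) the explicit list of the roots of $f$ in \eqref{4axn_gap}; and (3) the behaviour at a double root of $f$. For (1), the plan is to use Proposition \ref{4cdv}, which lets us regard $\omega_\Re$ as the multivalued function $\omega_\Re=\pm\sqrt{P_{\omega_\Im}\pm\sqrt{P_{\omega_\Im}^2-Q_{\omega_\Im}}-\omega_\Im^2}$ on $\overline{W_{\eR\times\{\beta\}}(T)\setminus i\R}$. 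By Definition \ref{3extdef}, the extreme points are the local maxima/minima of the imaginary part together with the two globally extreme points $p_{\min},p_{\max}$; equivalently these are the points where the curve has a horizontal tangent or, for branches terminating on $i\R$, where the two values of $\omega_\Re$ coming from the inner $\pm$ collide (i.e.\ the branch turns around). I would argue that both phenomena are governed by the inner radical: a horizontal tangent occurs precisely where $\frac{d}{d\omega_\Im}\bigl(P_{\omega_\Im}\pm\sqrt{P_{\omega_\Im}^2-Q_{\omega_\Im}}-\omega_\Im^2\bigr)=0$ with $\omega_\Re\neq 0$, and a turning point against $i\R$ occurs where $P_{\omega_\Im}^2-Q_{\omega_\Im}=0$ while $P_{\omega_\Im}-\omega_\Im^2>0$ (so that $\omega_\Re$ stays real and nonzero). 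The crucial simplification — and the point at which the ``distinct root of $f$'' formulation is cleaner than working with the two cases separately — is that after differentiating and clearing the outer square root, both conditions reduce to the vanishing of $f(\omega_\Im)=\omega_\Im^2(P_{\omega_\Im}^2-Q_{\omega_\Im})$; one then checks that at a \emph{distinct} root of $f$ exactly one curve component has a horizontal tangent there (so it is a genuine local extreme point, not a crossing), which is where the ``distinct'' hypothesis is used.

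For (2), substituting the definitions \eqref{4defcd} of $P_{\omega_\Im}$ and $Q_{\omega_\Im}$ into $P_{\omega_\Im}^2-Q_{\omega_\Im}$ and multiplying through by $16\omega_\Im^2$ (to clear the $\beta d/(4\omega_\Im)$ term) yields a polynomial in $\omega_\Im$; I expect it to factor. The claim is that $f$ factors as a product involving $(4\omega_\Im^2+2d\omega_\Im+\beta)$ — whose roots are $\omega_\Im=(-d\pm\sqrt{d^2-4\beta})/4$, matching \eqref{4axn_gap}\,i) — and a second quadratic factor $(4c-d^2)\cdot 4\omega_\Im^2 + \text{(linear)} + \dots$ whose roots are $\omega_\Im=\bigl(-d\pm d\sqrt{1+4\beta/(4c-d^2)}\bigr)/4$, matching \eqref{4axn_gap}\,ii). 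This is a routine algebraic verification: expand $P_{\omega_\Im}^2$, subtract $Q_{\omega_\Im}$, multiply by $\omega_\Im^2$, and recognise the factorisation — I would just display the factored form and say the identity is checked by expansion. One should also record that the two trivial roots $\omega_\Im=0$ coming from the $\omega_\Im^2$ prefactor are excluded because $\omega\notin i\R$, and note which of \eqref{4axn_gap}\,i)--ii) are real depending on the sign of $d^2-4\beta$ and of $4c-d^2$.

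For (3), the double-root analysis: setting the two quadratic factors equal forces $\sqrt{d^2-4\beta}$ and $d\sqrt{1+4\beta/(4c-d^2)}$ to produce a common root, and equating the (equal) sums of roots $-d/4$ and solving the resulting relation gives $d^2-4\beta=0$, i.e.\ $d=2\sqrt\beta$; feeding this back shows one needs $4c-d^2>0$, i.e.\ $d<4\sqrt c$, hence $d=2\sqrt\beta<4\sqrt c$, and the common value of $\omega_\Im$ is $-d/4$, giving $\omega_\Re=\pm\sqrt{P_{-d/4}-d^2/16}=\pm\sqrt{c-d^2/16}$ after plugging $\omega_\Im=-d/4$ into \eqref{4defcd} (here $P_{-d/4}^2-Q_{-d/4}=0$ so the inner radical vanishes). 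Finally, to see that $\omega=\pm\sqrt{c-d^2/16}-id/4$ is a point where more than one curve component meets: at a double root of $f$ the inner radical $\sqrt{P_{\omega_\Im}^2-Q_{\omega_\Im}}$ vanishes to second order, so the two branches indexed by the inner $\pm$ have the same value \emph{and} the same tangent there, hence they are distinct curve components of $W_{\eR\times\{\beta\}}(T)\setminus i\R$ passing through the common point $\omega$; I would cite Proposition \ref{prop5imroots} and Corollary \ref{3multcor} (a double root of $q_\beta$) to confirm this is exactly the multiple-intersection scenario, alternatively invoke Lemma \ref{4rninj}\,i), which identifies $\pm\sqrt{c-d^2/16}-id/4$ as precisely the exceptional point where two of the $r_n$ coincide off $i\R$.

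\textbf{Main obstacle.} The delicate step is part (1): one must show carefully that vanishing of $f$ captures \emph{both} interior horizontal tangents and the turning points where a branch meets $i\R$, and — conversely — that every \emph{distinct} root of $f$ really is an extreme point in the sense of Definition \ref{3extdef} rather than, say, an inflection with vertical behaviour or a spurious solution introduced by squaring. Handling the endpoints $0$ and $\infty$ separately (they are listed explicitly in the ``if and only if'' of Proposition \ref{4cdv} and are the global max/min of the imaginary part, already covered by $p_{\min},p_{\max}$) and keeping track of the case distinctions $d\lessgtr 2\sqrt c$, $d\lessgtr 2\sqrt\beta$ that determine reality of the roots will be the bookkeeping-heavy portion; the algebra in (2) and (3) is mechanical by comparison.
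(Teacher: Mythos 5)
Your part (2) --- the factorisation of $f$ and the list of roots \eqref{4axn_gap} --- is correct and matches the paper, which disposes of it as ``simple computations.'' The genuine problems are in parts (1) and (3). In part (1) your key analytic step is wrong: writing the curve as $\omega_\Re^2=g_\pm(\omega_\Im):=P_{\omega_\Im}\pm\sqrt{P_{\omega_\Im}^2-Q_{\omega_\Im}}-\omega_\Im^2$, the condition $g_\pm'(\omega_\Im)=0$ with $\omega_\Re\neq0$ forces $d\omega_\Re/d\omega_\Im=g_\pm'/(2\sqrt{g_\pm})=0$, i.e.\ a \emph{vertical} tangent (an extremum of the real part), which is irrelevant to Definition \ref{3extdef}; and that condition does not reduce to $f=0$ after clearing radicals (it gives $(P'_{\omega_\Im}-2\omega_\Im)^2(P_{\omega_\Im}^2-Q_{\omega_\Im})=\bigl(\tfrac12(P_{\omega_\Im}^2-Q_{\omega_\Im})'\bigr)^2$, a different equation). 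Horizontal tangents occur where $g_\pm'=\pm\infty$, i.e.\ exactly where the inner radicand $P_{\omega_\Im}^2-Q_{\omega_\Im}$ vanishes, so only your second condition is the relevant one. More importantly, Definition \ref{3extdef} is global: an extreme point lies on the boundary line of a maximal horizontal strip in the complement of the \emph{whole} set, so a branch-local tangent analysis cannot deliver the ``only if'' direction --- another component could attain the nearby imaginary parts. The paper's proof instead exploits that \eqref{5ctdt} lists \emph{every} point of the set with prescribed imaginary part $\mu$, so whether $\mu$ is attained at all is governed by the sign of $f(\mu)$: if $f(\omega_\Im)\neq0$ then by continuity $g_\pm>0$ on a neighbourhood and all nearby imaginary parts are attained, so $\omega$ is not extreme; if $\omega_\Im$ is a distinct root then $f$ changes sign, every neighbourhood contains $\mu$ with $f(\mu)<0$, no point of the set has imaginary part $\mu$, and $\omega$ is extreme; if $\omega_\Im$ is a double root then $P^2-Q\geq0$ nearby and again $\omega$ is not extreme. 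You should replace the tangent analysis by this sign-change argument.

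In part (3) the case analysis is incomplete. Up to a constant, $f$ factors as $(4\omega_\Im^2+2d\omega_\Im+\beta)\bigl(4\omega_\Im^2+2d\omega_\Im-\beta d^2/(4c-d^2)\bigr)$, so a double root arises in three ways: the first factor degenerates ($d=2\sqrt\beta$, double root at $-d/4$), the second factor degenerates ($d=2\sqrt{c+\beta}$, also at $-d/4$), or the two factors share a root --- impossible for $c>0$, $\beta>0$, since their difference is the constant $4\beta c/(4c-d^2)$. ``Equating the sums of roots'' proves nothing, as both sums equal $-d/2$ automatically. The case $d=2\sqrt{c+\beta}$ must be excluded by noting that there $P_{-d/4}=0$, hence $\omega_\Re^2=P_{-d/4}-d^2/16<0$ and no point of the curve corresponds to that double root; only $d=2\sqrt\beta$ with $c>d^2/16$ survives, giving $\omega=\pm\sqrt{c-d^2/16}-id/4$. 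Your closing step --- identifying this point with the exceptional coincidence of two roots $r_n$ via Lemma \ref{4rninj} i) --- is exactly what the paper does.
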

\begin{proof}
By simple computations it follows that the roots of $f$ are \eqref{4axn_gap}. Lemma \ref{4rninj} \rm i) and Proposition \ref{4cdv} imply that a double root $\mu$ of $f$ exists if and only if $\mu=-d/4=-\sqrt{\beta}/2<\sqrt{c}$. Then, Proposition \ref{4cdv} yields that the corresponding points on $ W_{\eR\times\{\beta\}}(T)$ are $\omega=\pm\sqrt{c-d/16}-id/4$. $P_{\omega_\Im}^2-Q_{\omega_\Im}$ is non-negative in a neighborhood of $-d/4$, thus it is not an extreme point but a point where more than one curve component intersects.
 
Assume that $\omega\in W_{\eR\times\{\beta\}}(T)\setminus i\R$ is an extreme point and that $f(\omega_\Im)\neq0$. Then since one of the equations \eqref{5ctdt} hold it follows that $P_{\omega_\Im}^2-Q_{\omega_\Im}>0$. Likewise $P_{\omega_\Im}\pm\sqrt{P_{\omega_\Im}^2-Q_{\omega_\Im}}-\omega_\Im^2>0$, since $\omega_\Re\neq0$. The function $P_{\omega_\Im}\pm\sqrt{P_{\omega_\Im}^2-Q_{\omega_\Im}}-\omega_\Im^2$ is continuous in $\omega_\Im$. Hence, there exists an open interval $I_{\omega_\Im}$ containing $\omega_\Im$ such that $P_\mu\pm\sqrt{P_\mu^2-Q_\mu}-\mu^2>0$ for each $\mu\in I_{\omega_\Im}$. Then \eqref{5ctdt} holds for any point in the interval, which contradicts that $\omega$ is an extreme point. Hence it follows that $\omega_\Im$ is a distinct root of $f$.
Now suppose $\omega_\Im$ is a distinct root of $f$. Then for every open interval $I_{\omega_\Im}$ containing $\omega_\Im$, there exists an $\mu\in I_{\omega_\Im}$ such that $f(\mu)<0$, and it is thus an extreme point by  \eqref{5ctdt}. 
\end{proof}
Figure \ref{4fig:first}.(c) shows a case where the local extreme points as well as the points with smallest imaginary part are not on the imaginary axis. 
\begin{lem}\label{3imex}
Let $W_{\eR\times\{\beta\}}(T)\setminus i\R$ and $P_{\omega_\Im}$, $Q_{\omega_\Im}$ be defined as in \eqref{4curveb} and in \eqref{4defcd}, respectively.
A point $i\mu\in i\R$, where $\mu\in\R\setminus\{0\}$ is an extreme point \eqref{3extdefe} to $\overline{ W_{\eR\times\{\beta\}}(T)\setminus i\R}$ if and only if  $0=P_{\mu}+\sqrt{P_{\mu}^2-Q_{\mu}}-\mu^2$ and $\mu$ is a distinct intersection of the imaginary axis.
\end{lem}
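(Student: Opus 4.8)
The plan is to characterize the extreme points on the imaginary axis by transferring the description in Lemma~\ref{4cor_nax1} for points off the axis to their limits on $i\R$. The set $\overline{W_{\eR\times\{\beta\}}(T)\setminus i\R}$ intersects $i\R$ at zero and, by Proposition~\ref{4cdv}, exactly at those $i\mu$ for which one of the four equations \eqref{5ctdt} degenerates so that $\omega_\Re = 0$, i.e. for which $P_\mu + \varepsilon\sqrt{P_\mu^2-Q_\mu} - \mu^2 = 0$ for one of the sign choices $\varepsilon=\pm1$ (together with $P_\mu^2-Q_\mu\geq 0$). The first step is to observe that since $Q_\mu = c(\beta+c+2d\mu+4\mu^2) \geq 0$ on the relevant range and $\sqrt{P_\mu^2-Q_\mu}\leq |P_\mu|$, the branch $P_\mu - \sqrt{P_\mu^2-Q_\mu} - \mu^2$ can only vanish in degenerate situations (it forces $Q_\mu$-related sign constraints); the generic axis-intersection condition is the one stated, namely $P_\mu + \sqrt{P_\mu^2-Q_\mu} - \mu^2 = 0$. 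I would make this reduction precise and note it recovers, via $q_\beta$, the intersection points already found in Proposition~\ref{prop5imroots} and Proposition~\ref{4ax_crprop}.

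Next I would argue the equivalence itself. For the forward direction, suppose $i\mu$ is an extreme point of $\overline{W_{\eR\times\{\beta\}}(T)\setminus i\R}$ in the sense of Definition~\ref{3extdef}, with $\mu\neq 0$. Being an extreme point, $i\mu$ lies in $\Gamma\cap\overline{\strip_j}$ for a maximal strip or is $p_{\min}$/$p_{\max}$; in either case there is no full neighborhood interval of $\mu$ in $\R$ whose image points all lie in the set. Using the multivalued-function viewpoint (the remark after Proposition~\ref{4cdv} that $\omega_\Re$ is a multivalued function of $\omega_\Im$ via \eqref{5ctdt}), the curve reaching $i\mu$ from off-axis does so with $\omega_\Re\to 0$, forcing $P_\mu + \sqrt{P_\mu^2-Q_\mu}-\mu^2 = 0$; and the extremality, exactly as in the proof of Lemma~\ref{4cor_nax1}, forces $\mu$ to be a point where the quantity $P_\nu + \sqrt{P_\nu^2-Q_\nu}-\nu^2$ changes sign rather than staying nonnegative on a neighborhood — which, combined with injectivity of the $r_n$ off the axis (Lemma~\ref{4rninj}\,i)) and the symmetry in the imaginary axis, is precisely the statement that $\mu$ is a \emph{distinct} intersection point of $i\R$ with the set. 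Conversely, if $0 = P_\mu + \sqrt{P_\mu^2-Q_\mu}-\mu^2$ and $\mu$ is a distinct intersection, then the single curve component through $i\mu$ touches the axis transversally from one side; any horizontal line slightly below/above $\mathrm{Im}(i\mu)$ meets the set only off-axis on that side, so $i\mu$ bounds a strip or is $p_{\min}/p_{\max}$, hence is an extreme point. Here I would lean on Lemma~\ref{4cor_nax1} to rule out the double-root coincidence $\mu = -d/4$ (which is an intersection but not distinct, and is a crossing point of two components, not an extreme point), so that "distinct intersection" is exactly the right hypothesis.

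The main obstacle I expect is bookkeeping the two sign branches in \eqref{5ctdt} near the axis and confirming that only the $+$ branch produces genuine axis intersections while the $-$ branch and the edge case $P_\mu^2 = Q_\mu$ are either impossible or reduce to the already-understood coincidences of Lemma~\ref{4cor_nax1}; in other words, cleanly separating "distinct intersection of the imaginary axis" from "intersection point where curve components cross." Everything else is a local sign-change argument of the same flavor as the proof of Lemma~\ref{4cor_nax1}, combined with the injectivity and symmetry already established, so I expect the write-up to be short once that branch analysis is pinned down.
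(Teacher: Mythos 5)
Your overall strategy (read off the axis intersections from the branch formulas of Proposition~\ref{4cdv} and run a local sign-change argument, as in Lemma~\ref{4cor_nax1}) is the same as the paper's, but your first reduction step contains a genuine error that would sink the proof. You claim that the branch $P_\mu-\sqrt{P_\mu^2-Q_\mu}-\mu^2$ "can only vanish in degenerate situations", so that the condition $0=P_\mu+\sqrt{P_\mu^2-Q_\mu}-\mu^2$ is essentially automatic at an axis intersection. This is false. Since $q_\beta(\mu)=\mu^4-2P_\mu\mu^2+Q_\mu$, every real root $\mu$ of $q_\beta$ satisfies $\mu^2=P_\mu\pm\sqrt{P_\mu^2-Q_\mu}$ with exactly one sign, and that sign is the minus sign whenever $P_\mu>\mu^2$; your inequality $\sqrt{P_\mu^2-Q_\mu}\leq|P_\mu|$ only excludes this when $P_\mu\leq 0$. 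The case $P_\mu>\mu^2$ occurs non-degenerately: by Proposition~\ref{4proplava}~i), for $\beta<4c$ and $2\sqrt{\beta}<d<(\beta+4c)/(2\sqrt{c})$ the root $\mu_2$ of $q_\beta$ is a genuine axis intersection at which only the minus branch vanishes, and $i\mu_2$ is \emph{not} an extreme point (the local maximum points of the strip lie off the axis). So the clause $0=P_\mu+\sqrt{P_\mu^2-Q_\mu}-\mu^2$ is the substantive half of the characterization rather than a generic consequence of being an intersection; the paper uses Lemma~\ref{4cor_nax1} precisely to show that if $P_\mu^2-Q_\mu>0$ and $P_\mu+\sqrt{P_\mu^2-Q_\mu}-\mu^2\neq 0$, then off-axis curve points persist at heights near $\mu$ and $i\mu$ cannot be extreme.

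A second, smaller problem is that you identify "non-distinct intersection" with the double root $\mu=-d/4$ of the function $f$ from Lemma~\ref{4cor_nax1}. That $f$ governs extreme points \emph{off} the axis; non-distinct axis intersections are multiple roots of $q_\beta$ (Proposition~\ref{prop5imroots} and Corollary~\ref{3multcor}), which occur at other values of $\mu$ as well (see Figure~\ref{4fig:imag_c}). The paper disposes of them by noting that a multiple root of $q_\beta$ forces at least two curve segments through $i\mu$ while $P_\mu-\sqrt{P_\mu^2-Q_\mu}-\mu^2<0$ caps the number of off-axis solutions per height near $\mu$ at two, so curve points must occur at heights on both sides of $\mu$ and $i\mu$ is not extreme; the borderline case $P_\mu^2=Q_\mu$ (a quadruple root of $p_{(\alpha,\beta)}$, hence a triple, non-distinct root of $q_\beta$) must also be excluded separately --- you flag it but do not resolve it. The converse direction of your argument is essentially correct and matches the paper.
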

\begin{proof}
From Proposition \ref{prop5imroots} follows that each intersection of the imaginary axis is equivalent to a root of $q_\beta$ as defined in \eqref{5imroots}.
Assume $i\mu\in\overline{ W_{\eR\times\{\beta\}}(T)\setminus i\R}$ is an extreme point. Then by Proposition \ref{4cdv}, $P_{\mu}^2-Q_{\mu}\geq0$ and one of the equations $0=P_{\mu}\pm\sqrt{P_{\mu}^2-Q_{\mu}}-\mu^2$ hold. If $P_{\mu}^2-Q_{\mu}=0$, then by continuity $i\mu$ is for some $\alpha$ a quadruple root of the polynomial $p_{(\alpha,\beta)}$ defined in \eqref{TCurve}. Hence, by Proposition \ref{4cdv} the root can not be an extreme point. Furthermore, Corollary \ref{3multcor} implies that $\mu$ is a triple root of $q_\beta$, thus not distinct. Assume that $P_{\mu}^2-Q_{\mu}>0$ and  $0\neq P_{\mu}+\sqrt{P_{\mu}^2-Q_{\mu}}-\mu^2$, then it follows from Lemma \ref{4cor_nax1} that $i\mu\in\overline{ W_{\eR\times\{\beta\}}(T)\setminus i\R}$ is not an extreme point. Hence, we 
have shown that $0=P_{\mu}+\sqrt{P_{\mu}^2-Q_{\mu}}-\mu^2$  and $P_{\mu}^2-Q_{\mu}>0$. Assume $\mu$ is not a distinct root of $q_\beta$, then at least two segments of $\overline{ W_{\eR\times\{\beta\}}(T)\setminus i\R}$ intersect $i\mu$. Since $P_{\mu}-\sqrt{P_{\mu}^2-Q_{\mu}}-\mu^2<0$, Proposition \ref{4cdv} implies that in some interval containing $\mu$ there is for a given $\omega_\Im$ at most two solutions $\omega$. Combining these results shows that $i\mu$ is not an extreme point and the intersection must then be distinct. Assume $P_{\mu}+\sqrt{P_{\mu}^2-Q_{\mu}}-\mu^2=0$ and that $\mu$ is a distinct root of $q_\beta$, then
there is only one segment of $\overline{ W_{\eR\times\{\beta\}}(T)\setminus i\R}$ intersecting $i\mu$. Furthermore,  $P_{\mu}^2-Q_{\mu}>0$ and thus $P_{\mu}-\sqrt{P_{\mu}^2-Q_{\mu}}-\mu^2<0$. Proposition \ref{4cdv} implies that $i\mu$ is an extreme point.
\end{proof}
\begin{prop}
Let $W_{\eR\times\{\beta\}}(T)$ denote the set \eqref{4curveb}. Then every real number is the real part of some point in the set $W_{\eR\times\{\beta\}}(T)$.
\end{prop}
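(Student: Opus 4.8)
The plan is to combine Proposition \ref{4cdv} with a one-variable intermediate value argument. First I would note that $W_{\eR\times\{\beta\}}(T)=\bigcup_{n=1}^{4}r_n(\eR\times\{\beta\})$ is the image of the compact set $\eR\times\{\beta\}$ under the continuous maps $r_n$, hence is closed in $\eC$; consequently $\overline{W_{\eR\times\{\beta\}}(T)\setminus i\R}\subseteq W_{\eR\times\{\beta\}}(T)$. Thus, by Proposition \ref{4cdv}, it is enough to produce, for each $x\in\R$, a point with real part $x$ that is either $0$ or satisfies one of the four equations \eqref{5ctdt}. For $x=0$ this is immediate, since $p_{(0,\beta)}(0)=0$ (and $p_{(\alpha,\beta)}(0)=\alpha c=0$ for every $\alpha$ when $c=0$), so $0\in W_{\eR\times\{\beta\}}(T)$. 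From now on fix $x\neq0$, and recall the standing assumptions $\beta>0$, $d>0$ in this section (for $\beta=0$ the statement is already contained in Lemma \ref{5nueq0}).

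Writing $P_{y}$ and $Q_{y}$ for the quantities in \eqref{4defcd} evaluated at $\omega_\Im=y$, I would square \eqref{5ctdt} once and rearrange: a point $x+iy$ with $y\neq0$ satisfies one of the equations \eqref{5ctdt} if and only if $x^{2}+y^{2}-P_{y}=\pm\sqrt{P_{y}^{2}-Q_{y}}$ for one of the two signs; squaring again, this is equivalent to
\[
	g(y):=(x^{2}+y^{2}-P_{y})^{2}-\bigl(P_{y}^{2}-Q_{y}\bigr)=(x^{2}+y^{2})^{2}-2(x^{2}+y^{2})P_{y}+Q_{y}=0 .
\]
Conversely, any real root $y\neq0$ of $g$ gives back such a point: $g(y)=0$ forces $P_{y}^{2}-Q_{y}\geq0$, and choosing the sign $s\in\{\pm1\}$ so that $s\sqrt{P_{y}^{2}-Q_{y}}=x^{2}+y^{2}-P_{y}$ yields $P_{y}+s\sqrt{P_{y}^{2}-Q_{y}}-y^{2}=x^{2}\geq0$, which is precisely one of the equations \eqref{5ctdt}. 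Hence the proposition reduces to the existence of a real root $y\neq0$ of $g$.

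Since $P_{y}$ has a simple pole at $y=0$, I would pass to the polynomial $h(y):=4y\,g(y)$. Using the formulas \eqref{4defcd}, a direct computation shows that $h$ is a polynomial in $y$ of degree five with leading coefficient $4$, and that $h(0)=2\beta d\,x^{2}$. Being of odd degree with positive leading coefficient, $h(y)\to-\infty$ as $y\to-\infty$, while $h(0)=2\beta d\,x^{2}>0$ because $\beta,d>0$ and $x\neq0$; the intermediate value theorem therefore produces a root $y_{0}\in(-\infty,0)$ of $h$. As $y_{0}\neq0$ this is a root of $g$, so $x+iy_{0}$ satisfies one of the equations \eqref{5ctdt} and hence lies in $\overline{W_{\eR\times\{\beta\}}(T)\setminus i\R}\subseteq W_{\eR\times\{\beta\}}(T)$, a point with real part $x$.

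The one place that genuinely needs care is the equivalence in the second paragraph: keeping track of the two square-root sign choices in \eqref{5ctdt} and of the sign of $x^{2}+y^{2}-P_{y}$, and verifying that clearing the denominator in $g$ does not create $y=0$ as a spurious root of $h$ — this is exactly why the value $h(0)=2\beta d\,x^{2}>0$ is decisive. The remaining ingredients — closedness of $W_{\eR\times\{\beta\}}(T)$, Proposition \ref{4cdv}, and the computation of $h(0)$ — are routine.
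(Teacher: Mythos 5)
Your argument is correct, but it runs along a different track than the paper's own proof. The paper argues directly through Proposition \ref{5bounds}: for $\omega_\Re\neq\pm\sqrt{c-d^2/4}$ it solves $\bet(\omega)=\beta$ in \eqref{4partbl} for $\omega_\Im$ at fixed $\omega_\Re$ (the solvability is asserted, not spelled out; after clearing the denominator it is again an odd-degree polynomial equation in $\omega_\Im$), then sets $\alpha:=\alp(\omega)$ via \eqref{4partal} to get $p_{(\alpha,\beta)}(\omega)=0$ directly, and it treats the exceptional real parts $\pm\sqrt{c-d^2/4}$ separately by noting $\delta_\pm\in W_{\eR\times\{\beta\}}(T)$. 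You instead route everything through Proposition \ref{4cdv}: you characterize points off the imaginary axis by \eqref{5ctdt}, clear the pole of $P_{\omega_\Im}$ at $\omega_\Im=0$ to obtain an explicit quintic $h$ with $h(0)=2\beta d x^2>0$, and conclude by the intermediate value theorem, with the only special case being $x=0$ and with an extra (easy) closedness observation needed to pass from $\overline{W_{\eR\times\{\beta\}}(T)\setminus i\R}$ back into $W_{\eR\times\{\beta\}}(T)$. What your version buys is that the existence step the paper leaves implicit is made completely explicit (odd degree, sign at zero, IVT), and the poles' real parts require no separate treatment; what the paper's version buys is brevity and the fact that it produces the admissible pair $(\alpha,\beta)$ and the root $\omega$ directly, with no detour through the closure of the off-axis part or the sign bookkeeping for the nested radicals in \eqref{5ctdt}. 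Both proofs are, at heart, ``fix the real part and solve an odd-degree real equation for the imaginary part,'' so your proposal is a legitimate, slightly more self-contained alternative.
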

\begin{proof}
For $\omega_\Re=\pm \sqrt{c-d^2/4}$ note that $\delta_+,\delta_-\in W_{\eR\times\{\beta\}}(T)$. In all other cases, equation \eqref{4partbl} has a solution $\omega_\Im$ for given $\beta=\bet(\omega)$ and $\omega_\Re$. Then  $\alpha:=\alp(\omega)$ is uniquely  given by \eqref{4partal}. Hence, $p_{(\alpha,\beta)}(\omega)=0$ in \eqref{TCurve} has  for fixed $\omega_\Re$ and $\beta$ a solution for some  $\omega\in\C$, and  $\alpha\in\R$.
\end{proof}

\begin{lem}\label{4lem1r}
Let $W_{\eR\times\{\beta\}}(T)\setminus i\R$ be defined as in \eqref{4curveb}. Then, for each bounded component $\gamma\subset\overline{ W_{\eR\times\{\beta\}}(T)\setminus i\R}$  and $\alpha\in\eR$ either one root of  the polynomial $p_{\alpha,\beta}$ in \eqref{TCurve} belongs to $\gamma$, or one root of $p_{\alpha,\beta}$ can be written as $i\mu$, for $\mu\in J:= [\min ( \gamma\setminus i\R)_\Im,\max ( \gamma\setminus i\R)_\Im]$.
\end{lem}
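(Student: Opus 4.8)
The plan is to fix the value $\beta$, reducing at once to $\beta>0$ since for $\beta=0$ the set $W_{\eR\times\{\beta\}}(T)$ is described completely by Lemma \ref{5nueq0}, and then, with $\gamma$ the given bounded component and $J=[\mu_-,\mu_+]$ as in the statement, to put $K:=\gamma\cup\{i\mu:\mu\in J\}$ and prove that $A:=\{\alpha\in\eR:\ p_{(\alpha,\beta)}\ \text{has a root in}\ K\}$ equals $\eR$; since $\eR$ is connected it is enough to show that $A$ is nonempty, closed and open. A few preliminaries come first. The set $K$ is compact, being the union of the compact set $\gamma$ (a component of the compact set $\overline{W_{\eR\times\{\beta\}}(T)\setminus i\R}$) and a closed segment, and moreover $\gamma\cap i\R\subseteq\{i\mu:\mu\in J\}$: an imaginary point of $\gamma$ lies in the closure of the off-axis set, which contains no imaginary points, so it is a limit of off-axis points of $\gamma$ (a neighbourhood of it in $\overline{W_{\eR\times\{\beta\}}(T)\setminus i\R}$ lies in the component $\gamma$), whence its imaginary part lies in $[\mu_-,\mu_+]$. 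Every point of $\gamma$ is a root of $p_{(\alpha,\beta)}$ for some $\alpha\in\eR$ — off-axis points by the definition of $W_{\eR\times\{\beta\}}(T)$, and imaginary points $i\mu\in\gamma$ because then $\mu=0$ or $q_\beta(\mu)=0$ by Proposition \ref{prop5imroots}, so $i\mu$ is a multiple root of some $p_{(\alpha,\beta)}$ by Corollary \ref{3multcor} — so $A\neq\emptyset$; and $A$ is closed because the roots $r_n(\cdot,\beta)$ are continuous and $K$ is compact.

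The heart of the argument is that $A$ is open. Fix $\alpha_0\in A$. The decisive structural fact is that each branch $r_n(\alpha,\beta)$ always lies on $W_{\eR\times\{\beta\}}(T)$, and that $\overline{W_{\eR\times\{\beta\}}(T)\setminus i\R}$ is a finite union of arcs (Lemma \ref{4rninj} together with Lemma \ref{5nueq0}), so its components are pairwise disjoint compact sets; hence a branch that lies in $\gamma$ at $\alpha_0$ cannot slip into a neighbouring component for nearby $\alpha$ and can leave $\gamma$ only by moving onto $i\R$. If $p_{(\alpha_0,\beta)}$ has a root $\omega_0$ at an off-axis point of $\gamma$, then the branch through $\omega_0$ stays in $\gamma$ for $\alpha$ near $\alpha_0$, so $\alpha\in A$. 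If every root of $p_{(\alpha_0,\beta)}$ in $K$ lies on $i\R$, pick one, $i\mu_0$ with $\mu_0\in J$. When $\mu_0=0$ or $q_\beta(\mu_0)=0$, $i\mu_0$ is a multiple root of $p_{(\alpha_0,\beta)}$ (Corollary \ref{3multcor}), two or more branches meet there, and by the symmetry of the roots about $i\R$ a perturbation of $\alpha$ either keeps a pair on $i\R$ near $i\mu_0$ — one of them at $i\mu$ with $\mu$ between $\mu_-$ and $\mu_0\le\mu_+$, hence in $\{i\nu:\nu\in J\}$ — or sends a mirror pair off the axis into the component carrying $i\mu_0$; when that component is $\gamma$ the pair lies in $\gamma$ (again $\gamma$ is open in the off-axis curve), and identifying that component and treating the case in which it is not $\gamma$ is where Propositions \ref{prop5imroots} and \ref{4cdv} and Lemmas \ref{4cor_nax1} and \ref{3imex} enter. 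When instead $\mu_0\neq 0$ and $q_\beta(\mu_0)\neq 0$, $i\mu_0$ is a simple imaginary root, stays a simple imaginary root under perturbation, and hence remains in $\{i\nu:\nu\in J\}$ unless $\mu_0$ is an endpoint of $J$ — a configuration again controlled via the extreme-point description of $\gamma$ in Lemma \ref{4cor_nax1}. Collecting the cases produces a neighbourhood of $\alpha_0$ inside $A$, so $A=\eR$.

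The step I expect to be the main obstacle is precisely the imaginary-axis part of the openness argument: one must do careful bookkeeping at the imaginary points of $K$, especially near the endpoints $i\mu_\pm$ and at points of $\{i\mu:\mu\in J\}$ lying on a component of $\overline{W_{\eR\times\{\beta\}}(T)\setminus i\R}$ other than $\gamma$, in order to exclude the scenario in which every root of $p_{(\alpha,\beta)}$ lying in $K$ drifts out of $K$ at once as $\alpha$ crosses $\alpha_0$ (which would make $\alpha_0$ an isolated point of $A$). I would control this by combining three ingredients already available: the multiplicity dictionary between $p_{(\alpha,\beta)}$ and $q_\beta$ (Corollary \ref{3multcor}), which fixes how many branches meet at each imaginary point of $K$ and hence how the local root count can jump; the explicit parametrization of the off-axis curve and the location of its turning points (Propositions \ref{prop5imroots} and \ref{4cdv}, Lemmas \ref{4cor_nax1} and \ref{3imex}), which determines into which component a cluster of roots can split; and the mirror symmetry about $i\R$, which pairs off escaping branches and, when a multiple imaginary root splits along the axis, forces one of the resulting roots onto the side of $i\mu_0$ that lies inside $\{i\mu:\mu\in J\}$.
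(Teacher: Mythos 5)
Your overall strategy -- show that the set $A$ of admissible $\alpha$ is nonempty, closed and open in the connected set $\eR$ -- is a legitimate repackaging of the paper's argument, which instead assumes a bad $\alpha$ exists and passes to a ``last'' parameter $\alpha'$ at which the roots sit in $\gamma\cup iJ$ but escape for $\alpha'+\epsilon$. The nonemptiness and closedness parts are fine, and you correctly identify that the only escape route is through the imaginary axis and that the symmetry of the roots of $p_{(\alpha,\beta)}$ about $i\R$ together with the multiplicity dictionary of Corollary \ref{3multcor} must be the tools. But the openness step, which you yourself flag as ``the main obstacle,'' is never actually carried out: the decisive cases are described in the conditional (``is where Propositions \ldots\ enter,'' ``a configuration again controlled via \ldots,'' ``I would control this by \ldots''). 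As written, the proposal is a plan for a proof, not a proof; the statement could still fail in exactly the configurations you defer.

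Concretely, two cases are left open. First, when a multiple imaginary root $i\mu_0$ splits under perturbation into two imaginary roots, Lemma \ref{4rninj} \textrm{ii)} forces one to move up and one to move down; you assert that one of them lands in $\{i\nu:\nu\in J\}$, but this fails precisely when both leave $J$, i.e.\ when $\mu_0$ is simultaneously the top and the bottom of $J$. The paper's proof turns this apparent failure into the contradiction: if both perturbed roots exit $iJ$ then $J$ is a single point, so $\mu_0$ is a double root of $q_\beta$, hence by Corollary \ref{3multcor} a \emph{triple} root of $p_{(\alpha,\beta)}$, and the third coinciding branch must then (by injectivity, Lemma \ref{4rninj}) enter $\gamma$ after perturbation. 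This counting step is the crux of the lemma and is absent from your sketch. Second, the case of a simple imaginary root sitting at an endpoint $i\mu_\pm$ of $iJ$, which could drift out of $iJ$ without any multiple root being present, is only ``controlled via the extreme-point description'' in principle; you would need to argue (e.g.\ via Proposition \ref{prop5imroots}, which makes every intersection of $\gamma$ with $i\R$ a root of $q_\beta$ and hence a multiple root of $p_{(\alpha,\beta)}$) why this scenario cannot strand all roots outside $K$. Until both cases are written out, the proof has a genuine gap.
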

\begin{proof}
If the bounded component of $\overline{ W_{\eR\times\{\beta\}}(T)\setminus i\R}$ does not intersect $i\R$ it contains  by continuity a root for all $\alpha\in\eR$. If the bounded component $\gamma\subset\overline{ W_{\eR\times\{\beta\}}(T)\setminus i\R}$ intersects with $i\R$ the curve is closed
with an even number of intersections of the imaginary axis (counting multiplicity in \eqref{5imroots}). Assume that there is an $\alpha\in\eR$ such that no root is on $\gamma$, and none of the roots are purely imaginary with imaginary part in $J$. Then, by definition \eqref{4curveb}, $r_n(\alpha,\beta)$ is not on $\gamma\cup iJ$ for $n=1,2,3,4$. From the continuity of the roots it follows that there exists an $\alpha'$ such that $r_n(\alpha',\beta)\in\gamma\cup iJ$ and for a sufficiently small $|\epsilon |$, the roots $r_n(\alpha'+\epsilon,\beta)$, $n=1,2,3,4$ are not on $\gamma\cup iJ$. Since this can only happen on the imaginary axis it follows that \eqref{TCurve} for $\alpha'$ has a purely imaginary multiple root $i\mu$ on $\gamma$. Thus for some ordering of the roots $r_1(\alpha',\beta)=r_2(\alpha',\beta)=i\mu$. 
Since $r_1(\alpha'+\epsilon,\beta)\notin\gamma$ and $r_2(\alpha'+\epsilon,\beta)\notin\gamma$ it follows by continuity that they are imaginary.
From Lemma \ref{4rninj} \rm ii) follows that one of the roots has a larger imaginary part and one has a smaller imaginary part than $\mu$. 
Then if both are outside $J$, it follows that $J$ will consist of only one point. Hence $\mu$ is a double root of \eqref{5imroots} and from Corollary \ref{3multcor}, $i\mu$ is a triple root of $p_{(\alpha,\beta)}$. Hence $r_3(\alpha',\beta)=i\mu$ and by injectivity (Lemma \ref{4rninj}) one root will belong to $\gamma$ for $\alpha'+\epsilon$, which is a contradiction.
\end{proof}

\begin{defn}\label{3mui}
Denote the $k$ real roots of $q_\beta$ defined in \eqref{5imroots} by
\begin{equation}\label{3mudef}
	\mu_1\leq\mu_2\leq \hdots \leq \mu_k,
\end{equation}
where $k$ might be zero.
\end{defn}

\begin{prop}\label{4gap1}
Let $W_{\eR\times\{\beta\}}(T)\setminus i\R$ be defined as in \eqref{4curveb} and let $d_2$ denote the constant defined in Lemma \ref{5lemd}. Then, there is a unique maximal strip $\strip$ with respect to $\overline{ W_{\eR\times\{\beta\}}(T)\setminus i\R}$, as in Definition \ref{4strip}, if and only if $d>\min(2\sqrt{\beta},d_2)$. If $d\leq\min(2\sqrt{\beta},d_2)$, there is no such strip.
\end{prop}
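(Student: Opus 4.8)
The plan is to characterize the existence of a maximal horizontal strip through the behavior of the multivalued function $\omega_\Re$ of $\omega_\Im$ supplied by Proposition \ref{4cdv}, together with the intersection data with the imaginary axis from Proposition \ref{prop5imroots} and Lemma \ref{5lemd}. First I would observe that by Corollary \ref{3minlim} the point of smallest imaginary part of $\overline{W_{\eR\times\{\beta\}}(T)\setminus i\R}$ lies strictly below $\mathrm{Im}(\delta_-)=-d/2$, while the point of largest imaginary part is $0$; so a maximal strip, if it exists, must be of the form $\{\omega: s_0<\omega_\Im<s_1\}$ with $-d/2-\sqrt{\max(0,d^2/4-c)}>s_0$ in the relevant regime and $s_1<0$. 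The key analytic fact is that $\omega$ with $\omega_\Im=\mu\neq 0$ lies in $\overline{W_{\eR\times\{\beta\}}(T)\setminus i\R}$ for some real $\omega_\Re\neq 0$ if and only if $g(\mu):=P_\mu+\sqrt{P_\mu^2-Q_\mu}-\mu^2\geq 0$ and $P_\mu^2-Q_\mu\geq 0$, by \eqref{5ctdt}; so the set of admissible horizontal levels is exactly $\{\mu: g(\mu)\geq 0\}$ together with the discrete set of $\mu$ coming from roots of $q_\beta$. A maximal strip exists precisely when this admissible level set is disconnected between $0$ and the minimal level, i.e. when there is a horizontal line that misses $W_{\eR\times\{\beta\}}(T)$ entirely.

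Second I would connect the discriminant $\Delta_{q_\beta}$ and the constants $d_1,d_2,d_3$ of Lemma \ref{5lemd} to the number of real roots of $q_\beta$, hence to the number of intersections of $W_{\eR\times\{\beta\}}(T)$ with $i\R$; combined with Proposition \ref{4ax_crprop} this tells me where on the imaginary axis these intersections sit. The heuristic is: when $d\leq 2\sqrt\beta$ and $d\leq d_2$, all the intersections and the non-imaginary-axis part of the curve are "connected through the imaginary axis," there is no level $\mu$ with $g(\mu)<0$ sandwiched between levels with $g(\mu)\geq 0$, and the set $\overline{W_{\eR\times\{\beta\}}(T)\setminus i\R}$ together with its imaginary-axis intersections forms a set whose imaginary-part projection is a single interval $[\min,0]$ — so no maximal strip. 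When $d>\min(2\sqrt\beta,d_2)$, either there is an extra pair of real roots of $q_\beta$ lying in $(-d/2,0)$ (the $d\geq d_2$ branch, cf. Proposition \ref{4ax_crprop} iii)--v)) or the two "lower" intersections detach from the rest (the $d>2\sqrt\beta$ branch), producing exactly one gap in the imaginary-part projection; I would show this gap is an open horizontal strip and that it is unique.

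Concretely, the argument runs: (1) by Lemma \ref{4cor_nax1} and Lemma \ref{3imex}, the local extreme points of $\overline{W_{\eR\times\{\beta\}}(T)\setminus i\R}$ occur at the roots \eqref{4axn_gap} of $f(\omega_\Im)=\omega_\Im^2(P_{\omega_\Im}^2-Q_{\omega_\Im})$, namely $\omega_\Im=(-d\pm\sqrt{d^2-4\beta})/4$ and $\omega_\Im=(-d\pm d\sqrt{1+4\beta/(4c-d^2)})/4$; these real roots exist iff $d\geq 2\sqrt\beta$, respectively iff $d^2>4c$ (with sign bookkeeping on $4c-d^2$). (2) A maximal strip corresponds to an interval of levels $\mu$ on which $g(\mu)<0$ that is bounded above and below by levels where the curve (or its imaginary-axis intersections) is present; by the structure of $g$ as built from $P_\mu,Q_\mu$, and using that $Q_\mu=c(\beta+c+2d\mu+4\mu^2)$ changes sign and $P_\mu\to-\infty$ as $\mu\to 0^-$, I would show $g$ has at most one such "dip," and it is nonempty iff one of the two root-pairs in \eqref{4axn_gap} becomes real and separates from $0$, which is exactly the condition $d>\min(2\sqrt\beta,d_2)$. (3) Conversely, for $d\leq\min(2\sqrt\beta,d_2)$, neither pair in \eqref{4axn_gap} is real in the separating position and $q_\beta$ has at most two real roots both below $-d/2$ (Lemma \ref{5lemd} v), Proposition \ref{4ax_crprop} i)--ii)), so $\{\mu: g(\mu)\geq 0\}\cup\{\text{intersections}\}$ projects onto a single interval and no strip exists. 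Uniqueness of the strip follows because $g$ admits at most one sign-change interval of the required type.

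The main obstacle I anticipate is step (2): showing that $g$ (equivalently the admissible-level set) has at most one gap and pinning down that the threshold is exactly $\min(2\sqrt\beta,d_2)$ rather than some messier combination of $d_1,d_2,d_3$. This requires a careful case analysis paralleling Proposition \ref{4ax_crprop} — in particular distinguishing $d<2\sqrt c$, $2\sqrt c\le d<2\sqrt{\beta+c}$, and $d\ge 2\sqrt{\beta+c}$, and within each comparing $d$ to $2\sqrt\beta$ and to $d_2$ — and matching the qualitative pictures in Figures \ref{4fig:first} and \ref{4fig:imag_c}. The algebra is routine in each case (it reduces to sign analysis of $P_\mu^2-Q_\mu$ and of $q_\beta$ at the candidate extreme levels, already computed in Lemmas \ref{5lemd}, \ref{4cor_nax1}, \ref{3imex}), but organizing the cases so that the single threshold $\min(2\sqrt\beta,d_2)$ emerges cleanly, and confirming the strip is genuinely open and maximal in the sense of Definition \ref{4strip}, is the delicate bookkeeping part.
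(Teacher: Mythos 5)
Your reduction is sound and is essentially the one the paper uses: by Proposition \ref{4cdv} the level $\mu=\omega_\Im$ is attained by $\overline{W_{\eR\times\{\beta\}}(T)\setminus i\R}$ exactly when $P_\mu^2-Q_\mu\geq 0$ and $P_\mu+\sqrt{P_\mu^2-Q_\mu}-\mu^2\geq 0$, so a maximal strip corresponds to a gap in the imaginary-part projection, and your treatment of the regime $d>2\sqrt{\beta}$ (the level $-d/4$ is missed, while Corollary \ref{3minlim} guarantees the curve continues below it) matches the paper's argument. However, there is a genuine gap: the two hardest implications are asserted rather than proved, and the tool the paper uses for them is absent from your plan. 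Specifically, (a) existence of the strip when $2\sqrt{\beta}\geq d>d_2$ and (b) uniqueness of the strip are both handled in the paper by Lemma \ref{4lem1r}, a topological root-tracking statement: every bounded component $\gamma$ of $\overline{W_{\eR\times\{\beta\}}(T)\setminus i\R}$ must, for \emph{every} $\alpha\in\eR$, contain a root of $p_{\alpha,\beta}$ or a purely imaginary root with imaginary part in the projection of $\gamma$; letting $\alpha\to\pm\infty$ and using Lemma \ref{2leminf}, such a component must interact with the poles $\delta_\pm$, whose imaginary part is $-d/2$. In case (a) the paper assumes no strip exists, shows $f(\mu)=P_\mu-\sqrt{P_\mu^2-Q_\mu}-\mu^2$ then alternates sign at $\mu_2,\mu_3,\mu_4$, producing two bounded components all of whose levels exceed $-d/2$ (Proposition \ref{4ax_crprop}), contradicting the pole-trapping; in (b) two strips would force two bounded components each enclosing a pole, forcing both poles onto the imaginary axis and again contradicting Proposition \ref{4ax_crprop}.

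Your substitute for all of this is the claim that ``$g$ admits at most one sign-change interval of the required type,'' justified only by the remark that $Q_\mu$ changes sign and $P_\mu\to-\infty$ as $\mu\to 0^-$. That is not enough: $P_\mu^2-Q_\mu$ has up to four real roots \eqref{4axn_gap} and $q_\beta$ up to four real roots, so a priori the admissible level set could have two gaps, and nothing in the local behaviour of $P_\mu$, $Q_\mu$ near $0^-$ rules this out. Likewise, in case (a) the mere presence of four axis intersections does not by itself force $g$ to become negative between two of them. You correctly flag this as ``the delicate bookkeeping part,'' but it is precisely the mathematical content of the proposition, and it is not routine sign analysis --- without Lemma \ref{4lem1r} (or an equivalent global argument about how the four root branches $r_n(\cdot,\beta)$ move and where the poles sit), the proof is incomplete. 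The non-existence direction for $d\leq\min(2\sqrt{\beta},d_2)$ also quietly uses the same lemma in the paper (all candidate extreme points lie below $-d/2$, hence below the poles, hence cannot bound a strip), whereas you assert the projection is a single interval without argument.
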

\begin{proof}
By definition \eqref{4curveb} the imaginary part of $ W_{\eR\times\{\beta\}}(T)$ is bounded and there is a strip maximal with respect to $\overline{ W_{\eR\times\{\beta\}}(T)\setminus i\R}$ if and only if there are at least three extreme points with different imaginary parts (disregarding the point $0$).

If $d>2\sqrt{\beta}$ it follows from Proposition \ref{4cdv} that $\omega\notin\overline{ W_{\eR\times\{\beta\}}(T)\setminus i\R}$ for $\omega_\Im=-d/4$ and Corollary \ref{3minlim} implies that $-d/4$ is not the least imaginary part.
Hence, there exists a strip that is maximal with respect to $\overline{ W_{\eR\times\{\beta\}}(T)\setminus i\R}$ containing the set $\R-id/4$.

If $2\sqrt{\beta}\geq d>d_2$, it follows that \eqref{5imroots} has four distinct real roots (if $d_3=2\sqrt{\beta}$ it follows from Lemma \ref{5lemd} that $d_2=2\sqrt{\beta}$ and this contradicts $2\sqrt{\beta}\geq d>d_2$). Thus, it follows from Corollary \ref{3multcor} that there are five intersections of $\overline{ W_{\eR\times\{\beta\}}(T)\setminus i\R}$ with the imaginary axis. Assume that the point with smallest imaginary part is not on the imaginary axis. Then it follows from Lemma \ref{4cor_nax1} and $d_2>2\sqrt{c}$ that the least imaginary part is larger than $-d/2$, which contradicts Corollary \ref{3minlim}.
Hence the point with the smallest imaginary part is on the imaginary axis and thus given by the root $\mu_1$, where $P_{\mu_1}+\sqrt{P_{\mu_1}^2-Q_{\mu_1}}-\mu_1^2=0$ by Lemma \ref{3imex}.
Assume that there is no strip maximal with respect to $\overline{ W_{\eR\times\{\beta\}}(T)\setminus i\R}$. Then Lemma \ref{3imex} implies $P_{\mu}+\sqrt{P_{\mu}^2-Q_{\mu}}-\mu^2>0$, with $P_{\mu}^2-Q_{\mu}\geq 0$ for all $\mu\in(\mu_1,0)$. 

Define for $\mu\in(\mu_1,0)$ the function $f(\mu):=P_{\mu}-\sqrt{P_{\mu}^2-Q_{\mu}}-\mu^2$. Then Proposition \ref{4cdv} implies $f(\mu_i)=0$ for $i=2,3,4$.
Take $i\in \{2,3,4\}$ and assume that $f(\mu)$ is either positive or negative in an open punctured interval around $\mu_i$. 
Then it follows from Corollary \ref{3multcor} that $\mu_i$ is not a distinct root of $q_\beta$. Hence $f(\mu)$ alternates signs between the roots. Proposition \ref{4cdv} implies that there must be two bounded components of $\overline{ W_{\eR\times\{\beta\}}(T)\setminus i\R}$. Since $d<2\sqrt{\beta+c}$, Proposition \ref{4ax_crprop} implies that $\mu_i>-d/2$ for $i=2,3,4$. Then from Lemma \ref{4lem1r} it follows that both poles are larger than $-d/2$, which gives a contradiction and a strip maximal with respect to $\overline{ W_{\eR\times\{\beta\}}(T)\setminus i\R}$ must therefore exist.
 
 Assume that there are at least two strips maximal with respect to $\overline{ W_{\eR\times\{\beta\}}(T)\setminus i\R}$, then it must be at least three components of $\overline{ W_{\eR\times\{\beta\}}(T)\setminus i\R}$, one is unbounded and two are bounded.
Lemma \ref{4lem1r} implies that the bounded components will both enclose a pole. This yields that the poles are imaginary and thus each bounded component intersects the imaginary axis twice and we have four real roots of \eqref{5imroots}. Since the imaginary parts of these roots approach $0$ as the real parts approach $\pm\infty$, the points $0$ and $\infty$ will be in the same component with no other intersections of the imaginary axis. 
 This means that for $\alpha\geq0$  there are two roots in the unbounded component. By Lemma \ref{4lem1r} there are always at least one root on or enclosed by a bounded component. Hence, for $\alpha\geq0$ there is only one root in each bounded component. Thus due to symmetry the roots in the bounded components are imaginary for all $\alpha\geq0$. Hence, if $\omega$ belongs to a bounded component of $\overline{ W_{\eR\times\{\beta\}}(T)\setminus i\R}$ then  $\omega_\Im\leq-d/2$ and thus all the solutions $\mu$ to \eqref{5imroots} satisfy $\mu\leq-d/2$, which contradicts Proposition \ref{4ax_crprop}.

Assume $d\leq\min(2\sqrt{\beta},d_2)$ and that there is a strip maximal with respect to $\overline{ W_{\eR\times\{\beta\}}(T)\setminus i\R}$. Then $\overline{ W_{\eR\times\{\beta\}}(T)\setminus i\R}$ has at least three extreme points. It follows that \eqref{5imroots} has at most two distinct roots, $\mu_1,\mu_2$, and thus at most two extreme point on $i\R$. Hence, there has to be an extreme point in $\C\setminus i\R$. This implies that $d<2\sqrt{c}$ since otherwise none of the constants in \eqref{4axn_gap} can be real and negative. Then since $d<2\sqrt{c}$ and $d\leq\min(2\sqrt{\beta},d_2)$, Lemma $\ref{4cor_nax1}$ and Proposition \ref{4ax_crprop} yield that all possible extreme points have imaginary parts smaller than $-d/2$, which is the imaginary part of the poles. Hence by Lemma \ref{4lem1r} there is no strip maximal with respect to $\overline{ W_{\eR\times\{\beta\}}(T)\setminus i\R}$.
\end{proof}
Figure  \ref{4fig:gap} depicts $ W_{\eR\times\{\beta\}}(T)$ and illustrates the claim of Proposition \ref{4gap1}. The following proposition gives a detailed description of the strip, $\strip$, maximal with respect to $\overline{W_{\eR\times\{\beta\}}(T)\setminus i\R}$. If there is a strip maximal with respect to $\overline{ W_{\eR\times\{\beta\}}(T)\setminus i\R}$, we let  $s_0$ denote the \emph{local minimum} and $s_1$ dentote the \emph{local maximum,} as defined in Definition \ref{4strip}. Moreover, $M$ denotes the set of extreme points \eqref{3extdefe} to  $\overline{ W_{\eR\times\{\beta\}}(T)\setminus i\R}$. 
 
\begin{prop}\label{4proplava}
Let $W_{\eR\times\{\beta\}}(T)\setminus i\R$ be defined as in \eqref{4curveb}. Given the ordering of the roots $\mu_i$ in Definition \ref{3mui}, the following properties hold:
\begin{itemize}
\item[\rm i)]
If $\beta<4c$ there is a unique strip maximal with respect to $\overline{ W_{\eR\times\{\beta\}}(T)\setminus i\R}$ if and only if $d>2\sqrt{\beta}$. 
     \begin{itemize} 
         \item[$\bullet$] If $d<\frac{\beta+4c}{2\sqrt{c}}$ then the local maximum points are not on the imaginary axis and $s_0= (-d-\sqrt{d^2-4\beta})/4$. If $d\geq\frac{\beta+4c}{2\sqrt{c}}$ the local maximum  point is $i\mu_2$.
         \item[$\bullet$]  The local minimum points are not on the imaginary axis and $s_1=(-d+\sqrt{d^2-4\beta})/4$.
     \end{itemize}
\item[\rm ii)]
If $\beta\geq4c>0$  there is a unique strip maximal with respect to $\overline{ W_{\eR\times\{\beta\}}(T)\setminus i\R}$ if and only if $d_2< d$. 
     \begin{itemize}
         \item[$\bullet$] The local maximum point is $i\mu_2$.
        \item[$\bullet$] If $d\leq\frac{\beta+4c}{2\sqrt{c}}$  then the local minimum point is $i\mu_3$. If $d>\frac{\beta+4c}{2\sqrt{c}}$   the local minimum points are not on the imaginary axis and $s_1=(-d+\sqrt{d^2-4\beta})/4$.
     \end{itemize}
\end{itemize}

\end{prop}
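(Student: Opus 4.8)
The plan is to derive the existence statements from Proposition~\ref{4gap1} combined with the location of $d_2$ in Lemma~\ref{5lemd}, and then to pin down $s_0$ and $s_1$ by identifying which extreme points of $\Gamma:=\overline{W_{\eR\times\{\beta\}}(T)\setminus i\R}$ border the strip. For existence, Proposition~\ref{4gap1} gives a unique strip if and only if $d>\min(2\sqrt{\beta},d_2)$. In case~i), $\beta<4c$, Lemma~\ref{5lemd}~i) sets $d_2=\infty$, so the condition reads $d>2\sqrt{\beta}$; in case~ii), $\beta\ge 4c>0$, Lemma~\ref{5lemd}~ii)--iii) put $d_2\in(2\sqrt{c},2\sqrt{\beta}]$, so the condition reads $d>d_2$. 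This settles the first sentence of each item.

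Next I would enumerate the candidate boundary points. By Definition~\ref{3extdef} the values $s_0,s_1$ occur at extreme points of $\Gamma$. By Lemma~\ref{4cor_nax1} an extreme point off the imaginary axis has imaginary part among the values in \eqref{4axn_gap}; of these, $(-d\pm\sqrt{d^2-4\beta})/4$ is real and negative exactly when $d\ge 2\sqrt{\beta}$, in which case it lies in $(-d/2,0)$, whereas the pair $(-d\pm d\sqrt{1+4\beta/(4c-d^2)})/4$ needs a separate, elementary discussion showing that it either exceeds $\omega_\Im=0$, lies below $-d/2$ and hence concerns $p_{\min}$ by Corollary~\ref{3minlim}, or coincides with an on-axis value. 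By Lemma~\ref{3imex} an extreme point on the imaginary axis is $i\mu$ with $\mu$ a \emph{distinct} root of $q_\beta$ obeying $0=P_\mu+\sqrt{P_\mu^2-Q_\mu}-\mu^2$; by Proposition~\ref{prop5imroots}, Proposition~\ref{4ax_crprop} and Lemma~\ref{5lemd}~v) such $\mu$ are among the $\mu_i$ of Definition~\ref{3mui}, whose number and signs are known. Hence $\{s_0,s_1\}$ is drawn from $(-d\pm\sqrt{d^2-4\beta})/4$ and the $i\mu_i$.

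To decide which candidates bound the strip I would rerun the structural analysis used in the proof of Proposition~\ref{4gap1}: by the remark preceding Corollary~\ref{3minlim} the largest imaginary part of $\Gamma$ is $0$, by Corollary~\ref{3minlim} the smallest lies below ${\rm Im}(\delta_-)=-d/2$, and the strip separates the unbounded component of $\Gamma$ (which reaches $\omega_\Im=0$) from a bounded component that, by Lemma~\ref{4lem1r}, must enclose a pole and hence meet $i\R$ twice. Matching those two intersections against the real roots of $q_\beta$ via Proposition~\ref{4ax_crprop} fixes the number and location of the bounded components relative to $-d/2$. Since Proposition~\ref{4cdv} presents $\omega_\Re$ as a multivalued function of $\omega_\Im$, the strip is exactly the $\omega_\Im$-interval on which \eqref{5ctdt} has no real solution and which avoids $i\R$; a direct comparison of the candidate heights then identifies the boundary on the side facing $\omega_\Im=0$ as $i\mu_2$ (case~i)) resp.\ $i\mu_3$ (case~ii)) when $d$ is on one side of $(\beta+4c)/(2\sqrt{c})$ and as an off-axis value $(-d\pm\sqrt{d^2-4\beta})/4$ otherwise, the opposite boundary being the remaining one of these.

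The crux is the transition at $d=(\beta+4c)/(2\sqrt{c})$: one must verify that this is precisely the value of $d$ at which the relevant off-axis extreme point in $(-d/2,0)$ merges onto the imaginary axis at a root of $q_\beta$ --- equivalently, where $\omega_\Re=0$ in \eqref{5ctdt} at that root, i.e.\ where $P_\mu=\mu^2$ and $P_\mu^2=Q_\mu$ simultaneously --- so that below the threshold the corresponding boundary is genuinely off the axis at $(-d+\sqrt{d^2-4\beta})/4$ or $(-d-\sqrt{d^2-4\beta})/4$, while at and above the threshold it is $i\mu_2$ resp.\ $i\mu_3$. This is an elementary but delicate computation with $P_\mu,Q_\mu$ from \eqref{4defcd} and the discriminant data of Lemma~\ref{5lemd}, and it must be accompanied by an argument --- using Lemma~\ref{4lem1r} and the injectivity of the $r_n$ in Lemma~\ref{4rninj} --- that no further extreme point sits inside the putative strip. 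Once this threshold identification is in place, cases~i) and~ii) follow by the same reasoning, the dichotomy $\beta<4c$ versus $\beta\ge 4c$ entering only through the position of $d_2$.
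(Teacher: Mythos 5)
Your proposal follows essentially the same route as the paper's proof: existence of the strip from Proposition \ref{4gap1} together with the location of $d_2$ in Lemma \ref{5lemd}, identification of the candidate boundary heights as the roots of $f$ in \eqref{4axn_gap} and the on-axis roots $\mu_i$ of $q_\beta$ via Lemmas \ref{4cor_nax1} and \ref{3imex}, selection of $i\mu_2$/$i\mu_3$ through the one-bounded-plus-one-unbounded component structure (reusing the four-intersection contradiction from Proposition \ref{4gap1}), and the threshold $d=(\beta+4c)/(2\sqrt{c})$ obtained exactly as you describe, from the sign of $P_{\omega_\Im}-\omega_\Im^2$ at the root of $f$, i.e.\ whether the off-axis extreme point has merged onto the imaginary axis. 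The computations you defer are precisely the ones the paper carries out, so the plan is sound and matches the published argument.
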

\begin{proof}
We will first show that  $s_0=\mu_2$ if the local maximum point is on the imaginary axis and $s_0=(-d-\sqrt{d^2-4\beta})/4$ if the local maximum is not on the imaginary axis.
If a strip maximal with respect to $\overline{ W_{\eR\times\{\beta\}}(T)\setminus i\R}$ exists then it follows from Corollary \ref{4circor} that there is exactly one unbounded component and one bounded component of $\overline{ W_{\eR\times\{\beta\}}(T)\setminus i\R}$. By continuity the bounded component intersects the imaginary axis an even number of times. If the local maximum is on the imaginary axis it must thus be the largest root of the bounded component. Hence, the local maximum is the root $\mu_2$ in \eqref{3mui} if the bounded component intersects the imaginary axis two times and the root $\mu_4$ in \eqref{3mui} if there are four intersections.
\\ 
Assume there are four intersections with the imaginary axis. This leads to a contradiction by arguments analogous to the  proof of  the uniqueness of the strip maximal with respect to $\overline{ W_{\eR\times\{\beta\}}(T)\setminus i\R}$ in Proposition \ref{4gap1}. Thus, the local maximum is  $i\mu_2$ and similarly it follows that the local minimum is  $i\mu_3$.

 Assume that the local maximum is not on the imaginary axis. Then $f(s_0)=s_0^2(P_{s_0}^2-Q_{s_0})=0$ from Lemma \ref{4cor_nax1} and $s_0$ satisfies one of \eqref{4axn_gap}, \rm i) or \rm ii). Since it is a local maximum,  $f(s_0-\epsilon)>0$ and $f(s_0+\epsilon)<0$ for sufficiently small $\epsilon>0$, which implies $s_0=(-d + d\sqrt{1+4\beta/(4c-d^2)})/4$ or $s_0=(-d -\sqrt{d^2-4\beta})/4$. Assume that $s_0=(-d + d\sqrt{1+4\beta/(4c-d^2)})/4$, then $d^2>4\beta+4c$ since otherwise $s_0$ is not negative. Then $f(s_0):=P_{s_0}\pm\sqrt{P_{s_0}^2-Q_{s_0}}-s_0^2<0$ and we have a contradiction to Proposition \ref{4cdv}. Hence, $s_0=(-d-\sqrt{d^2 - 4\beta})/4$. The proof for the local minimum points is similar.

\rm i) Assume $\beta<4c$, then by Lemma \ref{5lemd} it follows that $d_2>2\sqrt{\beta}$. Hence by Proposition \ref{4gap1} there is a gap if and only if $d>2\sqrt{\beta}$. 
The point $\omega$ is a local maximum not on the imaginary axis if and only if $\omega_\Im=(-d-\sqrt{d^2-4\beta})/4$ and $P_{\omega_\Im}-\omega_\Im^2\leq0$. The condition $P_{\omega_\Im}-\omega_\Im^2\leq0$ holds if and only if $d\geq \frac{\beta+4c}{2\sqrt{c}}$.
 For the local minimum the same idea is used.  

\rm ii) 
Assume $\beta\geq 4c$ then by Lemma \ref{5lemd}, $d_2\leq 2\sqrt{\beta}$. Hence Proposition \ref{4gap1} implies that there is a gap if and only if $d>d_2$.
The point $\omega$ is a local maximum not on the imaginary axis if and only if $\omega_\Im=(-d-\sqrt{d^2-4\beta})/4$ and $P_{\omega_\Im}-\omega_\Im^2>0$, which never holds. 
For the local minimum the same idea is used for $\omega_\Im=(-d+\sqrt{d^2-4\beta})/4$ and then it follows that $P_{\omega_\Im}-\omega_\Im^2>0$ holds if and only if $c>0$ and $d>\frac{\beta+4c}{2\sqrt{c}}$.
\end{proof}

\begin{prop}\label{4proplava2}
Let $W_{\eR\times\{\beta\}}(T)\setminus i\R$ be defined as in \eqref{4curveb}. If $c=0$ set $\hat{d}:=0$ and if $c>0$ let $\hat{d}$ be the unique solution of
\begin{equation}\label{4mind_axn}
c\hat{d}^3+\left(\frac{\beta^2}{16}-\beta c-3c^2\right)\hat{d}^2+c^2(2\beta+3c)\hat{d}-c^3(\beta+c)=0, 
 \end{equation}
that satisfies $0<\hat{d}<c$.
Then if $d<2\sqrt{\hat{d}}$, the extreme points as in Definition \ref{3extdef} with smallest imaginary part  of $\overline{ W_{\eR\times\{\beta\}}(T)\setminus i\R}$ are not on the imaginary axis and the imaginary part of the points are
\begin{equation}\label{eq:4proplava2}
	\frac{1}{4}\left(-d - d\sqrt{1+\frac{4\beta}{4c-d^2}}\right).
\end{equation}	
If $d\geq2\sqrt{\hat{d}}$ the point with smallest imaginary part is $i\mu_1$ as defined in \eqref{3mudef}.

\end{prop}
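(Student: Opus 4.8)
The plan is to pin down the point(s) $p_{\min}$ of $\overline{ W_{\eR\times\{\beta\}}(T)\setminus i\R}$ of least imaginary part; by Definition~\ref{3extdef} these lie in the set of extreme points, and by symmetry in the imaginary axis the extreme points of least imaginary part are exactly these $p_{\min}$, so the statement is precisely a description of $p_{\min}$. Throughout I use: ${\rm Im}(p_{\min})<{\rm Im}(\delta_-)\le-d/2$ (Corollary~\ref{3minlim} and \eqref{5delthe}); writing $h(\mu):=P_\mu+\sqrt{P_\mu^2-Q_\mu}-\mu^2$, a point of height $\mu\in\R\setminus\{0,-d/2\}$ lies in the set iff $P_\mu^2-Q_\mu\ge0$ and $h(\mu)\ge0$, such a point lies off $i\R$ for at least one choice of real part iff $h(\mu)>0$, while $i\mu$ itself lies in the set iff $q_\beta(\mu)=0$; all of this is read off \eqref{5ctdt}, Proposition~\ref{4cdv} and Proposition~\ref{prop5imroots}, and $q_\beta(\mu)=0$ forces $P_\mu^2-Q_\mu\ge0$.

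First I would settle the case $4c-d^2\le0$, which contains $c=0$ (where $\hat d=0$, so $d\ge2\sqrt{\hat d}$ automatically). If $p_{\min}\notin i\R$, then by Lemma~\ref{4cor_nax1} its imaginary part is a distinct root of $f$, hence one of the values \eqref{4axn_gap}; but an elementary estimate shows that when $4c-d^2\le0$ every real number among \eqref{4axn_gap} exceeds $-d/2$, contradicting ${\rm Im}(p_{\min})<-d/2$. Hence $p_{\min}\in i\R$, so by Proposition~\ref{prop5imroots} (the value $0$ being excluded since $0>-d/2$) it equals $i\mu$ for a real root $\mu$ of $q_\beta$, and being the least such it is $i\mu_1$ in the ordering of Definition~\ref{3mui}; Proposition~\ref{4ax_crprop} (together with its $c=0$ remark) guarantees that $q_\beta$ does have a real root below $-d/2$ here. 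This proves the claim for $d\ge2\sqrt{\hat d}$ in this regime.

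The core is the case $4c-d^2>0$, so $c>0$ and $d<2\sqrt c$. Put $\mu^\ast:=\tfrac14\bigl(-d-d\sqrt{1+4\beta/(4c-d^2)}\bigr)$, the quantity in \eqref{eq:4proplava2}; since $1+4\beta/(4c-d^2)>1$ one has $\mu^\ast<-d/2$, and a short comparison with \eqref{4axn_gap}\,i) shows $\mu^\ast$ is the least real root of the degree-four polynomial $\mu\mapsto\mu^2(P_\mu^2-Q_\mu)$ (its only repeated root, if any, being $-d/4\ne\mu^\ast$), whose leading coefficient is $d^2-4c<0$. Hence $P_\mu^2-Q_\mu<0$ for every $\mu<\mu^\ast$, so by Propositions~\ref{prop5imroots} and~\ref{4cdv} no point of the set has imaginary part below $\mu^\ast$, i.e.\ ${\rm Im}(p_{\min})\ge\mu^\ast$. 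At height $\mu^\ast$ one has $P_{\mu^\ast}^2-Q_{\mu^\ast}=0$, so \eqref{5ctdt} collapses to $\omega_\Re=\pm\sqrt{P_{\mu^\ast}-(\mu^\ast)^2}$. If $P_{\mu^\ast}-(\mu^\ast)^2>0$, there is an off-axis point of height $\mu^\ast$ and $\mu^\ast$ is a simple root of $f$, so that point is an extreme point and $p_{\min}$ has imaginary part \eqref{eq:4proplava2}, off $i\R$. If $P_{\mu^\ast}-(\mu^\ast)^2\le0$, then there is no point of height $\mu^\ast$; on $(\mu^\ast,-d/2)$ one has $P_\mu^2-Q_\mu>0$ because \eqref{4axn_gap} has no root there, while $h(\mu^\ast)=P_{\mu^\ast}-(\mu^\ast)^2\le0$, so (using ${\rm Im}(p_{\min})<-d/2$) the first height at which a point reappears is a zero of $h$, hence of $q_\beta$; since $q_\beta$ has no real root $\le\mu^\ast$, this is its least real root $\mu_1$ and $p_{\min}=i\mu_1$. (If $q_\beta$ has no real root at all, so $d<d_1$ in Lemma~\ref{5lemd}, this analysis forces the first alternative.)

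It remains to identify the threshold. Imposing $P_\mu=\mu^2$ together with $P_\mu^2=Q_\mu$ at $\mu=\mu^\ast$, equivalently $\mu^4=Q_\mu$ and $4\mu^3=(4c-2d^2)\mu-4d\mu^2-\beta d$, and eliminating $\mu$ produces, after clearing denominators, exactly equation \eqref{4mind_axn} as a cubic in $\hat d:=d^2/4$. Its left-hand side equals $-c^3(\beta+c)<0$ at $\hat d=0$ and $\beta^2c^2/16>0$ at $\hat d=c$; combined with a sign analysis of the left-hand side of \eqref{4mind_axn} on $(0,c)$ (equivalently, of $d\mapsto P_{\mu^\ast(d)}-(\mu^\ast(d))^2$ on $(0,2\sqrt c)$, which is positive as $d\to0^{+}$ and tends to $-\infty$ as $d\to2\sqrt c^{-}$), this yields a unique root $\hat d\in(0,c)$ with $P_{\mu^\ast}-(\mu^\ast)^2>0$ precisely when $d^2/4<\hat d$, i.e.\ $d<2\sqrt{\hat d}$; for $c=0$ the cubic degenerates to $\tfrac{\beta^2}{16}\hat d^2=0$, matching the convention $\hat d=0$. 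Feeding this back into the dichotomy of the previous paragraph gives the statement. I expect the main obstacle to be exactly this last step: carrying out the elimination that yields \eqref{4mind_axn} and checking that its root in $(0,c)$ is unique and governs the sign of $P_{\mu^\ast}-(\mu^\ast)^2$; the remainder is sign bookkeeping for $P_\mu^2-Q_\mu$ and $h$ of the type already used for Propositions~\ref{4cdv}--\ref{4proplava}.
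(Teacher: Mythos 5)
Your proposal is correct and follows essentially the same route as the paper: Corollary \ref{3minlim}, Lemma \ref{4cor_nax1} and Proposition \ref{4ax_crprop} reduce the candidates for the lowest point to the off-axis height \eqref{eq:4proplava2} and $i\mu_1$, and the dichotomy is then decided by the sign of $P_{\mu^\ast}-(\mu^\ast)^2$, which after elimination is governed by the unique root of \eqref{4mind_axn} in $(0,c)$. Your version actually fills in more of the sign bookkeeping (why no points lie below $\mu^\ast$, and the reappearance argument on $(\mu^\ast,-d/2)$) than the paper's own proof does, and the one step you flag as unverified — the elimination producing \eqref{4mind_axn} and the uniqueness of its root — is likewise left unverified in the paper.
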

\begin{proof}
From Proposition \ref{4ax_crprop}, Lemma \ref{4cor_nax1}, and Corollary \ref{3minlim} it follows that $\omega$ with imaginary part \eqref{eq:4proplava2} and $i\mu_1$ are the only possible points that can have smallest imaginary part. Moreover if the points with smallest imaginary part are not on the imaginary axis then $d<2\sqrt{c}$ since otherwise $\omega_\Im<Im(\delta_-)$ does not hold. It thus follows that if $c=0$ the point with smallest imaginary part is $i\mu$.  Assume $c>0$, then $\omega$ is a  point with smallest imaginary part not on the imaginary axis if and only if  $d<2\sqrt{c}$ and by Proposition \ref{4cdv} follows $P_{\omega_\Im}-\omega_\Im^2>0$. This holds if and only if $d<2\sqrt{\hat{d}}$, where $\hat{d}$ is the unique solution to \eqref{4mind_axn} satisfying $0<\hat{d}<c$. In the remaining case the  point with smallest imaginary part must be on the imaginary axis and thus $i\mu_1$.
\end{proof}
 In Figure \ref{4fig:gap}.(b) the local minimum is clearly not on the imaginary axis but by increasing $d$, Figure \ref{4fig:gap}.(c) is obtained, where the local maximum point is on the imaginary axis.
\begin{center}
\begin{figure}
\includegraphics[width=12.5cm]{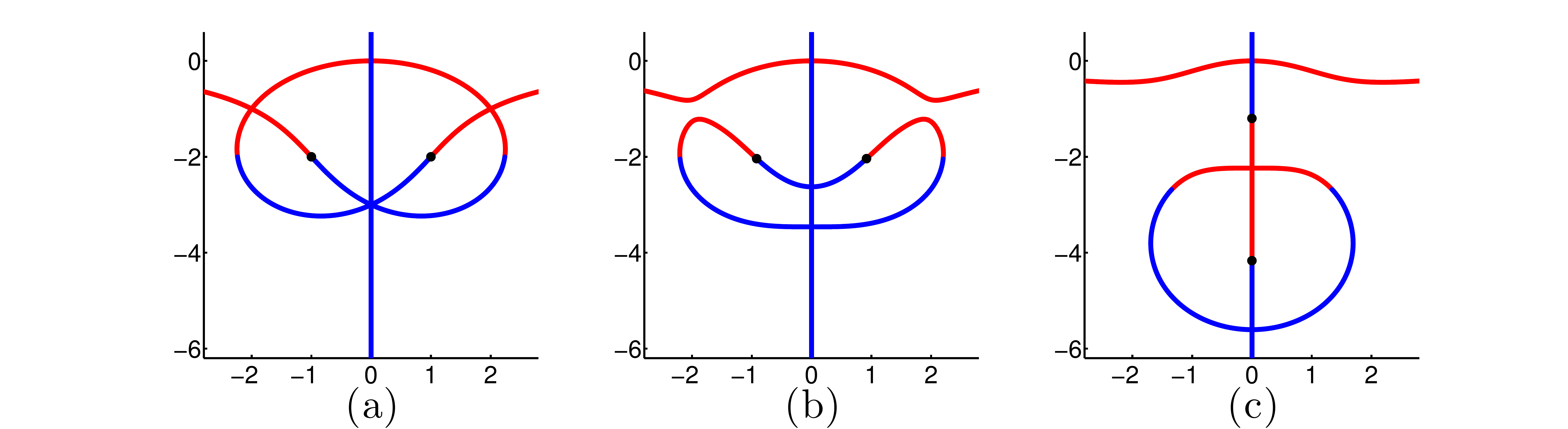}
\caption{Examples of the set $ W_{\eR\times\{\beta\}}(T)\subset\eC$, where red and blue denotes $W_{[0,\infty]\times\{\beta\}}(T)$ and $W_{[-\infty,0)\times\{\beta\}}(T)$, respectively. The figure describes how $ W_{\eR\times\{\beta\}}(T)$ changes with increasing $d$.
In (a) there is no strip maximal with respect to $\overline{ W_{\eR\times\{\beta\}}(T)\setminus i\R}$. In (b) such a strip exists and the point with smallest imaginary part is moving to the imaginary axis. In (c) the local maximum is moved to the imaginary axis.}\label{4fig:gap}
\end{figure}
\end{center}

The operator function $T$ defined in \eqref{5orgeq} depends on $d$ and in the following we study the dependence of $\overline{ W_{\eR\times\{\beta\}}(T)\setminus i\R}$ on the parameter $d$. Moreover, we consider properties of a strip maximal with respect to $\overline{ W_{\eR\times\{\beta\}}(T)\setminus i\R}$ as in Definition \ref{4strip}.

\begin{prop}\label{mindegd}
Let $W_{\eR\times\{\beta\}}(T)\setminus i\R$ be defined as in \eqref{4curveb} and consider the extreme points of $W_{\eR\times\{\beta\}}(T)\setminus i\R$ depending on $d$. Then the extreme points of $\overline{ W_{\eR\times\{\beta\}}(T)\setminus i\R}$  in Definition \ref{3extdef} are continuous in $d$ and the extreme points with smallest imaginary parts are decreasing with $d$.
Furthermore, if a strip maximal with respect to $\overline{ W_{\eR\times\{\beta\}}(T)\setminus i\R}$ exists, then $s_0$ is strictly decreasing and $s_1$ is strictly increasing with respect to $d$.
\end{prop}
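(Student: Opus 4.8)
The plan is to reduce every claim to an explicit analysis of the functions identified in Propositions \ref{4proplava} and \ref{4proplava2}, and then differentiate in $d$. Recall that the extreme points of $\overline{ W_{\eR\times\{\beta\}}(T)\setminus i\R}$ are of exactly two kinds (Lemma \ref{4cor_nax1} and Lemma \ref{3imex}): those off the imaginary axis, whose imaginary parts are the distinct roots of $f(\omega_\Im)=\omega_\Im^2(P_{\omega_\Im}^2-Q_{\omega_\Im})$ listed in \eqref{4axn_gap}, and those on the imaginary axis, which are certain distinct roots $\mu_j$ of $q_\beta$ from \eqref{5imroots} satisfying $P_\mu+\sqrt{P_\mu^2-Q_\mu}-\mu^2=0$. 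For the off-axis family the two candidate imaginary parts are $(-d\pm\sqrt{d^2-4\beta})/4$ and $(-d\pm d\sqrt{1+4\beta/(4c-d^2)})/4$; each of these is manifestly continuous in $d$ on the range of $d$ where it is real and negative (and, by Propositions \ref{4proplava}--\ref{4proplava2}, these are the only $d$ for which such a point can be extreme). For the on-axis family, the relevant $\mu_j$ are simple roots of $q_\beta$, so by the implicit function theorem they are continuous (indeed $C^1$) in $d$ wherever they remain simple; the only possible failure of continuity is a collision of roots of $q_\beta$, i.e.\ a value of $d$ where $\Delta_{q_\beta}=0$, and at such a $d$ (one of $d_1,d_2,d_3$ of Lemma \ref{5lemd}) Propositions \ref{4proplava} and \ref{4proplava2} describe precisely how an on-axis extreme point is created from or absorbed into an off-axis one, and one checks the two one-sided limits agree. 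Assembling these cases gives global continuity of the extreme-point set in $d$.

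For monotonicity of the smallest imaginary part, I would split along the threshold $d=2\sqrt{\hat d}$ of Proposition \ref{4proplava2}. For $d<2\sqrt{\hat d}$ the smallest imaginary part equals the expression \eqref{eq:4proplava2}, namely $\tfrac14\bigl(-d-d\sqrt{1+4\beta/(4c-d^2)}\bigr)$; differentiating in $d$ (here $0<d<2\sqrt c$, so $4c-d^2>0$ and the radical is real) one checks the derivative is strictly negative — both $-d$ and the factor $-d\sqrt{1+4\beta/(4c-d^2)}$ are strictly decreasing since $d\mapsto d^2/(4c-d^2)$ is increasing on $(0,2\sqrt c)$. For $d\ge 2\sqrt{\hat d}$ the smallest imaginary part is $\mu_1$, the least real root of $q_\beta$; writing $q_\beta(\mu_1;d)=0$ and differentiating, $\mu_1'(d)=-\,\partial_d q_\beta(\mu_1)/\partial_\mu q_\beta(\mu_1)$, and from \eqref{5imroots} $\partial_d q_\beta(\mu)=2\mu^3+2d\mu^2+(\beta/2+2c)\mu=\mu\bigl(2\mu^2+2d\mu+\beta/2+2c\bigr)$. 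Since $\mu_1<\operatorname{Im}(\delta_-)=-d/2<0$ (Corollary \ref{3minlim}) and the quadratic $2\mu^2+2d\mu+\beta/2+2c$ is positive there (its discriminant $4d^2-4(\beta+4c)$ is negative precisely when $d<2\sqrt{\beta+c}$, and one verifies positivity in the remaining range using $\mu_1<-d/2$), we get $\partial_d q_\beta(\mu_1)<0$; and $\partial_\mu q_\beta(\mu_1)>0$ because $\mu_1$ is the smallest real root of the quartic $q_\beta$ whose leading coefficient is positive. Hence $\mu_1'(d)<0$. Continuity from the first part glues the two regimes, giving strict decrease throughout.

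Finally, for the strip endpoints, by Proposition \ref{4proplava} each of $s_0$ and $s_1$ is, in every parameter regime, one of: an explicit off-axis expression $(-d\mp\sqrt{d^2-4\beta})/4$, or an on-axis root $i\mu_2$ (for $s_1$, i.e.\ the local maximum) or $i\mu_3$ (for $s_0$, i.e.\ the local minimum). For the off-axis expressions one differentiates directly: $\frac{d}{dd}\bigl[(-d-\sqrt{d^2-4\beta})/4\bigr]=-\tfrac14\bigl(1+d/\sqrt{d^2-4\beta}\bigr)<0$ and $\frac{d}{dd}\bigl[(-d+\sqrt{d^2-4\beta})/4\bigr]=-\tfrac14\bigl(1-d/\sqrt{d^2-4\beta}\bigr)>0$ since $d>\sqrt{d^2-4\beta}$; so $s_0$ decreases and $s_1$ increases, consistent with the claim. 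For the on-axis cases one again uses $s_1=\mu_2$, $s_0=\mu_3$ (with our sign convention $\operatorname{Im}$ of the max point $>$ $\operatorname{Im}$ of the min point, so $\mu_2>\mu_3$ among the ordered roots), these being simple roots of $q_\beta$, and computes $\mu_j'(d)=-\partial_d q_\beta(\mu_j)/\partial_\mu q_\beta(\mu_j)$ as above: the sign of $\partial_\mu q_\beta$ alternates with the index of the root, and the sign of $\partial_d q_\beta(\mu_j)=\mu_j(2\mu_j^2+2d\mu_j+\beta/2+2c)$ is controlled by the location of $\mu_j$ relative to $-d/2$ via Proposition \ref{4ax_crprop}; tracking these signs yields $\mu_2'(d)>0$ and $\mu_3'(d)<0$. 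Across the thresholds where the strip endpoint switches between an on-axis root and an off-axis expression (the values $d=(\beta+4c)/(2\sqrt c)$ and $d=d_2$ in Proposition \ref{4proplava}), continuity has already been established in the first paragraph, so strict monotonicity on each side patches together to strict monotonicity overall.

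\medskip
The main obstacle I expect is the bookkeeping at the threshold values $d\in\{d_1,d_2,d_3,2\sqrt{\hat d},(\beta+4c)/(2\sqrt c),2\sqrt\beta,2\sqrt{\beta+c}\}$: at each such $d$ an extreme point changes type (on-axis $\leftrightarrow$ off-axis) or a pair of on-axis extreme points collides, and one must verify that the one-sided limits of the relevant expression agree and that the one-sided derivatives, while possibly blowing up (e.g.\ $d/\sqrt{d^2-4\beta}\to\infty$ as $d\downarrow 2\sqrt\beta$), keep a constant sign. The algebraic heart of the matter — showing $2\mu^2+2d\mu+\beta/2+2c>0$ at the roots $\mu_j$ of $q_\beta$ in the appropriate ranges, so that the sign of $\partial_d q_\beta(\mu_j)$ is simply that of $\mu_j$ — is a short discriminant-plus-location argument using Proposition \ref{4ax_crprop} and Corollary \ref{3minlim}, and everything else is routine single-variable calculus.
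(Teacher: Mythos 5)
Your overall strategy is the same as the paper's: differentiate the closed-form off-axis expressions directly, and treat the on-axis extreme points as roots of $q_\beta$ moving with $d$ (the paper writes the exact expansion $q^{d+\epsilon}_\beta(\mu)=q^{d}_\beta(\mu)+2\mu(\mu^2+d\mu+c+\beta/4)\epsilon+\epsilon^2\mu^2$ and reads off the sign of $q^{d+\epsilon}_\beta$ at the old roots, which is equivalent to your implicit differentiation $\mu_j'=-\partial_d q_\beta/\partial_\mu q_\beta$). The continuity argument and the off-axis derivative computations are fine. However, the on-axis sign analysis contains errors that, taken literally, prove the opposite monotonicity. Since $q_\beta$ is monic of degree four, $q_\beta\to+\infty$ as $\mu\to-\infty$, so at its smallest simple real root $\partial_\mu q_\beta(\mu_1)<0$, not $>0$ as you assert; combined with your (correct) $\partial_d q_\beta(\mu_1)<0$, your formula as written gives $\mu_1'>0$. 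For the strip endpoints, in the increasing ordering of Definition \ref{3mui} one has $s_0=\mu_2$ and $s_1=\mu_3$ (your relabeling ``$\mu_2>\mu_3$ among the ordered roots'' contradicts that ordering), with $\partial_\mu q_\beta(\mu_2)>0$ and $\partial_\mu q_\beta(\mu_3)<0$ because $q_\beta>0$ on the gap $(\mu_2,\mu_3)$. Hence $s_0$ decreases and $s_1$ increases exactly when $\mu_j^2+d\mu_j+c+\beta/4\le 0$ for $j=2,3$, i.e.\ when $\mu_2,\mu_3$ lie in $I^d:=\left[-d/2-\sqrt{(d^2-\beta)/4-c},\,-d/2+\sqrt{(d^2-\beta)/4-c}\right]$. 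Your closing paragraph asserts the reverse inequality, $2\mu^2+2d\mu+\beta/2+2c>0$ at the roots $\mu_j$; that is correct (and needed) only for $\mu_1$, and at $\mu_2,\mu_3$ it would make the strip shrink.

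The inclusion $\mu_2,\mu_3\in I^d$ is the genuinely non-trivial step and cannot be read off from ``the location of $\mu_j$ relative to $-d/2$'' via Proposition \ref{4ax_crprop}: it is a two-sided condition, and $I^d$ is empty unless $d\ge\sqrt{\beta+4c}$. The paper establishes it by noting that once $\mu_2^{\tilde d},\mu_3^{\tilde d}\in I^{\tilde d}$ for some $\tilde d$ the inclusion persists for all $d\ge\tilde d$, and then anchoring at a single parameter value ($\mu_2^{d_0}=-\sqrt{c}\in I^{d_0}$ with $d_0=(\beta+4c)/(2\sqrt c)$ when $\beta<4c$; $\mu_2^{d_2}=\mu_3^{d_2}\in I^{d_2}$ when $\beta>4c$; continuity in $\beta$ when $\beta=4c$). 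This anchoring argument is absent from your proposal, so the strip part does not go through as written. (Minor: the discriminant of $2\mu^2+2d\mu+\beta/2+2c$ is $4d^2-4(\beta+4c)$, which is negative for $d<\sqrt{\beta+4c}$, not $d<2\sqrt{\beta+c}$.)
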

\begin{proof}
Proposition \ref{4proplava}, Proposition \ref{4proplava2}, and Proposition \ref{4cdv} yield the continuity 
of the extreme points. Hence, it is enough to show the results for the extreme points on and off the imaginary axis separately.
For each extreme point on $ W_{\eR\times\{\beta\}}(T)\setminus i\R$, the result follows directly from  Propositions \ref{4proplava} and \ref{4proplava2}. 
All other extreme points can be written in the form $\omega=i\mu$, $\mu\in \R$ where $\mu$ is a solution to \eqref{5imroots} and the smallest imaginary part is then $\mu_1$ as defined in \eqref{3mudef}. Let $q^d_\beta(\mu)$ denote the polynomial \eqref{5imroots} for a given $d>0$ and 
take $\epsilon>0$. Then
\begin{equation}\label{eq:qPert}
	q^{d+\epsilon}_\beta(\mu)=q^{d}_\beta(\mu)+2\mu\left(\mu^2+d\mu+c+\frac{\beta}{4}\right)\epsilon+\epsilon^2\mu^2.
\end{equation}
Let $\mu_i^d$ for $i=1,2,3,4$ denote the real roots of $q^d_\beta$ ordered non-decreasingly. If the point with the smallest imaginary part is on the imaginary axis then it is given by $\mu_1^d$ and Proposition \ref{4ax_crprop} implies $(\mu_1^d)^2+d\mu_1^d+c+\beta/4>0$. Thus $q^{d+\epsilon}_\beta(\mu_1^d)<0$ for $\epsilon>0$ small enough and \eqref{eq:qPert} then implies that the smallest imaginary part of $\overline{ W_{\eR\times\{\beta\}}(T)(d)\setminus i\R}$ is decreasing in $d$.

Assume that it exists a strip $\strip$ that is maximal with respect to $\overline{ W_{\eR\times\{\beta\}}(T)\setminus i\R}$ and that the local maximum (minimum) point $is_0$ ($is_1$) is on the imaginary axis. Then 
Proposition \ref{4proplava} implies $s_0=\mu_2^d$ ($s_1=\mu_3^d$) and we conclude that $q^d_\beta(\mu)>0$ for $\mu\in \strip$.
In particular the maximum (minimum) is decreasing (increasing) if and only if $q^{d+\epsilon}_\beta(\mu_2^d)>0$ $(q^{d+\epsilon}_\beta(\mu_3^d)>0)$ for $\epsilon>0$ small enough.
In general, $q^{d+\epsilon}_\beta(\mu)>0$ for $\epsilon>0$ small enough if and only if $\mu^2+d\mu+c+\beta/4\leq 0$, which is equivalent to
\[
\mu\in\left[-\frac{d}{2}-\sqrt{\frac{d^2-\beta}{4}-c},-\frac{d}{2}+\sqrt{\frac{d^2-\beta}{4}-c}\right ]:=I^d.
\]
What remains to show is that all local extreme points $\mu_2^d$, $\mu_3^d$ are in $I^d$. It can be seen that if for some $\tilde{d}>0$ it holds that $\mu_2^{\tilde d},\mu_3^{\tilde d}\in I^{\tilde d}$, then $\mu_2^{d},\mu_3^{d}\in I^d$ for all $d\geq\tilde d$.

If $\beta<4c$ then it follows from Proposition \ref{4proplava} \rm i) that only the local maximum can be on the imaginary axis. Moreover, the condition $d\geq d_0:=(\beta+4c)/(2\sqrt{c})$ holds, which together with $\mu_2^{d_0}=-\sqrt{c}\in I^{d_0}$ yields the result.

If $\beta>4c$, there is a strip maximal with respect to $\overline{ W_{\eR\times\{\beta\}}(T)\setminus i\R}$  if and only if $d>d_2$. Furthermore $\mu_2^{d_2}=\mu_3^{d_2}$ holds. Thus it is enough to show $\mu_2^{d_2} \in I^{d_2}$ to prove the claim. 
Since $\beta>4c$, it follows from Lemma \ref{5lemd} that $d_2\leq 2\sqrt{\beta}<d_0$. 
Hence, for $d_0-d_2\geq\epsilon >0$  and $d=d_2+\epsilon$ the set $\overline{ W_{\eR\times\{\beta\}}(T)\setminus i\R}$ has both the local minimum and maximum on the the imaginary axis. However, under the assumption that there is a strip with respect to $\overline{ W_{\eR\times\{\beta\}}(T)\setminus i\R}$ it follows from $\mu_2^{d_2}=\mu_3^{d_2}$ that either $\mu_2^{d}$ is decreasing in the vicinity of $d_2$ or $\mu_3^{d}$ is in the vicinity of $d_2$ increasing in $d$. Hence either $\mu_2^d\in I^d$ or $\mu_3^d\in I^d$. Since this holds for arbitrarily small $\epsilon>0$, $\mu_2^{d_2}=\mu_3^{d_2}\in I^{d_2}$.
 
For $\beta=4c$, the result follows immediately since the roots of \eqref{5imroots} are continuous in $\beta$ and Proposition \ref{4proplava}.
\end{proof}
\subsection{Variation of the numerical range  $W(B)$}\label{Sec:alpha}

In this section, we describe the subset of $W_{\partial \Omega}(T)$ obtained when fixing $\alpha$ and varying $\beta\in \overline{W(B)}$. Let $\eR_+:=[0,\infty]$ and consider the set 
\begin{equation}\label{4curvea}
W_{\{\alpha\}\times {\eR}_+}(T)=\bigcup_{n=1}^4r_n(\alpha,\eR_+),
\end{equation}
defined according to \eqref{4nrenc2}.
\begin{rem}
In the definition of $W_{\{\alpha\}\times \eR_+}(T)$, we set $\overline{W(B)}=[0,\infty]$ since $[0,\infty]$ is the smallest closed interval containing $\overline{W(B)}$ for all bounded $B$.
The limit of the roots $r_n(\alpha,\beta)$ are $0$ and $\pm\infty-id/2$ as $\beta\rightarrow\infty$. 
These points are in $ W_{\{\alpha\}\times \eR_+}(T)$ but for $\alpha\neq0$ not in $W_{\{\alpha\}\times \overline{W(B)}}(T)$, for any bounded $B$.
\end{rem}
For $\alpha=0$ this is completely solved in Lemma \ref{5nueq0}, and we assume therefore that $\alpha\in\R\setminus\{0\}$.

The set $W_{\{\alpha\}\times \eR_+}(T)$ can in the variable $\beta\in \eR_+$ be parametrized into a union of four curves.
For $\omega\in\C\setminus(i\R\cup\{\delta_+,\delta_-\})$, $\omega\in W_{\eR\times\{\beta\}}(T)$ is equivalent to $\omega\in\Pi_\beta$ as defined in \eqref{2pPi}, and $\alpha=\alp(\omega)$ in equation \eqref{4partal}.
The following results for this curve are similar to the results for $W_{\eR\times\{\beta\}}(T)$, but the behavior will greatly depend on the sign of $\alpha$, as can be seen in Proposition \ref{4circor} and in Figure \ref{4fig:signs}.
\begin{center}
\begin{figure}
\includegraphics[width=12.5cm]{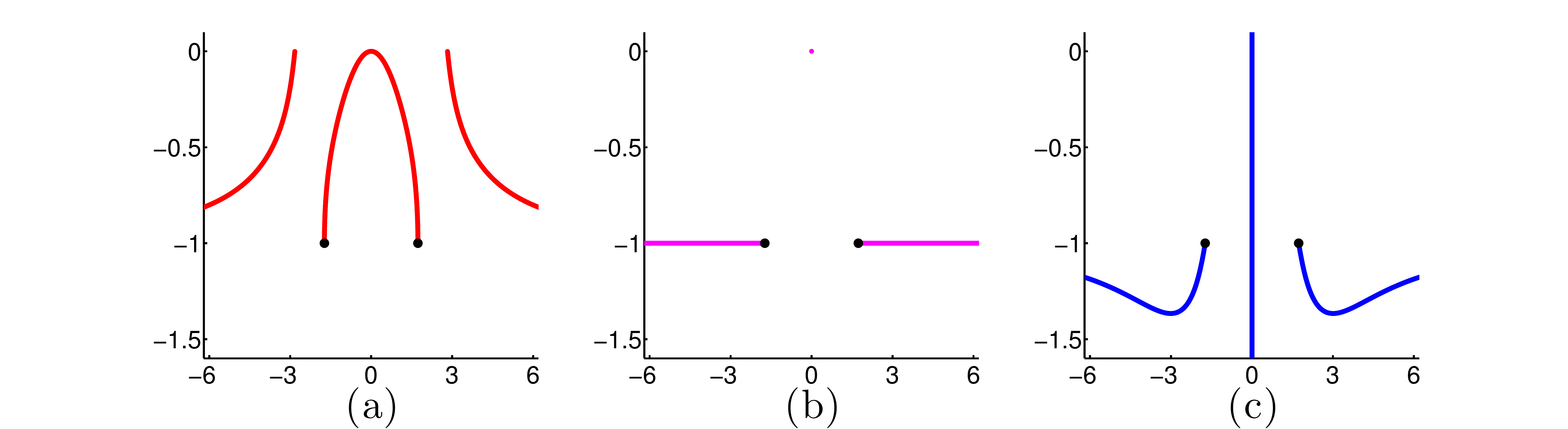}
\caption{Visualization of how the set  $ W_{\{\alpha\}\times \eR_+}(T)$ depends on the sign of $\alpha$, where  $\alpha >0$ in panel (a), $\alpha=0$ in (b), and  $\alpha<0$ in (c). }\label{4fig:signs}
\end{figure}
\end{center}

\begin{prop}\label{5imaxis2}
Let $W_{\{\alpha\}\times {\eR}_+}(T)$ denote the set \eqref{4curvea} and take $\alpha,c\neq0$, $\nu\in\R\setminus\{-2c/d\}$. Then $i\nu\in\overline{ W_{\{\alpha\}\times \eR_+}(T)\setminus i \R}$ if and only if $\alpha>\nu=0$, or $\nu$ is a solution to
\begin{equation}\label{5imroots2}
q_\alpha(\nu):=\nu^4+\frac{d}{2}\nu^3-\frac{\alpha d}{2}\nu-\alpha c=0,\quad \nu\leq-\frac{2c}{d},
\end{equation}
where the inequality is strict if $\alpha<-c$.
For $c=0$, the statements of the proposition hold but zero is in that case not in the set $\overline{ W_{\{\alpha\}\times \eR_+}(T)\setminus i\R}$.
\end{prop}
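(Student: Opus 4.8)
The plan is to mirror the proof of Proposition~\ref{prop5imroots}, interchanging the roles of $\alpha$ and $\beta$: a point $i\nu$ with $\nu\neq0$ can lie in $\overline{W_{\{\alpha\}\times\eR_+}(T)\setminus i\R}$ only if it is approached by a sequence of off-axis roots of $p_{(\alpha,\beta)}$, and since those roots are symmetric about the imaginary axis (Proposition~\ref{5boundspol}(i)) their mirror images also approach $i\nu$, so $i\nu$ must be a multiple root of $p_{(\alpha,\beta)}$ for some $\beta\in\eR_+$. The computational engine is the system \eqref{5gentabdef} and \eqref{5gentab} relating the roots to the coefficients, used as an ansatz for a double root.

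For $\nu\in\R\setminus\{0,-2c/d\}$ and finite $\beta$ I would set $t_1=2\nu$, $v_1=\nu^2$ (i.e.\ $\omega_1=\omega_2=i\nu$) in \eqref{5gentab} and eliminate $t_2$ and $v_2$; this yields exactly $q_\alpha(\nu)=0$ together with an explicit expression for the associated value $\beta=\beta(\nu)$, which one checks equals $\bet(i\nu)$. The case $\beta=\infty$ contributes only the limit roots $0$ and $\pm\infty-id/2$, none of which is $i\nu$ with $\nu$ finite and nonzero, so $\beta$ is finite. For the converse, Lemma~\ref{4rninj}(iii) says that for fixed $\alpha$ the purely imaginary roots of $p_{(\alpha,\cdot)}$ form a graph $\beta=\beta(\nu)$ over $\nu$; hence a root $\nu$ of $q_\alpha$ is a critical point of this graph, and moving $\beta$ off the critical value forces the two coalescing roots to leave the imaginary axis as a symmetric pair $\omega,-\overline{\omega}$, so that $i\nu$ lies in the closure, provided the relevant side of $\beta(\nu)$ meets $\eR_+=[0,\infty]$. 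The membership $\beta(\nu)\in\eR_+$ is what yields the inequality on $\nu$: since $\nu\neq0,-2c/d$ the point $i\nu$ is off $\partial\disc$, so $\bet(i\nu)$ is finite, and by Corollary~\ref{4circor}(iii) together with $\Pi_\beta\cap i\R=\{i\nu:\nu\le-2c/d\}$ one gets $\beta(\nu)\ge0$ if and only if $\nu\le-2c/d$. The strict refinement when $\alpha<-c$ is a boundary phenomenon: it occurs when $\beta(\nu)=0$, where only one side of $\beta=0$ lies in $\eR_+$ and where $i\nu$ can coincide with a pole $\delta_\pm$ (equivalently $\nu^2+d\nu+c=0$) or satisfy $\nu^2=-\alpha$; deciding on which side of $\beta=0$ the off-axis branch emerges, via Lemma~\ref{5pmfac}(i) and the factorization of $p_{(\alpha,0)}$ in Lemma~\ref{5nueq0}(iv), is what brings in the threshold $\alpha=-c$.

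Next the point $\nu=0$. Since $\alpha,c\neq0$ we have $p_{(\alpha,\beta)}(0)=\alpha c\neq0$ for every finite $\beta$, so $0$ is a root of no $p_{(\alpha,\beta)}$ with $\beta$ finite, and the only way it can enter the closure is through the limit $\beta\to\infty$, where two roots of the polynomial in \eqref{TCurve} tend to $0$. A short expansion shows these two roots behave like $\pm\sqrt{\alpha c/\beta}$ as $\beta\to\infty$, which is off the imaginary axis precisely when $\alpha c>0$, i.e.\ (as $c>0$) when $\alpha>0$; this is the clause ``$\alpha>\nu=0$''. For $c=0$ the same double-root elimination gives $q_\alpha(\nu)=0$ verbatim, but now $p_{(\alpha,\beta)}(0)=0$ for all $\beta$, so $0$ is always a root and always on $i\R$; ruling it out of $\overline{W_{\{\alpha\}\times\eR_+}(T)\setminus i\R}$ amounts to showing $0$ is never a triple root, which an ansatz with a triple root at $0$ contradicts, exactly as in the corresponding step of Proposition~\ref{prop5imroots}.

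The main obstacle I expect is the converse direction at the boundary of $\eR_+$: deciding, when $\beta(\nu)=0$ or when $i\nu$ is close to a pole $\delta_\pm$, whether the branch that leaves the imaginary axis does so for $\beta>0$ (so that $i\nu$ belongs to the set) or only for $\beta<0$ (so that it does not). This is precisely the point that forces the ``$\le$ versus strict if $\alpha<-c$'' distinction, and handling it requires the sharper information on purely imaginary roots from Lemma~\ref{5pmfac}(i) and the explicit roots in Lemma~\ref{5nueq0}(iv) rather than a bare continuity argument.
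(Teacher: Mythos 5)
Your proposal is correct and takes essentially the same route as the paper: the paper's proof is a one-line reduction to Proposition~\ref{prop5imroots} with the extra constraint $\beta\geq0$, resolved on the imaginary axis via Corollary~\ref{4circor}~iii), which is exactly the structure you lay out (double-root ansatz in \eqref{5gentab} yielding $q_\alpha$, sign of $\bet$ via $\Pi_\beta$, and separate treatment of $\nu=0$ and $c=0$). The boundary case $\nu=-2c/d$ that you flag as the main obstacle is excluded by the hypothesis $\nu\in\R\setminus\{-2c/d\}$, and the paper's proof does not elaborate on it either.
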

\begin{proof}
The proof follows the same steps as in Proposition \ref{prop5imroots}, with the additional condition $\beta\geq0$, which by Corollary \ref{4circor} \rm iii) simplifies to $ \nu\leq-\frac{2c}{d}$ or $\nu=0$ on the imaginary axis. 
\end{proof}
\begin{cor}\label{3multcor2}
Let $p_{\alpha,\beta}$ and $q_\alpha$ denote the polynomials \eqref{TCurve} and \eqref{5imroots2}, respectively. Then $p_{\alpha,\beta}$ has a root $i\nu$, $\nu\in\R\setminus\{0\}$ of multiplicity $n>1$ for some $\beta\in [0,\infty)$ if and only if $\nu\leq -2c/d$ and $\nu$ is a root of $q_\alpha$ of multiplicity $n-1$.
\end{cor}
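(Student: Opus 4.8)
The plan is to mirror the proof of Corollary~\ref{3multcor}, replacing $q_\beta$ by $q_\alpha$ from \eqref{5imroots2} and carrying along the sole structural difference, namely that $\beta$ is now constrained to $[0,\infty)$; by Corollary~\ref{4circor}~{\rm iii)} this constraint amounts, on the imaginary axis, to the restriction $\nu\leq -2c/d$ already appearing in Proposition~\ref{5imaxis2}. The engine is again the system \eqref{5gentabdef}--\eqref{5gentab} relating the elementary symmetric functions of the roots $\omega_1,\dots,\omega_4$ of $p_{\alpha,\beta}$ to $\alpha,\beta,c,d$, together with the symmetry of the root set about the imaginary axis: since $p_{\alpha,\beta}(i\omega)$ has real coefficients in $\omega$, a purely imaginary root of multiplicity $\geq3$ forces the remaining root to be purely imaginary as well.

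First I would record that $q_\alpha$ has no quadruple root: the $\nu^2$-coefficient of $q_\alpha$ vanishes, so a quadruple root would have to be $\nu=0$, forcing $\alpha c=0$ against the standing assumption $\alpha,c\neq0$ (for $c=0$ one factors out $\nu$ and argues with the remaining cubic). For $n=4$, substituting $\omega_1=\omega_2=\omega_3=\omega_4=i\nu$ into \eqref{5gentab} gives $\nu=-d/4$ together with one scalar identity in $\alpha,c$, and a direct computation identifies this identity with the condition that $\nu$ be a triple root of $q_\alpha$; hence $p_{(\alpha,\cdot)}$ has a quadruple root $i\nu$ for some $\beta\geq0$ if and only if $\nu$ is a triple root of $q_\alpha$, in which case $\nu\leq -2c/d$ automatically. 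For $n=3$, substituting $\omega_1=\omega_2=\omega_3=i\nu$, $\omega_4=i\mu_4$ and eliminating $\mu_4$ and $\beta$ from \eqref{5gentab} leaves precisely the two equations $q_\alpha(\nu)=0$ and $q_\alpha'(\nu)=0$, so $p_{(\alpha,\cdot)}$ has a triple root $i\nu$ for some $\beta\geq0$ iff $\nu$ is a multiple root of $q_\alpha$; since $q_\alpha$ has no quadruple root, and since a triple root of $q_\alpha$ would by the $n=4$ step make $i\nu$ a quadruple root of $p_{\alpha,\beta}$, that multiplicity is exactly $2$.

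It remains to assemble the equivalence for all $n$. The values $n\geq5$ are impossible since $\deg p_{\alpha,\beta}=4$. The case $n=2$ is supplied by Proposition~\ref{5imaxis2}: for $\nu\in\R\setminus\{0\}$, $i\nu$ is a root of $p_{\alpha,\beta}$ of multiplicity $\geq2$ for some $\beta\geq0$ exactly when $\nu\leq -2c/d$ and $q_\alpha(\nu)=0$. Combined with the $n=3,4$ analysis, a root $\nu$ of $q_\alpha$ with $\nu\leq -2c/d$ is simple exactly when $i\nu$ has multiplicity $2$ in $p_{\alpha,\beta}$, a double root exactly when it has multiplicity $3$, and a triple root exactly when it has multiplicity $4$ --- which is the asserted correspondence $n\leftrightarrow n-1$.

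The only real bookkeeping difficulty I anticipate is keeping the equivalence $\beta\geq0\Leftrightarrow\nu\leq -2c/d$ coherent across the three multiplicity cases and disposing of the excluded boundary value $\nu=-2c/d$ --- a short computation shows $q_\alpha(-2c/d)=0$ forces $d=2\sqrt c$, where this value coincides with ${\rm Im}(\delta_\pm)=-d/2$ --- together with the pole case $c=0$; the polynomial identities linking the substitutions above to $q_\alpha$ and $q_\alpha'$ are routine and I would not carry them out in full.
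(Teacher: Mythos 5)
Your proposal is correct and follows essentially the same route as the paper, which proves this corollary by repeating the argument of Corollary~\ref{3multcor} (no quadruple root of the auxiliary quartic, quadruple/triple roots of $p_{\alpha,\beta}$ matched to triple/double roots of the quartic via \eqref{5gentabdef}--\eqref{5gentab} and the symmetry of the root set, and the $n=2$ case supplied by the preceding proposition) with the extra constraint $\beta\geq 0$ translated into $\nu\leq -2c/d$ exactly as you do. Your use of $q_\alpha(\nu)=q_\alpha'(\nu)=0$ in place of the paper's discriminant condition $\Delta_{q_\alpha}=0$ is only a cosmetic (and arguably slightly cleaner) variant of the same step.
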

\begin{proof}
Similar to Corollary \ref{3multcor}, with the additional condition $\beta\geq0$.
\end{proof}

\begin{lem}\label{5lemd2}
Let $q_\alpha$ be the polynomial in \eqref{5imroots2} and let $\Delta_{q_\alpha}$ be its discriminant.
Let $d_0$ denote the largest real $d$ solving $\Delta_{q_\alpha}=0$. If  $c=0$ and $\alpha<0$, then $d_0=0$, otherwise $d_0$ is the unique positive solution. The following properties hold for $q_\alpha$ and for $d_0$:
\begin{itemize}
\item[\rm i)]  If $\alpha>0$ then $d_0\in[4\sqrt{\max(\alpha,c)},\infty)$ and the polynomial $q_\alpha$ has four real roots if  $d\geq d_0$, and if $d<d_0$ it has two real roots.
\item[\rm ii)]  If $\alpha<0$ the polynomial $q_\alpha$ has two real roots if  $d\geq d_0$, and if $d<d_0$ it has no real roots. If $c>0$ then $d_0\in(0,2\sqrt{c}]$.
\end{itemize}
\end{lem}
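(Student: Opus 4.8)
The plan is to reduce the whole statement to the sign of the discriminant $\Delta_{q_\alpha}$, viewed as a function of $\hat d:=d^{2}/4$; this substitution is legitimate because $q_\alpha$ is invariant under $(\nu,d)\mapsto(-\nu,-d)$, so $\Delta_{q_\alpha}$ is even in $d$. A direct computation of the quartic discriminant of $q_\alpha$ yields the identity
\[
	\Delta_{q_\alpha}=\alpha^{2}f(\hat d),\qquad f(\hat d):=4\alpha(\hat d-4c)^{3}-27(\alpha-c)^{2}\hat d^{2},\qquad \hat d:=\tfrac{d^{2}}{4}.
\]
Securing this clean factorization is the main computational step; the quickest certifications are at $\alpha=c$, where $q_{c}=(\nu^{2}-c)\bigl(\nu^{2}+\tfrac{d}{2}\nu+c\bigr)$ forces $f=4c(\hat d-4c)^{3}$, and at $c=0$, where $f=4\alpha\hat d^{2}\bigl(\hat d-\tfrac{27}{4}\alpha\bigr)$. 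Since $f$ has leading coefficient $4\alpha\ne0$, $\Delta_{q_\alpha}$ is a genuine degree-six polynomial in $d$ and $d_{0}$ is well defined.

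Next I would show that for $c>0$ the cubic $f$ has exactly one positive root $\hat d_{0}$: a positive root must satisfy $(\hat d-4c)^{3}/\hat d^{2}=27(\alpha-c)^{2}/(4\alpha)$, and the map $\hat d\mapsto(\hat d-4c)^{3}/\hat d^{2}$ has derivative $(\hat d-4c)^{2}(\hat d+8c)/\hat d^{3}\ge0$ on $(0,\infty)$, hence is a strictly increasing bijection of $(0,\infty)$ onto $\R$, so the equation has a unique solution $\hat d_{0}>0$. Because $f(0)=-256\alpha c^{3}\ne0$, the two remaining roots of $f$ are complex or strictly negative, so for real $d$ the polynomial $\Delta_{q_\alpha}$ vanishes exactly at $d=\pm2\sqrt{\hat d_{0}}$. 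For $c=0$ one reads off $\hat d_{0}=\tfrac{27}{4}\alpha$ when $\alpha>0$ and, when $\alpha<0$, that $f$ has no positive root, so the only real zero of $\Delta_{q_\alpha}$ is $d=0$. This pins down $d_{0}$ as in the statement.

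It remains to locate $\hat d_{0}$ and count the real roots of $q_\alpha$. For $\alpha>0$ one computes $f(4c)=-432c^{2}(\alpha-c)^{2}\le0$ and $f(4\alpha)=-16\alpha(\alpha-c)^{2}(11\alpha+16c)\le0$; since $f$ has positive leading coefficient and does not change sign on $(0,\hat d_{0})$, it is $\le0$ on $[0,\hat d_{0}]$, whence $\hat d_{0}\ge4\max(\alpha,c)$ and $d_{0}\ge4\sqrt{\max(\alpha,c)}$. For $\alpha<0$, $c>0$, the analogous value $f(c)=-27c^{2}(\alpha+c)^{2}\le0$ together with $f(0)>0$ and negative leading coefficient gives $\hat d_{0}\le c$, i.e.\ $d_{0}\le2\sqrt c$, with equality only at $\alpha=-c$. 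For the root count I would invoke the standard fact that a real quartic has $0$ or $4$ real roots when $\Delta>0$ and exactly $2$ when $\Delta<0$, combined with the sign of $f(d^{2}/4)$ on $(0,d_{0})$ and $(d_{0},\infty)$ found above. When $\alpha>0$: $q_\alpha(0)=-\alpha c\le0$ while $q_\alpha(\pm\infty)=+\infty$ (for $c=0$, $q_\alpha=\nu\cdot(\text{a cubic with a real root})$), so $q_\alpha$ always has at least two real roots; hence it has four on $(d_{0},\infty)$ (where $\Delta>0$) and two on $(0,d_{0})$ (where $\Delta<0$). When $\alpha<0$, $c>0$: for small $d>0$, $q_\alpha$ is a perturbation of $\nu^{4}+|\alpha|c>0$ and has no real root, and since the real-root count changes only at a zero of $\Delta_{q_\alpha}$ it stays $0$ throughout $(0,d_{0})$, while on $(d_{0},\infty)$, where $\Delta<0$, it is $2$; the case $c=0$, $\alpha<0$ is similar with $d_{0}=0$. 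At $d=d_{0}$ a pair of real, respectively complex-conjugate, roots coalesces into a real double root, so by continuity $q_\alpha$ has four, respectively two, real roots counting multiplicity, consistent with the ranges $d\ge d_{0}$. The only genuine obstacle is establishing the discriminant identity displayed above; once that is in hand everything reduces to evaluating $f$ at a few points and tracking signs.
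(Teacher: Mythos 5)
Your proof is correct and follows the same overall strategy as the paper: substitute $\hat d=d^{2}/4$, reduce everything to the sign of the cubic $f(\hat d)=\Delta_{q_\alpha}/\alpha^{2}$, and pin down its unique positive root by evaluating $f$ at $0$, $4c$, $4\alpha$ and $c$; your expanded form of $f$ agrees with the paper's displayed discriminant formula, so the computation you flag as the main obstacle is sound. The one place you genuinely diverge is the uniqueness of the positive root of $f$ when $c>0$: the paper computes the discriminant of the cubic itself, $\Delta_f=-2\cdot6^{9}\alpha^{9}c^{3}(\alpha-c)^{4}(\alpha+c)^{2}$, and argues from its sign (separately for $\alpha>0$ and $\alpha<0$), whereas you rewrite $f(\hat d)=0$ as $(\hat d-4c)^{3}/\hat d^{2}=27(\alpha-c)^{2}/(4\alpha)$ and note that the left-hand side is an increasing bijection of $(0,\infty)$ onto $\R$. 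Your device is cleaner, treats both signs of $\alpha$ uniformly, and avoids a second discriminant computation, at the price of having to certify the factored form of $\Delta_{q_\alpha}$ rather than just its coefficients; both routes then finish identically, using the sign of $f$ on either side of $\hat d_0$ together with the standard $0/2/4$ real-root count of a quartic by the sign of its discriminant and the elementary sign checks $q_\alpha(0)\le0$ (for $\alpha>0$) and $q_\alpha>0$ at $d=0$ (for $\alpha<0$).
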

 \begin{proof}
Let $\hat{d}:=d^2/4$ and study $f(\hat{d}):=\Delta_{q_{\alpha}}$ as a polynomial in $\hat{d}$. By definition, the discriminant of $q_\alpha$ is
\begin{equation}\label{4discr2}
f(\hat{d})= 4\alpha ^3 \hat{d}^3- \alpha ^2 (27 \alpha ^2 - 6 \alpha  c + 27 c^2) \hat{d}^2 + 192 \alpha ^3 c^2 \hat{d} -256 \alpha ^3 c^3.
\end{equation}
For $c=0$ the roots are $0,\ 27\alpha/4$ and the result follows. If $c>0$ the discriminant of $f$ is $
\Delta_f=-2\cdot6^9\alpha^{9}c^3(\alpha-c)^4(\alpha +c)^2
$.

\rm i)
 Assume $\alpha>0$ then $\Delta_f\leq0$ with equality only if $c=\alpha$. In that case $\hat{d}=4c$ is a triple root of $f$, else $\Delta_f<0$ and then there is one real root. Hence, there is in each case exactly one real solution to $f(\hat{d})=0$. Denote this solution $\hat{d}_0$ and define $d_0=2\sqrt{\hat{d}_0}$. Then $d_0$ is the unique positive solution to $\Delta_{q_a}=0$.
 Further since $f(4\alpha),f(4c)\leq0$ and $f(\hat{d})\rightarrow\infty,\hat{d}\rightarrow\infty$ it follows that the unique positive solution $\hat{d}_0\in[4\max(\alpha,c),\infty)$. If $\hat{d}<\hat{d}_0$ then $f(\hat{d})<0$ and thus there are two real roots of $q_\alpha$. If $\hat{d}\geq \hat{d}_0$ then  $f(\hat{d})\leq 0$ and since $q_\alpha(0)\leq0$ there is always at least one root and thus it must be four.
 
\rm ii)
Assume $\alpha<0$, then there are always three real roots since  $\Delta_f\geq0$. From $f(-8c)\leq0$, $f(0)>0$ and $f(\hat{d})\rightarrow\infty,\hat{d}\rightarrow-\infty$, follows that exactly one root is positive. Since $f(c)\leq0$ the unique solution $\hat{d}$ is in $(0,c]$. Assume $d<d_0$, then by continuity there is no root since $q_\alpha$ has no real root for $d=0$ and $\Delta_{q_\alpha}>0$ for $d<d_0$. Assume $\hat{d}\geq \hat{d}_0$, then $f(\hat{d})\leq0$ and thus there are two roots of $q_\alpha$.
\end{proof}
 \begin{prop}\label{4ax_crprop2}
Let $q_\alpha$ be the polynomial \eqref{5imroots2} and let $d_0$ denote the largest real $d$ solving $\Delta_{q_\alpha}=0$. Then the set $\overline{ W_{\{\alpha\}\times \eR_+}(T)\setminus i\R}$, defined as in \eqref{4curvea}, intersects the imaginary axis in the following points, counting multiplicity:
\begin{itemize}
\item[\rm i)]   If $\alpha>0$ and $c>0$, there is one intersection in $0$. There are no further intersections if $d<2\sqrt{c}$, one more intersection if $ 2\sqrt{c}\leq d<d_0$, and three more intersections if $d\geq d_0$. If $\alpha>0$ and $c=0$ there is no intersections if  $d<d_0$ and two intersections if $d\geq d_0$.
\item[ \rm ii)]   If $-c\leq\alpha<0$, there is no intersection if $ d<2\sqrt{c}$, and one intersection if $ d\geq2\sqrt{c}$.
\item[\rm iii)]   If $\alpha<-c$, there is no intersection if $d<d_0$, two intersections if $d_0\leq d<2\sqrt{c}$, and one intersection if $ d\geq2\sqrt{c}$.
\end{itemize}
 \end{prop}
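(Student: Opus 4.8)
The plan is to follow the scheme of the proof of Proposition \ref{4ax_crprop}, with the polynomial $q_\alpha$ from \eqref{5imroots2} playing the role of $q_\beta$. By Proposition \ref{5imaxis2}, a point $i\nu$ with $\nu\in\R\setminus\{-2c/d\}$ lies on $\overline{W_{\{\alpha\}\times\eR_+}(T)\setminus i\R}$ if and only if either $\alpha>\nu=0$ (which forces $c>0$), or $\nu$ is a real root of $q_\alpha$ satisfying $\nu\le -2c/d$, the inequality being strict when $\alpha<-c$. Corollary \ref{3multcor2} transfers multiplicities, and Lemma \ref{5lemd2} supplies the total number of real roots of $q_\alpha$ as a function of the position of $d$ relative to the discriminant zero $d_0$. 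So the statement reduces, in each sign configuration for $\alpha$ and sign of $c$, to counting how many real roots of $q_\alpha$ fall in the half-line $\nu\le -2c/d$ (resp. $\nu<-2c/d$), and then adding the separate contribution of $\nu=0$.

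First I would record the two elementary facts that organise the case split. Writing $q_\alpha(\nu)=\nu^3(\nu+d/2)-\tfrac{\alpha d}{2}(\nu+2c/d)$, one gets $q_\alpha(-2c/d)=(-2c/d)^3(d/2-2c/d)=4c^3(4c-d^2)/d^4$, so the sign of $q_\alpha$ at the threshold $\nu=-2c/d$ is that of $4c-d^2$: positive for $d<2\sqrt c$, zero at $d=2\sqrt c$, negative for $d>2\sqrt c$. The same factored form, together with $q_\alpha(0)=-\alpha c$ and the convexity of $q_\alpha$ on $[0,\infty)$ and on $(-\infty,-d/4]$ (since $q_\alpha''=3\nu(4\nu+d)$), fixes the sign of $q_\alpha$ on each of $(-\infty,-2c/d]$, $[-2c/d,0]$ and $[0,\infty)$. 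For $\alpha<0$ one also needs positivity on $(-\infty,-2c/d]$ when $d<2\sqrt c$; here I would use the monotone comparison $q_\alpha\ge q_{-c}$ for $-c\le\alpha<0$ and $q_\alpha\le q_{-c}$ for $\alpha<-c$ on $\{\nu\le-2c/d\}$ (both because $\nu+2c/d\le0$ there), combined with the elementary inequality $x^4+c^2\ge \sqrt c\,(x^3+cx)$ for $x>0$, which factors as $(x-\sqrt c)^2(x^2+\sqrt c x+c)\ge0$, to conclude $q_{-c}(\nu)>0$ for every $\nu<0$ whenever $d<2\sqrt c$.

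With these in hand the cases are routine and I would run them in the order: $\alpha>0$ with $c>0$, $\alpha>0$ with $c=0$, $-c\le\alpha<0$, $\alpha<-c$. For $\alpha>0$, $c>0$: the point $\nu=0$ always contributes; for $d<2\sqrt c$, positivity of $q_\alpha$ on $(-\infty,-2c/d]$ gives no further intersection; for $2\sqrt c\le d<d_0$, Lemma \ref{5lemd2}(i) gives two real roots, one of them positive (by convexity on $[0,\infty)$), forcing the other into $(-\infty,-2c/d)$ since $q_\alpha<0$ on $[-2c/d,0]$; for $d\ge d_0$ the four real roots split, again by the sign on $[-2c/d,0]$ and the single positive root, as three in $(-\infty,-2c/d)$ and one positive. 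The case $c=0$ follows by factoring $q_\alpha(\nu)=\nu(\nu^3+\tfrac d2\nu^2-\tfrac{\alpha d}{2})$ and reading off the negative roots of the cubic from its local maximum at $\nu=-d/3$. For $-c\le\alpha<0$: the comparison step gives $q_\alpha>0$ on $(-\infty,-2c/d]$ when $d<2\sqrt c$, hence no intersection, while for $d>2\sqrt c$ the sign change of $q_\alpha$ at $-2c/d$ together with $q_\alpha(0)>0$, $q_\alpha>0$ on $(0,\infty)$, and the two-root count of Lemma \ref{5lemd2}(ii) leaves exactly one root below $-2c/d$. For $\alpha<-c$: the comparison now shows $q_\alpha>0$ on all of $[-2c/d,\infty)$, so for $d_0\le d<2\sqrt c$ both real roots lie strictly below $-2c/d$, for $d<d_0$ there are none, and for $d\ge2\sqrt c$ the argument is as in the preceding case. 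The bookkeeping that the coincident-root configuration at $d=d_0$ produces exactly the stated counts ``counting multiplicity'' is handled by Corollary \ref{3multcor2} and the multiplicity bounds from the proof of Lemma \ref{5lemd2}.

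The main obstacle is the boundary value $d=2\sqrt c$. There $\theta=0$, so $\delta_+=\delta_-=-2ic/d=-i\sqrt c$, and precisely this $\nu=-2c/d$ is the value excluded in Proposition \ref{5imaxis2}; moreover $q_\alpha(-2c/d)=0$ there, and for $\alpha=-c$ the point $-\sqrt c$ is a double root of $q_\alpha$. To obtain ``one intersection for $d\ge 2\sqrt c$'' uniformly I would handle $d=2\sqrt c$ separately, using the explicit factorization $q_\alpha(\nu)=(\nu+\sqrt c)(\nu^3-\alpha\sqrt c)$ valid at that $d$ and a direct examination of how the multiple root of $p_{\alpha,\beta}$ at $-i\sqrt c$ splits as $\beta$ varies (equivalently, a limit of the $d\neq2\sqrt c$ description obtained above) to decide whether $-i\sqrt c=\delta_\pm$ belongs to $\overline{W_{\{\alpha\}\times\eR_+}(T)\setminus i\R}$; the degenerate subcase $\alpha=-c$ is disposed of in the same way.
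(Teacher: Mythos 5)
Your proposal is correct and follows essentially the same route as the paper: reduce via Proposition \ref{5imaxis2} and Corollary \ref{3multcor2} to counting real roots of $q_\alpha$ on the half-line $\nu\le -2c/d$, use Lemma \ref{5lemd2} for the total root count, and locate the roots by the sign of $q_\alpha$ at $-2c/d$ (equivalently of $4c-d^2$), at $0$, and by convexity, with the degenerate value $d=2\sqrt{c}$ (where $-2c/d=-d/2=\delta_\pm$) treated separately via the factorization $q_\alpha(\nu)=(\nu+\sqrt{c})(\nu^3-\alpha\sqrt{c})$. Your monotone comparison with $q_{-c}$ and the identity $q_\alpha(-2c/d)=4c^3(4c-d^2)/d^4$ replace the paper's direct derivative and inequality estimates in cases ii) and iii), but this is only a minor variation in the elementary steps, not a different argument.
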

\begin{proof}
The intersections of $\overline{W_{\{\alpha\}\times\eR}(T)\setminus i\R}$ coincide with the roots of the polynomial \eqref{5imroots2} in Proposition \ref{5imaxis2}. In each case Lemma \ref{5lemd2} is used to obtain the number of real roots. 
 
 \rm i) Assume $\alpha>0$ and $c>0$, then by Proposition \ref{5imaxis2}, there is a simple intersection in $0$.
 For $d< 2\sqrt{c}$ Proposition \ref{5imaxis2} implies that there are no solutions to \eqref{5imroots2}. For $d=2\sqrt{c}$, the point $-2c/d=-d/2$ is a simple root of $q_{\alpha}$ and by Proposition \ref{5imaxis2}, there is a simple intersection of the imaginary axis. For $\nu<-d/2$, $q_\alpha(\nu)>0$ and hence there are no more intersections of the imaginary axis. For $d>2\sqrt{c}$, $q_{\alpha}(-2c/d)<0$, thus there is either one or three solutions of \eqref{5imroots2}. If $2\sqrt{c}< d<d_0$, there is one intersection. Assume $d\geq d_0$. Then $q_{\alpha}(\nu)<0$ for $\nu\in(-2c/d,0)$ 
and $q_\alpha$ is convex for $\nu\geq0$. Hence, \eqref{5imroots2} has three solutions. Now assume $c=0$, then Proposition \ref{5imaxis2} shows that zero does not give an intersection and thus there is one less intersection of the imaginary axis in this case.

\rm ii), 
 For $\nu\leq-2c/d$ the derivative $q'_{\alpha}\left(\nu\right)$ is non-positive
with equality only if $\alpha=c$. Thus there is at most one solution to \eqref{5imroots2}. If $2\sqrt{c}>d$ then $q_{\alpha}(-2c/d)
>0$ and there is no solution. For $2\sqrt{c}\leq d$, $q_{\alpha}(-2c/d)\leq0$ and it follows from Proposition \ref{5imaxis2} and $\alpha\geq-c$ that \eqref{5imroots2} has one solution.
Assume $2\sqrt{c}=d$ and $\alpha=-c$, then $-2c/d$ is a double root of \eqref{5imroots} and by Corollary \ref{3multcor2} a triple root of $p_{(\alpha,0)}$. From the injectivity stated in Lemma \ref{4rninj} \rm iii) it follows that there is only one intersection of the imaginary axis at this point. The result then follows from Proposition \ref{5imaxis2}. 
 
 \rm iii)
 If $d<d_0$ then there are no real solutions to \eqref{5imroots2}. 
 If $d_0\leq d<2\sqrt{c}$, then $q_{\alpha}(\nu)>0$ for $\nu>-d/2$. 
For $-d/2\geq \nu>-2c/d$ we have
\[
q_{\alpha}(\nu)\geq\left(\frac{-2c}{d}+\frac{d}{2}\right)\left(-\frac{d^3}{8}-\frac{\alpha d}{2}\right)-\alpha\left(c-\frac{d^2}{4}\right)=\frac{d^2}{4}\left(c-\frac{d^2}{4}\right)>0.
\]
Thus, the two roots satisfy $\nu\leq-2c/d$. Assume $d=2\sqrt{c}$, then
by Proposition \ref{5imroots2} the value $-2c/d$ is not an intersection of the imaginary axis. However, there is an intersection at the distinct root $\sqrt[3]{\alpha d/2}$, which shows that we have one root.
For $d>2\sqrt{c}>0$ it follows that $q_{\alpha}(-2c/d)<0$ and it can be seen that \eqref{5imroots2} has one solution.
\end{proof}

The multiplicity of a real root $\nu<-2c/d$ of $q_\alpha$ in \eqref{5imroots2} will determine the number of segments of $\overline{ W_{\{\alpha\}\times \eR_+}(T)\setminus i\R}$ intersecting $i\nu$. 
However, this will in general not hold if $-2c/d$ is a root of  $q_\alpha$. 
This case is addressed in Proposition \ref{4ax_crprop2}. Figure \ref{4fig:avar}.(b), shows an example where $-2c/d=2$ is an intersection of the imaginary axis, while in Figure \ref{4fig:avar}.(c), $-2c/d=2$ is not an intersection despite being a root of $q_\alpha$. Cases with different numbers of intersections are illustrated in Figure \ref{4fig:signs} and in Figure \ref{4fig:avar}.
\begin{center}
\begin{figure}
\includegraphics[width=12.5cm]{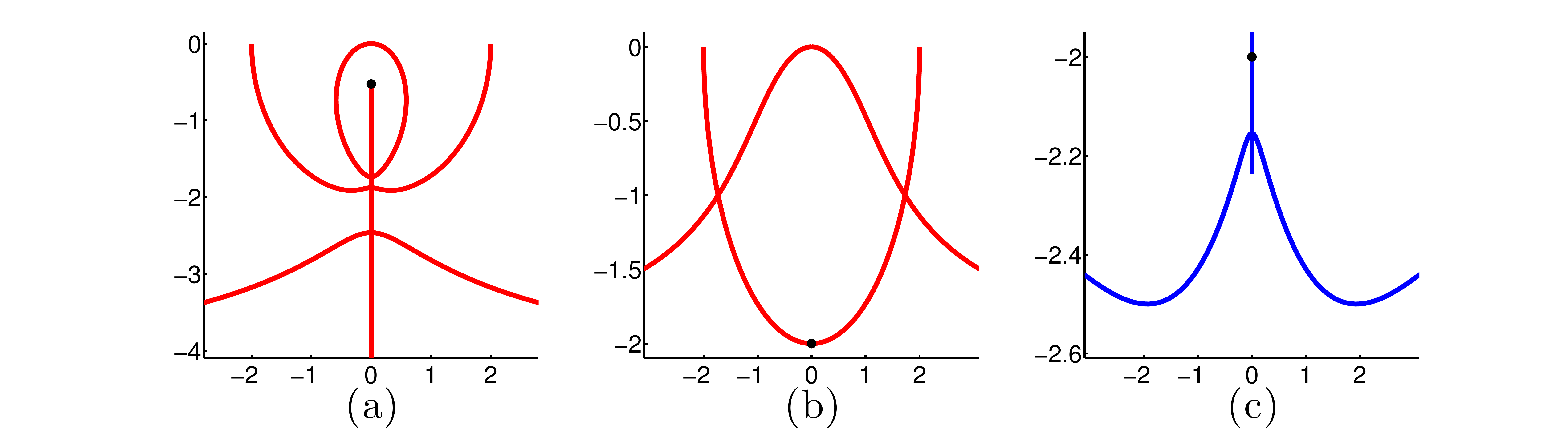}
\caption{Examples of the set $ W_{\{\alpha\}\times \eR_+}(T)\subset\eC$ with different numbers of intersections of the imaginary axis. In the panels (a) and $(b)$ are $\alpha>0$ and in panel (c) is $\alpha<0$.}\label{4fig:avar}
\end{figure}
\end{center}

\begin{prop}\label{4cdv2}
Assume $c>0$ and let $\overline{ W_{\{\alpha\}\times \eR_+}(T)\setminus i\R}$  and  $\Pi_\beta$  denote the sets defined in \eqref{4curvea} and in \eqref{2pPi}, respectively. Then $\omega\in\overline{ W_{\{\alpha\}\times \eR_+}(T)\setminus i\R}$ if and only if $\omega=\infty$ or one of the following conditions hold:
\begin{itemize}
\item[i)]   $d\leq2\sqrt{c}$ and $\omega=\delta_\pm$, where the inequality is strict if $\alpha<-c$.
\item[ii)]   $\omega\in \Pi_\beta\setminus\{\R-id/2\}$ and one of the below four identities hold:
\begin{equation}\label{5ctdt2}
\omega_\Re=\pm\sqrt{R_{\omega_\Im} \pm\sqrt{R_{\omega_\Im}^2-S_{\omega_\Im}}-\omega_\Im^2},
\end{equation}
where 
\begin{equation}\label{4defcd2}
\begin{array}{l}
R_{\omega_\Im}=\frac{\alpha d}{2(d+2\omega_\Im)},\quad S_{\omega_\Im}=-\frac{2\alpha c \omega_\Im}{d+2\omega_\Im}
\end{array}.
\end{equation}
\end{itemize}
If $c=0$ then $0\notin\overline{ W_{\{\alpha\}\times \eR_+}(T)\setminus i\R}$ but all other statements hold as for the case $c>0$.

\end{prop}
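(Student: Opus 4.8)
The plan is to argue exactly as in the proof of Proposition~\ref{4cdv}, with the roles of $\alpha$ and $\beta$ interchanged: now $\alpha$ is the fixed parameter and $\beta\in\eR_+$ is free. First I would isolate the two exceptional points. The point $\omega=\infty$ always lies in $\overline{W_{\{\alpha\}\times\eR_+}(T)\setminus i\R}$, since by the remark following the definition \eqref{4curvea} one has $r_n(\alpha,\beta)\to\pm\infty-id/2$ as $\beta\to\infty$ and these limit points have imaginary part $-d/2\neq0$. For $\omega=\delta_\pm$, Lemma~\ref{5nueq0}~{\rm iv)} shows $\delta_\pm$ is a root of $p_{(\alpha,0)}$, so $\delta_\pm\in W_{\{\alpha\}\times\eR_+}(T)$; this point lies off the imaginary axis exactly when $d<2\sqrt c$, and in the borderline case $d=2\sqrt c$ (where $\delta_+=\delta_-=-id/2$) one must decide whether $-id/2$ is approached by the part of the curve off the axis. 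Combining Proposition~\ref{5imaxis2} with Proposition~\ref{4ax_crprop2}, one sees that it is when $\alpha\geq-c$ (the double root at $-d/2$ of the imaginary-axis polynomial corresponds, via Corollary~\ref{3multcor2}, to a branch meeting $i\R$ there), while for $\alpha<-c$ the nearest imaginary-axis intersection is the strictly smaller root $\sqrt[3]{\alpha d/2}<-d/2$, so $\delta_\pm$ is separated from the off-axis curve; this yields the dichotomy in~{\rm i)}.

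For the remaining points $\omega\notin i\R\cup\{\delta_\pm,\infty\}$ I would exploit the symmetry of the roots of $p_{(\alpha,\beta)}$ about the imaginary axis: if $\omega$ is a root then so is $-\overline\omega$, so in \eqref{5gentabdef} one may take $\omega_1=\omega$ and $\omega_2=-\overline\omega$, giving $t_1=2\omega_\Im$ and $v_1=|\omega|^2=\omega_\Re^2+\omega_\Im^2>0$. The system \eqref{5gentab} then reduces to $t_2=-d-2\omega_\Im$, $v_1v_2=\alpha c$, $2\omega_\Im v_2+t_2v_1=-\alpha d$, together with $t_1t_2+v_1+v_2=\alpha+\beta+c$, which only serves to recover $\beta$ and imposes no constraint since $\beta$ is free. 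Substituting $v_2=\alpha c/v_1$ into the third equation gives the quadratic $(d+2\omega_\Im)v_1^2-\alpha d\,v_1-2\alpha c\,\omega_\Im=0$ --- precisely where $d+2\omega_\Im$ must be nonzero, i.e.\ the excluded line $\R-id/2$, on which the only point of the curve is $\delta_\pm$ anyway. Solving for $v_1$ produces $v_1=R_{\omega_\Im}\pm\sqrt{R_{\omega_\Im}^2-S_{\omega_\Im}}$ with $R,S$ as in \eqref{4defcd2}, and $v_1=\omega_\Re^2+\omega_\Im^2$ then yields exactly \eqref{5ctdt2}. (This quadratic is nothing but the equation $\alpha=\alp(\omega)$ of \eqref{4partal} rewritten in $v_1$.)

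It remains to turn the requirement $\beta\in\eR_+$ into a condition on $\omega$. Recovering $\beta$ from $t_1t_2+v_1+v_2=\alpha+\beta+c$ and imposing $\beta\geq0$ is, by Corollary~\ref{4circor}~{\rm iii)} (applicable because $\omega\notin i\R\cup\partial\disc$), equivalent to $\omega\in\Pi_\beta$; this is why $\Pi_\beta$ enters condition~{\rm ii)}. For the converse, given $\omega\in\Pi_\beta\setminus\{\R-id/2\}$ satisfying \eqref{5ctdt2}, one automatically has $R_{\omega_\Im}^2-S_{\omega_\Im}\geq0$ and $v_1\geq\omega_\Im^2$, reads off $v_1,v_2,t_1,t_2$, sets $\beta:=t_1t_2+v_1+v_2-\alpha-c$ (nonnegative, again by Corollary~\ref{4circor}~{\rm iii)}), and concludes $\omega=r_n(\alpha,\beta)$ for some $n$ with $\beta\in[0,\infty)$, hence $\omega\in W_{\{\alpha\}\times\eR_+}(T)\setminus i\R$; the limit $\beta\to\infty$ supplies the boundary point $\delta_\pm$ in the case $d=2\sqrt c$. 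Finally, when $c=0$ the same elimination applies with $S_{\omega_\Im}=0$, so $v_1\in\{2R_{\omega_\Im},0\}$ and the branch $v_1=0$ forces $\omega=0\in i\R$, which is exactly why $0$ is excluded from the set in that case.

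I expect the only real obstacle to be the bookkeeping around the degenerate line $\R-id/2$ and the borderline value $d=2\sqrt c$ for $\delta_\pm$, together with deciding for each sign of $\alpha$ which of the four branches of \eqref{5ctdt2} is realizable under $\beta\geq0$; this is where Lemma~\ref{5lemd2} and Proposition~\ref{4ax_crprop2} have to be brought in. The algebraic elimination itself is routine.
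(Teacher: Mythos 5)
Your proposal takes essentially the same route as the paper: the paper's proof of this proposition is literally ``similar to Proposition \ref{4cdv} subject to $\beta\geq0$'', i.e.\ use the root symmetry to set $t_1=2\omega_\Im$, $v_1=|\omega|^2$ in \eqref{5gentab}, eliminate $v_2$ to obtain the quadratic giving \eqref{5ctdt2}, encode $\beta\geq0$ via Corollary \ref{4circor} iii) as $\omega\in\Pi_\beta$, and treat $\delta_\pm$ and $\infty$ separately, exactly as you do. The only small slip is the closing remark that the limit $\beta\to\infty$ supplies $\delta_\pm$ when $d=2\sqrt c$ (those limits are $0$ and $\pm\infty-id/2$); what actually puts $\delta_\pm=-id/2$ in the closure for $\alpha\geq-c$ is the double root of $p_{(\alpha,\beta)}$ there, which you had already justified correctly via Propositions \ref{5imaxis2} and \ref{4ax_crprop2} and Corollary \ref{3multcor2}.
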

\begin{proof}
Similar to Proposition \ref{4cdv} subject to $\beta\geq0$.
\end{proof}
\begin{rem}
Since in the limit $\beta\rightarrow\infty$ the roots $r_n(\alpha,\beta)$ approach $0$ and $\pm\infty-id/2$, it is convenient to assume the imaginary part in infinity is $-d/2$. 
Using this convention,
for $\alpha>0$ the largest imaginary part of $\overline{W_{\{\alpha\}\times \eR_+}(T)\setminus i\R}$ is  $0$ and the smallest imaginary part is $-d/2$. For $\alpha<0$ the largest imaginary part of $\overline{W_{\{\alpha\}\times \eR_+}(T)\setminus i\R}$ is $-d/2$. 
\end{rem}
\begin{lem}\label{4cor_nax12}
Let $W_{\{\alpha\}\times \eR_+}(T)\setminus i\R$ denote the set \eqref{4curvea} and let $R_{\omega_\Im}$ and $S_{\omega_\Im}$ denote the expressions in \eqref{4defcd2}.
A point $\omega\in W_{\{\alpha\}\times \eR_+}(T)\setminus i\R$, with $\omega_\Im\notin\{0,-d/2\}$ is an extreme point in the sense of Definition \ref{3extdef} if and only if it is a distinct root to $g(\omega_\Im):=(d+2\omega_\Im)^2(R_{\omega_\Im}^2-S_{\omega_\Im})$. The roots of $g$ are
\begin{equation}\label{4axn_gap2}
\omega_\Im=\frac{-d\pm d\sqrt{1- \frac{\alpha}{c}}}{4}.
\end{equation}
A double root of $g$ is only possible if $\alpha=c>d^2/16$, where $\omega=\pm \sqrt{c-d^2/16}-id/4$. Assume $\omega_\Im$ is a double root of $f$. Then, $\omega$ is a point where more than one curve component in $W_{\{\alpha\}\times \eR_+}(T)\setminus i\R$ intersect.
\end{lem}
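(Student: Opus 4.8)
The plan is to follow the template of Lemma~\ref{4cor_nax1}, replacing Proposition~\ref{4cdv} by Proposition~\ref{4cdv2}. First I would simplify $g$: since $(d+2\omega_\Im)^2R_{\omega_\Im}^2=\alpha^2d^2/4$ and $(d+2\omega_\Im)^2S_{\omega_\Im}=-2\alpha c\,\omega_\Im(d+2\omega_\Im)$, we obtain $g(\omega_\Im)=\alpha^2d^2/4+2\alpha c\,\omega_\Im(d+2\omega_\Im)$, a quadratic in $\omega_\Im$ with leading coefficient $4\alpha c$. As $\alpha\neq0$, dividing $g(\omega_\Im)=0$ by $\alpha$ and solving the resulting quadratic yields exactly the two values in \eqref{4axn_gap2}; its discriminant equals $4cd^2(c-\alpha)$, so $g$ has a double root only when $\alpha=c$, in which case $\omega_\Im=-d/4$. (When $c=0$ the function $g$ is a positive constant; this degenerate case is trivial and I would dispose of it separately.)

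Next I would treat the double-root case. With $\alpha=c$ one computes $R_{-d/4}=c$ and $R_{-d/4}^2-S_{-d/4}=0$, so Proposition~\ref{4cdv2} forces $\omega_\Re^2=c-d^2/16$; for $\omega$ to lie off the imaginary axis this requires $c>d^2/16$, and then $\omega=\pm\sqrt{c-d^2/16}-id/4$. Since $g$ is in this case a quadratic with positive leading coefficient having a double root at $-d/4$, we have $R_\mu^2-S_\mu\geq0$ for all $\mu$ near $-d/4$, while $R_{-d/4}-(d/4)^2=c-d^2/16>0$, so the $+$ branch of \eqref{5ctdt2} provides curve points with imaginary part on both sides of $-d/4$; hence this point is not an extreme point. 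On the other hand $(\alpha,\beta)=(c,d^2/4)$ is, by Lemma~\ref{4rninj}~i), the unique parameter value at which two of the roots $r_n$ coincide, with corresponding point $\pm\sqrt{c-d^2/16}-id/4$, so it is a point where more than one curve component of $W_{\{\alpha\}\times\eR_+}(T)\setminus i\R$ meet.

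For the forward direction of the equivalence, suppose $\omega\in W_{\{\alpha\}\times\eR_+}(T)\setminus i\R$ with $\omega_\Im\notin\{0,-d/2\}$ is an extreme point and $g(\omega_\Im)\neq0$. Since one of the identities \eqref{5ctdt2} holds and $\omega_\Re\neq0$, the radicands must be nonnegative, and $g(\omega_\Im)\neq0$ upgrades this to $R_{\omega_\Im}^2-S_{\omega_\Im}>0$ and $R_{\omega_\Im}+\sqrt{R_{\omega_\Im}^2-S_{\omega_\Im}}-\omega_\Im^2>0$. By Corollary~\ref{4circor}~iii) and Lemma~\ref{2lemdisc} the curve $W_{\{\alpha\}\times\eR_+}(T)\setminus i\R$ lies in $\Pi_\beta$ apart from $\delta_\pm$, and $\delta_\pm$ have imaginary part $-d/2$, so $\omega\notin\overline{\disc}$ and $\omega_\Im<0$; hence a small enough neighbourhood of $\omega$ stays in $\Pi_\beta\setminus\{\R-id/2\}$, and by continuity of $R_\mu$ and $S_\mu$ in $\mu$ the two strict inequalities above persist on an open interval $I_{\omega_\Im}$ about $\omega_\Im$. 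Then Proposition~\ref{4cdv2} produces points of the curve with imaginary part equal to every $\mu\in I_{\omega_\Im}$, contradicting that $\omega$ is a local extremum of the imaginary part along the curve. Hence $g(\omega_\Im)=0$, and it cannot be the double root, which gave a non-extreme intersection point above; so $\omega_\Im$ is a distinct root of $g$.

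Conversely, if $\omega_\Im\notin\{0,-d/2\}$ is a distinct root of $g$, then, as $(d+2\mu)^2>0$ near $\omega_\Im$, the factor $R_\mu^2-S_\mu$ changes sign at $\omega_\Im$; on the side where $R_\mu^2-S_\mu<0$ equation \eqref{5ctdt2} has no real solution, so by Proposition~\ref{4cdv2} the curve has no point with that imaginary part, whereas on the other side the $+$ branch of \eqref{5ctdt2} does give curve points (since $R_{\omega_\Im}-\omega_\Im^2>0$ because $\omega_\Re\neq0$, hence it remains positive nearby). Therefore $\omega$ is a local extremum of the imaginary part along the curve, i.e. an extreme point in the sense of Definition~\ref{3extdef}. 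I expect the main technical point to be the bookkeeping in the forward direction: verifying that a neighbourhood of an extreme point off the imaginary axis genuinely stays inside $\Pi_\beta\setminus\{\R-id/2\}$, the region where Proposition~\ref{4cdv2} describes the curve as the multivalued graph \eqref{5ctdt2}, so that the continuity argument against extremality actually applies.
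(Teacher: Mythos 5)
Your proof is correct and takes essentially the same approach as the paper: the paper's own proof of this lemma is the single sentence ``The proof is similar to that of Lemma~\ref{4cor_nax1},'' and your argument is exactly that proof transplanted, with Proposition~\ref{4cdv2} replacing Proposition~\ref{4cdv}, the discriminant computation $4cd^2(c-\alpha)$ identifying the double root, and Lemma~\ref{4rninj}~i) supplying the crossing-point claim. The extra care you take in checking that a neighbourhood of an off-axis curve point stays in $\Pi_\beta\setminus\{\R-id/2\}$ (via Corollary~\ref{4circor}~iii) and Lemma~\ref{2lemdisc}) is a detail the paper leaves implicit, and you supply it correctly.
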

\begin{proof}
The proof is similar to that of Lemma \ref{4cor_nax1}.
\end{proof}

Figure \ref{4fig:avar}.(b) show an example where the set $ W_{\{\alpha\}\times \eR_+}(T)$ contains points with more than one curve component intersecting it in $ W_{\{\alpha\}\times \eR_+}(T)\setminus i\R$.
\begin{lem}\label{3imex2}
Let $W_{\{\alpha\}\times \eR_+}(T)\setminus i\R$ and let $R_{\omega_\Im}$, $S_{\omega_\Im}$ denote the expressions in \eqref{4curvea} and in \eqref{4defcd2}, respectively.
A point $i\nu\in i\R$ with $\nu\in\R\setminus\{-d/2,0\}$ is an extreme point to $\overline{ W_{\{\alpha\}\times \eR_+}(T)\setminus i\R}$  in the sense of Definition \ref{3extdef} if and only if  $0=R_{\nu}+\sqrt{R_{\nu}^2-S_{\nu}}-\nu^2$ and $\nu$ is a distinct intersection of the imaginary axis. 
\end{lem}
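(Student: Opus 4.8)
The plan is to follow the proof of Lemma~\ref{3imex} step by step, replacing each ingredient for the curve $W_{\eR\times\{\beta\}}(T)$ by its counterpart for $W_{\{\alpha\}\times\eR_+}(T)$: Proposition~\ref{5imaxis2} takes the role of Proposition~\ref{prop5imroots}, Proposition~\ref{4cdv2} that of Proposition~\ref{4cdv}, Corollary~\ref{3multcor2} that of Corollary~\ref{3multcor}, and Lemma~\ref{4cor_nax12} that of Lemma~\ref{4cor_nax1}, with the pair $(P_{\omega_\Im},Q_{\omega_\Im})$ and the polynomial $q_\beta$ replaced throughout by $(R_{\omega_\Im},S_{\omega_\Im})$ and $q_\alpha$. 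The starting point is that, for $\nu\in\R\setminus\{0,-d/2\}$, Proposition~\ref{5imaxis2} identifies the points $i\nu$ of $\overline{W_{\{\alpha\}\times\eR_+}(T)\setminus i\R}\cap i\R$ with the real roots $\nu\le -2c/d$ of $q_\alpha$, while Corollary~\ref{3multcor2} converts the multiplicity of such a root of $q_\alpha$ into the order of $i\nu$ as a root of $p_{\alpha,\beta}$, hence controls how many curve segments of $\overline{W_{\{\alpha\}\times\eR_+}(T)\setminus i\R}$ pass through $i\nu$; a distinct root corresponds to $i\nu$ being a simple double root of $p_{\alpha,\beta}$.

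For the forward implication, suppose $i\nu$ is an extreme point. Since $\nu\neq -d/2$, Proposition~\ref{4cdv2} gives $R_\nu^2-S_\nu\ge 0$ together with $0=R_\nu\pm\sqrt{R_\nu^2-S_\nu}-\nu^2$ for one choice of sign. If $R_\nu^2-S_\nu=0$, then, as in Lemma~\ref{3imex}, $i\nu$ is a quadruple root of $p_{\alpha,\beta}$ for some $\beta\ge 0$; Corollary~\ref{3multcor2} then forces $\nu$ to be a triple, hence non-distinct, root of $q_\alpha$, while Proposition~\ref{4cdv2} and Lemma~\ref{4cor_nax12} show such a point cannot be extreme, so this case does not occur and $R_\nu^2-S_\nu>0$. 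If the ``outer'' branch did not vanish, i.e.\ $0\neq R_\nu+\sqrt{R_\nu^2-S_\nu}-\nu^2$, then $R_\mu+\sqrt{R_\mu^2-S_\mu}-\mu^2$ is continuous and strictly positive near $\nu$, so \eqref{5ctdt2} keeps producing off-axis points of $W_{\{\alpha\}\times\eR_+}(T)$ on both sides of $i\nu$, contradicting extremality (this is the argument of Lemma~\ref{4cor_nax12}); hence $0=R_\nu+\sqrt{R_\nu^2-S_\nu}-\nu^2$. Finally, if $\nu$ were not a distinct root of $q_\alpha$, then at least two curve segments would meet $i\nu$; but $R_\nu-\sqrt{R_\nu^2-S_\nu}-\nu^2<0$, so Proposition~\ref{4cdv2} allows at most two solutions $\omega$ of \eqref{5ctdt2} for each fixed $\omega_\Im$ in an interval around $\nu$, forcing the two segments to lie on opposite sides of $\nu$ and contradicting extremality. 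Thus $\nu$ is a distinct intersection.

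For the converse, suppose $0=R_\nu+\sqrt{R_\nu^2-S_\nu}-\nu^2$ and that $\nu$ is a distinct root of $q_\alpha$. Distinctness excludes $R_\nu^2-S_\nu=0$ (which by Corollary~\ref{3multcor2} would make $\nu$ a triple root), so $R_\nu^2-S_\nu>0$, whence $R_\nu-\sqrt{R_\nu^2-S_\nu}-\nu^2<0$, and distinctness also yields exactly one curve segment through $i\nu$. Inserting this into Proposition~\ref{4cdv2}: near $\nu$ the ``inner'' branch contributes no points, while the ``outer'' branch has a simple sign change at $\nu$ (the associated root of $p_{\alpha,\beta}$ being a simple double root), so $W_{\{\alpha\}\times\eR_+}(T)\setminus i\R$ approaches $i\nu$ from only one side in imaginary part. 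Hence $i\nu$ is a local extreme, or the point of smallest or largest imaginary part, of $\overline{W_{\{\alpha\}\times\eR_+}(T)\setminus i\R}$, i.e.\ an extreme point in the sense of Definition~\ref{3extdef}.

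I expect the only genuine difficulty to be bookkeeping rather than anything structural. The constraint $\beta\ge 0$ confines the relevant $\nu$ to $\nu\le -2c/d$ and can flip the sign of $d+2\omega_\Im$ appearing in the definition \eqref{4defcd2} of $R_{\omega_\Im}$ and $S_{\omega_\Im}$, so one must track carefully which of the two nested square roots in \eqref{5ctdt2} is the ``outer'' one and on which side of $\nu$ the corresponding expression is nonnegative. In addition, the degenerate value $\nu=-2c/d$ — the point $-2ic/d$ on $\partial\disc$, where $p_{\alpha,\beta}$ need not actually attain the root $i\nu$ — must be checked separately so that it produces no spurious extreme point; this is handled exactly by the case analysis already present in Proposition~\ref{5imaxis2} and Proposition~\ref{4ax_crprop2}.
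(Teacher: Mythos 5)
Your proposal is correct and follows essentially the same route as the paper, whose own proof consists of one explicit observation plus the remark that ``the rest is similar to the proof of Lemma~\ref{3imex}''. The only point the paper makes explicit that you phrase differently is the verification that the candidate extreme points are attained with $\beta\ge 0$: the paper deduces this from $R_\nu+\sqrt{R_\nu^2-S_\nu}-\nu^2=0\Rightarrow R_\nu>0$, which places $\nu$ on the correct side of $-d/2$ relative to the sign of $\alpha$, whereas you obtain the same constraint through the restriction $\nu\le -2c/d$ built into Proposition~\ref{5imaxis2}; both are adequate.
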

\begin{proof}
The condition $R_{\nu}+\sqrt{R_{\nu}^2-S_{\nu}}-\nu^2=0$ implies $R_{\nu}>0$. Hence $\nu>-d/2$ for $\alpha>0$ and $\nu<-d/2$ for $\alpha<0$. Then it follows from Proposition \ref{4circor} \rm iv) that the points \eqref{4axn_gap2} are obtained for positive $\beta$. The rest of the proof is similar to the proof of Lemma \ref{3imex}.
\end{proof}

\begin{lem}\label{4lem1r2}
Let $W_{\{\alpha\}\times \eR_+}\hspace{-4pt}(T)\setminus i\R$ be defined as in \eqref{4curvea} and let $\gamma\in \overline{W_{\{\alpha\}\times \eR_+}\hspace{-4pt}(T)\setminus i\R}$ be a bounded component such that $\C\setminus \gamma$ consists of more than one component. Then for each $\beta\in [0,\infty)$, either one root of the polynomial $p_{\alpha,\beta}$ in \eqref{TCurve}  belongs to $\gamma$ or one root of $p_{\alpha,\beta}$ can be written as $i\mu$, for $\mu\in J:= [\min ( \gamma\setminus i\R)_\Im,\max ( \gamma\setminus i\R)_\Im]$.
\end{lem}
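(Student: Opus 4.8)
The plan is to mirror the proof of Lemma~\ref{4lem1r}, exchanging the roles of $\alpha$ and $\beta$: here $\alpha$ is fixed while $\beta$ ranges over $[0,\infty)$, so the relevant injectivity statement is Lemma~\ref{4rninj}~iii) rather than ii), the relevant multiplicity transfer is Corollary~\ref{3multcor2} rather than Corollary~\ref{3multcor}, and the polynomial governing the purely imaginary intersections is $q_\alpha$ of \eqref{5imroots2} rather than $q_\beta$ of \eqref{5imroots}. The hypothesis that $\C\setminus\gamma$ has more than one component says $\gamma$ is a bounded loop; in particular $\gamma\not\subseteq i\R$, so $J$ is a genuine interval and, by continuity, $\gamma\cap i\R\subseteq iJ$. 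I would first argue that $\gamma$ necessarily meets $i\R$: a continuous root-path can leave the component $\gamma$ only through $i\R$, while the limiting root values at the ends $\beta=0$ and $\beta\to\infty$ are among $\pm\sqrt{\alpha},\delta_\pm$ and $0,\infty$ (Lemma~\ref{5nueq0} and the limit values recorded before Proposition~\ref{5imaxis2}), none of which is an interior off-axis point of a bounded loop except $0\in i\R$; hence any root sweeping out part of $\gamma$ must pass through $i\R$. By the symmetry of the roots with respect to the imaginary axis (cf.\ the proof of Proposition~\ref{5boundspol}), $\gamma$ then meets $i\R$ an even number of times, counting multiplicity through $q_\alpha$ via Corollary~\ref{3multcor2}.

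The core is a contradiction argument. Assume some $\beta\in[0,\infty)$ has $r_n(\alpha,\beta)\notin\gamma\cup iJ$ for all $n$. Since $\gamma$ contains off-axis points of $W_{\{\alpha\}\times\eR_+}(T)$, we have $r_n(\alpha,\beta'')\in\gamma$ for some $\beta''$, and continuity of the roots yields a transition parameter $\beta'$ with $r_n(\alpha,\beta')\in\gamma\cup iJ$ for some $n$ but $r_n(\alpha,\beta'+\epsilon)\notin\gamma\cup iJ$ for all $n$ and all sufficiently small $|\epsilon|$. Because a root-path leaves $\gamma$ only through $i\R$, the polynomial $p_{\alpha,\beta'}$ has a purely imaginary root $i\mu$ on $\gamma$ of multiplicity $\geq2$; after reordering, $r_1(\alpha,\beta')=r_2(\alpha,\beta')=i\mu$. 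If the multiplicity is exactly $2$, then $\mu$ is a simple root of $q_\alpha$ (Corollary~\ref{3multcor2}), only one curve segment of $\overline{W_{\{\alpha\}\times\eR_+}(T)\setminus i\R}$ passes through $i\mu$ and it lies in $\gamma$; hence $r_1(\alpha,\beta'+\epsilon)$ and $r_2(\alpha,\beta'+\epsilon)$ cannot lie off $i\R$ (they would otherwise lie on $\gamma$) and are purely imaginary, say $i\mu_1,i\mu_2$. Regarding $\beta=-(c+d\omega_{\Im}+\omega_{\Im}^2)(1+\alpha/\omega_{\Im}^2)$ from Lemma~\ref{4rninj}~iii) as a function of $\omega_{\Im}$ with a critical point at $\mu$ gives $\mu_1<\mu<\mu_2$.

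To conclude, if both $\mu_1$ and $\mu_2$ lie outside $J$ then, since $\mu\in J$ and $\mu_1,\mu_2\to\mu$ as $\epsilon\to0$, the interval $J$ degenerates to $\{\mu\}$; exactly as in Lemma~\ref{4lem1r} this forces $\mu$ to be a double root of $q_\alpha$, hence $i\mu$ a triple root of $p_{\alpha,\beta'}$ by Corollary~\ref{3multcor2}, and then the injectivity of the $r_n$ off $i\R$ (Lemma~\ref{4rninj}~i)) forces a third root $r_3(\alpha,\beta'+\epsilon)$ onto $\gamma$, contradicting the choice of $\beta'$. If instead at least one of $\mu_1,\mu_2$ lies in $J$, that root is purely imaginary with imaginary part in $J$, again contradicting the choice of $\beta'$. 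In either case we reach a contradiction, which establishes the stated alternative for every $\beta\in[0,\infty)$.

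I expect the main obstacle to be the bookkeeping forced by the constraint $\beta\geq0$. Proposition~\ref{5imaxis2} locates purely imaginary roots only on the half-line $\nu\leq-2c/d$ (strictly, when $\alpha<-c$), so one must verify that the transition parameter $\beta'$ is genuinely interior to $[0,\infty)$ — that is, that a root cannot leave $\gamma$ exactly at $\beta=0$, where $p_{\alpha,0}$ has the exceptional roots $\pm\sqrt{\alpha},\delta_\pm$ of Lemma~\ref{5nueq0}~iv) — and that the single point $i(-2c/d)$, where by the discussion following Proposition~\ref{4ax_crprop2} the multiplicity of a root of $q_\alpha$ need not equal the number of curve segments through $i(-2c/d)$, causes no trouble; the case $c=0$, in which $0$ is always a root of $p_{\alpha,\beta}$ yet $0\notin\overline{W_{\{\alpha\}\times\eR_+}(T)\setminus i\R}$, must likewise be handled separately. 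The purely topological steps and the critical-point computation for the map $\omega_{\Im}\mapsto\beta$ are routine.
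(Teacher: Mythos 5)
Your proposal is correct and coincides with the paper's own treatment: the paper proves this lemma simply by declaring it ``similar to the proof of Lemma~\ref{4lem1r}'' (the hypothesis that $\C\setminus\gamma$ has more than one component being the only new ingredient), and your adaptation—replacing Lemma~\ref{4rninj}~ii) by iii), Corollary~\ref{3multcor} by Corollary~\ref{3multcor2}, and $q_\beta$ by $q_\alpha$, with the same continuity/multiple-imaginary-root contradiction—is exactly that argument spelled out. The boundary caveats you flag ($\beta=0$, the point $-2ic/d$, $c=0$) are not treated in the paper either, so your write-up is, if anything, more explicit than the original.
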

\begin{proof}
The proof is similar to the proof of Lemma \ref{4lem1r2} with the exception that for a bounded component $\gamma$ the set $\C\setminus \gamma$ does not necessarily consists of more than one component.
\end{proof}
\begin{prop}\label{4gap2}

Let $W_{\{\alpha\}\times\eR_+}(T)\setminus i\R$ be defined as in \eqref{4curvea} and let $d_0$ be the constant in Lemma \ref{5lemd2}. Then, if $\alpha>0$ and either  $\alpha<c$ or $d>d_0$ there is a unique strip with respect to $\overline{ W_{\{\alpha\}\times\eR_+}(T)\setminus i\R}$. In all other cases, there is no such strip.
\end{prop}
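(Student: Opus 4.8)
The plan is to mirror the strategy of Proposition \ref{4gap1}, but now the sign of $\alpha$ drives everything, as indicated in Figure \ref{4fig:signs} and Corollary \ref{4circor}. First I would record the standing criterion used throughout: a strip maximal with respect to $\overline{ W_{\{\alpha\}\times\eR_+}(T)\setminus i\R}$ exists if and only if there are at least three extreme points with distinct imaginary parts (disregarding the accumulation point at infinity, which by the remark preceding Lemma \ref{4cor_nax12} we treat as having imaginary part $-d/2$, and disregarding $0$). The candidates for extreme points are completely pinned down by Lemma \ref{4cor_nax12} (the off-axis extreme points have $\omega_\Im$ given by \eqref{4axn_gap2}) and by Lemma \ref{3imex2} together with Proposition \ref{4ax_crprop2} (the on-axis ones are the distinct roots $\nu\le -2c/d$ of $q_\alpha$ with $R_\nu+\sqrt{R_\nu^2-S_\nu}-\nu^2=0$).

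Next I would split into the two regimes. For $\alpha>0$: here the largest imaginary part of the set is $0$ and the smallest is $-d/2$ (the remark after Proposition \ref{4cdv2}), so a strip exists precisely when there is an extreme point with imaginary part strictly between $-d/2$ and $0$. By Lemma \ref{4cor_nax12} the only possible off-axis value in $(-d/2,0)$ is $\omega_\Im=(-d+d\sqrt{1-\alpha/c})/4$, which is real and lies in $(-d/2,0)$ exactly when $\alpha<c$; this handles the sufficiency of $\alpha<c$. When $\alpha\ge c$, \eqref{4axn_gap2} gives no off-axis extreme point strictly inside $(-d/2,0)$, so one must look at on-axis extreme points, i.e.\ roots of $q_\alpha$; by Lemma \ref{5lemd2}\,i) and Proposition \ref{4ax_crprop2}\,i), $q_\alpha$ has the requisite extra real roots (three more intersections, giving an interior root by Lemma \ref{3imex2} after a parity/alternation argument as in Proposition \ref{4gap1}) exactly when $d\ge d_0$ — and one checks $d=d_0$ is covered since a double root of $q_\alpha$ still produces a genuine gap by the same reasoning used for $d_2$ in Proposition \ref{4gap1}. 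Conversely, for $\alpha\ge c$ and $d<d_0$ there are at most two intersections of the imaginary axis (Proposition \ref{4ax_crprop2}\,i)) and no off-axis extreme point in $(-d/2,0)$, so no strip. Uniqueness follows as in Proposition \ref{4gap1}: two strips would force at least two bounded components, each enclosing a pole by Lemma \ref{4lem1r2}, hence both poles imaginary with $\omega_\Im\le -d/2$, contradicting Proposition \ref{4ax_crprop2}.

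For $\alpha<0$: the remark after Proposition \ref{4cdv2} says the largest imaginary part of $\overline{W_{\{\alpha\}\times\eR_+}(T)\setminus i\R}$ is $-d/2$, and Proposition \ref{4ax_crprop2}\,ii)--iii) shows there are at most two intersections of the imaginary axis, all with $\nu\le -2c/d\le -d/2$; by Lemma \ref{4cor_nax12} the off-axis candidate $(-d+d\sqrt{1-\alpha/c})/4$ also satisfies $\omega_\Im\le -d/4\,$?\,— more precisely it is $\le -d/2$ exactly when it is real and negative, but the point is that all extreme points have imaginary part $\le -d/2$ while the top of the set is $-d/2$, so the extreme points cannot have three distinct imaginary values spanning a genuine horizontal gap; hence no strip. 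This gives the ``all other cases'' clause.

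The main obstacle I expect is the $\alpha\ge c$, $d\ge d_0$ sufficiency argument: translating ``$q_\alpha$ has four real roots $\le -2c/d$'' (or a double root at $d=d_0$) into ``there is a bona fide extreme point strictly inside the interval $(-d/2,0)$ separated from the components above and below'' requires the alternation-of-signs bookkeeping from the proof of Proposition \ref{4gap1} — checking that $R_\nu-\sqrt{R_\nu^2-S_\nu}-\nu^2$ changes sign between consecutive roots while $R_\nu+\sqrt{R_\nu^2-S_\nu}-\nu^2$ vanishes at the appropriate one, and ruling out the degenerate case of two bounded components via Lemma \ref{4lem1r2} and Corollary \ref{3multcor2}. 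Everything else is a matter of reading off signs from Lemma \ref{5lemd2}, Proposition \ref{4ax_crprop2}, and \eqref{4axn_gap2}.
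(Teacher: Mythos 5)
Your overall strategy is the right one: the paper's own proof is literally ``similar to the proof of Proposition \ref{4gap1}, using Proposition \ref{4cdv2} and Lemmata \ref{4cor_nax12}, \ref{3imex2} and \ref{4lem1r2}'', and your plan fleshes out exactly that transfer. However, there is one concrete error and one genuinely missing argument. The error is your treatment of the boundary case $d=d_0$ for $\alpha\geq c$: you assert that a double root of $q_\alpha$ ``still produces a genuine gap by the same reasoning used for $d_2$ in Proposition \ref{4gap1}''. This is backwards on both counts. Proposition \ref{4gap1} asserts a strip exists if and only if $d>\min(2\sqrt{\beta},d_2)$, so at $d=d_2$ there is \emph{no} strip; and Proposition \ref{4gap2} likewise requires $d>d_0$ strictly (cf.\ Proposition \ref{prop4mind_ax2} i), where the local maximum is $i\nu_1$ and the local minimum is $i\nu_2$). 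At $d=d_0$ the discriminant $\Delta_{q_\alpha}$ vanishes, so $\nu_1=\nu_2$; by Corollary \ref{3multcor2} this is a triple root of $p_{\alpha,\beta}$, hence a self-intersection of the curve rather than a distinct intersection in the sense of Lemma \ref{3imex2}, and the would-be strip collapses to $s_0=s_1$, which is excluded by \eqref{rev:strip}. As written, your argument would prove a statement inconsistent with the proposition.

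The missing argument is the case $\alpha<0$. Knowing that every extreme point has imaginary part $\leq -d/2$ and that the supremum of imaginary parts equals $-d/2$ does not preclude a strip: a set contained in $\{\omega_\Im\leq -d/2\}$ can perfectly well have a horizontal gap, and your own inventory (the off-axis candidate $(-d-d\sqrt{1-\alpha/c})/4<-d/2$ from \eqref{4axn_gap2}, up to two axis intersections from Proposition \ref{4ax_crprop2} ii)--iii), plus the value $-d/2$ attained at $\pm\infty-id/2$) already allows more than three candidate extreme values below the top of the set. Ruling out the strip here requires the component/root-counting argument of Proposition \ref{4gap1}: a strip would force a bounded component which, by Lemma \ref{4lem1r2}, must carry a root (or shadow one on $i\R$) for every $\beta\in[0,\infty)$, and this is incompatible with the intersection counts of Proposition \ref{4ax_crprop2} and the location of the poles. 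A smaller instance of the same over-reliance on candidates occurs in your sufficiency argument for $0<\alpha<c$: the off-axis value $(-d+d\sqrt{1-\alpha/c})/4$ lies in $(-d/2,0)$, but it is an extreme point only when the corresponding $\omega_\Re^2=R_{\omega_\Im}-\omega_\Im^2$ is positive (this is exactly condition \eqref{dim2} of Proposition \ref{prop4mind_ax2}); when it fails you must fall back on the axis roots $i\nu_1,i\nu_2$, so the case split cannot be avoided even for $\alpha<c$.
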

\begin{proof}
Similar to the proof of Proposition \ref{4gap1}, using Proposition \ref{4cdv2} and Lemmata \ref{4cor_nax12}, \ref{3imex2} and \ref{4lem1r2}.
\end{proof}

\begin{defn}\label{3mui2}
Denote the $k$ real solutions to \eqref{5imroots2} by
\begin{equation}\label{3mudef2}
\nu_1\leq\nu_2 \leq \hdots\leq \nu_k,
\end{equation}
where $k$ might be zero.
\end{defn}

\begin{prop}\label{prop4mind_ax2}
Let $W_{\{\alpha\}\times \eR_+}(T)\setminus i\R$ be defined as in \eqref{4curvea}  and $\nu_i$ as in Definition \ref{3mui2}. 
 The extreme points of $\overline{ W_{\{\alpha\}\times \eR_+}(T)\setminus i\R}$ in Definition \ref{3extdef} have the following properties:
\begin{itemize}
\item[i)] If $\alpha\geq c$ there is a unique  maximal strip with respect to $\overline{ W_{\{\alpha\}\times \eR_+}(T)\setminus i\R}$ if and only if $d>d_0$.
The local maximum point is $i\nu_1$ and the local minimum is $i\nu_2$.
\item[ii)] If $c>\alpha> 0$ there is  a unique  maximal strip with respect to $\overline{ W_{\{\alpha\}\times \eR_+}(T)\setminus i\R}$.
     \begin{itemize} 
         \item[$\bullet$] \textit{If  \eqref{dim2} \rm a) holds then the local maximum points are not on the imaginary axis and $s_0=(-d-d\sqrt{1-\alpha/c})/4$. If  \eqref{dim2} \rm a) does not hold, then the local maximum point is $i\nu_1$.}
         \item[$\bullet$] \textit{If \eqref{dim2} \rm{b)} holds then the local minimum points are not on the imaginary axis and $s_1=(-d+d\sqrt{1-\alpha/c})/4$. If \eqref{dim2} \rm{b)} does not hold, then the local minimum point is $i\nu_2$.}
     \end{itemize}
 \item[iii)] If $\alpha<0$, the points in $\omega\in W_{\{\alpha\}\times \eR_+}\hspace{-2pt}(T)\setminus i\R$ with smallest imaginary parts are not on the imaginary axis if \eqref{dim2} \rm{a}) holds and $\omega_\Im=(-d-d\sqrt{1+\alpha/c})/4$. If  \eqref{dim2} {\rm a}) does not hold, then the point with the smallest imaginary part is $i\nu_1$.
  If $c=0$ the point with smallest imaginary part is always $i\nu_1$.
 \end{itemize}
\begin{equation}\label{dim2}
\text{\rm a)}\quad d<\frac{4\sqrt{c}}{\sqrt{1+\sqrt{1-\frac{\alpha}{c}}}},\quad \text{\rm b)}\quad  d<\frac{4\sqrt{c}}{\sqrt{1-\sqrt{1-\frac{\alpha}{c}}}}
\end{equation}
\end{prop}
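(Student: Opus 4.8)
The proof follows the same template as the proofs of Propositions \ref{4proplava} and \ref{4proplava2}, with $R_{\omega_\Im}$, $S_{\omega_\Im}$, $q_\alpha$ and Lemmata \ref{4cor_nax12}, \ref{3imex2}, \ref{4lem1r2}, together with Propositions \ref{4cdv2}, \ref{4ax_crprop2}, \ref{4gap2} and Lemma \ref{5lemd2}, playing the roles that $P_{\omega_\Im}$, $Q_{\omega_\Im}$, $q_\beta$ and their companions played there. The plan is: (a) list all candidate extreme points of $\overline{W_{\{\alpha\}\times\eR_+}(T)\setminus i\R}$ by viewing $\omega_\Re$ through the four branches \eqref{5ctdt2} as a multivalued function of $\omega_\Im$; (b) decide, in each regime for $\alpha$, which candidates actually lie on the curve and are extreme points; (c) match the surviving extreme points to $s_0$, $s_1$, or, when no strip exists, to $p_{\min}$ and $p_{\max}$.

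For (a)--(b) off the imaginary axis: by Lemma \ref{4cor_nax12} the only candidates have $\omega_\Im=(-d\pm d\sqrt{1-\alpha/c})/4$, which are real exactly when $\alpha<c$; evaluating $R_{\omega_\Im}-\omega_\Im^2$ at these two values (a short computation, as in Proposition \ref{4proplava}, using that $R_{\omega_\Im}^2-S_{\omega_\Im}=0$ there) shows that the relevant branch of \eqref{5ctdt2} produces a genuine nonzero real part, hence a genuine point of $W_{\{\alpha\}\times\eR_+}(T)\setminus i\R$, precisely under condition \eqref{dim2} a) for the $-$ sign and under \eqref{dim2} b) for the $+$ sign; when the relevant inequality fails, the corresponding extreme point is forced onto the imaginary axis. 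On the imaginary axis: by Lemma \ref{3imex2} the extreme points are the distinct roots $\nu$ of $q_\alpha$ with $R_\nu+\sqrt{R_\nu^2-S_\nu}-\nu^2=0$ (which forces $R_\nu>0$, i.e.\ $\nu>-d/2$ for $\alpha>0$ and $\nu<-d/2$ for $\alpha<0$); the number of such roots is read off from Proposition \ref{4ax_crprop2} and Lemma \ref{5lemd2}, with distinctness governed by Corollary \ref{3multcor2}. Existence and uniqueness of a maximal strip is quoted directly from Proposition \ref{4gap2}: for $\alpha>0$ one exists iff $\alpha<c$ or $d>d_0$, and for $\alpha<0$ there is never one.

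Step (c) splits into the three cases. For i) $\alpha\geq c$: here $1-\alpha/c\leq0$, so there are no off-axis extreme points (the limiting case $\alpha=c$ yields a self-intersection, not an extreme point, by Lemma \ref{4cor_nax12}), whence all extreme points lie on $i\R$; a strip exists iff $d>d_0$ by Proposition \ref{4gap2}, and arguing exactly as in the uniqueness part of Proposition \ref{4gap1}---excluding, via Lemma \ref{4lem1r2} and the location of the poles, a bounded component meeting $i\R$ four times---the bounded component meets $i\R$ in exactly two points, which are necessarily $i\nu_1$ and $i\nu_2$, giving $s_0=\nu_1$ and $s_1=\nu_2$. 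For ii) $c>\alpha>0$: a strip always exists, and by the off-axis analysis its local maximum is the off-axis point $(-d-d\sqrt{1-\alpha/c})/4$ precisely under \eqref{dim2} a) and otherwise $i\nu_1$, and symmetrically its local minimum is $(-d+d\sqrt{1-\alpha/c})/4$ precisely under \eqref{dim2} b) and otherwise $i\nu_2$; the four-intersection scenario is again ruled out as in i). For iii) $\alpha<0$: no strip exists, $p_{\max}$ is the point at infinity with imaginary part $-d/2$ (per the convention in the remark after Proposition \ref{4cdv2}), and the analogous dichotomy shows $p_{\min}$ is the off-axis point under \eqref{dim2} a) and otherwise $i\nu_1$; when $c=0$ the off-axis alternative requires $d<2\sqrt{c}=0$ and is impossible, so $p_{\min}=i\nu_1$, using also $0\notin\overline{W_{\{\alpha\}\times\eR_+}(T)\setminus i\R}$ and $d_0=0$.

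I expect the main obstacle to be the combinatorial bookkeeping in cases i) and ii): correctly separating the bounded and unbounded components of $\overline{W_{\{\alpha\}\times\eR_+}(T)\setminus i\R}$ and excluding a bounded component that crosses the imaginary axis four times---this needs the pole-location/continuity argument imported from Proposition \ref{4gap1} together with Lemma \ref{4lem1r2}---in order to pin down exactly which roots $\nu_i$ of $q_\alpha$ serve as the strip boundaries. The sign computations at the off-axis candidates and the root counts themselves are routine.
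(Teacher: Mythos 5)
Your proposal is correct and takes essentially the same route as the paper, whose entire proof of this proposition is the single sentence ``The proof is similar to Proposition \ref{4proplava}''; your write-up is a faithful and accurate expansion of exactly that analogy (off-axis candidates from Lemma \ref{4cor_nax12} with the sign conditions reducing to \eqref{dim2}, on-axis candidates from Lemma \ref{3imex2} and Proposition \ref{4ax_crprop2}, strip existence from Proposition \ref{4gap2}, and the component bookkeeping imported from Proposition \ref{4gap1} via Lemma \ref{4lem1r2}). As a minor side remark, your computation yields $\omega_\Im=(-d-d\sqrt{1-\alpha/c})/4$ in case iii), consistent with Lemma \ref{4cor_nax12}, which suggests the ``$1+\alpha/c$'' in the printed statement is a typo.
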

\begin{proof}
The proof is similar to Proposition \ref{4proplava}.
\end{proof}
\begin{prop}\label{mindegd2} 
Let $W_{\{\alpha\}\times \eR_+}(T)\setminus i\R$ be defined as in \eqref{4curvea} and let $\strip$ denote a strip defined in Definition \ref{4strip}. 
 The extreme points of $\overline{ W_{\{\alpha\}\times \eR_+}(T)\setminus i\R}$ in Definition \ref{3extdef} are continuous in $d$ and have the following properties:
\begin{itemize}
\item[\rm i)] If $\alpha>c$ and  a  maximal strip with respect to $\overline{ W_{\{\alpha\}\times \eR_+}(T)\setminus i\R}$  exists, then the local minimum of $\overline{ W_{\{\alpha\}\times \eR_+}(T)\setminus i\R}$ is increasing in $d$, and the local maximum is decreasing in $d$.
\item[\rm ii)] If $c=\alpha>0$ and a  maximal strip with respect to $\overline{ W_{\{\alpha\}\times \eR_+}(T)\setminus i\R}$ exists, then the local minimum of $\overline{ W_{\{\alpha\}\times \eR_+}(T)\setminus i\R}$ is $-i\sqrt{c}$, and the local maximum is $-id/4-i\sqrt{d^2/16-c}$.
\item[\rm iii)] If $c>\alpha>0$ then the local maximum and local minimum of $\overline{ W_{\{\alpha\}\times \eR_+}(T)\setminus i\R}$ are decreasing in $d$.
\item[\rm iv)] For $\alpha<0$ the smallest imaginary part of $\overline{ W_{\{\alpha\}\times \eR_+}(T)\setminus i\R}$ is decreasing in $d$.
\end{itemize}
\end{prop}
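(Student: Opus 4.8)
The plan is to follow the template of the proof of Proposition~\ref{mindegd}: monotonicity of the extreme points that lie on $i\R$ will be read off from a perturbation analysis of the quartic $q_\alpha$ in \eqref{5imroots2}, while the extreme points off $i\R$ are given by explicit formulas. First I would observe that the continuity of the extreme points in $d$ is already contained in Propositions~\ref{prop4mind_ax2} and~\ref{4cdv2}, which give piecewise explicit parametrizations; hence it is enough to prove each monotonicity statement on the open $d$-intervals on which the combinatorial type of $\overline{W_{\{\alpha\}\times\eR_+}(T)\setminus i\R}$ is constant (the dividing values are $d=d_0$ from Lemma~\ref{5lemd2} and $d=2\sqrt c$, where $\Delta_{q_\alpha}=0$ or a root of $q_\alpha$ crosses $-2c/d$), with the dividing values themselves covered by continuity. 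It is also enough to treat the extreme points on $i\R$ and those off $i\R$ separately.

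For the off-axis extreme points, Lemma~\ref{4cor_nax12} together with Proposition~\ref{prop4mind_ax2} shows that their imaginary part is one of the two numbers in \eqref{4axn_gap2}, each affine in $d$; using $0<1-\alpha/c<1$ when $0<\alpha<c$ and $1-\alpha/c>0$ when $\alpha<0$, both have strictly negative slope, so these points move strictly downward as $d$ increases. This settles the off-axis contributions in~iii) and~iv), and it records that for $\alpha\ge c$ there are no off-axis extreme points, so that~i) and~ii) reduce to the on-axis analysis.

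For the on-axis extreme points, by Proposition~\ref{5imaxis2} and Lemma~\ref{3imex2} each is $i\nu$ with $\nu$ a real root of $q_\alpha$ satisfying $\nu\le -2c/d<0$, and (away from the finitely many exceptional $d$) a simple one. Writing $q_\alpha^d$ for \eqref{5imroots2} with parameter $d$, only two of its coefficients depend on $d$, and linearly, so one has the \emph{exact} identity $q_\alpha^{d+\epsilon}(\nu)=q_\alpha^{d}(\nu)+\tfrac{\epsilon}{2}\,\nu(\nu^2-\alpha)$ for $\epsilon>0$. Thus at a simple root $\nu^d$ the sign of $\tfrac{\epsilon}{2}\nu^d\bigl((\nu^d)^2-\alpha\bigr)$, combined with the direction of the sign change of $q_\alpha^d$ at $\nu^d$, determines whether the root moves left or right; since $\nu^d<0$, this sign is governed by whether $\nu^d<-\sqrt\alpha$ or $\nu^d>-\sqrt\alpha$ when $\alpha>0$, and is unconditionally negative when $\alpha<0$. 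For $\alpha<0$ this at once shows that the smallest imaginary part $i\nu_1$ decreases, giving~iv). For $\alpha=c$ I would instead factor $q_c(\nu)=(\nu^2-c)(\nu^2+\tfrac{d}{2}\nu+c)$, exhibiting the on-axis extreme points of the (unique, for $d>4\sqrt c=d_0$) strip as $-i\sqrt c$, which is constant, and $-id/4-i\sqrt{d^2/16-c}$, which is strictly decreasing in $d$, while $\pm\sqrt{c-d^2/16}-id/4$ is a crossing point by Lemma~\ref{4cor_nax12}; this gives~ii).

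The main obstacle is the analogue of the interval-membership step in the proof of Proposition~\ref{mindegd}: locating the two smallest real roots $\nu_1\le\nu_2$ of $q_\alpha$ relative to $-\sqrt\alpha$, uniformly in $d$. The decisive input is the evaluation $q_\alpha(-\sqrt\alpha)=\alpha(\alpha-c)$, which never vanishes when $\alpha\ne c$; hence $-\sqrt\alpha$ is never a root of $q_\alpha$, and the number of real roots of $q_\alpha$ in $(-\infty,-\sqrt\alpha)$ is constant in $d$ on each admissible interval. Computing this count in the limit $d\to\infty$, where the real roots of $q_\alpha$ separate into one near $-d/2$, one just below $-\sqrt\alpha$ if $\alpha>c$ (just above it if $\alpha<c$), one near $-2c/d$, and one near $\sqrt\alpha$, pins down the ordering: for $\alpha>c$ one obtains $\nu_1<\nu_2<-\sqrt\alpha$, so by the identity above the local maximum $i\nu_1$ (Proposition~\ref{prop4mind_ax2}~i)) strictly decreases and the local minimum $i\nu_2$ strictly increases, proving~i); for $0<\alpha<c$ the sign flips to $\nu_1<-\sqrt\alpha<\nu_2$, so both $i\nu_1$ and $i\nu_2$ strictly decrease, which together with the off-axis estimate of the second paragraph proves~iii). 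What remains is bookkeeping at the transition values of $d$ where the root count of $q_\alpha$ jumps or an extreme point migrates between $i\R$ and its complement; there the asserted monotonicity follows from the continuity noted at the outset together with the one-sided statements just established.
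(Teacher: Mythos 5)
Your proposal is correct and follows the same skeleton as the paper's proof: continuity from Propositions \ref{prop4mind_ax2} and \ref{4cdv2}; off-axis extreme points handled via the explicit, $d$-linear formulas \eqref{4axn_gap2}; the case $\alpha=c$ by the factorization $q_c(\nu)=(\nu^2-c)(\nu^2+\tfrac{d}{2}\nu+c)$; and the on-axis points via the exact perturbation identity $q_\alpha^{d+\epsilon}(\nu)=q_\alpha^{d}(\nu)+\tfrac{\epsilon}{2}\nu(\nu^2-\alpha)$, which reduces everything to locating $\nu_1,\nu_2$ relative to $-\sqrt{\alpha}$ using $q_\alpha(-\sqrt{\alpha})=\alpha(\alpha-c)$. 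The one place you genuinely diverge is that last locating step: the paper fixes $d$ and counts roots in $(-\infty,-\sqrt{\alpha}]$ directly, combining the parity forced by the sign of $q_\alpha(-\sqrt{\alpha})$ with convexity/concavity of $q_\alpha$ on $(-\infty,-d/4]$ and $[-d/4,0]$ and the inequality $-d/4\leq-\sqrt{\alpha}$ from $d\geq d_0$; you instead freeze the count on each interval where $\Delta_{q_\alpha}\neq 0$ (no root can cross $-\sqrt{\alpha}$ since $q_\alpha(-\sqrt{\alpha})\neq0$) and evaluate it in the limit $d\to\infty$. Both work; the paper's version is self-contained at each $d$ and covers the regime $0<\alpha<c$, $d<d_0$ with two real roots without a separate case, whereas your limit computation is cleaner but should be supplemented by the (immediate) parity observation in that two-root regime. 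One small inaccuracy to fix: for $\alpha<0$ the branch $(-d+d\sqrt{1-\alpha/c})/4$ of \eqref{4axn_gap2} has \emph{positive} slope, so it is not true that both branches decrease; this is harmless because only the minus branch is relevant for the smallest imaginary part in iv), exactly as in the paper.
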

\begin{proof}
The continuity in $d$ of the extreme points follows from Propositions \ref{prop4mind_ax2} and \ref{4cdv2}.

\rm i)--iii) Assume that a  maximal strip with respect to $\overline{ W_{\{\alpha\}\times \eR_+}(T)\setminus i\R}$ exists. If an extreme point is located not on the imaginary axis then $\alpha<c$ by Lemma \ref{4cor_nax12} and then the result follows from \eqref{4axn_gap2}.
Thus it is sufficient to show the result for the roots of \eqref{5imroots2} and for the case $\alpha=c$  it is easy to verify that $\nu_1=-d/4-\sqrt{d^2/16-c}$ and  $\nu_2=-\sqrt{c}$, which proves \rm ii). Hence, assume that $\alpha\neq c$. Similarly as in Proposition \ref{mindegd}, observe the polynomial \eqref{5imroots2}, where $d$ is shifted by some small $\epsilon>0$ is
\begin{equation}\label{5qdot2}
q_\alpha^{d+\epsilon}(\nu):=q_\alpha^d(\nu)+\frac{\nu}{2}(\nu^2-\alpha)\epsilon.
\end{equation}
 For $\alpha>0$ 
 it follows from Proposition \ref{4gap2} that $d\geq d_0$ since otherwise the local minimum and maximum are not on the imaginary axis, and thus there are three roots. The local maximum is $i \nu_1$ and the local minimum is $i \nu_2$ by Proposition \ref{prop4mind_ax2}. The local maximum (minimum) is decreasing (increasing) if and only if
 $q_\alpha^{d+\epsilon}(\nu_1^d)<0\ (q_\alpha^{d+\epsilon}(\nu_2^d)<0)$, for $\epsilon>0$ sufficiently small. In general, if $q_\alpha^d(\nu)=0$, then $q_\alpha^{d+\epsilon}(\nu)>0$ for $\epsilon>0$ small enough if and only if $(\nu^2-\alpha)>0$, and thus $\nu<-\sqrt{\alpha}$. Hence, what needs to be shown is whether $\nu_1^d,\nu_2^d<-\sqrt{\alpha}$. 
Since $q_\alpha(-\sqrt{\alpha})=\alpha(\alpha-c)$, there is an even number of roots in $(-\infty,-\sqrt{\alpha}]$ if $\alpha>c$ and an odd number of roots $(-\infty,-\sqrt{\alpha}]$ if $\alpha<c$.
 The polynomial $q^d_\alpha(\nu)$ is concave only on the interval $[-d/4,0]$, hence there can at most be two roots in this interval. 
 From Lemma \ref{5lemd2} and that $d\geq d_0$ it follows that $-d/4\leq-\sqrt{\alpha}$ and  thus there has to be at least one root in the interval $(-\infty,-d/4]$. Hence, if $\alpha>c$ there are two roots in $(-\infty,-\sqrt{\alpha}]$, thus  $\nu_1^d,\nu_2^d<-\sqrt{\alpha}$, which proves \rm i).
 
  If $\alpha<c$ it follows that $q'_\alpha(-\sqrt{\alpha})$ and $q'_\alpha(-d/4)$ are positive and since $q_\alpha$ is concave on $[-d/4,-\sqrt{\alpha}]$ there is at most one root in that interval. On $(-\infty,-d/4]$, $q_\alpha$ is convex and since $q_\alpha(-d/4)<0$ it follows that $q_\alpha$ has exactly one root in this interval.  Since the number of roots is odd in $(-\infty,-\sqrt{\alpha}]$ it follows that there is only one root in the interval, this proves \rm iii).
  
 \rm iv) The result follows from Lemma \ref{4cor_nax12} if the point with smallest imaginary part is not on the imaginary axis. Assume $\alpha<0$ and that the point with smallest imaginary part is on the imaginary axis. Then the point with smallest imaginary part will be $i \nu_1^d$, which implies $((\nu_1^d)^2-\alpha)>0$ and thus the smallest imaginary part is decreasing, due to \eqref{5qdot2}.
 \end{proof}

\section{Resolvent estimates}

In this section, the $\epsilon$-pseudonumerical range is introduced and we determine an enclosure of this set. We show how the boundary of the new enclosure of the pseudospectra can be determined and give an estimate of the resolvent of \eqref{5orgeq}.

For given $\epsilon>0$ the $\epsilon$-pseudospectrum $\sigma^{\epsilon}(T)$ is the union of $\sigma (T)$ and the set of all $\omega\in\C$ such that $\|T^{-1}(\omega)\|>\epsilon^{-1}$.  An equivalent condition for $\omega\in\sigma^{\epsilon}(T)$ is that there exists a function $u\in\dom T$, $\|u\|=1$ for which $\|T(\omega)u\|<\epsilon$. Such $u$ is called an approximate eigenvector or $\epsilon$-pseudomode \cite[p. 255]{MR2359869}.
To be able to see how well-behaved $T^{-1}$ is close to $W_\Omega(T)$, as defined in \eqref{4nrenc}, we will for $\omega\in\domain=\C\setminus\{\delta_+,\delta_-\}$ make an upper estimate of the resolvent for the rational function \eqref{5orgeq}.

\begin{defn}\label{4pnr}
For an operator function $T$ define the $\epsilon$-pseudonumerical range as the set
\[
W^\epsilon(T):=W(T)\cup\{\omega\in\domain\setminus W(T):\exists u\in\dom T, |(T(\omega)u,u)|/\|u\|^2<\epsilon\}.
\]
\end{defn}
From definition \ref{4pnr} it is clear that $W^\epsilon(T)\supset 
 W(T)$. Moreover, the inequality $\|T(\omega)u\|\|u\|\geq |(T(\omega)u,u)|$ for $u\in \dom T$ yields that
 $W^\epsilon(T)\setminus \sigma(T)\supset \sigma^\epsilon(T)\setminus \sigma(T)$. 
 
In the following, $T$ is the rational operator function defined in \eqref{5orgeq}. Similar to the enclosure of $W(T)$ in \eqref{4nrenc}, we define an enclosure of $W^\epsilon(T)\subset \eC$ as
\begin{equation}\label{5enrenc}
	W_\Omega^\epsilon(T):=W_\Omega(T)\cup\{\omega\in\domain\setminus W_\Omega(T) :\exists (\alpha,\beta)\in\Omega, |t_{(\alpha,\beta)}(\omega)|<\epsilon\},
\end{equation}
where $t_{(\alpha,\beta)}(\omega)$ is given in \eqref{TCurver}.
\begin{rem}
For $\omega\in\{\delta_+,\delta_-,\infty\}$, $\omega\in W_\Omega(T)$ if and only if $\omega\in W_\Omega^\epsilon(T)$ for all $\epsilon>0$. 
\end{rem}
Figure \ref{4fig:epsil} illustrates $W_\Omega(T)$ and $W_\Omega^\epsilon(T)$  in two cases. Note in particular that the distance between a point $\omega\in\partial W_\Omega^\epsilon(T)$ and  $\partial W_\Omega(T)$ is not constant.

Define the set,
\begin{equation}\label{4gammae}
\gamma^\epsilon(T):=\overline{\partial\Gamma^\epsilon(T)\setminus W_\Omega(T)},\quad \Gamma^\epsilon(T):=\{\omega\in\domain\setminus W_{\partial\Omega}(T) :\exists (\alpha,\beta)\in\partial\Omega, |t_{\alpha,\beta}(\omega)|<\epsilon\}.
\end{equation}
\begin{thm}\label{4maine}
Assume that $\epsilon>0$ and define $\partial W_\Omega^\epsilon(T)$ and $\gamma^\epsilon(T)$ as in \eqref{5enrenc} and in \eqref{4gammae}, respectively. Then $\partial W_\Omega^\epsilon(T)=\gamma^\epsilon(T)$.
\end{thm}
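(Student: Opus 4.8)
The plan is to establish the two inclusions $\partial W_\Omega^\epsilon(T)\subset\gamma^\epsilon(T)$ and $\gamma^\epsilon(T)\subset\partial W_\Omega^\epsilon(T)$ by exploiting the structure of $W_\Omega^\epsilon(T)$ as a union over $(\alpha,\beta)\in\Omega$ of sublevel sets of the continuous functions $\omega\mapsto|t_{(\alpha,\beta)}(\omega)|$, in direct analogy with Theorem \ref{4maint}, where the boundary of $W_\Omega(T)$ (away from $i\R$) was shown to be generated by $\partial\Omega$. The key observation is that for fixed $\omega\in\domain\setminus(i\R\cup\{\delta_+,\delta_-,\infty\})$, the map $(\alpha,\beta)\mapsto t_{(\alpha,\beta)}(\omega)$ is, by \eqref{TCurver}, an \emph{affine} function of $(\alpha,\beta)\in\R^2$ (with $\alpha$-coefficient $1$ and $\beta$-coefficient $-\omega^2/(c-id\omega-\omega^2)$, both nonzero for such $\omega$). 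Hence $g_\omega(\alpha,\beta):=|t_{(\alpha,\beta)}(\omega)|$ is a norm of an affine function, so its minimum over the convex rectangle $\Omega$ is attained on $\partial\Omega$, and in fact the sublevel set $\{(\alpha,\beta)\in\Omega:g_\omega(\alpha,\beta)<\epsilon\}$ is nonempty if and only if it meets $\partial\Omega$. This immediately gives that $\omega\in W_\Omega^\epsilon(T)\setminus W_\Omega(T)$ iff $\omega\in\Gamma^\epsilon(T)\setminus W_{\partial\Omega}(T)$ up to the exceptional set, so the "interior-generated" part of $W_\Omega^\epsilon(T)$ contributes nothing new to the boundary beyond what $\partial\Omega$ produces.

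First I would fix the exceptional points: by the Remark preceding the theorem, $\{\delta_+,\delta_-,\infty\}$ lie in $W_\Omega^\epsilon(T)$ iff they lie in $W_\Omega(T)$, so they never appear in $\partial W_\Omega^\epsilon(T)\setminus W_\Omega(T)$ and can be treated exactly as in Proposition \ref{5boundspol} and Corollary \ref{5polcor}; points on $i\R$ are handled separately using the explicit affine relation \eqref{eq:linearb}-type identity and Lemma \ref{4rninj} ii)--iii), just as in Proposition \ref{5boundsi}. For the main inclusion $\partial W_\Omega^\epsilon(T)\subset\gamma^\epsilon(T)$: take $\omega\in\partial W_\Omega^\epsilon(T)$. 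If $\omega\in\partial W_\Omega(T)$, then $\omega\in\overline{W_{\partial\Omega}(T)}$ by Theorem \ref{4maint}, hence $\omega\in\overline{\partial\Gamma^\epsilon(T)}\cup W_\Omega(T)$; since $W_\Omega(T)\subset W_\Omega^\epsilon(T)$ one checks $\omega$ still lands in $\gamma^\epsilon(T)$ using that $\partial\Gamma^\epsilon(T)$ accumulates on $\partial W_{\partial\Omega}(T)$ as $\epsilon\to0$ and that $\gamma^\epsilon(T)$ is closed. If instead $\omega\notin W_\Omega(T)$, then near $\omega$ the set $W_\Omega^\epsilon(T)$ agrees with $\{\omega':\exists(\alpha,\beta)\in\Omega,\ |t_{(\alpha,\beta)}(\omega')|<\epsilon\}$, and by the affineness argument above this equals $\{\omega':\exists(\alpha,\beta)\in\partial\Omega,\ |t_{(\alpha,\beta)}(\omega')|<\epsilon\}=\Gamma^\epsilon(T)$ locally (modulo the measure-zero exceptional locus); therefore $\omega\in\partial\Gamma^\epsilon(T)\setminus W_\Omega(T)\subset\gamma^\epsilon(T)$. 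For the reverse inclusion $\gamma^\epsilon(T)\subset\partial W_\Omega^\epsilon(T)$: a point in $\partial\Gamma^\epsilon(T)\setminus W_\Omega(T)$ is by continuity of each $t_{(\alpha,\beta)}$ a point where $\inf_{(\alpha,\beta)\in\partial\Omega}|t_{(\alpha,\beta)}(\cdot)|=\epsilon$, hence $\inf_{(\alpha,\beta)\in\Omega}|t_{(\alpha,\beta)}(\cdot)|=\epsilon$ by the affineness argument, so it is a boundary point of $W_\Omega^\epsilon(T)$; the closure in \eqref{4gammae} is absorbed because $\partial W_\Omega^\epsilon(T)$ is closed.

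The main obstacle I expect is the careful bookkeeping near the three places where the clean affine picture degenerates: (i) the imaginary axis $i\R$, where the coefficient of $\beta$ in $t_{(\alpha,\beta)}(\omega)$ and the structure of the level sets change (this is exactly why $i\R$ is singled out throughout Section 2, cf.\ Theorem \ref{4maint} and Proposition \ref{5boundsi}, and one must redo the argument with the real reduction $\hat p_{\alpha,\beta}$ of \eqref{5imrot}); (ii) the poles $\delta_\pm$ and $\infty$, where $t_{(\alpha,\beta)}$ is not defined or not finite, handled by the Remark and Corollary \ref{5polcor}; and (iii) showing that the closure operations in the definitions of $\gamma^\epsilon(T)$ and $\partial W_\Omega^\epsilon(T)$ match up — i.e.\ that no spurious limit points are created or lost — which requires knowing that $\Gamma^\epsilon(T)$ and $W_\Omega^\epsilon(T)$ differ from their "closed" counterparts only on lower-dimensional sets. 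This last point uses that for each fixed $(\alpha,\beta)$ the zero set and level sets of the real-analytic function $|t_{(\alpha,\beta)}|^2$ are tame, together with the finiteness statements from Lemma \ref{4rninj} and Lemma \ref{5nueq0} guaranteeing only finitely many $(\alpha,\beta)$ can be "exceptional'' for a given $\omega$. Everything else is a routine transcription of the proof of Theorem \ref{4maint} with $r_n(\partial\Omega)$ replaced by the $\epsilon$-inflated version $\partial\Gamma^\epsilon(T)$.
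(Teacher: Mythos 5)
Your central device---for fixed $\omega$ the map $(\alpha,\beta)\mapsto t_{(\alpha,\beta)}(\omega)$ is affine, so if it does not vanish on $\Omega$ the minimum of $|t_{(\alpha,\beta)}(\omega)|$ over $\Omega$ is attained on $\partial\Omega$---is indeed the second half of the paper's proof, and your local identification of $W_\Omega^\epsilon(T)$ with $\Gamma^\epsilon(T)$ near a point outside the closed set $W_\Omega(T)$ is sound. The genuine gap is in your reverse inclusion: from $\min_{(\alpha,\beta)\in\partial\Omega}|t_{(\alpha,\beta)}(\omega)|=\min_{(\alpha,\beta)\in\Omega}|t_{(\alpha,\beta)}(\omega)|=\epsilon$ you conclude ``so it is a boundary point of $W_\Omega^\epsilon(T)$''. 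That equality only shows $\omega\notin W_\Omega^\epsilon(T)$; to place $\omega$ on the boundary you must exhibit points of $W_\Omega^\epsilon(T)$ arbitrarily close to $\omega$, and a priori $\omega$ could lie in the interior of the complement (a positive local minimum of $\omega'\mapsto\min_\Omega|t_{(\alpha,\beta)}(\omega')|$). This is exactly where the paper brings in its key analytic ingredient, the minimum modulus principle applied to the non-constant holomorphic function $t_{(\alpha_0,\beta_0)}$ at a minimizing pair $(\alpha_0,\beta_0)$, which produces nearby $\omega'$ with $|t_{(\alpha_0,\beta_0)}(\omega')|<\epsilon$. In your two-inclusion setup you could instead observe that $\omega\in\partial\Gamma^\epsilon(T)$ is a limit of points of $\Gamma^\epsilon(T)$, which lie in $W_\Omega^\epsilon(T)$; but you state neither argument, so the step is missing rather than merely terse.

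Two further points need repair. Your case $\omega\in\partial W_\Omega^\epsilon(T)\cap\partial W_\Omega(T)$ is handled with an argument that does not work: ``$\partial\Gamma^\epsilon(T)$ accumulates on $\partial W_{\partial\Omega}(T)$ as $\epsilon\to 0$'' is irrelevant because $\epsilon$ is fixed. What is true, and what you should prove, is that this case is vacuous away from $\{\delta_+,\delta_-,\infty\}$: if $t_{(\alpha_0,\beta_0)}(\omega)=0$ for some $(\alpha_0,\beta_0)\in\Omega$, then by continuity $|t_{(\alpha_0,\beta_0)}|<\epsilon$ on a whole neighbourhood of $\omega$, so $\omega$ is an interior point of $W_\Omega^\epsilon(T)$ and cannot be a boundary point. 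Also, your exclusion of $i\R$ is unnecessary and leaves the part of the boundary meeting the imaginary axis covered only by a vague sketch: the affine minimization does not require $\omega\notin i\R$ (degeneracy of the affine map does not prevent every value attained on $\Omega$ from being attained on $\partial\Omega$), and the paper's proof is uniform in $\omega$, with the exceptional points $\delta_\pm,\infty$ disposed of at the end by noting that both $\partial W_\Omega^\epsilon(T)$ and $\gamma^\epsilon(T)$ are closed and that these points are not isolated since $W(B)\neq\{0\}$---a short argument your proposal only gestures at.
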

\begin{proof}
Assume that $\omega\notin\{\delta_+,\delta_-,\infty\}$. If $\omega\in\partial W_\Omega^\epsilon(T)$, then $\min_{(\alpha,\beta)\in\Omega}|t_{(\alpha,\beta)}(\omega)|=\epsilon$ by continuity. Conversely assume $\min_{(\alpha,\beta)\in\Omega}|t_{(\alpha,\beta)}(\omega)|=\epsilon$ and let $(\alpha_0,\beta_0)$ be such that $|t_{(\alpha_0,\beta_0)}(\omega)|=\epsilon$.
Since $\epsilon>0$ and $t_{(\alpha_0,\beta_0)}$ is non-constant and holomorphic, the minimum modulus principle states that for each neighborhood $N$ of $\omega$, there is an $\omega'\in N$ such that $|t_{(\alpha_0,\beta_0)}(\omega')|<\epsilon$. Hence, $\min_{(\alpha,\beta)\in\Omega}|t_{(\alpha,\beta)}(\omega')|<\epsilon$ and thus $\omega\in\partial W_\Omega^\epsilon(T)$ is equivalent to $\min_{(\alpha,\beta)\in\Omega}|t_{(\alpha,\beta)}(\omega)|=\epsilon$.

Assume $\omega \notin W_\Omega(T)$, then $\min_{(\alpha,\beta)\in\Omega}|t_{(\alpha,\beta)}(\omega)|>0$ by definition \eqref{5enrenc}. Since $t_{(\alpha,\beta)}(\omega)$ is linear  in $\alpha$ and $\beta$, and nonzero it follows that $|t_{(\alpha,\beta)}(\omega)|$ attains its minimum for $(\alpha,\beta)\in\partial\Omega$. Hence, 
\[
	\min_{(\alpha,\beta)\in\partial\Omega}|t_{(\alpha,\beta)}(\omega)|=\min_{(\alpha,\beta)\in\Omega}|t_{(\alpha,\beta)}(\omega)|=\epsilon,
\]
and thus $\omega\hspace{-1pt}\in\hspace{-1pt}\partial W_\Omega^\epsilon(T)$.
 It then follows that $\partial W_\Omega^\epsilon(T)\setminus\{\delta_+,\delta_-,\infty\}\hspace{-1pt}=\hspace{-1pt}\gamma^\epsilon(T)\setminus\{\delta_+,\delta_-,\infty\}$.
 
By definition $W(B)\neq\{0\}$, which implies that the points $\{\delta_+,\delta_-,\infty\}$ are not isolated. Then, since $\partial W_\Omega^\epsilon(T)$ and $\gamma^\epsilon(T)$ are closed sets, $\partial W_\Omega^\epsilon(T)=\gamma^\epsilon(T)$ follows by continuity.
\end{proof}
\begin{rem}
From the minimum modulus principle it follows that $W_\Omega^\epsilon(T)$ has no new components compared with $W_\Omega(T)$. However, components disconnected in $W_\Omega(T)$ might be connected in $W_\Omega^\epsilon(T)$; see Figure \ref{4fig:epsil}.(a).
\end{rem}

\begin{center}
\begin{figure}
\includegraphics[width=12.5cm]{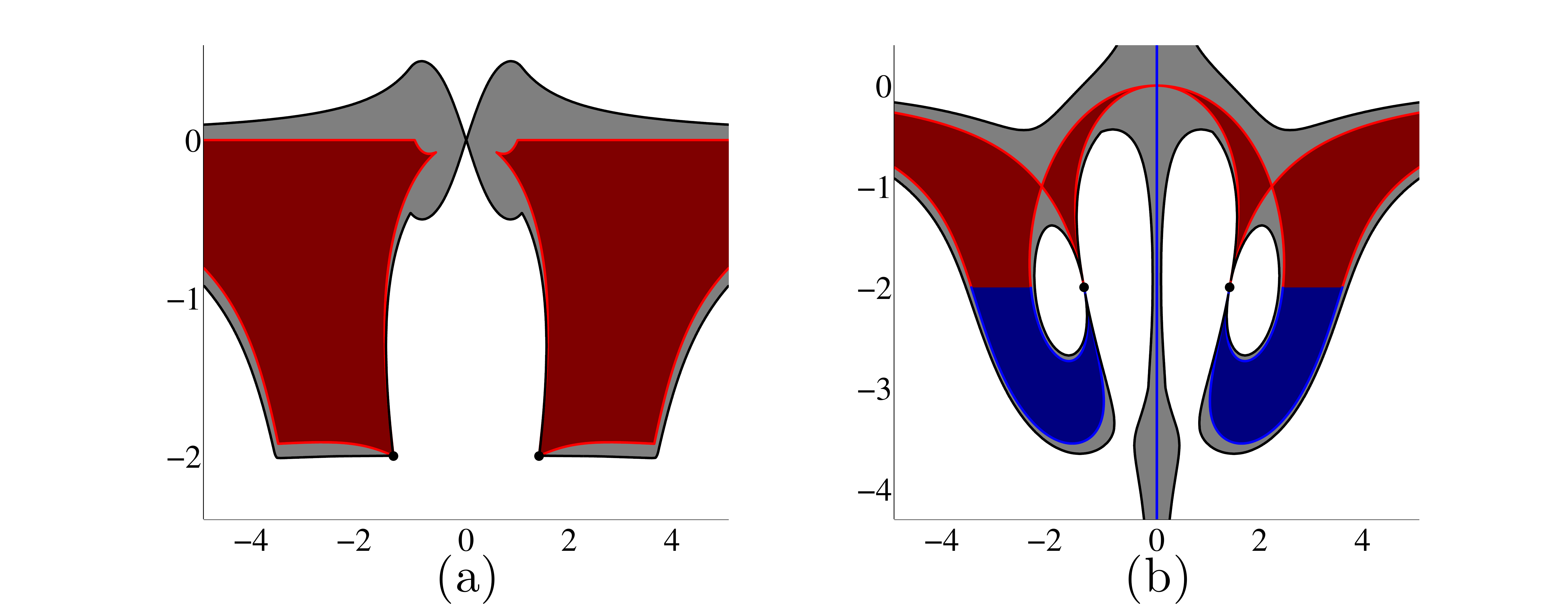}
\caption{Examples of the set $W_\Omega(T)$ in red and blue and the set $W_\Omega^1(T)$ in grey with $c=6$, $d=4$. In (a) $\overline{W(A)}=[1,\infty]$, $\overline{W(B)}=[0,11]$ and in (b) $\overline{W(A)}=[-\infty,\infty]$, $\overline{W(B)}=[4,11]$.}
\label{4fig:epsil}
\end{figure}
\end{center}
  Define the complex constants 
\begin{equation}\label{5rdef}
\kappa:=\frac{\omega^2}{c-id\omega-\omega^2},\qquad \lambda:=\omega^2.
\end{equation}
From Definition \ref{TCurver} it follows that the absolute value of $ t_{(\alpha,\beta)}$ can be written in the form
\begin{equation}\label{5linles}
	|t_{(\alpha,\beta)}(\omega)|=\left\|\begin{pmatrix}
		0 & -\kappa_\Im\\
		1 & -\kappa_\Re
	\end{pmatrix}
	\begin{pmatrix}
		\alpha\\
		\beta
	\end{pmatrix}
	-
	\begin{pmatrix}
		\lambda_\Im
		\\
		\lambda_\Re
	\end{pmatrix}\right\|.
\end{equation}
The following proposition is proven by a technique similar to the active set algorithm for constrained linear least squares problems presented in \cite[Chapter 16.5]{MR2244940}.

\begin{prop} \label{4norm}
 Assume $\omega\in\domain$, and let $ W_\Omega^\epsilon(T)$, $\kappa$, and $\lambda$ be defined as in \eqref{5enrenc} and \eqref{5rdef}, respectively. 
 Define the constant $\epsilon_0$ as follows:
 If $\kappa_\Re=0$ define 
\[
\epsilon_0:=\sqrt{\min _{\beta\in W(B)}|\beta \kappa_\Im+\lambda_\Im|^2+\min _{\alpha\in W(A)}|\alpha -\lambda_\Re|^2}.
\]
If $\kappa_\Re\neq0$, define the constants $\beta_{\inf}$ and $\beta_{\sup}$ as the values in $\overline{W(B)}$ closest to 
\[
	-\frac{\kappa_\Im \lambda_\Im-\kappa_\Re(\inf W(A)-\lambda_\Re)}{|\kappa|^2}\quad \text{and}\quad-\frac{\kappa_\Im \lambda_\Im-\kappa_\Re(\sup W(A)-\lambda_\Re)}{|\kappa|^2},
\]
respectively. Define the interval 
\[
	\mathcal{B}:=\left[\frac{ \inf W(A)-\lambda_\Re}{\kappa_\Re},\frac{\sup W(A)-\lambda_\Re}{\kappa_\Re}\right].
\] 
If $\mathcal{B}\cap \overline{W(B)}=\emptyset$ set $\Omega':=\{(\inf W(A),\beta_{\inf} ),(\sup W(A),\beta_{\sup} )\}$.
If $\mathcal{B}\cap \overline{W(B)}\neq\emptyset$ set $\Omega':=\{(\inf W(A),\beta_{\inf} ),(\sup W(A),\beta_{\sup} ),(\alpha_{op},\beta_{op} )\}$, where $\beta_{op}$ denotes the value in $\mathcal{B}\cap \overline{W(B)}$ closest to $-\lambda_\Im/\kappa_\Im$ and $\alpha_{op}:= \beta_{op}\kappa_\Re+ \lambda_\Re$. If $\kappa_\Im=0$ the choice of $\beta_{op}\in\mathcal{B}\cap \overline{W(B)}$ is arbitrary.
Define
\[
\epsilon_0:=\min_{(\alpha,\beta)\in\Omega'}\sqrt{|\beta \kappa_\Im+\lambda_\Im|^2+|\alpha -\kappa_\Re\beta-\lambda_\Re|^2}.
\]
Then $\omega\in W_\Omega^\epsilon(T)$ if and only if $\epsilon>\epsilon_0$. 
\end{prop}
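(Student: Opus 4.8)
The plan is to first reduce the statement to an explicit minimization. For $\omega\in\domain$, definition \eqref{5enrenc} says that $\omega\in W_\Omega^\epsilon(T)$ holds exactly when $\min_{(\alpha,\beta)\in\Omega}|t_{(\alpha,\beta)}(\omega)|<\epsilon$ — and when $\omega\in W_\Omega(T)$ this minimum is $0$, so the reformulation stays valid — hence it suffices to show that this minimum equals the explicitly described constant $\epsilon_0$. Using \eqref{5linles}, I would write $|t_{(\alpha,\beta)}(\omega)|^2=g(\alpha,\beta):=(\kappa_\Im\beta+\lambda_\Im)^2+(\alpha-\kappa_\Re\beta-\lambda_\Re)^2$, a convex quadratic, so that $\epsilon_0^2$ is the optimal value of a linear least-squares problem constrained to the rectangle $\Omega=\overline{W(A)}\times\overline{W(B)}$. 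Since $g(\cdot,\beta)$ is coercive for each fixed $\beta$ and $\overline{W(B)}$ is bounded, the minimum is attained even when $\overline{W(A)}$ is unbounded; in that case the minimizer is finite, so an infinite endpoint of $\overline{W(A)}$ contributes nothing, and the formulas are read with the convention that a term containing it equals $+\infty$.

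If $\kappa_\Re=0$, I would note that $g$ separates as $(\kappa_\Im\beta+\lambda_\Im)^2+(\alpha-\lambda_\Re)^2$; minimizing the two summands independently over $\overline{W(B)}$ and over $\overline{W(A)}$ immediately gives the stated value of $\epsilon_0$ in this case.

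For $\kappa_\Re\neq0$ the argument would be an active-set analysis of the convex quadratic program, following \cite[Chapter 16.5]{MR2244940}. Let $(\alpha_0,\beta_0)$ be a minimizer of $g$ over $\Omega$; exactly one of (a) $\alpha_0=\inf W(A)$, (b) $\alpha_0=\sup W(A)$, (c) $\inf W(A)<\alpha_0<\sup W(A)$ occurs. In case (a) the reduced map $\beta\mapsto g(\inf W(A),\beta)$ is convex, so its minimum over the interval $\overline{W(B)}$ is its unconstrained critical point projected onto $\overline{W(B)}$; differentiating shows that critical point to be $-(\kappa_\Im\lambda_\Im-\kappa_\Re(\inf W(A)-\lambda_\Re))/|\kappa|^2$, hence the optimal $\beta$ is $\beta_{\inf}$, producing the candidate $(\inf W(A),\beta_{\inf})$. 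Case (b) is the same with $\sup W(A)$ and yields $(\sup W(A),\beta_{\sup})$. In case (c) the bounds on $\alpha$ are inactive, so $\partial_\alpha g(\alpha_0,\beta_0)=0$, forcing $\alpha_0=\kappa_\Re\beta_0+\lambda_\Re$; feasibility of $\alpha_0$ then means $\beta_0\in\mathcal B\cap\overline{W(B)}$, and along this line $g$ collapses to $(\kappa_\Im\beta_0+\lambda_\Im)^2$, whose minimum over the interval $\mathcal B\cap\overline{W(B)}$ is attained at $\beta_{op}$ (any such point if $\kappa_\Im=0$), producing the candidate $(\alpha_{op},\beta_{op})$. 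Therefore, if $\mathcal B\cap\overline{W(B)}=\emptyset$ then case (c) is impossible and $\Omega'$ is the two-point set, and otherwise $\Omega'$ has three points; in either case $\epsilon_0=\min_{(\alpha,\beta)\in\Omega'}\sqrt{g(\alpha,\beta)}$, which is precisely the asserted formula.

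The hard part will be making the case distinction genuinely exhaustive and covering the degenerate configurations: $W(A)$ or $W(B)$ a single point (so edges of $\Omega$ collapse), $\kappa_\Im=0$ (so $g$ is already constant along $\alpha=\kappa_\Re\beta+\lambda_\Re$), $\overline{W(A)}$ unbounded, and $\omega=0$ (so $\kappa=\lambda=0$ and $g$ reduces to $\alpha^2$). In each of these I would verify that the reduction still selects a point of $\Omega'$ as a minimizer; convexity of $g$ is what guarantees that projecting the one-dimensional critical points onto the relevant intervals yields the true edgewise minima rather than spurious stationary points. The remaining work — evaluating the critical points and simplifying $g$ along $\alpha=\kappa_\Re\beta+\lambda_\Re$ — is routine differentiation.
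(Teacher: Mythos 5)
Your proposal is correct and follows essentially the same route as the paper: reduce membership in $W_\Omega^\epsilon(T)$ to the constrained linear least squares problem \eqref{5linles} over the rectangle $\Omega$, and then run the active-set case analysis (minimizing $\alpha$ at an endpoint of $\overline{W(A)}$ or at an interior point where $\alpha=\kappa_\Re\beta+\lambda_\Re$), which is exactly the argument the paper sketches with reference to \cite[Chapter 16.5]{MR2244940}. Your treatment is in fact more detailed than the paper's, in particular in handling the degenerate and unbounded cases explicitly.
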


\begin{proof}
Note that $\omega\in W_\Omega^\epsilon(T)$ if and only if $\epsilon>\min_{(\alpha,\beta)\in\Omega}|t_{(\alpha,\beta)}(\omega)|$. This is in the sense of \eqref{5linles} a constrained linear least squares optimization problem. If $\kappa_\Re=0$ the result is trivial. Otherwise, the minimizing value $\alpha$ of this linear problem is either on one of the endpoints of $\overline{W(A)}$ or an $\alpha$ that makes the second equation solvable in $\beta\in \overline{W(B)}$. Computing the optimal $\beta\in \overline{W(B)}$ in all of these cases gives three possible pairs in $\Omega$ that minimizes \eqref{5linles} and the result follows.
\end{proof}
\begin{cor}\label{4Tbound}
Let $ W_\Omega^\epsilon(T)$ be defined  as in \eqref{5enrenc} and assume that  $\omega\in\domain\setminus W_\Omega(T)$. Then,
$\|T^{-1}(\omega)\|\leq1/\epsilon_0$, where $\epsilon_0$ is given by Proposition \ref{4norm}.
\end{cor}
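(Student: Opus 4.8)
The plan is to identify the constant $\epsilon_0$ of Proposition \ref{4norm} with the pointwise infimum $\inf_{(\alpha,\beta)\in\Omega}|t_{(\alpha,\beta)}(\omega)|$, convert this into a lower bound for $T(\omega)$, and then upgrade ``bounded below'' to ``boundedly invertible'' by an adjoint argument using the imaginary-axis symmetry of the enclosure.

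First I would observe that, since $\omega\notin W_\Omega(T)$, the definition \eqref{5enrenc} makes $\omega\in W_\Omega^\epsilon(T)$ equivalent to the existence of $(\alpha,\beta)\in\Omega$ with $|t_{(\alpha,\beta)}(\omega)|<\epsilon$, i.e.\ to $\epsilon>\inf_{(\alpha,\beta)\in\Omega}|t_{(\alpha,\beta)}(\omega)|$. Comparing this with the conclusion of Proposition \ref{4norm} (``$\omega\in W_\Omega^\epsilon(T)$ if and only if $\epsilon>\epsilon_0$'') yields $\epsilon_0=\inf_{(\alpha,\beta)\in\Omega}|t_{(\alpha,\beta)}(\omega)|$; only this identity, not the explicit formula for $\epsilon_0$, is needed. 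I would then argue that this infimum is strictly positive: because $\omega\in\domain$, the factorization $p_{(\alpha,\beta)}=t_{(\alpha,\beta)}\,(c-id\omega-\omega^2)$ shows $t_{(\alpha,\beta)}(\omega)=0$ exactly when $\omega$ is a root of $p_{(\alpha,\beta)}$, so $\omega\notin W_\Omega(T)=\bigcup_n r_n(\Omega)$ forces $t_{(\alpha,\beta)}(\omega)\neq0$ for every finite $(\alpha,\beta)\in\Omega$; since $|t_{(\alpha,\beta)}(\omega)|=|\alpha-\lambda-\kappa\beta|\to\infty$ as $|\alpha|\to\infty$ by \eqref{5rdef}, the infimum is attained on a compact subset of $\Omega$, and hence $\epsilon_0>0$.

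Next I would pass to vectors: for $u\in\dom T(\omega)=\dom A$ with $u\neq0$, the pair $(\alpha_u,\beta_u)$ lies in $W(A)\times W(B)\subset\Omega$, so by \eqref{TCurver} one has $|(T(\omega)u,u)|=|t_{(\alpha_u,\beta_u)}(\omega)|\,\|u\|^2\geq\epsilon_0\|u\|^2$, and Cauchy--Schwarz gives $\|T(\omega)u\|\geq\epsilon_0\|u\|$. As $T(\omega)-A$ is bounded and $A$ is self-adjoint, $T(\omega)$ is closed, so this estimate shows $T(\omega)$ is injective with closed range and $\|T(\omega)^{-1}\|\leq\epsilon_0^{-1}$ on $\operatorname{ran}T(\omega)$. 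The remaining, and only nontrivial, point is surjectivity. For this I would use the identity $T(\omega)^*=A-\overline{\omega^2}-\overline{\kappa}\,B=T(-\overline\omega)$, which holds because $\omega\mapsto-\overline\omega$ conjugates $\omega^2$ and sends $c-id\omega-\omega^2$ to $c+id\overline\omega-\overline\omega^2$. By Proposition \ref{5boundspol} i) the enclosure $W_\Omega(T)$ is symmetric about the imaginary axis, so $-\overline\omega\notin W_\Omega(T)$ and $-\overline\omega\in\domain$; applying the bound just obtained to $-\overline\omega$ shows $T(\omega)^*=T(-\overline\omega)$ is injective, whence $\operatorname{ran}T(\omega)=(\ker T(\omega)^*)^\perp=\Hs$. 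Therefore $T(\omega)$ is boundedly invertible with $\|T^{-1}(\omega)\|\leq1/\epsilon_0$.

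I expect the main obstacle to be precisely the passage from ``bounded below'' to ``invertible'': the Cauchy--Schwarz estimate alone delivers injectivity and closed range, and surjectivity must be recovered separately. The adjoint identity $T(\omega)^*=T(-\overline\omega)$ together with the imaginary-axis symmetry of $W_\Omega(T)$ is exactly what supplies it; the rest is bookkeeping with Proposition \ref{4norm} and the definitions.
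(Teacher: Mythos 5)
Your proposal is correct, and its core is exactly the paper's argument: identify $\epsilon_0$ with $\min_{(\alpha,\beta)\in\Omega}|t_{(\alpha,\beta)}(\omega)|>0$ via Proposition \ref{4norm} and then use $\|T(\omega)u\|\,\|u\|\geq|(T(\omega)u,u)|=|t_{(\alpha_u,\beta_u)}(\omega)|\,\|u\|^2\geq\epsilon_0\|u\|^2$. The one place you go beyond the paper is the passage from ``bounded below'' to ``boundedly invertible'': the paper's two-line proof stops at the lower bound and does not address surjectivity, whereas you close that gap with the identity $T(\omega)^*=T(-\overline{\omega})$ together with the imaginary-axis symmetry of $W_\Omega(T)$ (Proposition \ref{5boundspol} i)), which gives injectivity of the adjoint and hence, with closed range, $\operatorname{ran}T(\omega)=\Hs$. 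That identity is correct (and the pole set $\{\delta_+,\delta_-\}$ is invariant under $\omega\mapsto-\overline{\omega}$, so $-\overline{\omega}\in\domain$), so your version is a strictly more complete proof of the same estimate; the only cost is the extra bookkeeping with the adjoint, which the paper implicitly leaves to the reader.
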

\begin{proof}
Assume that $\omega\in\C\setminus (W_\Omega(T)\cup\{\delta_+,\delta_-\})$, then from Proposition \ref{4norm} it follows that $\min_{(\alpha,\beta)}|t_{(\alpha,\beta)}(\omega)|=\epsilon_0>0$. Hence, the result follows from $\|T(\omega)u\|/\|u\|\geq |(T(\omega)u,u)|/\|u\|^2\geq \min_{(\alpha,\beta)}|t_{(\alpha,\beta)}(\omega)|$.
\end{proof}
\begin{center}
\begin{figure}
\includegraphics[width=12.5cm]{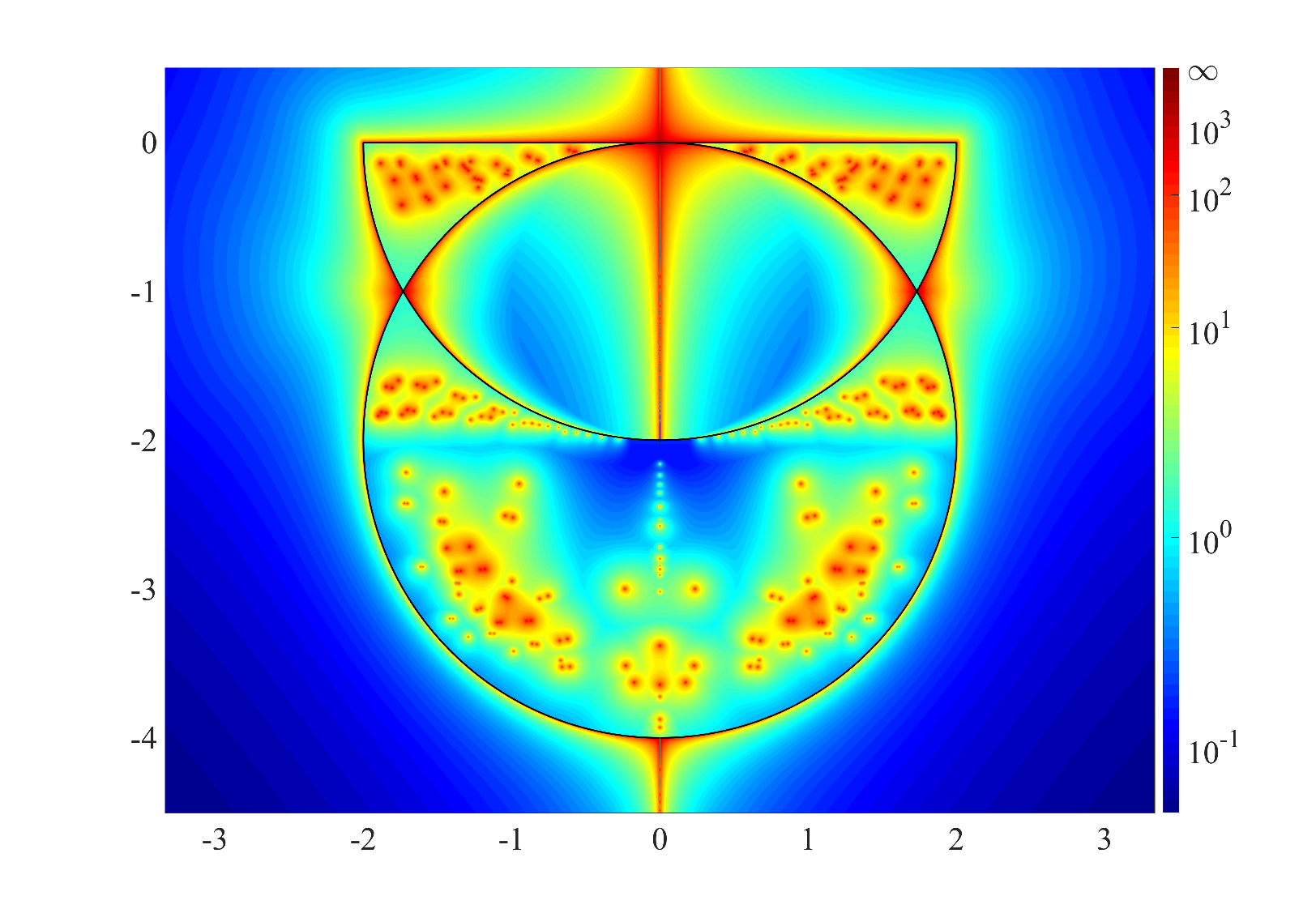}
\caption{
Example of the upper bound of $\|T^{-1}(\omega)\|$ from Corollary~\ref{4Tbound} for $\omega\in\domain\setminus W_\Omega(T)$ with $\overline{W(A)}=[-32,4]$, $\overline{W(B)}=[0,4]$, $c=4$, and $d=4$. For $\omega\in W_\Omega(T)$ the resolvent is computed numerically for particular matrices $A$ and $B$.}
\label{4fig:circle}
\end{figure}
\end{center}
Figure \ref{4fig:circle} illustrates the upper bound of $\|T^{-1}(\omega)\|$ in Corollary \ref{4Tbound} for arbitrary operators with $\overline{W(A)}=[-32,4]$, $\overline{W(B)}=[0,4]$, where a solid line depicts $\overline{ W_{\partial\Omega}(T)\setminus i\R}$. For $\omega\in W_\Omega(T)$, we chose particular matrices $A$ and $B$ with the given numerical ranges and compute $\|T^{-1}(\omega)\|$ numerically. Note that on the imaginary axis a simpler result than Proposition \ref{4norm} holds.

\begin{prop}\label{4normi}
Let $ W_\Omega^\epsilon(T)$ be defined as in \eqref{5enrenc}. Let $A^\epsilon$ be an arbitrary selfadjoint operator with $\overline{W(A^\epsilon)}=[\inf W(A)-\epsilon,\sup W(A)+\epsilon]$ and define
\[
T^\epsilon(\omega):=A^\epsilon-\omega^2-\frac{\omega^2}{c-id\omega-\omega^2}B, \quad \dom T^\epsilon(\omega)=\dom A^\epsilon,\quad \omega\in\domain.
\]
Let $W_\Omega(T^\epsilon)$ be defined as in \eqref{4nrenc}.
Then $\overline{W_\Omega^\epsilon(T)}\cap i\R=W_\Omega(T^\epsilon)\cap i\R$.
\end{prop}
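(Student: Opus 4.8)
The plan is to reduce the statement to the fact that $t_{(\alpha,\beta)}$ takes real values on the imaginary axis. For $i\mu\in i\R\setminus\{\delta_+,\delta_-,\infty\}$ the factor $c+d\mu+\mu^2$ is nonzero, so \eqref{TCurver} gives
\[
	t_{(\alpha,\beta)}(i\mu)=\alpha+\mu^2+\frac{\mu^2}{c+d\mu+\mu^2}\,\beta\in\R ,
\]
an affine function of $\alpha$ with slope $1$. By \eqref{4nrenc} and the factorisation \eqref{TCurve}, and since the operator coefficients enter $t$ only through $\alpha$ and $\beta$ (so the same formula serves $T$ and $T^\epsilon$), the point $i\mu$ lies in $W_\Omega(T^\epsilon)$ if and only if $t_{(\alpha^\epsilon,\beta)}(i\mu)=0$ for some $\alpha^\epsilon\in\overline{W(A^\epsilon)}=[\inf W(A)-\epsilon,\sup W(A)+\epsilon]$ and some $\beta\in\overline{W(B)}$. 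Decomposing $\alpha^\epsilon=\alpha+s$ with $\alpha\in\overline{W(A)}$ and $|s|\le\epsilon$ and using the unit slope, this is equivalent to $\min_{(\alpha,\beta)\in\Omega}\lvert t_{(\alpha,\beta)}(i\mu)\rvert\le\epsilon$.

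Next I would show that $\overline{W_\Omega^\epsilon(T)}$ admits the same description on $i\R\setminus\{\delta_+,\delta_-,\infty\}$. By \eqref{5enrenc} and continuity of $(\alpha,\beta)\mapsto\lvert t_{(\alpha,\beta)}(\omega)\rvert$ on the compact set $\Omega$, a point $\omega\in\domain$ lies in $W_\Omega^\epsilon(T)$ iff $\min_{(\alpha,\beta)\in\Omega}\lvert t_{(\alpha,\beta)}(\omega)\rvert<\epsilon$ or $\omega\in W_\Omega(T)$, the latter being subsumed on $i\R$ away from $\delta_\pm$ by $\min_{(\alpha,\beta)\in\Omega}\lvert t_{(\alpha,\beta)}(\omega)\rvert=0$. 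Exactly as in the proof of Theorem \ref{4maine}, if the minimum equals $\epsilon$ and $(\alpha_0,\beta_0)$ realises it, the minimum modulus principle for the non-constant holomorphic function $t_{(\alpha_0,\beta_0)}$ furnishes points arbitrarily close to $\omega$ where $\lvert t_{(\alpha_0,\beta_0)}\rvert<\epsilon$, while if the minimum exceeds $\epsilon$ it does so on a whole neighbourhood. Since the points of $i\R\setminus\{\delta_+,\delta_-,\infty\}$ are interior to $\domain$, this gives $i\mu\in\overline{W_\Omega^\epsilon(T)}$ iff $\min_{(\alpha,\beta)\in\Omega}\lvert t_{(\alpha,\beta)}(i\mu)\rvert\le\epsilon$, which together with the previous paragraph proves the identity off $\{\delta_+,\delta_-,\infty\}$.

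It remains to treat $\delta_+,\delta_-,\infty$; of these $\delta_\pm$ lie on $i\R$ precisely when $d\ge2\sqrt c$ (with $\delta_+=0$ when $c=0$). By the remark following \eqref{5enrenc}, each such point belongs to $W_\Omega^\epsilon(T)$ iff it belongs to $W_\Omega(T)$, and Proposition \ref{5boundspol} (parts (ii)--(v)) characterises membership of these points in $W_\Omega(T)$, and likewise in $W_\Omega(T^\epsilon)$, solely in terms of whether $W(A)$ (resp.\ $W(A^\epsilon)$) is unbounded and whether $0\in\overline{W(B)}$ or $c=0$; since $W(A^\epsilon)$ is unbounded exactly when $W(A)$ is, these conditions coincide. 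Passing to the closure of $W_\Omega^\epsilon(T)$ changes nothing at these points: when they lie in $W_\Omega(T)$ they are non-isolated there (as $\overline{W(B)}\ne\{0\}$), and when they do not, $\overline{W(A)}$ and $\overline{W(B)}$ are bounded with $\overline{W(B)}$ bounded away from $0$, so $\min_{(\alpha,\beta)\in\Omega}\lvert t_{(\alpha,\beta)}(\cdot)\rvert$ stays above $\epsilon$ on a neighbourhood (as in the proof of Proposition \ref{5boundspol}). This completes the proof.

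The main obstacle is the transition from the \emph{strict} inequality in the definition \eqref{5enrenc} of $W_\Omega^\epsilon(T)$ to the \emph{non-strict} condition describing $W_\Omega(T^\epsilon)$: the open endpoint interval $(\inf W(A)-\epsilon,\sup W(A)+\epsilon)$ produced on the left must be matched with the closed interval $\overline{W(A^\epsilon)}$ on the right, and it is precisely the closure appearing in the statement, justified by the minimum modulus principle as in Theorem \ref{4maine}, that reconciles them. The case analysis for $\{\delta_+,\delta_-,\infty\}$ is secondary and entirely routine given Proposition \ref{5boundspol}.
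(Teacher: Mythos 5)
Your proof is correct and follows essentially the same route as the paper: characterize $\overline{W_\Omega^\epsilon(T)}$ on $i\R\setminus\{\delta_+,\delta_-\}$ by $\min_{(\alpha,\beta)\in\Omega}|t_{(\alpha,\beta)}(i\mu)|\leq\epsilon$ via the minimum modulus argument of Theorem \ref{4maine}, then absorb the tolerance $\epsilon$ into the $\alpha$-interval using that $t_{(\alpha,\beta)}(i\mu)$ is real and affine in $\alpha$ with unit slope (the paper writes this as $\alpha-e\in\overline{W(A^\epsilon)}$), and finally handle $\delta_\pm$ separately. Your endpoint analysis at $\delta_\pm$ via Proposition \ref{5boundspol} is more explicit than the paper's appeal to continuity, but the substance is the same.
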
 
\begin{proof}
Assume $\omega\in i\R \setminus\{\delta_+,\delta_-\}$, then as in the proof of Theorem \ref{4maine} we have $\omega\in \overline{W_\Omega^\epsilon(T)}$ if and only if $\min_{(\alpha,\beta)\in\Omega}|t_{(\alpha,\beta)}(\omega)|\leq\epsilon$. Hence, $\omega\in \overline{W_\Omega^\epsilon(T)}$  if and only if 
\begin{equation}\label{4sista}
	\alpha+\omega_\Im^2+\frac{\omega_\Im^2}{c+d\omega_\Im+\omega_\Im^2}\beta=e,
\end{equation}
for some $(\alpha,\beta)\in\Omega$ and for some $e$ with $|e|\leq\epsilon$. Assume $\omega\in i\R \setminus\{\delta_+,\delta_-\}$ satisfies \eqref{4sista}, then $\alpha-e\in\R$, $\alpha-e\in \overline{W(A^\epsilon)}$ and thus $\omega \in W_\Omega(T^\epsilon)$. The converse is obvious. Hence,  $\overline{W_\Omega^\epsilon(T)}\cap i\R\setminus \{\delta_+,\delta_-\}=W_\Omega(T^\epsilon)\cap i\R\setminus \{\delta_+,\delta_-\}$, which by continuity yields $\overline{W_\Omega^\epsilon(T)}\cap i\R=W_\Omega(T^\epsilon)\cap i\R$.
\end{proof}
The set $\overline{W_\Omega^\epsilon(T)}\cap i\R$ can then be obtained by the algorithm given in Proposition~\ref{2coloralgi}.

\vspace{3.5mm}

{\small
{\bf Acknowledgements.} \ 
The authors gratefully acknowledge the support of the Swedish Research Council under Grant No.\ $621$-$2012$-$3863$. 
}

\bibliographystyle{alpha}
\bibliography{Bibliography}

\end{document}